\documentclass[12pt]{amsart}
\usepackage{graphicx}
\usepackage{color}
\usepackage[colorlinks=true]{hyperref}
\usepackage{mathtools,cite}
\usepackage{stmaryrd}
\usepackage{xparse}
\definecolor{RedOrange}{cmyk}{ 0, 0.77, 0.87, 0}
\definecolor{RoyalPurple}{cmyk}{ 0.84, 0.53, 0, 0}
\definecolor{YellowGreen}{cmyk}{ 0.44, 0, 0.74, 0}
\definecolor{Fuchsia}{cmyk}{ 0.47, 0.91, 0, 0.08}
\definecolor{Blue}{cmyk}{ 0.84, 0.53, 0, 0}
\definecolor{BlueViolet}{cmyk}{ 0.84, 0.53, 0, 0}
\definecolor{Black}{cmyk}{ 0.75, 0.68, 0.67, 0.9}
\usepackage{xcolor}
\usepackage{verbatim}
\usepackage{amsmath}
\usepackage{amssymb, mathrsfs}
\usepackage{amsbsy}
\usepackage{amscd}
\usepackage{amsthm}
\usepackage{bbold}
\usepackage{ulem}
\usepackage[english]{babel}
\usepackage{todonotes}
\usepackage[T1]{fontenc}
\DeclarePairedDelimiterX{\Iintv}[1]{\llbracket}{\rrbracket}{\iintvargs{#1}}
\NewDocumentCommand{\iintvargs}{>{\SplitArgument{1}{,}}m}
{\iintvargsaux#1}
\NewDocumentCommand{\iintvargsaux}{mm} {#1\mkern1.5mu,\mkern1.5mu#2}
\usepackage{color}
\newcommand{\eb}{\mathbf{e}}

\newcommand{\lf}{\lfloor}
\newcommand{\rf}{\rfloor}
\newcommand{\R}{\mathbb{R}}
\newcommand{\bL}{\mathbb{L}}

\newcommand{\N}{\mathbb{N}}
\newcommand{\bS}{\mathbb{S}}
\newcommand{\e}{\varepsilon}
\newcommand{\E}{\mathbb{E}}
\newcommand{\Z}{\mathbb{Z}}

\newcommand{\pp}{\mathbb{P}}

\newcommand{\eps}{\epsilon}
\newcommand{\kA}{\mathcal{A}}

\newcommand{\kC}{\mathcal{C}}

\newcommand{\kR}{\mathcal{R}}
\newcommand{\kO}{\mathcal{O}}
\newcommand{\kP}{\mathcal{P}}

\newcommand{\kF}{\mathcal{F}}

\newcommand{\kE}{\mathcal{E}}

\newcommand{\kL}{\mathcal{L}}

\newcommand{{\rmT}}{\mathrm{T}}
\newcommand{\rmB}{\mathrm{B}}

\newcommand{\Con}{\mathrm{Con}}
\newcommand{\sct}{\mathscr{T}}
\newcommand{\hsct}{\mathscr{T}^*}
\newcommand{\scl}{\mathscr{L}}
\newcommand{\tT}{\tilde{\mathrm{T}}}
\newcommand{\fgd}{\mathfrak{g}_d}
\newcommand{\fsd}{\mathfrak{s}_d}
\newcommand{\frd}{\mathfrak{r}_d}

\newcommand{\1}[1]{\mathbf{1}_{#1}}

\newcommand{\lin}{\left[\kern-0.15em\left[}
\newcommand{\rin} {\right]\kern-0.15em\right]}
\newcommand{\linf}{[\kern-0.15em [}
\newcommand{\rinf} {]\kern-0.15em ]}
\newcommand{\ilin}{\left]\kern-0.15em\left]}
\newcommand{\irin} {\right[\kern-0.15em\right[}
\def\be#1{\begin{equation*}#1\end{equation*}}
\def\ben#1{\begin{equation}#1\end{equation}}
\def\bea#1{\begin{eqnarray*}#1\end{eqnarray*}}
\def\bean#1{\begin{eqnarray}#1\end{eqnarray}}
\def\al#1{\begin{align*}#1\end{align*}}
\def\aln#1{\begin{align}#1\end{align}}
\usepackage{constants}
\newconstantfamily{c}{symbol=c}
\newconstantfamily{a}{symbol=\alpha}
\newcommand{\secno}[1]{\thesection.\arabic{#1}}
\newconstantfamily{kE}{
symbol=\mathcal{E},
format=\secno,
reset={section}
}

\renewcommand{\tilde}{\widetilde}

\DeclareMathOperator{\T}{T}
\newtheorem{lem}{Lemma}[section]

\newtheorem{prop}[lem]{Proposition}
\newtheorem{theo}[lem]{Theorem}

\newtheorem {rem}[lem] {Remark}

\usepackage{color}
\definecolor{lilas}{RGB}{182, 102, 210}

\numberwithin{equation}{section}
\def\be#1{\begin{equation*}#1\end{equation*}}
\def\ben#1{\begin{equation}#1\end{equation}}
\def\bea#1{\begin{eqnarray*}#1\end{eqnarray*}}
\def\bean#1{\begin{eqnarray}#1\end{eqnarray}}

\begin{document}
\title[Upper tail large deviations  in the frog model]{Upper tail large deviation for the first passage time in the frog model on $Z^d$ with $d\geq 2$}

\author[V.~H.~CAN]{Van Hao CAN}
\address[V.~H.~CAN]{Institute of Mathematics, Vietnam Academy of Science and Technology, 18 Hoang Quoc Viet, Nghia Do, Hanoi, Vietnam.}
\email{cvhao@math.ac.vn}

\author[N.~KUBOTA]{Naoki KUBOTA}
\address[N.~KUBOTA]{College of Science and Technology, Nihon University, Chiba 274-8501, Japan.}
\email{kubota.naoki08@nihon-u.ac.jp}

\author[S.~NAKAJIMA]{Shuta NAKAJIMA}
\address[S.~NAKAJIMA]{Department of Mathematics, Keio University, Kanagawa 223-8522, Japan.}
\email{njima@keio.jp}

\begin{abstract}
We study upper  tail large deviations for the first passage time in the
frog model on $\mathbb Z^d$, $d\ge 2$. Let $\rmT(0,x)$ denote the first
time at which the infection, started from the origin, reaches $x$, and
let $\mu$ be the associated time constant. We prove that, for any Euclidean unit vector 
$v\in\mathbb R^d$ and $\xi>0$,
\[
\lim_{n\to\infty}
\frac{-1}{\mathfrak s_d(n)}
\log \mathbb P\big(\rmT(0,\lfloor nv\rfloor)\ge(\mu(v)+\xi)n\big)
=\mathfrak r_d\xi,
\]
where $\mathfrak s_2(n)=n/\log n$ and  $\mathfrak r_2=\pi$ for $d=2$;  $\mathfrak s_d(n)=n$ and $\mathfrak r_d$ is the escape probability of
simple random walk for $d\ge3$. 
Moreover, on the  upper tail large deviation event, we show that a certain slowdown mechanism occurs near the endpoint. 
To rule out other mechanisms, we analyze the growth of the infection near the starting point using a modified exploration process, and investigate its  propagation through the intermediate region  using the renormalization
and covering arguments.
\end{abstract}

\maketitle

\section{Introduction}
Front propagation is a fundamental phenomenon arising in a wide variety of physical, biological, and social systems. A prototypical example is an autocatalytic reaction described by 
\[
X+Y\rightarrow 2X,
\]
which appears in the spread of information and infectious diseases as well as in combustion processes. In this reaction, particles of type $X$ convert particles of type $Y$ into the same type upon contact. Depending on the application, \(X\)-particles may represent heat sources, informed individuals, or infected individuals, whereas \(Y\)-particles represent unburned material, uninformed individuals, or susceptible individuals. Understanding the propagation induced by this reaction has attracted considerable attention over the past decades. On the deterministic side, reaction-diffusion equations provide a classical framework for describing propagating fronts, with the Fisher--Kolmogorov--Petrovsky--Piscounov equation as a canonical example; see \cite{Pan04,vSaa03}  and the references therein. On the stochastic side,  according to \cite{KesSid05}, the study of particle systems modeling the reaction \(X+Y\to2X\) goes back to work of Spitzer and Kesten around 1980. When \(X\)- and \(Y\)-particles are interpreted as infected and susceptible individuals, respectively, such systems are often referred to as susceptible-infected (SI) models.

The frog model is one of the basic stochastic particle systems arising from this picture. In this model, susceptible particles remain immobile until they are activated, after which they perform independent random walks. Alves, Machado, and Popov \cite{AlvMacPop02}  established a shape theorem for the frog model, showing that the activated region grows linearly and converges, after rescaling, to a deterministic shape. A related SI model was studied by Kesten and Sidoravicius \cite{KesSid05,KesSid08}, in which both susceptible and infected particles perform random walks and infection is transmitted upon contact. They established a shape theorem when the two types of particles move at the same rate. More recently, Dauvergne and Sly \cite{DauSly23} proved linear growth when the two types move at different rates. 

In this paper, we focus on the frog model with one frog initially placed at each site of $\mathbb Z^d$ and study the rare-event behavior of its first passage time. Let \(\rmT(x,y)\) denote the first time at which the frog initially located at \(y\) is activated when the process starts with only the frog at \(x\) active. As we recall below, \(\rmT(0,nv)\) grows linearly in \(n\), with a deterministic rate depending on the direction \(v\). Our aim is to estimate the probability that \(\rmT(0,nv)\) is substantially larger than this typical value.

In one dimension, the first passage time possesses a renewal structure, which allows for a precise analysis of its asymptotic behavior. In particular, this structure yields a central limit theorem for the first passage time \cite{ComQuaRam07,ComQuaRam09} and also provides an effective tool for studying rare slowdown events. In dimensions \(d\geq 2\), however, no analogous renewal structure is available. The propagation is substantially more complicated, as activation can spread along many competing routes. Even the typical fluctuations of the first passage time remain poorly understood; for instance, its variance is only known to be sublinear in \(n\) \cite{CanNak19}, in contrast to the one-dimensional case, where the renewal structure implies that \((\mathrm{T}(0,n) - \mu n)/\sqrt{n}\) converges in distribution to a centered Gaussian random variable

The study of rare slowdown events was initiated by Bérard and Ramírez \cite{BR} for the one dimensional continuous time frog model. They proved that for any $\xi>0$,
\[
\exp(-t^{1/2+o(1)})
\le
\mathbb P\left(\rmT(0,\lfloor t\rfloor)\ge \E(\rmT(0,\lfloor t\rfloor))+ \xi t\right)
\le
\exp(-t^{1/3+o(1)}),
\]
showing that the probability has stretched-exponential decay, while leaving the exact asymptotics open. 
In subsequent work, we studied the corresponding discrete-time frog model and
determined the exact asymptotics of  upper tail large deviation in one dimension
\cite{CanKubNak26}; see Subsection~\ref{rel1d}  for the precise statement
and the underlying slowdown mechanism. 

These results naturally raise the corresponding question in
higher dimensions. The problem is not merely a higher-dimensional extension of the one-dimensional one: the absence of a renewal structure and the fact that local obstructions can be avoided in higher dimensions lead to a qualitatively different slowdown mechanism. In this paper, we determine the exact large deviation asymptotics in every dimension \(d\geq2\), revealing both a dimensional transition in the decay speed and a localization mechanism that is absent in one dimension.

\subsection{Model and main result}
\label{subsect:model}

 We consider the discrete time frog model on the integer lattice $\mathbb{Z}^d$ with $d\ge2$. Initially, one active frog is located at the origin, while every other site contains exactly one sleeping frog. Active frogs perform independent discrete time simple symmetric random walks. Whenever an active frog visits a site containing sleeping frogs, all frogs at that site become active. Once activated, each frog continues to evolve independently according to the simple random walk.

We next make this description precise. Let $(S_k^x)_{k=0}^\infty$, $x \in \Z^d$, be independent simple random walks on $\Z^d$ with $S_0^x=x$ for each $x \in \Z^d$. This describes the trajectory of the frog initially located at $x$ after activation. We denote by $\pp$  the underlying probability measure of the independent simple random walks. For later convenience, we extend the first-passage time to $x,y\in\R^d$ by identifying them with their coordinatewise integer parts.
For $x,y \in \R^d$, we define 
\begin{align*}
	\T(x,y) \coloneq \inf\left\{ \sum_{i=0}^{m-1} t(x_i,x_{i+1}):
    \begin{minipage}{5cm}
        $m \geq 1$,\, $x_0=\lfloor x\rfloor,\,x_m=\lfloor y\rfloor$\\
        and $x_1,\dots,x_{m-1} \in \Z^d$
    \end{minipage}
    \right\},
\end{align*}
where $t(x,y) \coloneq \inf\{k \geq 0:S_k^{\lfloor x\rfloor}=\lfloor y\rfloor \}$ denotes the hitting time of $\lfloor y\rfloor$ by the random walk starting from $\lfloor x \rfloor$. Here $\lfloor z \rfloor$ is the vertex of $\Z^d$ obtained by taking the floor function to each coordinate of $z$.  Thus, $\T(x,y)$ represents the time at which the frog initially located at $\lfloor y\rfloor$ is activated when only the frog at $\lfloor x\rfloor$ is initially active.

The first passage times satisfy subadditivity. In particular, Alves, Machado, and Popov \cite{AlvMacPop02} proved a shape theorem: there exists a deterministic norm $\mu$ on $\mathbb{R}^d$ such that
\[
\lim_{n\to\infty}\frac{\T(0,nv)}{n}=\mu(v),
\]
almost surely and in $L^1$ for every $v\in\mathbb{R}^d \setminus \{0\}$.\\

We study the upper tail probability that the first passage time exceeds its typical value. Our main result identifies both the exact decay scale and the corresponding rate function.


\begin{theo}\label{theo:main} Let $d\geq 2$. 
For any Euclidean unit vector  $v \in \R^d$ and $\xi>0$,
\begin{align*}
	\lim_{n \to \infty} \frac{-1}{\mathfrak{s}_d(n)} \log \pp\bigl( {\rm T}(0,nv) \geq (\mu(v)+\xi)n \bigr)=\mathfrak{r}_d \xi,
\end{align*}
where \begin{align*}
\mathfrak{s}_d(n) \coloneq
\begin{dcases}
\frac{n}{\log n}, & \text{if } d=2,\\
	n, & \text{if } d \geq 3,	
\end{dcases}
\qquad
\mathfrak{r}_d \coloneq
\begin{dcases}
	\pi, & \text{if } d=2,\\
	\mathfrak{g}_d, & \text{if } d \geq 3,
\end{dcases}
\end{align*}
and $\mathfrak{g}_d \coloneq \pp(S_k^0 \neq 0 \text{ for all } k \geq 1)$ is the escape probability from the origin $0$ of the simple random walk $S^0$.
\end{theo}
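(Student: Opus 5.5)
The plan is to prove matching lower and upper bounds on the probability. The guiding heuristic is a \emph{slowdown near the endpoint}. The cheapest way to delay the activation of the frog at $\lfloor nv\rfloor$ by $\xi n$ is that the sleeping frog there is simply not reached for an extra time $\xi n$. The infection surrounding it arrives at roughly time $\mu(v)n$ and is then essentially a dense cloud of active frogs. Each active frog near the target independently fails to hit the single site $\lfloor nv\rfloor$ in a given time window with probability close to the escape probability. The cost is therefore of order $\exp(-\mathfrak r_d \xi n)$ for $d\ge3$. For $d=2$ the mechanism works through the $\log$ of the Green's function: in time $t$, the probability that a frog from distance $r$ avoids the origin is about $\log r/\log t$. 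Summing over the region of radius $\sqrt{t}$ reachable in the window, with $t=\xi n$, gives an exponent of order $\pi \xi n/\log n$. Here $\pi$ appears as the constant in $\sum_{|x|\le \sqrt t}$ (capacity/potential kernel normalization, $a(x)\sim \frac{2}{\pi}\log|x|$).

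\textbf{Lower bound.} I will construct the event explicitly. First, let the process run normally, so that $\T(0,y)\le(\mu(v)+\delta)n$ for $y$ in an annulus around $nv$; this holds with probability close to one by the shape theorem. Next, impose that every frog activated during $[(\mu(v)-\delta)n,(\mu(v)+\xi)n]$ avoids the site $\lfloor nv\rfloor$. For $d\ge3$, the number of frogs that can reach the target in that window, weighted by their hitting probability, amounts through a last-exit/Green's function decomposition to time $\times$ escape rate. Concretely, I will condition on the site being empty of visits and use that the frogs are independent given their activation data. Then
\[
\log \pp(\text{no visit})\approx -\sum_{x}\pp\bigl(S^x \text{ hits } 0 \text{ before time } \xi n\bigr),
\]
with an FKG/Harris-type inequality to decouple from the first event. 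The sum equals $\E[\#\text{distinct sites visited by } S^0 \text{ up to } \xi n]\sim \mathfrak g_d \xi n$. In $d=2$ the range is $\sim \pi \xi n/\log(\xi n)$. This is exactly where $\mathfrak r_d$ comes from.

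\textbf{Upper bound.} This is the main obstacle. I must show that no other mechanism is cheaper: not a slowdown near the origin, where the infection starts from one frog and a delay early might look cheap, and not in the bulk. I will decompose $\T(0,nv)\le \T(0,y_1)+\T(y_1,y_2)+\T(y_2,nv)$, with $y_1$ at distance $\epsilon n$ from the origin and $y_2$ at distance $\epsilon n$ from $nv$, choosing the best path among many. For the origin region, I will use a modified exploration process to show that the cluster reaches size $n^{c}$ quickly except with probability $e^{-Cn}$ for large $C$ in $d\ge3$. For the bulk, a renormalization argument covers the segment by mesoscopic boxes. Each box is good with superexponentially small failure probability when multiple disjoint starting frogs are available, so the bulk deviation costs $\gg \mathfrak s_d(n)$. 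The endpoint region is handled by the reverse of the lower-bound calculation. Once a dense infected set surrounds the target, the extra waiting time $\ge \xi n$ requires all those frogs to miss the target. The probability is bounded by $\exp(-(1-o(1))\sum_x \pp(S^x\text{ hits})) $ using independence, matching the lower bound. The hardest technical point is making the origin/bulk costs strictly larger than $\mathfrak r_d\xi\,\mathfrak s_d(n)$, especially in $d=2$ where everything lives on the $n/\log n$ scale. There I will first try a covering argument that uses many independent routes from $y_1$ to $y_2$ together with a BK-type inequality.
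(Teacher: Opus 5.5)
There is a genuine gap in your upper bound, at the origin. You assert that the exploration process makes the cluster reach size $n^{c}$ quickly, except with probability $e^{-Cn}$ for arbitrarily large $C$, when $d\ge 3$. This is false. At time $1$ the initial frog activates a neighbour $x$, and both active frogs then stay in $\{0,x\}$ for $k$ more steps with probability $(2d)^{-2k}$. So keeping the infection of bounded size for time $k$ costs only $e^{-O(k)}$, with a \emph{fixed} rate.

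What is true, and what the paper uses, is a two-part statement:
\begin{itemize}
\item Breeding up to $K$ active frogs has a finite exponential rate $\kappa$, the confinement exponent: $\pp(|\mathrm{B}_k|\le K)\le e^{-(\kappa-\varepsilon)k}$ (Lemma~\ref{kvth}).
\item From any connected infected set of size $K$, the slices are reached within $\varepsilon n$ except with probability $e^{-sn}$, for any $s$, once $K=K(s)$ is large (Lemma~\ref{lem:t1}, built on the exploration process).
\end{itemize}
Because the first stage has a finite rate, the upper bound becomes a convolution, roughly $\sum_k e^{-\kappa k}\,e^{-\mathfrak g_d(\xi n-k)}$. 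This uses that the breeding event depends only on frogs in $\Lambda(K)$, while the endpoint waiting time is independent of them. The sum is $e^{-(\mathfrak g_d\xi+o(1))n}$ only if $\kappa>\mathfrak g_d$; otherwise trapping near the origin would be the cheaper mechanism and the rate could not be $\mathfrak g_d\xi$. So $\kappa>\mathfrak g_d$ is the quantitative heart of the $d\ge 3$ result, and nothing in your plan produces it. Your phrase ``costs strictly larger than $\mathfrak r_d\xi\,\mathfrak s_d(n)$'' compares totals. What is needed is a comparison of cost per unit delay, because the delay can be split between stages.

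The paper proves $\kappa>2\mathfrak g_d$. If $\mathcal R_n\subset A$, then some pair $x\neq y$ in $A$ has local time $l_n(\{x,y\})\ge 2n/|A|$. The tail of this local time is governed by the eigenvalue $G_d(0)+G_d(z)\le 2G_d(0)-1$, and the numerical bound $\pi_3<0.341$ closes the comparison. You also have the difficulty of the dimensions reversed. In $d=2$, origin trapping costs $e^{-cn}$, which is automatically negligible on the scale $n/\log n$; it is $d\ge 3$ where the constants must be compared.

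Your lower bound needs repair as well. The event you impose, $\mathrm{T}(0,y)\le(\mu(v)+\delta)n$ near the target, has the wrong direction. What is needed is that the infection does \emph{not} arrive early, i.e.\ $\mathrm{T}(0,\partial\Lambda(nv,n^{2/3}))\ge(\mu(v)-\varepsilon)n$ (from a lower-tail bound or the shape theorem). You also need that frogs started outside that box do not reach $nv$ before $(\mu(v)+\xi)n$. Without these, frogs activated before $(\mu(v)-\delta)n$ are unconstrained.

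There is no FKG/Harris structure to invoke here, and a speed-up event would in any case be negatively correlated with avoidance. The paper instead uses exact independence. The two events above depend only on frogs outside $\Lambda(nv,n^{2/3})$, while the event that all frogs inside avoid $nv$ for time $(\xi+\varepsilon)n$ depends only on frogs inside.

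The same point matters in your endpoint upper bound. The ``dense infected set'' must be defined through passage times restricted to avoid $\Lambda(nv,M\sqrt n)$ and $\Lambda(M\sqrt n)$, so that it is independent of the walks started in the target box. The avoidance estimate must also hold uniformly over subsets of density $1-\delta$ at the diffusive scale $M\sqrt n$.

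Finally, in the bulk a single route fails with probability only $e^{-cn}$ for a fixed $c$. This comes from restricted passage time bounds proved by renormalization plus a covering of correlated bad regions. The arbitrarily large rate comes from using $L$ disjoint parallel slabs, not from superexponential box estimates.
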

Thus, the speed of the upper tail large deviation exhibits a dimensional transition, from \(n/\log n\) in dimension \(2\) to \(n\) in dimensions \(d\geq 3\), while the rate function is linear in \(\xi\) in every dimension.

\subsection{Discussion and related results}

\subsubsection{Relation with the one-dimensional case} \label{rel1d}

As mentioned above, the upper tail large deviation for the one-dimensional frog model was established in our previous work \cite{CanKubNak26}. More precisely, we proved that there exists a constant $\mathfrak r_*\in(0,\infty)$ such that, for every $\xi>0$,
\begin{align}\label{eq:uld1}
\lim_{n\to\infty}\frac{-1}{\sqrt n}
\log\pp\bigl({\rm T}(0,n)\geq(\mu(1)+\xi)n\bigr)
=\mathfrak r_*\sqrt{\xi}.
\end{align}
The constant $\mathfrak r_*$ is characterized through a variational problem describing the optimal strategy for slowing down the infection.

A key feature of the slowdown phenomenon in one dimension is the following localization mechanism. A delay of order $n$ can be produced by slowing down the infection inside a bad interval of size of order $\sqrt n$, and this interval can be located essentially anywhere between the origin and the target. This mechanism is specific to one dimension, since every propagation route from $0$ to $n$ must cross the bad interval.

The slowdown mechanism  is fundamentally different in dimensions $d\geq2$. A localized obstruction in the intermediate region can potentially be avoided by the infection, and it is therefore not clear a priori where an optimal slowdown should occur. We compare three possible regimes: a slowdown near the starting point, during the intermediate propagation, or near the target. Our analysis shows that slowdowns in all three regimes occur on the same large deviation scale $\fsd(n)$, making it necessary to compare their rate functions. This comparison reveals that the least costly mechanism is the last one: the infection propagates essentially at its typical speed to a neighborhood of the target, while the frogs there avoid hitting the target for an additional time of order $\xi n$. Heuristically, the probability of this dominant mechanism can be expressed as
\begin{align*}
\pp\bigl({\rm T}(0,nv)\geq(\mu(v)+\xi)n\bigr)
&\approx
\pp\Bigl(t(x,nv)\geq\xi n
\text{ for all }x\in\Lambda(nv,\mathcal O(\sqrt n))\Bigr)\\
&\approx
\exp\left(-\E[|\mathcal R_{\xi n}|]\right)
\approx
\exp\left(-\mathfrak r_d\xi\mathfrak s_d(n)\right),
\end{align*}
where $\mathcal R_m$ denotes the range of a simple random walk up to time $m$. Thus, in contrast to the one dimensional case, the  rate function for higher dimensions is determined by the cost of avoiding the target in the final stage. The main challenge is therefore to show that neither of the other slowdown mechanisms can achieve a smaller cost.

\subsubsection{Main ingredients of the proof}
 We analyze separately the initial growth near the
origin, the propagation through the intermediate region, and the final hitting of the target. The final stage is essentially governed by the avoidance probability described above, whereas the first two stages require new ingredients.

For the initial stage, we first show that the infection quickly produces an
sufficiently large fixed number of active frogs and then spreads rapidly away from the origin. The main  ingredient is an exploration procedure that reveals the infection generation by generation and exploits approximate independence when exploring previously unvisited regions. This allows us to show that producing a macroscopic delay near the starting point is more costly than delaying the final hit of the target.

The intermediate stage requires a multiscale renormalization argument based on the propagation of the infection through a family of spatial slices. This is reminiscent of renormalization arguments in first-passage percolation introduced by Kesten \cite{Kes86}, but there is an important additional difficulty. In first-passage percolation, the growth is determined by a static random environment, whereas in the frog model it is driven by a dynamically evolving system of interacting random walks. Long excursions of individual walks can therefore create substantial
correlations between spatially separated regions. To overcome this difficulty, we combine the  renormalization argument with a covering procedure. This procedure groups the bad regions into a well-separated collection, which restores the approximate independence needed for the subsequent estimates. These estimates show that a slowdown during the intermediate stage is less likely than a slowdown near the target.

Together, the estimates for the three stages show that the final stage, a slowdown near the target, is the least costly of the three competing mechanisms. The organization of the proof and the key estimates for each stage are given in Subsection~\ref{subsec:orgproof}.

\subsubsection{On the lower large deviation problem}

A natural counterpart to the  problem studied in this paper is the lower tail large deviation 
$$\pp({\rm T}(0,nv) \leq (\mu(v)-\xi)n), \qquad 0<\xi<\mu(v),$$
which describes an atypically fast propagation of the infection. Its
mechanism is fundamentally different from the localized slowdown responsible
for the upper tail. Indeed, to reduce the passage time by an amount of order
$n$, the infection must propagate atypically fast over a macroscopic distance. The speedup must therefore be sustained along the route from $0$ to $nv$, rather than localized in a small region. Consequently, the lower tail probability has the linear large deviation speed $n$. This problem will be addressed in a forthcoming paper.

\subsubsection{Related results}
The one-dimensional results of B\'erard and  Ram\'{\i}rez~\cite{BerRam10}
and our previous work~\cite{CanKubNak26} were discussed above. We therefore focus here on similar problems  of related models in higher dimensions.

In dimensions $d\geq2$, Kubota~\cite[Theorem~1]{Kub19} proved that the upper tail large deviation probability decays at least stretched-exponentially: for some $c>0$ and $\alpha\in(0,1)$, it is bounded above by $\exp(-cn^\alpha)$ for all sufficiently large $n$. Theorem~\ref{theo:main} sharpens this result by identifying both the optimal decay speed and the corresponding rate function.

Upper tail large deviations  have also been studied in
first-passage percolation and related models. For first-passage percolation, Basu, Ganguly, and Sly~\cite{BasGanSly21} and Cosco and Nakajima~\cite{CosNak23} obtained upper tail large deviation estimates whose decay speed depends on the tail of the edge weight. For the chemical distance in Bernoulli bond percolation, Dembin and Nakajima~\cite{DemNak22_arXiv} established an upper tail large deviation principle with explicit speed and rate function. 
For chemical distances associated with level sets of the Gaussian free field, Peretz~\cite[Theorem~1.2]{Per25_arXiv} obtained a related result, with decay speed $n/\log n$ in dimension $d=3$ and $n$ in dimensions $d \geq 4$. These results illustrate how the geometry and dependence structure of the underlying model can strongly affect the large deviations of chemical distances.

Beyond questions concerning passage times and large deviations, the frog model has been studied extensively from many other perspectives. These include recurrence and transience, the effect of the initial particle configuration and the underlying graph, and variants in which the motion or activation rules are modified; see, e.g. \cite{TelWor99,JohRol19,HofJohJun17,MulWie20,MicRos20,KosZer17,GuoTanWei22} and the references therein.

\subsection{Organization of the paper and notation}
\label{sect:notation}

The remainder of the paper is organized as follows.
In Section~\ref{sect:prelim}, we collect preliminary estimates and present a detailed organization of the proof of Theorem~\ref{theo:main}, together with the key propositions used in the argument.
Section \ref{sec:lbae} is devoted to the lower bound via the avoidance event near the target.
The upper bound is established separately in dimensions $d\geq 3$ and $d=2$ in Sections \ref{sec:theomain1}  and \ref{sec:theomain2}, respectively.
The main ingredients needed for the upper bound are proved in the subsequent sections: Section~\ref{sec:proptln} analyzes the growth of the infection in the initial stage, while Section~\ref{sect:LB_uld3} develops the renormalization and covering arguments for the propagation through the intermediate region.\\

\noindent {\bf Notation}. We collect here notation that will be used repeatedly
throughout the paper. For $a\in \N$, set $[a]:=[1,a]\cap \Z$. For any finite set $A$, write $|A|$ for its cardinality, and recall that
\begin{align*}
	\mathfrak{g}_d:=\pp(S_n^0 \neq 0 \text{ for all } n \geq 1).
\end{align*}

Given $x,y \in \Z^d$ and a set $A\subset \Z^d$ containing $x$, define
\begin{align*}
	\T_A(x,y):=\inf \left\{\sum_{i=1}^{\ell-1} t(x_i,x_{i+1}): \ell \geq 1,\, x=x_1,\, x_2,\dots,x_{\ell-1}\in A,\, x_\ell=y \right\}.
\end{align*}
If $A\cap B=\emptyset$, then for any $x_1 \in A$, $x_2 \in B$, and $y_1,y_2 \in \Z^d$, the random variables $\T_A(x_1,y_1)$ and $\T_B(x_2,y_2)$ are independent. For subsets $A,B\subset \Z^d$, we also write
\begin{align*}
	{\rm T}(A,B):=\inf_{x\in A,\, y\in B}{\rm T}(x,y),\qquad
	\tilde{\rm T}(A):=\sup_{x,y \in A}{\rm T}(x,y).
\end{align*}
For $A\subset \Z^d$ and $n \geq 0$, let $\rmB_n^A$ be the set of sites that can be reached from $A$ within time $n$, namely
\begin{align*}
	\rmB_n^A:=\{y \in \Z^d:{\rm T}(A,\{y\}) \leq n\}.
\end{align*}
When $A=\{x\}$, we abbreviate $\rmB_n^A$ to $\rmB_n^x$; in particular, $\rmB_n:=\rmB_n^0$. For any $x,y \in \Z^d$, let $\kO_{x,y}$ denote the set of optimal paths from $x$ to $y$ for ${\rm T}(x,y)$, that is, the set of all sequences $\gamma=(x_i)_{i=0}^\ell$ in $\Z^d$ such that $x_0=x$, $x_\ell=y$, and
\begin{align*}
	{\rm T}(x,y)=\sum_{i=0}^{\ell-1} t(x_i,x_{i+1}).
\end{align*}

Finally, write $\mathbf{e}_1,\dots,\mathbf{e}_d$ for the canonical unit vectors of $\R^d$, and write $|\cdot|_1$, $|\cdot|$, and $|\cdot|_\infty$ for the $\ell^1$-, $\ell^2$-, $\ell^\infty$-norms on $\R^d$, respectively. Thus, for $x=(x_1,\dots,x_d) \in \R^d$,
\begin{align*}
	|x|_1:=|x_1|+\dots+|x_d|,\qquad
	|x|:=\sqrt{x_1^2+\dots+x_d^2},\qquad
	|x|_\infty:=\max_{1 \leq i \leq d}|x_i|.
\end{align*}
For $A\subset \Z^d$ and $r>0$, define
\begin{align*}
	\Lambda(A,r):=\left\{x\in \Z^d:\inf_{y \in A}|x-y|_\infty \leq r \right\}.
\end{align*}
When $A=\{a\}$, we write $\Lambda(a,r)$, and we further set $\Lambda(r):=\Lambda(0,r)=[-r,r]^d \cap \Z^d$. Finally, for $A_0\subset \Z^d$ and $k\in \N$, define
\begin{align*}
	\Con_{\geq k}(A_0):=\left\{A \subset \Z^d: A \text{ is connected},\, A_0 \subset A,\, |A| \geq k \right\}.
\end{align*}

\section{Preliminaries}\label{sect:prelim}

\subsection{Organization of proof and key propositions} \label{subsec:orgproof}
For simplicity, we present a sketch of the proof for the case $d\geq 3$ and $v=\eb_1$, and  write $\mu=\mu(\mathbf{e}_1)$. Our main goal is to prove that 
\be{
\pp({\rm T}(0, n\eb_1) \geq (\mu +\xi) n) = \exp \left( -(\fgd \xi +o(1))n \right).
}

\subsubsection{Lower bound via the avoidance probability}
For the lower bound, we exhibit a slowdown mechanism whose
cost matches the rate in Theorem~\ref{theo:main}. The idea is to let the
infection propagate essentially normally toward the target, but require the
frogs near the target to avoid the vertex $ne_1$ for an additional time of
order $\xi n$.
Specifically, we prove in Section~\ref{sec:avprob} that
\[
\pp\bigl(\T(0,ne_1)\geq(\mu+\xi)n\bigr)
\geq
\frac12
\pp\Bigl(
t(z,ne_1)\geq\xi n
\ \text{for all }z\in
\Lambda(ne_1,n^{2/3})\setminus\{ne_1\}
\Bigr).
\]
This avoidance probability is analyzed in Section~\ref{sec:avprob}.
By translation invariance and estimates on the range of a
simple random walk, its leading order cost is
\[
\begin{aligned}
\pp\Bigl(
t(z,0)\geq\xi n
\ \text{for all }z\in\Lambda(n^{2/3})\setminus\{0\}
\Bigr)
&\geq
\pp\bigl(t(z,0)\geq\xi n\ \text{for all }z\neq0\bigr)\\
&\geq
\exp\bigl(-(1+o(1))\E|\mathcal R_{\xi n}|\bigr)\\
&\geq
\exp\bigl(-(\mathfrak g_d\xi+o(1))n\bigr).
\end{aligned}
\]
Here $\mathcal R_{\xi n}$ is the range of a simple random walk up to time
$\xi n$, and the last inequality follows from
\[
\E|\mathcal R_{\xi n}|
=(\mathfrak g_d\xi+o(1))n.
\]
This yields the required lower bound.

\subsubsection{Upper bound by analyzing three stages of infection}
\label{subsubsect:upper_infect}

\begin{figure}
    \centering
    \includegraphics[width=0.8\linewidth]{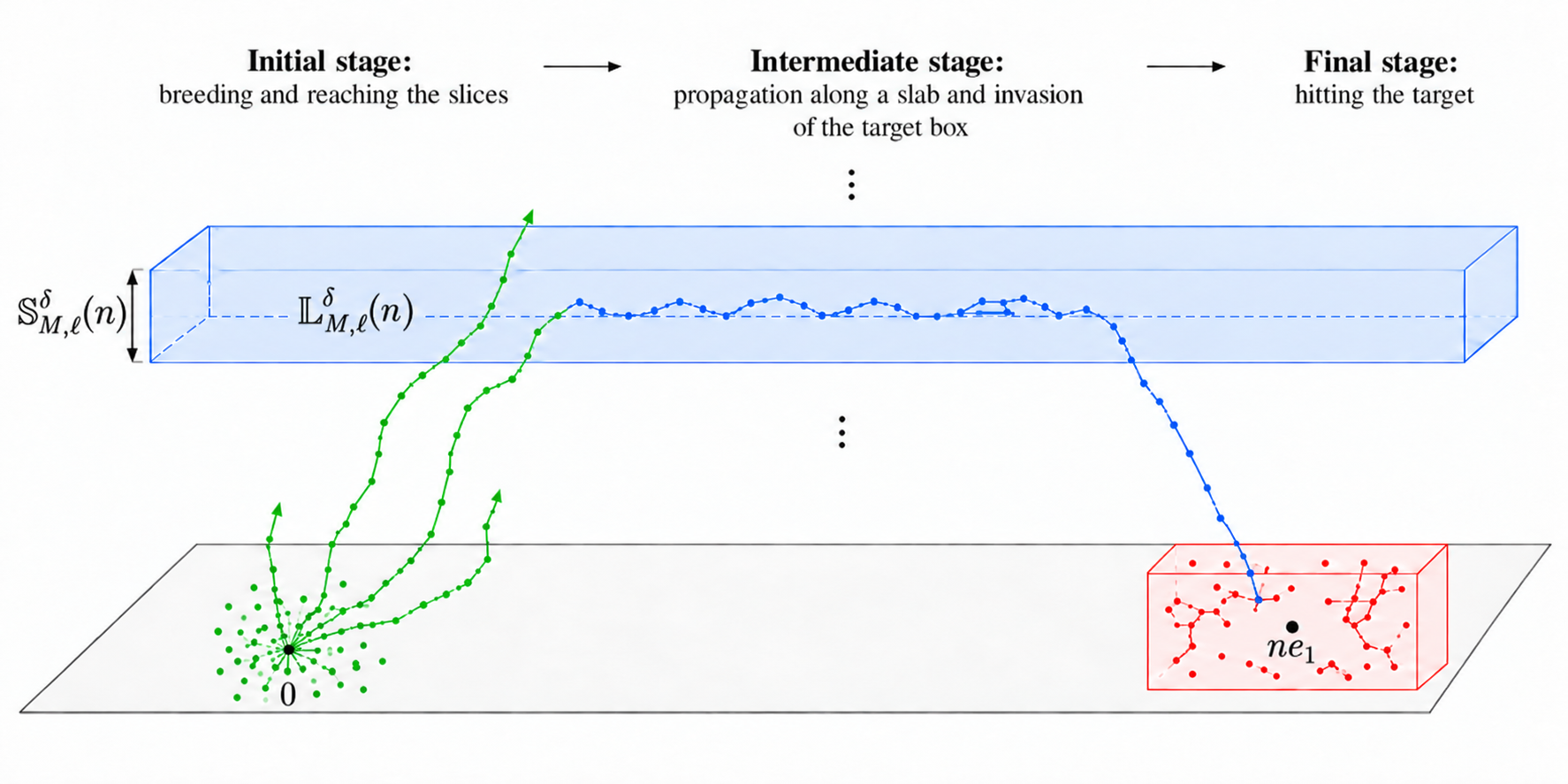}
    \caption{Three stages of infection}
    \label{fig:placeholder}
\end{figure}

For the upper bound, we separate the propagation of the infection into
three stages:
\[
  \text{reaching the slices}
  \quad\longrightarrow\quad
  \text{invading the target box}
  \quad\longrightarrow\quad
  \text{hitting the target}.
\]
Fix large integers $M,L$ and a small constant $\delta>0$, and introduce
the slices
\be{
\bL_{M,\ell}^\delta(n)
:=
[-\delta n,(1+\delta)n]
\times\{2\ell\lfloor M\sqrt n\rfloor\}
\times[-\delta n,\delta n]^{d-2}
\cap\Z^d,
\qquad 1\le |\ell|\le L.
}
Associated with each slice $\bL_{M,\ell}^\delta(n)$ is a slab
$\bS_{M,\ell}^\delta(n)$, obtained essentially by thickening the slice
by $M\sqrt n$ in the $\mathbf{e}_2$-direction. These slabs will be used to
restrict the propagation in the intermediate stage.
We define also
\[
{\rm T}_1
:=
\max_{1\le|\ell|\le L}
{\rm T}(0,\bL_{M,\ell}^\delta(n)).
\]
Let $\Lambda^{\varepsilon,\delta}_{M,L}(n)$ denote the set of vertices
in $\Lambda(n\eb_1,M\sqrt n)\setminus\{n\eb_1\}$ that can be reached
from one of the auxiliary slices, uniformly over the starting point on
that slice, within time $(\mu+\varepsilon)n$. We define the invasion
event
\[
{\rm Invad}
:=
\left\{
|\Lambda^{\varepsilon,\delta}_{M,L}(n)|
\ge
(1-\delta)|\Lambda(n\eb_1,M\sqrt n)|
\right\},
\]
and the final hitting time
\be{
{\rm T}_2
:=
\min\big\{
t(x,n\eb_1):
x\in\Lambda^{\varepsilon,\delta}_{M,L}(n)
\big\}.
}
Thus, ${\rm T}_1$ is the time needed for the infection to reach all
the auxiliary slices. On ${\rm Invad}$, one of these slices can
propagate the infection to almost all of the target box within an
additional $(\mu+\varepsilon)n$ steps, and ${\rm T}_2$ measures the
remaining time needed to hit the target. Consequently, on
${\rm Invad}$,
\[
{\rm T}(0,n\eb_1)
\le
{\rm T}_1+(\mu+\varepsilon)n+{\rm T}_2,
\]
and hence
\ben{\label{tnt12}
\pp\big({\rm T}(0,n\eb_1)\ge(\mu+\xi)n\big)
\le
\pp({\rm Invad}^c)
+
\pp\big({\rm T}_1+{\rm T}_2\ge(\xi-\varepsilon)n\big).
}
The estimates needed for this decomposition are
\ben{\label{ldt1}
\pp({\rm T}_1\ge k)
\le
\exp\big(-(2\mathfrak g_d-o_M(1))k\big),
}
\ben{\label{ldt2}
\pp({\rm T}_2\ge k)
\le
\exp\big(-(\mathfrak g_d-o_M(1))k\big),
}
and
\ben{\label{proinvad}
\pp({\rm Invad}^c)
\le
\exp(-Cn),
}
where $o_M(1)$ denotes a quantity tending to zero as $M\to\infty$,
and in \eqref{proinvad} the constant $C>0$ can be chosen arbitrarily
large by taking $L$ sufficiently large. 

Note further that $\rmT_1$ and $\rmT_2$ depend respectively on the frogs near the starting point $0$
and the ending point $n\eb_1$. Therefore, the two passage times are nearly independent,  which implies 
\begin{align*}
   \pp({\rm T}_1 + {\rm T}_2 \geq (\xi -\varepsilon )n) &\approx  \sup_{1 \leq k \leq (\xi-\varepsilon) n} \pp({\rm T}_1 \geq k) \pp({\rm T}_2 \geq (\xi-\varepsilon) n - k)\\
   & \approx \exp(-(\mathfrak{g}_d-2\varepsilon) \xi n),
\end{align*}
using \eqref{ldt1} and \eqref{ldt2}.  This together with
\eqref{tnt12} and \eqref{proinvad} yields the upper bound for large deviation probability.

The estimate \eqref{ldt2} follows from the avoidance probability
estimate and is proved in Subsection~\ref{sec:avprob}. The proofs of
\eqref{ldt1} and \eqref{proinvad} are more involved and are described
in the following two subsections.

\subsubsection{Initial stage: breeding and growth}

The control of the initial stage consists of two steps. We first show
that the infection quickly produces sufficiently many active frogs.
Once sufficiently many frogs have been activated, we show that the
infection rapidly reaches the auxiliary slices.

For the breeding step, define the confinement exponent
\[
\kappa:=
-\sup \big  \{ |A|
\limsup_{n\to\infty}
\frac1n\log\mathbb P(\mathcal R_n\subset A): \, \, 0 \in A \subset \mathbb{Z}^d, \text{ finite and connected} \big  \}.
\]
We prove in Lemma~\ref{kvth}  that
\[
\kappa>2g_d,
\]
and, for every fixed $K$,
\begin{equation}
\mathbb P(|\rmB_k|\le K)
\le
\exp\{-(\kappa-o_M(1))k\}.
\tag{3.6}
\end{equation}
Thus, preventing the infection from growing beyond a bounded number
of vertices has an exponential cost strictly larger than $2g_d$.

Once sufficiently many frogs have been activated, we need a uniform
estimate ensuring rapid spatial growth. The main ingredient is the
following proposition.

\begin{prop} \label{tln}
	The following claims hold for all sufficiently large $n\in \N$.
	\begin{itemize}
		\item [(i)] Assume that $d\geq 3$. For any $M, \e >0$ there exists $c=c(M,\e)>0$, such that
		\[ \pp( \T_{\Lambda(M \sqrt{n})} (0,x) \leq \e n \, ,\ \forall \, x \in \Lambda(\sqrt{n})  ) \geq 1- \exp(-cn).  \]
		\item[(ii)] Assume that $d\geq 3$. For any $M,C, \delta >0$, there exists $K=K(M,C,\delta)\in\N$ such that, for any connected set $A\subset \Z^d$ containing $0$ with $|A|\geq K$,
		\[  \pp(\, \mathrm{B}^A_{\delta n} \subset \Lambda(M\sqrt{n})) \leq  \exp(-Cn). \]
		\item[(iii)] Assume that $d=2$. For any $M, \e >0$ there exists $c=c(M,\e)>0$ such that,
		\[ \pp( \T_{\Lambda(M \sqrt{n})} (0,x) \leq \e n \, ,\ \forall \, x \in \Lambda(\sqrt{n})) \geq 1- \exp(-cn/\log n).  \]
		\item[(iv)] Assume that $d=2$. For any $M, \delta >0$ there exists $c=c(M,\delta)>0$, such that
		\[  \pp(\mathrm{B}_{\delta n} \subset \Lambda(M\sqrt{n})) \leq \exp(-cn). \]
	\end{itemize}
\end{prop}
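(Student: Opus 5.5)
We handle the four claims with one common scheme. First we produce many active frogs that stay inside the box. Then we use the independence of their walks to cover the target set.

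\emph{Parts (i) and (iii).} Split the time budget $\e n$ into two halves.

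\textbf{Breeding.} In time $\e n/2$, using only frogs started in $\Lambda(M\sqrt n)$, the infection activates at least $N=n^{a}$ frogs inside $\Lambda(\sqrt n)$, for some small $a>0$, except on an event of probability $e^{-cn}$. We run a generation-by-generation exploration restricted to $\Lambda(M\sqrt n)$. Each active frog performs a walk of length $\asymp n$, which stays in $\Lambda(M\sqrt n)$ with probability bounded below, say by $p_M$. Such a walk visits $\gtrsim \sqrt n$ distinct new sites in dimension $d\ge3$, and $\gtrsim \sqrt n/\log n$ sites in dimension $2$, with probability at least $1-e^{-c\sqrt n}$.

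The key point is the following. The event that the first $m$ frogs all fail to create new frogs costs $e^{-c m n}$, since each failure requires a time-$n$ walk to be confined or to be recurrent-heavy. Two consequences follow.
\begin{itemize}
\item To stall at a bounded number of frogs for a time of order $n$, one must pay $e^{-cn}$.
\item Once there are $m\ge K$ frogs, the number of frogs roughly multiplies at each stage of length $n/\log n$.
\end{itemize}
Hence polynomially many frogs are obtained within time $\e n/2$, except on an event of exponentially small probability.

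\textbf{Covering.} Given $N$ independent active frogs at time $\e n/2$, each sitting in $\Lambda(\sqrt n)$, fix $x\in\Lambda(\sqrt n)$. Each frog hits $x$ within time $\e n/2$ while staying inside $\Lambda(M\sqrt n)$ with probability at least $q_n$. Here $q_n\asymp n^{1-d/2}$ when $d\ge3$, and $q_n\asymp 1/\log n$ when $d=2$.

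This per-$x$ bound alone is too weak. To reach a failure probability of $e^{-cn}$, or $e^{-cn/\log n}$ when $d=2$, we need $Nq_n\gtrsim n$, respectively $n/\log n$. We therefore iterate: the frogs activated by the covering step are themselves used to cover again.

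A cleaner alternative is a \emph{bootstrap}. After breeding we have about $n^{d/2}$ frogs, namely almost all sites of $\Lambda(\sqrt n)$ activated by time $\e n/4$, except with probability $e^{-cn}$. We obtain this by applying the dichotomy below on dyadic scales. For a site $x$ to remain unhit, all of these $\asymp n^{d/2}$ independent frogs must miss $x$. By a union bound over $x$, the failure probability is at most
\[
n^{d/2}\exp\bigl(-c\, n^{d/2} q_n\bigr)=n^{d/2}\exp(-cn)
\]
when $d\ge3$, and $\exp(-cn/\log n)$ when $d=2$. This matches the claimed rates, including the weaker rate in $d=2$.

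The dyadic step is the main obstacle. We must show that the set activated inside $\Lambda(r)$ reaches a positive density of $\Lambda(r)$, with failure probability $e^{-cn}$ uniformly across scales. Our first attempt would be a martingale/Chernoff argument on the number of newly visited sites, conditioned on the exploration filtration. It would use the fact that a frog's walk restricted to unexplored territory is independent of the revealed randomness.

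\emph{Parts (ii) and (iv).} Suppose $|A|\ge K$ and the infection started from $A$ stays inside $\Lambda(M\sqrt n)$ up to time $\delta n$.

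First, every frog in $A$ must remain in the box up to time $\delta n$. A single walk does so with probability about $e^{-c\delta/M^2}$ per time $n$, i.e.\ with probability bounded away from $1$.

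Second, and more importantly, by part (i) or (iii) the infection activates essentially all of $\Lambda(\sqrt n)$ within time $\delta n/2$. This leaves $\gtrsim n^{d/2}$ frogs, each needing to stay in $\Lambda(M\sqrt n)$ over time $\delta n/2$, each with probability at most $1-c$. Independence gives a cost $\exp(-cn^{d/2})\le e^{-Cn}$.

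In dimension $d=2$ the same reasoning gives $\exp(-cn)$. Since we only need to reach a positive-density infected set, one uses $\gtrsim n$ frogs; this is why part (iv) needs no $K$.

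For general $A$ in part (ii), the role of $K$ is to beat the cost of the breeding phase. With $K$ independent initial frogs, the probability that none of them escapes far enough to seed growth is at most $(1-p)^{K\cdot(\text{stages})}$. Choosing $K$ large makes this smaller than $e^{-Cn}$. The remaining contributions are handled by part (i) started from a seed, together with translation invariance.
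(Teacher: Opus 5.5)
\textbf{There is a genuine gap.} Your endgame for (i) matches the paper's. Once $\asymp n^{d/2}$ independent active frogs sit in $\Lambda(\sqrt n)$, the bound $(1-cn^{1-d/2})^{(cn)^{d/2}}\le e^{-c'n}$ plus a union bound over $x$ finishes the proof. The passage from $\gtrsim n$ frogs to $(cn)^{d/2}$ frogs is an iterated covering step (Lemma~\ref{rnab}).

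The problem is that the step you call ``the main obstacle'' is the real content of the proposition, and you leave it open. Each covering step from a set $A$ fails with probability of order $e^{-c|A|}$, so it yields $e^{-cn}$ only once $|A|\gtrsim n$. Your breeding stage produces only $n^{a}$ frogs, so the bootstrap never starts at the right rate. The breeding claim is also unsupported, for two reasons:
\begin{itemize}
\item A bounded number $m$ of frogs fail to double within time $n/\log n$ with probability at least $e^{-Cn/\log n}$ (confine their walks), not $e^{-cn}$.
\item More seriously, you never control \emph{where} the newly activated frogs start. The restricted passage time in (i) requires every intermediate frog of the chain to start inside $\Lambda(M\sqrt n)$.
\end{itemize}
The missing ingredient is the paper's exploration process with fresh generations $G_k$, $|G_k|=2^k$, driven by the \emph{spatially constrained} quadrupling time $\mathscr{T}^*_{G}$, which counts only sites in $\Lambda(x,\sqrt t)$. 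This forces generation $k+1$ to lie within distance $\sqrt{\mathscr{T}^*_{G_k}}$ of generation $k$. Range estimates for a walk finding fewer than $8$ new sites of $\Lambda(\sqrt m)\setminus A$ (Lemma~\ref{pma}, two regimes) give the tails of $\mathscr{T}^*_G$ (Lemma~\ref{lte}). These give an exponential-moment bound for $\alpha\sqrt{n\mathscr{T}^*_{G_k}}$ (Lemma~\ref{lem6.4}). Markov's inequality then yields $\sum_{k<\kappa_n}\sqrt{\mathscr{T}^*_{G_k}}\le\sqrt{n/2}$ with probability $1-e^{-cn}$, i.e.\ $c_0n$ active frogs inside $\Lambda(\sqrt{n/2})$ by time $n/2$. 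A generic Chernoff bound on newly visited sites does not supply this displacement control.

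For (ii) and (iv), your reduction to (i) and (iii) cannot give the stated rates. Part (i) fails with probability $e^{-cn}$ for one fixed $c=c(M,\varepsilon)$, while (ii) demands $e^{-Cn}$ for arbitrary $C$. Your fix ``$(1-p)^{K\cdot(\text{stages})}$'' is not an argument, and you cannot multiply $K$ failure probabilities of (i), because seeds in a connected $A$ share frogs. Likewise (iii) fails with probability up to $e^{-cn/\log n}$, so deducing (iv) from it gives only $e^{-cn/\log n}$, not $e^{-cn}$.

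What the paper uses instead is transience, through the \emph{unconstrained} quadrupling time: $\mathbb{P}(\mathscr{T}_G>t)\le\exp(-ct|G|^{(d-2)/d})$. Hence for $|G|\ge K$ large, the exponential moment of $\mathscr{T}_G$ at any prescribed rate $4C/\delta$ is at most $2\exp(4C|G|^{2/d}/(c\delta))$. Over $\ell\approx\log_2 n$ generations of sizes $K2^i$ this costs only $e^{O(n^{2/d})}=e^{o(n)}$ when $d\ge3$. The $K2^\ell\gtrsim Cn$ frogs of the last generation must then all stay in $\Lambda(M\sqrt n)$ for time $\delta n/2$, which costs $e^{-aK2^\ell}$. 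No spatial constraint is needed there, because confinement is part of the event. Part (iv) is the same argument from $G_0=\{0\}$ with a small fixed rate. In $d=2$ the factor $|G|^{(d-2)/d}$ equals $1$, which is why (iv) has neither a $K$ nor an arbitrary $C$.
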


The proof of Proposition~\ref{tln} is based on a modified exploration process
designed to recover independence in the growth of the infection. At
each generation, we activate only a selected subset of the available
frogs and follow their descendants only in designated fresh regions,
while all other frogs are kept inactive. The modified process is
dominated by the original infection, but has substantially more
independence between successive generations. Showing that this process
still grows sufficiently well therefore gives the required lower bound
on the growth of the actual infection.

Proposition~\ref{tln} is then used in Lemma~\ref{lem:t1} to show that, once
sufficiently many frogs have been activated, the infection reaches all
the auxiliary slices within a prescribed linear time with overwhelming
probability. Combined with the breeding estimate, this gives the
control of $\rmT_1$. The construction and
analysis of the exploration process are given in Section~\ref{sec:proptln}.

\subsubsection{Intermediate stage: invading the target box}

The intermediate stage requires uniform control of the infection as it
propagates from the auxiliary slices toward the target box. The main
ingredient is a restricted passage time estimate on the diffusive
scale.

\begin{prop} \label{tuld}
	There exists $M \in \N$ such that, for any $\e>0$ there exists $c=c(\e,M)>0$ such that the following claims hold for all $n\in \N$ large enough, all $x \in \Z^d$  with $|x|_1 \leq \sqrt{n}$ and all $t$ with $(1+\e)\mu(x) \leq t \leq n$.
	\begin{itemize}
		\item [(i)] If $d\geq 3$, then
		$$\pp(\T_{\Lambda(M\sqrt{n})}(0,x) \geq  t ) \leq \exp(-ct).$$
		\item[(ii)]  If $d=2$, then
		$$\pp(\T_{\Lambda(M\sqrt{n})}(0,x) \geq  t ) \leq \exp(-ct/\log t).$$
	\end{itemize}
\end{prop}
Proposition~\ref{tuld} is a local estimate on the diffusive scale. To control
the restricted passage time along a slab over a macroscopic distance,
we cover a route by boxes of diameter of order $\sqrt n$, all contained
in the slab. The slab restricted passage time is then bounded by the
sum of the corresponding box restricted passage times.
Proposition~\ref{tuld} gives a large deviation estimate for each term, while
the terms have only finite range dependence. This yields an
exponentially small failure probability for each slab.
Since the auxiliary slabs are disjoint, the corresponding events are
independent. By taking sufficiently many slices, the failure
probability can therefore be made smaller than $e^{-Cn}$ for any
prescribed $C>0$. Once one slice succeeds, a further growth estimate
shows that the infection reaches most of the target box within a short
additional time, yielding the invasion estimate \eqref{proinvad}.

The proof of Proposition~\ref{tuld} is based on a renormalization and covering
argument. The renormalization produces a path of boxes connecting the
two endpoints, consisting mostly of good boxes but possibly containing
some bad ones. Passage times through the good boxes are well
controlled, so the main difficulty is to control the bad boxes, which
may occur in nearby clusters. To this end, we associate with each bad
box a radius measuring the extent of the surrounding region in which
propagation may still be anomalously slow, and introduce a covering
process that progressively merges these regions. The resulting
disjoint bad regions can then be treated collectively, allowing us to
control their total contribution to the passage time. This yields
Proposition~\ref{tuld}; the details are given in Section~\ref{sect:LB_uld3}.

\subsection{Some useful large deviation estimates}
\begin{lem} \label{dobbi}
	Let $(\kF_i)_{i\geq 0}$ be an increasing filtration with the trivial $\sigma$-field $\kF_0$, and $\tau$ a stopping time with respect to $(\kF_i)_{i\geq 1}$.
	Fix $p\in(0,1)$, and assume that $(X_i)_{i\geq 1}$ is a sequence of Bernoulli random variables such that for all $ i \in\N$, $X_i$ is $\kF_i$-measurable and  $\E(X_i \mathbf{1}( \tau \geq i)\mid \kF_{i-1}) \leq p$ a.s.
	Then for all $n \geq k \geq 1$,
	\be{
		\pp	 \left(\sum_{i=1}^n X_i \geq  k, \, \tau \geq n \right) \leq \binom{n}{k} p^k.
	}
	As a consequence, there exists a constant $c=c(p)\in (0,1)$ such that for any $n\in \N$
	\ben{ \label{consx}
		\pp	 \left(\sum_{i=1}^n X_i \geq  (1-c) n, \, \tau \geq n \right) \leq \exp(-cn).
	}
\end{lem}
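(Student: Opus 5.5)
The plan is to bound the probability by a union over the positions of the indices where $X_i=1$, and to control each such configuration through the supermartingale structure supplied by the hypothesis. Fix $n\ge k\ge 1$. On the event $\{\sum_{i=1}^n X_i\ge k,\ \tau\ge n\}$ there is a set $I\subset[n]$ with $|I|=k$ such that $X_i=1$ for all $i\in I$. Moreover, $\tau\ge n$ implies $\tau\ge i$ for every $i\le n$, so $X_i\mathbf 1(\tau\ge i)=1$ for all $i\in I$. Hence
\[
\pp\Big(\sum_{i=1}^n X_i\ge k,\ \tau\ge n\Big)\le \sum_{I\subset[n],\,|I|=k}\E\Big[\prod_{i\in I}X_i\mathbf 1(\tau\ge i)\Big].
\]
It therefore suffices to show that each term is at most $p^k$.

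Write $I=\{i_1<\dots<i_k\}$ and set $Y_j:=X_{i_j}\mathbf 1(\tau\ge i_j)$. The event $\{\tau\ge i\}=\{\tau\le i-1\}^c$ is $\kF_{i-1}$-measurable, since $\tau$ is a stopping time. Thus $Y_j$ is $\kF_{i_j}$-measurable. In particular, $\prod_{j<k}Y_j$ is $\kF_{i_{k-1}}\subset\kF_{i_k-1}$-measurable, and conditioning on $\kF_{i_k-1}$ gives
\[
\E\Big[\prod_{j\le k}Y_j\Big]=\E\Big[\prod_{j<k}Y_j\,\E(Y_k\mid\kF_{i_k-1})\Big]\le p\,\E\Big[\prod_{j<k}Y_j\Big].
\]
Iterating this down to $j=1$ gives the bound $p^k$. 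Summing over the $\binom nk$ choices of $I$ yields the first claim. (If $\tau$ is only a stopping time for $(\kF_i)_{i\ge1}$, the case $i=1$ uses $\{\tau\ge1\}$, which holds trivially or is $\kF_0$-measurable; this causes no issue.)

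For \eqref{consx}, apply the first claim with $k=\lceil(1-c)n\rceil$. The requirement $k\ge1$ holds once $c<1$ and $n\ge1$, and the event only grows when the threshold is lowered, so the bound still applies. We have
\[
\binom nk=\binom n{n-k}\le \binom{n}{\lfloor cn\rfloor}\le \Big(\frac{e}{c}\Big)^{cn}
\]
and $p^k\le p^{(1-c)n}$. Hence the probability is at most $\exp\big(n[c\log(e/c)+(1-c)\log p]\big)$. As $c\downarrow0$, the exponent rate tends to $\log p<0$. Choosing $c=c(p)\in(0,1)$ small enough that $c\log(e/c)+(1-c)\log p\le -c$ finishes the proof. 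For very small $n$ (where $\lfloor cn\rfloor=0$), the binomial coefficient equals $1$ and the bound $p^{k}\le e^{-cn}$ still holds for small $c$.

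There is no real obstacle. The only delicate point is the measurability of $\mathbf 1(\tau\ge i)$ with respect to $\kF_{i-1}$, which is what makes the tower-property iteration work.
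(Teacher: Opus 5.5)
Your proof is correct, but it is organized differently from the paper's. The paper proves the first bound by induction on $n$. It splits the event according to whether $\sum_{i=1}^{n-1}X_i\ge k$ already holds, or $\sum_{i=1}^{n-1}X_i\ge k-1$ and $X_n=1$. It then conditions once on $\kF_{n-1}$ to extract a factor $p$ from the second piece, and closes with Pascal's rule $\binom{n-1}{k}+\binom{n-1}{k-1}=\binom nk$.

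You instead take a union bound over the $\binom nk$ index sets $I$. Iterating the tower property along $i_1<\dots<i_k$, you show that $\E\bigl[\prod_{i\in I}X_i\mathbf 1(\tau\ge i)\bigr]\le p^{|I|}$ for every fixed $I$. Both arguments use the same two ingredients: the conditional bound in the hypothesis, and the fact that $\{\tau\ge i\}=\{\tau\le i-1\}^c$ is determined by time $i-1$. Both give exactly $\binom nk p^k$. Your version makes explicit a slightly stronger, reusable fact: the stopped indicators $X_i\mathbf 1(\tau\ge i)$ obey the same joint-moment bounds as i.i.d.\ Bernoulli$(p)$ variables. The induction avoids enumerating subsets and needs only a single conditioning step. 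For the consequence, you also supply the estimate $\binom{n}{\lfloor cn\rfloor}\le(e/c)^{cn}$ and an explicit choice of $c$, where the paper only refers to ``standard estimates''.

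One small correction in that last step concerns why you may replace the threshold $(1-c)n$ by $k=\lceil(1-c)n\rceil$. The right reason is that $\sum_i X_i$ is integer-valued, so $\{\sum_i X_i\ge(1-c)n\}=\{\sum_i X_i\ge k\}$ exactly. Monotonicity in the threshold points the wrong way here, since $k\ge(1-c)n$. With that justification, your ceiling choice gives $k\ge1$ for every $n\ge1$, so \eqref{consx} is covered for all $n$ directly. The paper's choice $k=\lfloor(1-c)n\rfloor$ tacitly needs $(1-c)n\ge1$.
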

\begin{proof}
	We prove by induction on $n\ge 1$ that for all $k=1,\dots,n$,
	\[
	\pp\Bigl(\sum_{i=1}^n X_i\ge k,\; \tau\ge n\Bigr) \le \binom{n}{k} p^k.
	\]
	The claim is trivial when $n=1$. Assume the claim holds for $n-1$. For $n\ge 2$ and any $1\le k\le n$, we have
	\[
	\Bigl\{\sum_{i=1}^n X_i\ge k,\; \tau\ge n\Bigr\} \subset  A\cup B,
	\]
	where
	\[
	A:=\Bigl\{\sum_{i=1}^{n-1} X_i\ge k,\; \tau\ge n\Bigr\}, \quad B:=\Bigl\{\sum_{i=1}^{n-1} X_i\ge k-1,\; X_n=1,\; \tau\ge n\Bigr\}.
	\]
	Since $A\subset \Bigl\{\sum_{i=1}^{n-1} X_i\ge k,\; \tau\ge n-1\Bigr\}$, the induction hypothesis gives
	\[
	\pp(A) \le \binom{n-1}{k}p^k.
	\]
	For $B$, using the tower property and the assumption,
	\al{
	\pp(B)&=\E\Bigl[\mathbf{1}\Bigl\{\sum_{i=1}^{n-1} X_i\ge k-1,\; \tau\ge n-1\Bigr\}\E\Bigl(X_n\mathbf{1}
	\{\tau\ge n\}\mid \kF_{n-1}\Bigr)\Bigr] \\
	&\le p\,\pp\Bigl(\sum_{i=1}^{n-1} X_i\ge k-1,\; \tau\ge n-1\Bigr).
}
	Thus, by the induction hypothesis,
	\[
	\pp(B) \le p\binom{n-1}{k-1}p^{\,k-1} = \binom{n-1}{k-1}p^k.
	\]
	Combining the bounds for $A$ and $B$, we obtain
	\[
	\pp\Bigl(\sum_{i=1}^n X_i\ge k,\; \tau\ge n\Bigr) \le \binom{n-1}{k}p^k + \binom{n-1}{k-1}p^k = \binom{n}{k}p^k,
	\]
	where the last equality follows from the well-known binomial identity.

	By taking $k=\lfloor (1-c)n\rfloor$ with  $c\in (0,1)$ chosen appropriately (depending on $p$) and applying standard estimates for the binomial coefficient, we get  \eqref{consx}. 
\end{proof}
	\begin{lem}
		 \label{lem:ldlc}
		 \begin{itemize}
		 	\item [(i)] Let $m, \ell \in \N$ and let $(X_i)_{i=1}^\ell$ be random variables such that, for every $I\subset [\ell]$ satisfying $|i-j|\ge m$ for all distinct $i,j\in I$, the family $(X_i)_{i\in I}$ is independent. Then for all $c >0$ and $h>0$,
		 		\al{
		 		\pp \left( \sum_{i=1}^\ell X_i \geq h  \right) \leq  m \exp(-c h/m)  \left(\max_{1 \leq i \leq \ell} \E[\exp(c X_i)] \right)^{\lceil \ell/m \rceil}.
		 	}
	 	\item[(ii)] 	Let $c, a, b$ be positive real numbers and $m, \ell$ be  integers. Suppose that $(X_i)_{i=1}^\ell$ are non-negative random variables satisfying $X_i$ and $X_j$ are independent when $|i-j| \geq m$, and
	 	\be{
	 		\pp(X_i \geq  t) \leq \exp (-ct)  \quad \forall \, t \in [a,b], \qquad \forall \, i \in [\ell].
	 	}
	 	Then for any $h$
	 	\be{
	 		\pp \left( \sum_{i=1}^\ell X_i \geq \ell a +h  \right) \leq m\exp(-ch/2m) 2^{\lceil \ell/m \rceil} + \ell \exp(-cb).
	 	}
		 \end{itemize}
\end{lem}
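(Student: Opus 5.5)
Part (i) reduces to the independent case by splitting the index set into the $m$ residue classes $I_r:=\{i\in[\ell]: i\equiv r \pmod m\}$, $r=1,\dots,m$. Within each class, distinct indices differ by at least $m$, so $(X_i)_{i\in I_r}$ is independent, and $|I_r|\le\lceil \ell/m\rceil$. If $\sum_{i=1}^\ell X_i\ge h$, then some class satisfies $\sum_{i\in I_r}X_i\ge h/m$. A union bound gives
\[
\pp\Bigl(\sum_{i=1}^\ell X_i\ge h\Bigr)\le \sum_{r=1}^m \pp\Bigl(\sum_{i\in I_r}X_i\ge h/m\Bigr).
\]
By the exponential Chebyshev inequality and independence within $I_r$, each term is at most $e^{-ch/m}\prod_{i\in I_r}\E[e^{cX_i}]$. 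When the $X_i$ are nonnegative (the case used later), $\E[e^{cX_i}]\ge1$, so this product is at most $\bigl(\max_i\E e^{cX_i}\bigr)^{\lceil \ell/m\rceil}$. For general real $X_i$, where $\E e^{cX_i}$ may be below $1$, I would bound the product by $\max(1,\max_i \E e^{cX_i})^{\lceil\ell/m\rceil}$. I would either note this or restrict to the regime where the maximum is at least $1$, which is the only one needed. Summing over $r$ gives the factor $m$.

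For (ii), I would truncate. Set $Y_i:=(X_i-a)_+\mathbf 1\{X_i< b\}$. On the event $\{X_i<b \ \forall i\}$, whose complement has probability at most $\ell e^{-cb}$ by a union bound and the tail hypothesis at $t=b$, we have $X_i\le a+Y_i$. Hence $\sum X_i\ge \ell a+h$ forces $\sum Y_i\ge h$. The $Y_i$ keep the same dependence structure; pairwise independence at distance $\ge m$ is stated, and I read it as mutual independence along residue classes, matching (i). Applying (i) to the $Y_i$ with constant $c/2$ then gives the factor $m e^{-ch/(2m)}$, and it remains to show $\E[e^{cY_i/2}]\le 2$.

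The key computation is this moment bound. Write
\[
\E e^{cY_i/2}=1+\int_0^\infty \tfrac c2 e^{cs/2}\pp(Y_i>s)\,ds.
\]
For $s\ge b-a$ we have $\pp(Y_i>s)=0$. For $0\le s<b-a$, $\pp(Y_i>s)\le\pp(X_i> a+s)\le e^{-c(a+s)}\le e^{-cs}$, using the hypothesis at $t=a+s\in[a,b]$. Therefore the integral is at most $\int_0^\infty \tfrac c2 e^{-cs/2}\,ds=1$, giving $\E e^{cY_i/2}\le 2$. Combining the two pieces yields the claimed bound. The main care points are the truncation at $b$, so that the tail hypothesis is only used inside $[a,b]$, and the remark on nonnegativity in (i). Neither is a serious obstacle.
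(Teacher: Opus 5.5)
Your proof is correct and follows the paper's argument. For (i) you use residue classes modulo $m$ with the exponential Chebyshev inequality. For (ii) you truncate at $a$ and $b$, bound $\E[e^{cY_i/2}]\le 2$, apply (i) with constant $c/2$, and take a union bound over $\{\exists i: X_i\ge b\}$. The only difference is that the paper truncates with $Y_i=X_i\mathbf{1}\{X_i\in[a,b]\}$ instead of your $(X_i-a)_+\mathbf{1}\{X_i<b\}$, which changes nothing.

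Your two side remarks flag small gaps that the paper passes over silently, and your fixes are the right ones. First, (i) as written needs $\max_i\E[e^{cX_i}]\ge 1$, which is automatic for nonnegative $X_i$. Second, (ii) needs mutual independence along residue classes, not just pairwise independence.
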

\begin{proof} We start by proving (i). For $t  \in [m]$, let
	\be{
		I_t =\{i: 1 \leq i \leq \ell, i \equiv t
		\, ({\rm mod } \, m) \}.
	}
	We then have $ |I_t| \leq \lceil \ell/m \rceil$, and $(X_i)_{i \in I_t}$ are independent random variables for each $t \in [ m]$. Therefore, using  the exponential Markov inequality, we get
	\al{
		\pp \left( \sum_{i=1}^\ell X_i \geq h  \right) \leq \sum_{t=1}^m \pp \left( \sum_{i \in I_t} X_i \geq h/m  \right) &\leq \sum_{t=1}^m \exp(-ch/m) \prod_{i \in I_t}\E[\exp(cX_i)] \\
		&\leq m \exp(-ch/m) \left(\max_{1 \leq i \leq \ell} \E[\exp(c X_i)] \right)^{\lceil \ell/m \rceil}.
	}
	For (ii), noting  that $X \leq a + X \1{\{X \geq a\}}$, we have
	\ben{ \label{xyilh}
		\pp \left( \sum_{i=1}^\ell X_i \geq \ell a +h  \right) \leq     \pp \left( \sum_{i=1}^\ell Y_i \geq h  \right) + \pp ( \, \exists \, i \in [\ell]: X_i \geq b),
	}
	where $Y_i = X_i \1{X_i \in [a,b]}$. By the assumption of $(X_i)_{i=1}^\ell$, we have $Y_i$ and $Y_j$ are independent if $|i-j| \geq m$ and
	\be{
		\pp(Y_i \geq t) \leq \exp(-ct) \quad \forall t>0, \quad \forall \, i \in [\ell].
	}
	In particular,  we have
	$\E [\exp(cY_i/2)] \leq 2$ for all $i \in [\ell]$.  Hence, using Part (i) we get
	\be{
	\pp \left( \sum_{i=1}^\ell Y_i \geq h  \right) \leq m\exp(-ch/2m) 2^{\lceil \ell/m \rceil}.
}
	On the other hand,  since $\pp(X_i \geq b) \leq \exp(-cb)$ for all $i \in[\ell]$,
	\be{
		\pp ( \, \exists \, i \in [\ell]: X_i \geq b) \leq \ell \exp(-cb).
	}
	Combining the last two estimates and \eqref{xyilh}, we obtain the result.
\end{proof}
\subsection{Prior estimates on simple random walks}\label{subsect:prior}
For any $A \subset  \Z^d$ and $s \in \R$, let $\kR^A_s$ be the set of vertices visited by the simple random walks starting in $A$ up to time $s$, i.e.,
\begin{align}\label{eq:RWrange}
    \kR^A_s := \bigcup_{x \in A} \{y \in \Z^d: t(x,y) \leq s \}.
\end{align}
Moreover, we simply write $\kR^x_s$ for $\kR^{\{x\}}_s$ and, in particular, set $\kR_s:=\kR_s^0$.
\begin{lem} \label{pzin}
	For any $m \geq 1$ and $A \subset \Z^d$, we have
	\be{
\pp \left( |\kR_m \cap A| \geq \tfrac{1}{2}\E[|\kR_m \cap A|] \right) \geq \frac{\E[|\kR_m \cap A|]}{12 \sup_{x \in A}\E[|\kR^x_m \cap A|]}.
}
\end{lem}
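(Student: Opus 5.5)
This is a Paley--Zygmund type estimate, so the plan is to bound the second moment of $N:=|\kR_m\cap A|$ and then apply the Paley--Zygmund inequality
\[
\pp\bigl(N\ge \tfrac12\E N\bigr)\ge \frac{(1-\tfrac12)^2(\E N)^2}{\E[N^2]}=\frac{(\E N)^2}{4\,\E[N^2]}.
\]
The claimed bound then follows once we show
\[
\E[N^2]\le 3\,\E N\cdot \sup_{x\in A}\E\bigl[|\kR^x_m\cap A|\bigr].
\]
Any constant at most $3$ in place of $3$ suffices, since $4\cdot 3=12$.

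To bound the second moment, write
\[
N^2=\sum_{y,z\in A}\mathbf 1\{t(0,y)\le m,\ t(0,z)\le m\}.
\]
On the event that both $y$ and $z$ are hit by time $m$, one of them is hit first. We split according to which one, so that $\E[N^2]$ is at most
\[
2\sum_{y\in A}\E\Bigl[\mathbf 1\{t(0,y)\le m\}\sum_{z\in A}\mathbf 1\{\text{$S^0$ visits $z$ during }[t(0,y),m]\}\Bigr].
\]
By the strong Markov property at the stopping time $t(0,y)$, the walk after this time is a simple random walk started at $y$. Its remaining horizon is at most $m$, so the inner sum has conditional expectation at most $\E|\kR^y_m\cap A|\le \sup_{x\in A}\E|\kR^x_m\cap A|$. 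Summing over $y$ gives
\[
\E[N^2]\le 2\,\E N\cdot\sup_{x\in A}\E|\kR^x_m\cap A|,
\]
which is even better than the factor $3$ needed.

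The only point needing care is the double counting in the split: the diagonal terms $y=z$ and ties are covered by the bound because the two events in the union need not be disjoint, and an overcount only weakens the inequality. Beyond this, the step is essentially routine. If one prefers not to double count, one can separate the diagonal, which contributes $\E N$. Since $y\in\kR^y_m\cap A$, we have $\sup_{x}\E|\kR^x_m\cap A|\ge 1$, so the diagonal is bounded by $\E N\cdot\sup_x\E|\kR^x_m\cap A|$. This accounts for a constant of $3$ rather than $2$. Both ways the constant $12$ follows, and the case $\E N=0$ is trivial.
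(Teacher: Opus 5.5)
Your proposal is correct and follows the paper's proof. You bound the second moment by splitting on which of the two points is hit first, apply the strong Markov property at that hitting time, and then use Paley--Zygmund; folding the diagonal into the double count gives you the sharper constant $2$ (hence $8$ in place of $12$), while your alternative of separating the diagonal reproduces exactly the paper's constant $3$.
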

\begin{proof}
	We have
		\be{
		\E[|\kR_m \cap A|] = \sum_{x \in A} \pp(t(0,x) \leq m)
	}
and
\be{
\E[|\kR_m \cap A|^2] = \E[|\kR_m \cap A|] + \sum_{\substack{x,y \in A\\x \neq y}} \pp(t(0,x) \leq m, t(0, y) \leq  m).
}
By the Markov property,
	\bea{
		 &&\pp(t(0,x) \leq m, t(0, y) \leq  m) \\
		  &\leq &
		 \sum_{i=0}^{m-1} \Bigl\{ \pp(t(0,x)=i <t(0,y) \leq m) + \pp(t(0,y)=i <t(0,x) \leq m) \Bigr\} \notag \\
		& \leq & \sum_{i=0}^{m-1} \Bigl\{ \pp(t(0,x)=i)\,\pp(t(x,y) \leq m) + \pp(t(0,y)=i)\,\pp(t(y,x) \leq m) \Bigr\}\notag \\
		& \leq  & \pp(t(0,x) \leq m)\,\pp(t(x,y) \leq m) + \pp(t(0,y) \leq m) \,\pp(t(y,x) \leq m).
		}
Therefore,
\bea{
\E[|\kR_m \cap A|^2] & \leq & \E[|\kR_m \cap A|] +2 \sum_{\substack{x,y \in A\\x \neq y}}\pp(t(0,x) \leq m) \pp(t(x,y) \leq m)\\
&\leq& \E[|\kR_m \cap A|] +2 \E[|\kR_m \cap A|] \sup_{x \in A} \E[|\kR^x_m \cap A|] \\
&\leq & 3 \E[|\kR_m \cap A|] \sup_{x \in A} \E[|\kR^x_m \cap A|].
}
Recall the Paley--Zygmund inequality: for a nonnegative random variable $X$ with finite variance,
\begin{align*}
    \pp(X \geq \E[X]/2) \geq \E[X]^2/(4 \E[X^2]).
\end{align*}
We use this inequality with $X=|\mathcal{R}_m \cap A|$ to obtain
\begin{align*}
    \pp \left( |\kR_m \cap A| \geq \frac{1}{2}\E[|\kR_m \cap A|] \right)  \geq \frac{\E[|\kR_m \cap A|]^2}{4\E[|\kR_m \cap A|^2]} \geq \frac{\E[|\kR_m \cap A|]}{12 \sup_{x \in A}\E[|\kR^x_m \cap A|]},
\end{align*}
which is the desired conclusion.
\end{proof}
\begin{lem} \label{kr}
The following hold:
	\begin{itemize}
		\item [(i)] If $d\geq 3$, then for any  $a>0$, there  exists  $c=c(a)>0$ such that
\begin{align} \label{eq: hitting time estimate d>2}
	\pp(t(0,z)\leq a|z|^2) \geq \frac{c}{|z|^{d-2}} \qquad \forall \, z \in \Z^d \setminus \{0\}.
\end{align}
Moreover, there exists a universal constant $C$ such that
\ben{ \label{uptoz}
 \pp(t(0,z)<\infty)\leq  \frac{C}{|z|^{d-2}} \qquad \forall \, z \in \Z^d \setminus \{0\}.
}
 If $d=2$,  then for any  $a>0$, there  exists  $c=c(a)>0$ such that
\begin{align} \label{eq: lower hitting time estimate d=2}
	\pp(t(0,z)\leq a|z|^2) \geq \frac{c}{\log{(1+|z|)}} \qquad \forall \, z \in \Z^d \setminus \{0\}.
\end{align}
Moreover, for any $n$  large enough and $z\in \Z^2$ with $\log n \leq |z| \leq \sqrt{n}$,
 \begin{equation}
     \pp(t(0,z)\leq n) \leq\frac{2 + \log (\tfrac{n}{|z|^2})}{\log n}.  \label{eq: hitting time estimate d=2}
 \end{equation}
 \item[(ii)] For any $a>0$, there exists $c=c(a)>0$ such that for all subsets $A \subset \Lambda(\sqrt{n}/a)$ with $|A| \geq a n^{d/2}$,
 \begin{alignat*}{2}
     &\pp \left( |\kR_{a n} \cap A| \geq cn \right) \geq c &&\qquad \text{if $d\geq 3$},\\[0.5em]
     &\pp \left( |\kR_{a n} \cap A| \geq  cn/ \log n \right) \geq c &&\qquad \text{if $d=2$}.
 \end{alignat*}
\item[(iii)] For any $\e>0$ and $\mathbf{a}=(a_1,\ldots,a_d)\in  (0,\infty)^d$,  there exists $c=c(\e,\mathbf{a})>0$ such that
\begin{alignat*}{2}
    &\min_{x \in \Lambda} \pp(|\kR^x_{\e n}\cap F_i| \geq c\sqrt{n} \,\, \forall \, i\in [2d]) \geq c && \qquad \text{if $d\geq 3$},\\[0.5em]
    &\min_{x \in \Lambda} \pp(|\kR^x_{\e n}\cap F_i| \geq c \sqrt{n}/\log n \,\, \forall \, i \in [2d]) \geq c && \qquad\text{if $d=2$},
\end{alignat*}
where   $\Lambda:=\prod_{i=1}^d[0,a_i\sqrt{n}]\cap \Z^d$ and  $(F_i)_{i\in [2d]}$ are its facets 
 \end{itemize}
\end{lem}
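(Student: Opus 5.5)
We prove the three parts of Lemma~\ref{kr} with standard simple random walk estimates: the local central limit theorem, Green's function asymptotics, and the second-moment bound of Lemma~\ref{pzin}.

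\emph{Part (i).} For the lower bounds we count visits to $z$ before time $a|z|^2$. Let $N_z=\sum_{k\le a|z|^2}\1{\{S^0_k=z\}}$.

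The local CLT gives $\E N_z\ge c|z|^{2-d}$ for $d\ge3$, using the $\asymp|z|^2$ times $k\in[a|z|^2/2,a|z|^2]$, each with $\pp(S_k=z)\asymp |z|^{-d}$. For $d=2$ the same count gives $\E N_z\ge c$.

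By the strong Markov property at $t(0,z)$,
\[
\E N_z\le \pp(t(0,z)\le a|z|^2)\,\E\Bigl[\textstyle\sum_{k\le a|z|^2}\1{\{S^0_k=0\}}\Bigr].
\]
The last expectation is bounded by the Green's function $G(0)<\infty$ when $d\ge3$, and by $C\log(1+|z|)$ when $d=2$. Dividing gives \eqref{eq: hitting time estimate d>2} and \eqref{eq: lower hitting time estimate d=2}.

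The upper bound \eqref{uptoz} follows from $\pp(t(0,z)<\infty)=G(z)/G(0)$ and $G(z)\le C|z|^{2-d}$.

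For \eqref{eq: hitting time estimate d=2} we use the same ratio with horizon $n$:
\[
\pp(t(0,z)\le n)\le \frac{\E\sum_{k\le 2n}\1{\{S_k=z\}}}{\min_{s\le n}\E\sum_{k\le n}\1{\{S_k=0\}}}.
\]
The denominator is $\frac1\pi\log n(1+o(1))$. The numerator is $\frac1\pi\log(n/|z|^2)+O(1)$ by the LCLT, summing $\frac{1}{\pi k}e^{-|z|^2/k}$. To land the explicit constant $2$ we run the renewal argument over a horizon $2n$ to absorb the leftover time, and use $|z|\ge\log n$ so that the errors are $o(1)$ relative to $\log n$. This constant bookkeeping is the only delicate point in (i).

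\emph{Part (ii).} We apply Lemma~\ref{pzin} with $m=an$ and the given $A$.

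For the first moment, every $x\in A\subset\Lambda(\sqrt n/a)$ satisfies $|x|\le C_a\sqrt n$, so $an\ge c'|x|^2$. Part (i), applied with a suitable constant, gives $\pp(t(0,x)\le an)\ge c n^{(2-d)/2}$ for $d\ge3$, or $\ge c/\log n$ for $d=2$. Summing over $|A|\ge an^{d/2}$ points gives $\E|\kR_{an}\cap A|\ge cn$, respectively $\ge cn/\log n$.

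For the second moment, note that $\sup_x\E|\kR^x_{an}|\le an+1$, and in $d=2$ it is $\le Cn/\log n$ (expected range). The ratio in Lemma~\ref{pzin} is therefore bounded below by a constant, and we conclude.

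\emph{Part (iii).} Fix a facet $F_i$, with $|F_i|\asymp n^{(d-1)/2}$. We use a two-step argument.

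First, with probability bounded below uniformly in $x\in\Lambda$, the walk reaches within distance $\eta\sqrt n$ of the centre of $F_i$ by time $\e n/2$, by Donsker.

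Second, from such a point $y$, apply Lemma~\ref{pzin} with $A=F_i$ and $m=\e n/2$:
\begin{itemize}
\item The first moment is $\sum_{w\in F_i}\pp(t(y,w)\le m)\gtrsim n^{(d-1)/2}\cdot n^{(2-d)/2}=\sqrt n$, or $\sqrt n/\log n$ in $d=2$, by part (i).
\item The second-moment quantity $\sup_{w\in F_i}\E|\kR^w_m\cap F_i|$ is $\le\sum_{u\in F_i}\pp(t(w,u)<\infty)\lesssim \sum_{r\le C\sqrt n} r^{d-2}\,r^{2-d}\asymp\sqrt n$ for $d\ge3$.
\item For $d=2$ the hitting probabilities are $O(\log(n/r^2)/\log n)$ by \eqref{eq: hitting time estimate d=2}, with the region $r\le\log n$ handled trivially, which gives $O(\sqrt n/\log n)$.
\end{itemize}
Hence each facet is hit $\ge c\sqrt n$ times (resp.\ $\ge c\sqrt n/\log n$) with positive probability.

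To get all $2d$ facets simultaneously, we chain the $2d$ steps using the Markov property over consecutive time windows of length $\e n/(4d)$. Each step succeeds with probability bounded below, so the product is a positive constant $c$, uniform in $x\in\Lambda$.

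The main obstacle is the $d=2$ case in parts (i) and (iii), where the logarithmic constants must be controlled precisely.
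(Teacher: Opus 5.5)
Your proposal is correct in outline. For (ii) and (iii) it is the paper's argument. In (ii) you use Lemma~\ref{pzin}, with the first moment from (i) and the second moment bounded by $an+1$ (resp.\ the Dvoretzky--Erd\H{o}s bound $Cn/\log n$). In (iii) the second moment is bounded by $\sum_{u\in F_i}\mathbb{P}(t(w,u)<\infty)\lesssim\sqrt n$ (resp.\ a two-dimensional hitting bound). Your Donsker pre-step in (iii) is superfluous: every $x\in\Lambda$ is within $C(\mathbf a)\sqrt n$ of every point of $F_i$, so the first moment can be bounded directly from $x$, as the paper does. In $d=2$ you reuse \eqref{eq: hitting time estimate d=2} for the second moment, where the paper splits at time $|z|^2$ and invokes Uchiyama's Corollary~1.3; the two are equivalent. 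The genuine difference is (i). The paper simply cites Alves--Machado--Popov and Uchiyama's Theorem~1.6. You derive all four bounds from the local CLT and the strong-Markov comparison $\mathbb{E}N_z\le\mathbb{P}(t(0,z)\le T)\sum_{k\le T}\mathbb{P}(S_k=0)$. This makes the lemma self-contained and shows the constant $2$ has slack, at the price of needing additive (second-order) Green's function asymptotics in $d=2$.

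A few points need tightening.

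(1) For \eqref{eq: hitting time estimate d=2}, writing the denominator as $\frac1\pi\log n\,(1+o(1))$ is not enough. A multiplicative error $\epsilon_n$ with $\epsilon_n\log n\to\infty$ swamps the additive $2/\log n$ when $|z|$ is near $\log n$, where $\log(n/|z|^2)\approx\log n$. It suffices to prove two bounds:
\begin{itemize}
\item $\sum_{k\le n}\mathbb{P}(S_k=0)\ge\frac1\pi\log n$, which holds for large $n$ since the additive constant is $\frac1\pi(\gamma+3\log 2)>0$;
\item $\sum_{k\le 2n}\mathbb{P}(S_k=z)\le\frac1\pi(\log(n/|z|^2)+2)$.
\end{itemize}
The second follows from the LCLT with error $O(k^{-2})$, summed over $k\ge|z|_1\ge\log n$, together with $\int_0^{2n}(\pi s)^{-1}e^{-|z|^2/s}\,ds=\frac1\pi E_1(|z|^2/2n)\le\frac1\pi(\log(2n/|z|^2)+e^{-1})$, where $E_1(y)=\int_y^\infty u^{-1}e^{-u}\,du$. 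This gives the additive constant $\log 2+e^{-1}+o(1)<2$, so the bookkeeping closes with room.

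(2) In (iii), the Markov property at the deterministic times $j\varepsilon n/(4d)$ restarts the walk at a point that need not lie in $\Lambda$. Your one-facet bound is only uniform over $\Lambda$, so the product argument does not apply as stated. Restart instead at the stopping time when the walk has visited $\lceil c\sqrt n\rceil$ sites of $F_i$; the walk then sits on $F_i\subset\Lambda$, which is what the paper's reduction does. Alternatively, make the one-facet bound uniform over starting points within $R\sqrt n$ of $\Lambda$ and control the displacement by \eqref{rsn}.

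(3) In the $d=2$ second moment, pairs with $|u-w|>\sqrt n$ lie outside the range of \eqref{eq: hitting time estimate d=2}, since the facet has length $a_i\sqrt n$. For $m\le n$, bound them by $\mathbb{P}(t(0,z)\le|z|^2)\le 1/\log|z|$, which is \eqref{eq: hitting time estimate d=2} with $n$ replaced by $|z|^2$.

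Finally, (i) as literally stated is false for $|z|<1/a$. For example, with $z=\mathbf e_1$ and $a<1$ the probability is $0$, because reaching $z$ takes at least $|z|_1$ steps. Your LCLT count, like every later use of (i) in the paper, only needs $|z|\ge C(a)$.
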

\begin{proof}
In this proof, the symbols $c$ and $C$ represent positive constants, with $c$ denoting small values and $C$ large ones. Their values may change from line to line. When these constants depend on specific parameters, we indicate this explicitly, e.g., by writing $c = c(a)$ to show dependence on $a$. Let us first prove part~(i).
The inequalities \eqref{eq: lower hitting time estimate d=2} and \eqref{eq: hitting time estimate d=2} are direct consequences of \cite[Theorem 1.6]{U1}.
Furthermore, the inequality \eqref{eq: hitting time estimate d>2} is obtained in \cite[Theorem~{2.2} (ii)]{AlvMacPop02} and \eqref{uptoz} is a consequence of an estimate of the Green's function (see for instance \cite[Theorem 4.3.1]{LawLim10_book}) as
\begin{align*}
    \pp(t(0,z)< \infty)
    \leq \E[|\{n \geq 1: S_n=z \}|] \leq  \frac{C}{|z|^{d-2}}.
\end{align*}
We next treat part~(ii).
If $d \geq 3$, then by \eqref{eq: hitting time estimate d>2},  for all $x \in A \subset \Lambda(\sqrt{n}/a)$,
	\be{
\pp(t(0,x) \leq a n)\geq \pp(t(0,x) \leq a^2 |x|^2/d) \geq c(a) |x|^{2-d} \geq c(a) n^{1-d/2}.
}
Thus, since $|A| \geq a n^{d/2}$,
	\ben{ \label{era}
\E[|\kR_{an} \cap A|]=\sum_{x\in A}\pp(t(0,x)\leq an) \geq c(a) |A|	n^{1-d/2} \geq c(a) n.
}
 Moreover, since $|\kR^x_{an}|\leq an+1$ always holds, we have
$\sup_{x \in A}\E[|\kR^x_{a n}\cap A|] \leq a n+1$.
Hence, part~(ii) in $d \geq 3$ follows from the last two estimates and Lemma \ref{pzin}.
In the case $d=2$, by \eqref{eq: hitting time estimate d=2}, we instead have
\[\E[|\kR_{an} \cap A|]=\sum_{x\in A}\pp(t(0,x)\leq an) \geq \frac{c(a) |A|}{\log{n}} \geq \frac{c(a) n}{\log{n}}.\]
Moreover, it is known from \cite[Theorem~1]{DvoErd51} that in $d=2$, $\E[|\kR^x_{an}\cap A|]\leq  \E[|\kR^x_{an}|]\leq C(a) n/\log n$ holds, and hence part~(ii) in $d=2$ also follows from Lemma~\ref{pzin}. 

Finally, we consider part~(iii). Using the strong Markov property at the times when the random walk hits faces $(F_i)_{i\in [2d]}$,  it suffices to show that for some positive constant $c(\e,\mathbf{a})>0$,
\ben{ \label{erbi}
\min_{x \in \Lambda} \min_{i\in[ 2d]} \pp(|\kR^x_{\e n/2d}\cap F_i|\geq c\sqrt{n}) \geq c(\e,\mathbf{a}).
}
For $d \geq 3$, similarly to  \eqref{era}, one has for each $x \in \Lambda$ and $i \in [2d]$,
\ben{ \label{earobix}
\E[|\kR^x_{\e n/2d}\cap F_i|] = \sum_{y\in F_i}\pp(t(x,y)\leq \e n/2d)\geq c(\e,\mathbf{a}) |F_i| n^{1-d/2} \geq c(\e,\mathbf{a}) \sqrt{n}.
}
By \eqref{eq: hitting time estimate d>2},  for any distinct $x,y \in \Z^d$,
\be{
\pp(t(x,y) \leq \e n/2d)\leq  \pp(t(x,y) <\infty) \leq C(\e) |x-y|^{2-d}.
}
Therefore, since $|\{y: y \in F_i, |x-y|_1=k \}| \leq  C k^{d-2}$ for any $x \in \Z^d$,
\begin{align*}
    \E[|\kR^x_{\e n/2d}\cap F_i|] \leq  1+C(\e) \sum_{\substack{y \in F_i\\x \neq y}}|x-y|^{2-d} \leq 1+C(\e) \sum_{k=1}^{a_i\sqrt{n}} k^{2-d} k^{d-2} \leq C(\e,\mathbf{a}) \sqrt{n}.
\end{align*}
Combining this with \eqref{earobix} and Lemma~\ref{pzin}, we obtain \eqref{erbi} for $d\geq 3$.
Now assume that $d=2$. By \eqref{eq: lower hitting time estimate d=2},   for all $x,y  \in \Lambda =\prod_{i \in [d]} [0,a_i \sqrt{n}] \cap \Z^d$ with $x\not=y$,
\be{
\pp(t(x,y) \leq \e n/2d) \geq \pp(t(x,y) \leq c(\e,\mathbf{a}) |x-y|^2) \geq \frac{c(\e,\mathbf{a})}{\log(1+|x-y|)} \geq \frac{c(\e,\mathbf{a})}{ \log n}.
}
Hence, we have
\begin{align}\label{eq:range_lower}
    \E[|\kR^x_{\e n/2d}\cap F_i|]
    \geq \sum_{y\in F_i \setminus \{x\}}\pp(t(x,y)\leq \e n/2d)\geq \frac{c(\e,\mathbf{a})|F_i|}{\log{n}} \geq  \frac{c(\e,\mathbf{a}) \sqrt{n}}{\log{n}}.
\end{align}
On the other hand, by \eqref{eq: hitting time estimate d=2},  for all $z \in \Z^d \setminus \{0\}$ and $m \geq 1$,
\begin{align*}
    \pp(t(0,z) \leq m)
    &=\pp(t(0,z) \leq |z|^2) + \pp(|z|^2<t(0,z) \leq m)\\
    &\leq \frac{2}{\log(1+|z|)} +  \pp(|z|^2<t(0,z) \leq m).
\end{align*}
From \cite[Corollary~1.3]{U1}, for all $m \in \N$ and $z \in \Z^d$ satisfying $|z| \geq 2$ and $|z|^2 <m$,
\begin{align*}
    \pp(|z|^2<t(0,z) \leq m)
    &\leq C \sum_{k=|z|^2}^m \frac{\log |z|}{k (\log k)^2}\\
    &\leq C \log |z| \int_{|z|^2-1}^m \frac{dt}{t (\log t)^2}
    \leq \frac{C \log (m/|z|^2)}{\log m}.
\end{align*}
Hence,
\be{
\pp(t(0,z) \leq m) \leq  \frac{2}{\log(1+|z|)} + \frac{C \log (m/|z|^2) \vee 0}{\log m},
}
where $a \vee b = \max\{a,b\}$. It follows that  for each $x \in \Lambda$,
\begin{align*}
    \E[|\kR^x_{\e n/4}\cap F_i|]
    &\leq 5^d+\sum_{\substack{y \in F_i\\|x-y| \geq 2}} \pp(t(x,y) \leq \e n/4)\\
    &\leq 5^d+C(\e,\mathbf{a}) \sum_{\substack{y \in F_i\\|x-y| \geq 2}} \left( \frac{1}{ \log(1+|x-y|)} + \frac{\log (n/|x-y|^2)\vee 0}{\log n} \right)\\
    &\leq C(\e,\mathbf{a}) \sum_{k=2}^{C(\e,\mathbf{a}) \sqrt{n}} \left( \frac{1}{\log k} + \frac{\log (n/k^2)\vee 0}{\log n} \right) \leq \frac{C(\e,\mathbf{a}) \sqrt{n}}{ \log n}.
\end{align*}
Combined with \eqref{eq:range_lower} and Lemma~\ref{pzin}, this yields  \eqref{erbi} for $d=2$.
\end{proof}
\begin{lem}  \label{rnab}
For any $\delta \in (0,1)$, there exists  $c=c(\delta)>0$ such that, for all $n$ sufficiently large, the following holds:
for any sets $A, B \subset \Z^d$ satisfying $\max_{x \in A, y \in B} |x-y|_{\infty} \leq \sqrt{n}/\delta$ and $|B| \geq \delta n^{d/2}$, we have
\begin{alignat*}{2}
    &\pp \left( |\kR^A_n \cap B| \geq \min \{c n|A|, (1-\delta)|B|\} \right) \geq 1-\exp(-c|A|) &&\qquad \text{if $d\geq 3$},\\[0.5em]
    &\pp \left( |\kR^A_n \cap B| \geq \min \{c n|A|/\log n, (1-\delta)|B|\} \right) \geq 1-\exp(-c|A|) &&\qquad \text{if $d=2$}.
\end{alignat*}
\end{lem}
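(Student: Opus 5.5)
The plan is to treat the walks started from the points of $A$ one at a time. This produces a sequence of Bernoulli trials, each succeeding with probability bounded below uniformly in the past, as long as many points of $B$ remain unvisited. Lemma~\ref{dobbi} or a standard martingale/Chernoff comparison then gives the tail bound $\exp(-c|A|)$.

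\textbf{Setup.} Enumerate $A=\{x_1,\dots,x_k\}$ with $k=|A|$, and let $\kF_i$ be the $\sigma$-field generated by the walks $S^{x_1},\dots,S^{x_i}$ up to time $n$. Set $U_i:=B\setminus\bigcup_{j\le i}\kR^{x_j}_n$, the unvisited part of $B$ after $i$ walks. Let $\tau$ be the first $i$ with $|U_i|<\delta|B|$. Once $\tau$ occurs we already have $|\kR^A_n\cap B|\geq(1-\delta)|B|$, so it suffices to work on the event $\{\tau> k\}$. For $i<\tau$ we have $|U_{i-1}|\ge \delta|B|\ge\delta^2 n^{d/2}$, and $U_{i-1}\subset \Lambda(x_i,\sqrt n/\delta)$ by the diameter hypothesis.

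\textbf{One-step estimate.} The walk $S^{x_i}$ is independent of $\kF_{i-1}$. Applying Lemma~\ref{kr}(ii) after translating by $x_i$, with $a$ chosen small in terms of $\delta$ (say $a=\delta^2$, using monotonicity $\kR_{an}\subset\kR_n$ for $a\le 1$), gives a constant $c_1=c_1(\delta)>0$ with
\[
\pp\bigl(|\kR^{x_i}_n\cap U_{i-1}|\geq c_1 n \,\big|\, \kF_{i-1}\bigr)\geq c_1 \quad\text{on } \{\tau>i-1\},
\]
and with $c_1 n/\log n$ in place of $c_1n$ when $d=2$. Define $X_i:=\mathbf 1\{|\kR^{x_i}_n\cap U_{i-1}|< c_1 n,\ \tau\ge i\}$. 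Then $X_i$ is $\kF_i$-measurable and $\E(X_i\mid\kF_{i-1})\le 1-c_1=:p$.

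\textbf{Conclusion via Lemma~\ref{dobbi}.} By \eqref{consx} (with $\tau$ replaced by $\tau\wedge (k+1)$ as needed), outside an event of probability $\exp(-c_2k)$ we have either $\tau\le k$, or at least $c_2 k$ indices $i$ with $X_i=0$. In the second case each such walk adds at least $c_1 n$ new points of $B$, because the sets $\kR^{x_i}_n\cap U_{i-1}$ are disjoint by construction. Hence $|\kR^A_n\cap B|\ge c_1c_2\, nk$. Taking $c=\min(c_1c_2,c_2)$ gives both claims, the $d=2$ case being identical with the $\log n$ factor.

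\textbf{Main obstacle.} The key point is uniformity in Lemma~\ref{kr}(ii): the target set $U_{i-1}$ depends on the past, but it only needs cardinality at least $a n^{d/2}$ and to lie in the prescribed box. Since the lemma is uniform over all such sets, conditioning on $\kF_{i-1}$ is harmless. One minor bookkeeping step remains: $\tau$ as defined is a stopping time with respect to $(\kF_i)$, and the event $\{\tau\ge n\}$ in Lemma~\ref{dobbi} must be matched with $\{\tau>k\}$ and $k$ trials.
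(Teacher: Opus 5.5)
Your proposal is correct and is essentially the paper's own proof. Both arguments explore the walks from $A$ one at a time and apply Lemma~\ref{kr}(ii) uniformly over the unvisited part of $B$ of size at least $\delta|B|$. Both then stop once fewer than $\delta|B|$ points of $B$ remain unvisited, and conclude with Lemma~\ref{dobbi} together with the disjointness of the sets $\kR^{x_i}_n\cap U_{i-1}$.
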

\begin{proof}
We only prove here the case $d\geq 3$, since the proof for $d=2$ is similar. By Lemma~\ref{kr}~(ii), there exists a constant  $c_0=c_0(\delta) \in (0,1)$ such that $\inf_{x\in A}{\inf_{\Gamma\subset B,|\Gamma| \geq \delta |B|}}\pp( | \kR^{x}_{n} \cap  \Gamma| \geq c_0 n )\geq c_0,$ or equivalently
\ben{\label{pxi=1}
\sup_{x\in A}\sup_{\Gamma\subset B, |\Gamma| \geq \delta |B|}\pp( | \kR^{x}_{n} \cap  \Gamma| < c_0 n )< 1-c_0.
}
We enumerate the elements of $A$ as $x_1,\dots,x_\ell$ with $\ell=|A|$, and  define for $i=1,\ldots \ell $,
\[ X_i := \mathbf{1}\left\{ |\kR^{x_i}_{n} \cap (B\backslash \left(\cup_{j<i } \, \kR^{x_j}_{n}\right))| < c_0 n \right\}. \]
Set $\kF_i:=\sigma(S^{x_1}_\cdot,\dots,S^{x_i}_\cdot)$ for $i\geq 1$.
It is clear that $X_i$ is $\kF_i$-measurable.
We now define the stopping time
\[ \tau := \inf \left \{ i \leq \ell: \Big|B\setminus \Big(\cup_{j \leq i} \kR^{x_i}_n\Big)\Big|  < \delta|B| \right \}. \]
By \eqref{pxi=1}, for all $i \leq \ell$
\be{
\E[X_i \mathbf{1}\{\tau > i-1\} \mid \kF_{i-1}] \leq 1-c_0 \, \textrm{ a.s.}
}
Hence, by Lemma~\ref{dobbi}, we can find $c_1=c_1(c_0)>0$ such that
\ben{ \label{sxi}
\pp \left( \sum_{i=1}^\ell X_i \geq (1-c_1)\ell,\, \tau \geq \ell \right) \leq \exp(-c_1 \ell).
}
If $\tau<\ell$, then
\begin{align*}
    |\kR^A_n \cap B|=\Bigl|\Bigl(\cup_{j \leq \ell} \kR^{x_i}_n\Bigr) \cap B\Bigr|
    \geq \Bigl|\Bigl( \cup_{j \leq \tau} \kR^{x_i}_n\Bigr) \cap B \Bigr|
    \geq (1- \delta)|B|.
\end{align*}
On the other hand, if $\sum_{i=1}^\ell X_i<(1-c_1)\ell$, or equivalently $\sum_{i=1}^\ell (1-X_i)>c_1 \ell $, then
\begin{align*}
    | \kR^A_n \cap B|=\Big| \Big(\cup_{j \leq \ell} \kR^{x_i}_n\Big) \cap B\Big|>c_0 n\cdot c_1 \ell=cn|A|,
\end{align*}
with $c:=c_0c_1$.
These observations, combined with \eqref{sxi} and $|A|=\ell$, prove that
\begin{align*}
    &\pp \left( |\kR^A_n \cap B|<\min \{c n|A|, (1-\delta)|B|\} \right)\\
    &\leq \pp \left( |\kR^A_n \cap B|<cn|A|,\,\tau \geq \ell \right)+\pp\left( |\kR^A_n \cap B|<(1-\delta)|B|,\,\tau<\ell \right)\\
    &\leq \pp \left( \sum_{i=1}^\ell X_i \geq (1-c_1)\ell,\,\tau \geq \ell \right)
    \leq \exp(-c_1\ell) \leq \exp(-c|A|),
\end{align*}
as $c<c_1$, and the lemma follows.
\end{proof}

\section{Lower Bound on the Large Deviation Probability via the Avoidance Event} \label{sec:lbae}

In this section, we assume that  $v=\eb_1$, since the proof for a general unit vector  $v \in  \R^d$ is essentially the same and omitted. We  aim to prove the lower bound
\be{
\pp ({\rm T}(0, n\eb_1) \geq (\mu+\xi)n) \geq \exp (-\fsd(n)(\frd \xi +o(1))),
}
where $\mu=\mu(\eb_1)$, or equivalently
\begin{align}
	\label{lr3d}
	&\limsup_{n \rightarrow \infty} \frac{-1}{\fsd(n)} \log \pp({\rm T}(0, n\eb_1) \geq (\mu+\xi)n) \leq \mathfrak{r}_d \xi.
\end{align}
As discussed in the subsection \ref{subsec:orgproof}, we shall give a  lower bound on the large deviation probability via the avoidance event, i.e. by  the probability that the endpoint $n \eb_1$ is not visited by any particle by time $\xi n$. Moreover, we show that this probability decays as $\exp(-(\mathfrak{r}_d \xi +o(1))\mathfrak{s}_d(n))$

For the latter use in the  upper bound, we present a modified version of avoidance probability. Roughly speaking, with $M$ a large constant, the ending vertex $n \eb_1$ is ignored within $\xi n$ steps by  most particles in $\Lambda(n \eb_1, M\sqrt{n})$  with probability  $\exp(-(\mathfrak{r}_d \xi +o_M(1))\mathfrak{s}_d(n)))$. More precisely,    given $\xi >0$, $n \in \N$ and $A\subset \Z^d$, we define the avoidance rate
\begin{align*}
	r_{A,n} (\xi):=\frac{-1}{\mathfrak{s}_d(n)} \log\pp(t(x,0)> \xi n \quad \forall x \in A).
\end{align*}
For $M,n \in \N$ and $\delta \in (0,1)$, define
\begin{align}\label{eq:kA_def}
	\kA_M^\delta(n):=\bigl\{ A\subset\Z^d \setminus \{0\}: |A\cap  \Lambda(M\sqrt{n})| \geq (1-\delta) |\Lambda(M\sqrt{n})| \bigr\}.
\end{align}
\begin{lem} [Avoidance rate is linear] \label{Step73d} For any $\xi >0$, it holds that
	\ben{ \label{step7}
		\limsup_{M\rightarrow \infty} \limsup_{\delta \rightarrow 0} \limsup_{n \to \infty} \sup_{A \in \kA_M^\delta(n)} |r_{A,n}(\xi) - \mathfrak{r}_d \xi | = 0.
	}
\end{lem}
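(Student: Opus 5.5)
Since the walks are independent,
\[
\pp(t(x,0)>\xi n\ \forall x\in A)=\prod_{x\in A}\bigl(1-\pp(t(x,0)\le \xi n)\bigr),
\]
and the plan is to compare the logarithm of this product with $\sum_{x}\pp(t(x,0)\le\xi n)$. The key identity is obtained by time reversal:
\[
\sum_{x\in\Z^d}\pp(t(x,0)\le m)=\sum_{x}\pp(t(0,x)\le m)=\E|\kR_m|,
\]
using the symmetry $\pp(t(x,0)\le m)=\pp(t(0,-x)\le m)$ together with the relation between first hitting and reversed paths. In fact $\pp(t(x,0)\le m)=\pp(0\in \kR^x_m)=\pp(-x\in\kR_m)$ by translation and reflection invariance. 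It is classical that $\E|\kR_m|=\mathfrak g_d m(1+o(1))$ for $d\ge3$ and $\E|\kR_m|=\pi m/\log m\,(1+o(1))$ for $d=2$ (Dvoretzky--Erdős). With $m=\xi n$ this gives $\E|\kR_{\xi n}|=(\frd\xi+o(1))\fsd(n)$, because $\log(\xi n)\sim\log n$.

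\emph{Upper bound on $r_{A,n}$.} Since $A\subset\Z^d\setminus\{0\}$, we have $\pp(t(x,0)\le\xi n)\le 1-c$ for every $x\neq0$ with a uniform $c>0$: for $x\ne0$ the walk can step away first and, for $n\ge1$, miss $0$ with positive probability (if $x$ is a neighbour, it steps away with probability at least $1/(2d)$ and then escapes). Using $-\log(1-p)\le C_c\,p$ for $p\le 1-c$ is too lossy, so the set is split. For $|x|_\infty\ge R$ we have $p_x:=\pp(t(x,0)\le\xi n)\le\pp(t(x,0)<\infty)\le C R^{2-d}$ by \eqref{uptoz} for $d\ge3$. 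For $d=2$ the bound is $\le (2+\log(n/|x|^2))/\log n$ by \eqref{eq: hitting time estimate d=2}, which is small when $|x|\ge n^{\eta}$, $\eta$ near $1/2$. On that region $-\log(1-p_x)\le p_x(1+o_R(1))$. The near region contains at most $C R^d$ sites, respectively $n^{2\eta}$ sites in $d=2$, each contributing at most $\log(1/c)$. This is $o(\fsd(n))$ for fixed $R$ in $d\ge3$. In $d=2$ it is $n^{2\eta}\ll n/\log n$ provided $\eta<1/2$, and then $|x|\ge n^{\eta}$ gives $p_x\le (2+(1-2\eta)\log n)/\log n$, which is not small. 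The fix in $d=2$ is to take $\eta=1/2-\eta'$ with $\eta'$ small but fixed, accept the bound $\sup_x p_x\le 1-2\eta+o(1)$ there, and handle the loss differently. The cleanest route for the upper bound on $r$ is to dominate directly by $A=\Z^d\setminus\{0\}$ and use
\[
\pp(t(x,0)>\xi n\ \forall x\neq0)\ \ge\ \exp\bigl(-(1+o(1))\E|\kR_{\xi n}|\bigr).
\]
This is the classical avoidance/range lower bound, obtained by the FKG-type (Harris) inequality for a product of decreasing events, or directly from the Poisson-frog comparison $\prod(1-p_x)\ge \exp(-\sum p_x-\sum p_x^2/(1-p_x))$. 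The error $\sum p_x^2$ is $o(\E|\kR_{\xi n}|)$: in $d\ge3$, $\sum p_x^2\le\sum_{|x|\le R}1+CR^{2-d}\sum p_x$. In $d=2$, $p_x\to0$ except on $|x|\le n^{1/2-o(1)}$, where I would instead use that the mass $\sum_{|x|\le n^{1/2-\eta'}} p_x\le n^{1-2\eta'}=o(n/\log n)$, so even the crude bound $\log(1/c)$ per site there is negligible. The bound $p_x\le 1-c$ is what makes these near-region terms harmless. Monotonicity in $A$ then gives $r_{A,n}(\xi)\le \frd\xi+o(1)$ uniformly.

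\emph{Lower bound on $r_{A,n}$.} Here $-\log(1-p)\ge p$ gives $\fsd(n)\,r_{A,n}\ge\sum_{x\in A}p_x\ge \sum_{x\in\Lambda(M\sqrt n)}p_x-\sum_{x\in\Lambda(M\sqrt n)\setminus A}p_x-\sum_{x\notin\Lambda(M\sqrt n)}p_x$. The last sum is at most $\E|\kR_{\xi n}\setminus\Lambda(M\sqrt n)|\le \xi n\,\pp(\max_{k\le\xi n}|S_k|_\infty> M\sqrt n)=o_M(1)\,\fsd(n)$ for $d\ge3$. In $d=2$ one uses that the range outside the box is at most $(\xi n)\cdot$ this exit probability, while its conditional expected size still scales like $n/\log n$; multiplying by the Gaussian tail $e^{-cM^2}$ gives $o_M(1)\fsd(n)$. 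The middle term is the main obstacle: the at most $\delta|\Lambda(M\sqrt n)|$ missing sites could be the ones with largest $p_x$, namely those near $0$. Bounding by rearrangement, this sum is at most $\sum_{|x|_\infty\le r}p_x$ with $(2r)^d=\delta(2M)^d n^{d/2}$, that is, $r=\delta^{1/d}M\sqrt n$. This quantity is $\le\E|\kR_{\xi n}\cap\Lambda(r)|$. For $d\ge3$ it is $\le C\sum_{|x|\le r}|x|^{2-d}\le Cr^2=C\delta^{2/d}M^2 n$, which vanishes as $\delta\to0$ for fixed $M$. For $d=2$ I would use \eqref{eq: hitting time estimate d=2} to get $\sum_{|x|\le r}p_x\le C r^2(1+\log(n/r^2))/\log n\le C\delta M^2(1+\log(1/\delta))\,n/\log n$, which also vanishes as $\delta\to0$. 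Taking the limits $n\to\infty$, then $\delta\to0$, then $M\to\infty$ completes the proof.
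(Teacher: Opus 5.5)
Your overall scheme is sound: an exact product over the independent walks, the reversal identity $\sum_{x}\pp(t(x,0)\le m)=\E|\kR_m|$, monotonicity in $A$ for the upper bound on $r_{A,n}$, and $-\log(1-p)\ge p$ with ``full range minus outside minus missing sites'' for the lower bound. No FKG is needed, since independence already makes the product exact. However, one step is false as stated.

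You claim $p_x=\pp(t(x,0)\le \xi n)\le 1-c$ uniformly in $x\neq 0$ and $n$, and charge each near site $\log(1/c)$. For $d\ge 3$ this is fine ($c=\mathfrak{g}_d$, by transience). In $d=2$, recurrence gives $1-p_{\eb_1}=\pp(t(\eb_1,0)>\xi n)\sim \pi/\log n\to 0$, so no such $c$ exists; ``then escapes'' is exactly what fails. The repair is short. Use the strong Markov property at the first visit to a neighbour of $0$ (all neighbours are equivalent by symmetry) and monotonicity of $m\mapsto \pp(t(\eb_1,0)\le m)$. This gives $1-p_x\ge 1-p_{\eb_1}\ge c/\log n$ for every $x\neq 0$. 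Hence each site with $|x|\le n^{1/2-\eta'}$ costs $O(\log\log n)$, and this zone contributes $O(n^{1-2\eta'}\log\log n)=o(n/\log n)$. On its complement $p_x\le 2\eta'+o(1)$, so $-\log(1-p)\le p/(1-p)$ loses only a factor $1+O(\eta')$; send $\eta'\to 0$ after $n\to\infty$. This is why the paper uses three zones in $d=2$, with $1-p_z\ge 1/\log n$ on $\log n\le |z|<\sqrt{n}/\log n$.

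A second, smaller hole, also in $d=2$: your bound on $\sum_{x\notin\Lambda(M\sqrt n)}p_x$ rests on the unproved assertion that the range still has expected size $O(n/\log n)$ on the exit event $E=\{\max_{k\le \xi n}|S^0_k|_\infty>M\sqrt n\}$. It is true and quick to prove. The second-moment computation in the proof of Lemma~\ref{pzin} with $A=\Z^2$ gives $\E|\kR_{\xi n}|^2\le 3(\E|\kR_{\xi n}|)^2$. So by Cauchy--Schwarz and \eqref{rsn}, $\E[|\kR_{\xi n}|\mathbf{1}_E]\le C\,(n/\log n)\,\pp(E)^{1/2}\le C(n/\log n)e^{-M^2/C}$.

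With these repairs the argument is complete, and it differs from the paper's in useful ways. The paper first rescales to $\xi=1$; you carry $\xi$ through, which is harmless since $\log(\xi n)\sim\log n$.

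In the upper bound on $r_{A,n}$, the paper handles $|z|<\log n$ by an explicit path construction giving $\pp(t(0,z)\ge n)\ge e^{-C\sqrt n}$. This needs no transience and works in every dimension, but it is so crude that the near zone must be tiny. Your bound $1-p_x\ge \mathfrak{g}_d$, together with the $\sum p_x^2/(1-p_x)$ error term, lets you work with a fixed radius $R$ in $d\ge 3$.

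In the lower bound, the paper imports $\E|\kR_\infty\cap B|\le C'|B|^{2/d}$. Your rearrangement argument derives the same $C|B|^{2/d}$ directly from Green's function decay. Apply it to the radially decreasing majorant $C|x|^{2-d}$ (resp.\ the majorant from Lemma~\ref{kr}~(i)), since $p_x$ itself is not exactly maximised on boxes. The imported bound is useless in $d=2$ because $\kR_\infty=\Z^2$. Your version also yields the $O\bigl(\delta M^2\log(1/\delta)\bigr)\,n/\log n$ estimate needed there, a case the paper dismisses as ``the same'' and omits.
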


\subsection{Proof of the lower bound~\eqref{lr3d} assuming Lemma~\ref{Step73d}} \label{polwb}
\begin{proof}
Fix $\xi>0$ and let $\e>0$ be a small positive constant.
Then, define for any $n \in \N$,
\begin{align*}
    &\kE_{\text{box}}(n):=\bigl\{ {\rm T}(0,\partial \Lambda(n\eb_1,n^{2/3})) \geq (\mu-\e) n \bigr\},\\
    &\kE_{\text{in}}(n):=\bigl\{ t(x, n\eb_1) \geq (\xi+\e) n \quad \forall x \in \Lambda(n\eb_1,n^{2/3}) \setminus \{n\eb_1 \} \bigr\},\\
    &\kE_{\text{out}}(n):=\bigl\{ t(x, n\eb_1) \geq (\mu +\xi)n \quad \forall\ x \in \Z^d \setminus \Lambda(n\eb_1,n^{2/3}) \bigr\},
\end{align*}
where for $A \subset \Z^d$, $\partial A$ is the inner boundary of $A$, i.e.,
	\begin{align*}
		\partial A:=\{ y \in A:\exists x \not \in A: |y-x|_1 =1 \}.
	\end{align*}
	Note that the frog that finally hits  $n\eb_1$ in the optimal path of ${\rm T}(0,n\eb_1)$ either starts inside $\Lambda(n\eb_1,n^{2/3})$ or outside $\Lambda(n\eb_1,n^{2/3})$. 
	In the case where the finally approaching frog starts at $x_0 \in \Lambda(n\eb_1,n^{2/3})$, we have on $\kE_{\text{box}} \cap \kE_{\text{in}}$,
	\begin{align*}
		{\rm T}(0,n\eb_1)
		\geq {\rm T}(0,\Lambda(n\eb_1,n^{2/3}))+t(x_0,n\eb_1)
		\geq (\mu-\varepsilon)n+(\xi+\varepsilon)n=(\mu+\xi)n.
	\end{align*}
	On the other hand, in the case where the finally approaching frog starts at $x_0 \not\in \Lambda(n\eb_1,n^{2/3})$, ${\rm T}(0,n\eb_1) \geq t(x_0,n\eb_1) \geq (\mu+\xi)n$ holds on $\kE_{\text{out}}$.
	Hence, ${\rm T}(0,n\eb_1) \geq (\mu+\xi)n$ on $\kE_{\text{box}} \cap \kE_{\text{in}} \cap \kE_{\text{out}}$.
	Since $\varepsilon$ is arbitrary, \eqref{lr3d} follows once we prove
	\begin{align}\label{eq:upper_goal}
\limsup_{n \to \infty}\frac{-1}{\fsd(n)}\log\pp(\kE_{\text{box}}(n) \cap \kE_{\text{in}}(n) \cap \kE_{\text{out}}(n))
\leq \mathfrak{r}_d\xi+2\varepsilon.
\end{align}
Let us first estimate $\pp(\mathcal{E}_{\text{box}}(n)^c)$.
	To this end, we recall the result obtained in \cite[Theorem~{1.2}]{Kub19}:
	given $\delta>0$, there exist positive constants $\alpha$ (which is independent of $\delta$) and $c=c(\delta)$ such that for all $y \in \Z^d$,
	\begin{align*}
		\pp({\rm T}(0,y) \leq (1-\delta)\mu(y)) \leq \exp(-c |y|_1^\alpha).
	\end{align*}
Moreover, for all $y \in \partial\Lambda(n\eb_1,n^{2/3})$, one has 
$$\mu(y) \geq \mu(n\eb_1)-\mu(n\eb_1-y) \geq n\mu -Cn^{2/3},$$
with $C$ a universal constant.  Therefore,  for  $n$ large enough,
\be{
\pp({\rm T} (0,y) \leq n(\mu-\e) ) \leq \pp({\rm T} (0,y) \leq (1-\delta) \mu(y)) \leq \exp(-c|y|_1^\alpha),
}
where $\delta=\delta(\e)$ and $c=c(\delta)$ are positive constants. Hence,
	\begin{align*}
		\pp(\kE_{\text{box}}(n)^c) \leq \sum_{y \in \partial\Lambda(n\eb_1,n^{2/3})} \exp(-c |y|_1^\alpha) \leq \frac{1}{4}.
	\end{align*}
	Next, since $\Lambda(n^{2/3}) \setminus \{0\} \in \kA_M^\delta(n)$ for all $n$ sufficiently large, Lemma~\ref{Step73d} shows that
	\begin{align*}
		\pp(\kE_{\text{in}}(n))
		= \pp\Bigl( t(x,0) \geq (\xi+\e) n \quad \forall x \in \Lambda(n^{2/3}) \setminus \{0\} \Bigr)
		\geq \exp(-(\mathfrak{r}_d \xi+2\e)\fsd(n)).
	\end{align*}
	Finally, we consider $\pp(\kE_{\text{out}}(n)^c)$. By \cite[Proposition~{2.1.2}~(b)]{LawLim10_book},
	there exists a  constant $C>0$ such that for all $n \in \N$ and $s>0$,
    \begin{align} \label{rsn}
		\pp\Bigl( \max_{0 \leq i \leq n}|S_i^0| \geq s\sqrt{n} \Bigr) \leq Ce^{-s^2/C}.
	\end{align}
	This tells us that for all $n$ sufficiently large,
	\begin{align*}
		&\pp(\kE_{\text{out}}(n)^c)
		\leq \sum_{x \not\in \Lambda(n^{2/3})} \pp(t(0,x)<(\mu+\xi)n)\\
		&\leq \sum_{x \not\in \Lambda(n^{2/3})} \pp \left( \max _{0 \leq i \leq (\mu +\xi)n } |S^0_i| \geq |x|_1\right)\\
		&\leq \sum_{x \not \in\Lambda(n^{2/3})} C\exp\biggl( -\frac{\beta|x|_1^2}{(\mu+\xi)n} \biggr)
		\leq C\sum_{k\geq n^{2/3}} (2k+1)^{d} \exp\Bigl( -\frac{ k^2}{C(\mu+ \xi)n} \Bigr)
		\leq \frac{1}{4}.
	\end{align*}
	Consequently, noting that $\kE_{\text{in}}(n)$ and $\kE_{\text{box}}(n) \cap \kE_{\text{out}}(n)$ are independent, one has for all $n$ sufficiently large,
	\begin{align*}
		\pp(\kE_{\text{box}}(n) \cap \kE_{\text{in}}(n) \cap \kE_{\text{out}}(n))
		&= \pp(\kE_{\text{in}}(n))\,\pp(\kE_{\text{box}}(n) \cap \kE_{\text{out}}(n))\\
		&\geq \pp(\kE_{\text{in}}(n)) \bigl(1-\pp(\kE_{\text{box}}(n)^c)-\pp(\kE_{\text{out}}(n)^c) \bigr)\\
		&\geq \frac{1}{2}\pp(\kE_{\text{in}}(n)) \geq \frac{1}{2} \exp(-(\mathfrak{r}_d \xi+2\eps)\fsd(n)),
	\end{align*}
	and \eqref{eq:upper_goal} immediately follows.
\end{proof}

\subsection{Estimates on the avoidance probabilities: Proof of Lemma \ref{Step73d}}  \label{sec:avprob}

We first recall a well-known result (see \cite{DvoErd51}) on  the range of a simple  random walk: 
	\ben{ \label{exran}
		\lim_{n \rightarrow \infty} \frac{\E[|\mathcal{R}_n]|}{\mathfrak{s}_d(n)} = \mathfrak{r}_d,
	}
	where 
	\be{
	\mathfrak{s}_d(n) =
    \begin{cases}
        n& (d\geq 3)\\
        \frac{n}{\log n}& (d =2),
    \end{cases}, \qquad 	\mathfrak{r}_d =\begin{cases}
         \mathfrak{g}_d & (d\geq 3)\\
       \pi& (d =2),
    \end{cases},
    }
	where $\mathfrak{g}_d = \pp(S_i \neq 0 \, \forall \, i \geq 1 )$ is the escape probability  of the simple random walk started at $0$.

\begin{proof}[Proof of Lemma \ref{Step73d}]  Recall the statement of the lemma: 
	\ben{ \label{step7p}
		\limsup_{M\rightarrow \infty} \limsup_{\delta \rightarrow 0} \limsup_{n \to \infty} \sup_{A \in \kA_M^\delta(n)} |r_{A,n}(\xi) - \mathfrak{r}_d \xi | = 0,
	}
where for $A \subset \Z^d$ and $n \in \N$
	\begin{align*}
		r_{A,n}(\xi) =\frac{-1}{\mathfrak{s}_d(n)} \log\pp(t(x,0) \geq   \xi n \quad \forall x \in A),
	\end{align*}
and for $M \in \N$, $\delta \in (0,1)$ and $n \in \N$
	\begin{align*}
		\kA_M^\delta(n)=\bigl\{ A\subset\Z^d \setminus \{0\}: |A\cap  \Lambda(M\sqrt{n})| \geq (1-\delta) |\Lambda(M\sqrt{n})| \bigr\}.
	\end{align*}	

First, we assume \eqref{step7p} with $\xi=1$, and prove that \eqref{step7p} holds for every other positive number $\xi$. 
Fix $\xi \not= 1$ and $M \in \N$, and set $M_\xi:=\lceil M/\sqrt{\xi}\rceil$.
Then, since $M\sqrt{n} \leq M_\xi\sqrt{\lceil \xi n \rceil}$ and $\lim_{n \to \infty} \lceil \xi n \rceil/\xi n=1$, for any $\delta \in (0,1/2)$, $\mathcal{A}_M^\delta(n) \subset \mathcal{A}_{M_\xi}^{2\delta}(\lceil \xi n \rceil)$ holds if $n \in \N$ is sufficiently large.
Hence, for any $\delta \in (0,1/2)$, if $n$ is large enough, then
\begin{align*}
	\sup_{A \in \kA_M^\delta(n)}|r_{A,n} (\xi)- \mathfrak{r}_d \xi|
	&\leq \sup_{A \in \kA_{M_\xi}^{2\delta}(\lceil \xi n \rceil)}|r_{A,n} (\xi)- \mathfrak{r}_d \xi|\\
	&\leq \frac{\mathfrak{s}_d(\lceil \xi n\rceil)}{\mathfrak{s}_d(n)} \sup_{A \in \kA_{M_\xi}^{2\delta}(\lceil \xi n \rceil)}\left|r_{A,\lceil \xi n \rceil}(1)- \mathfrak{r}_d\right|+\mathfrak{r}_d\left| \frac{\mathfrak{s}_d(\lceil \xi n\rceil)}{\mathfrak{s}_d(n)}-\xi \right|.
\end{align*}
By the assumption \eqref{step7p} with $\xi=1$ and the definition of $\mathfrak{s}_d(\cdot)$,
\begin{align*}
	&\limsup_{M\rightarrow \infty} \limsup_{\delta \rightarrow 0} \limsup_{n \to \infty} \sup_{A \in \kA_M^\delta(n)} |r_{A,n}(\xi) - \mathfrak{r}_d \xi|\\
	&\leq \limsup_{M\rightarrow \infty} \limsup_{\delta \rightarrow 0} \limsup_{n \to \infty} \sup_{A \in \kA_{M_\xi}^{2\delta}(\lceil \xi n \rceil)}\left|r_{A,\lceil \xi n \rceil}(1)-\mathfrak{r}_d\right|=0,
\end{align*}
and \eqref{step7p} is proved for all $\xi \not=1$.

 Now, it remains to prove \eqref{step7p} for $\xi =1$. For simplicity of notation, we write $	r_{A,n}=r_{A,n} (1)$. 	 We first consider the case  $d\geq 3$. 	Having $r_{A,n}\leq r_{\Z^d\setminus \{0\},n}$ in mind, we first prove
	\begin{align}\label{eq: first goal}
		\limsup_{n\to\infty}r_{\Z^d\setminus \{0\},n}\leq \mathfrak{g}_d.
	\end{align}
	Since $(t(z,0))_{z \in \Z^d}$ are independent, we have the following decomposition 
	\begin{align*}
		\pp( t(z,0)> n \quad \forall z \in \Z^d \setminus \{0\})
		& = \prod_{  |z| <  \log n} \pp( t(z,0)> n) \times  \prod_{ |z| \geq \log n} \pp( t(z,0)> n)\\
		&= \prod_{  |z| <  \log n} \pp( t(0,z)> n) \times  \prod_{ |z| \geq \log n} \pp( t(0,z)> n)\\
		&=: X^{(1)}_n \times X^{(2)}_n.
	\end{align*}
	By \eqref{rsn}, there exists a constant $s>1$, such that for all $n\in \N$,
	\ben{ \label{rns}
		\pp(\kR_n \subset \Lambda(s\sqrt{n})) \geq 1/2.
	}
	For any $z \in \Z^d$, we take a vertex $x$  with $|x| \geq 2s \sqrt{n}$, and a path $\gamma$ starting at $0$ and ending at $x$ with $|\gamma| \leq 4s \sqrt{n}$ and $z \not \in \gamma \cup \Lambda(x,s\sqrt{n})$. We then have
	\al{
		\pp(t(0,z) \geq n) = \pp(z \not \in \kR_n) &\geq \pp((S_0,\ldots,S_{|\gamma|})=\gamma, \forall k\in \Iintv{|\gamma|+1,n},\, S_k\in  \Lambda(x,s\sqrt{n}))\\
		&= (1/2d)^{|\gamma|} \pp(\kR^x_{n-|\gamma|} \subset \Lambda(x,s\sqrt{n})) \\
		&\geq  (1/2d)^{|\gamma|}\times (1/2) \geq  \exp(-16 d s \sqrt{n}),
	}
	where we have used \eqref{rns} and the Markov property. Therefore,
	\ben{ \label{x13d}
		0  \geq \log  X^{(1)}_n \geq -16 d s \sqrt{n}(2 \log n +1)^d.
	}
	Next, we  estimate $X^{(2)}_n$. By Lemma~\ref{kr}~(i), there exists $C=C(d)$ such that for any $z\in \Z^d \setminus \{0\}$,
	\ben{ \label{tozinf}
		\pp(t(0,z)< \infty) \leq C |z|^{2-d}.
	}
	Hence, we have 
	\ben{ \label{pzlogn}
		\lim_{n\to\infty}\sup_{|z| \geq \log n}\pp(t(0,z)\leq n) =0.
	}
	Consequently, the approximation $-x(1+o(1)) = \log (1-x)$ as $|x|\to 0$ implies
	\begin{align*}
		- \log \pp(t(0,z)>n) =- \log (1-\pp(t(0,z) \leq n))\leq (1+\epsilon_n)\pp(t(0,z) \leq n),
	\end{align*}
	where as $n\to \infty$, $\epsilon_n$ goes to $0$ uniformly in $z\in\Z^d$ with $|z| \geq \log n$.
	Thus,
	\begin{align*}
		-\frac{1}{n}\log X^{(2)}_n \leq  \frac{1+\epsilon_n}{n} \sum_{\substack{z \in \Z^d\\|z|\geq \log n}} \pp(t(0,z) \leq n)\leq  \frac{1+\epsilon_n}{n} \E[|\kR_n|] .
	\end{align*}
	This, combined with \eqref{x13d} and the fact that $\E[|\kR_n|] = n (\mathfrak{g}_d +o(1))$ by  \eqref{exran}, yields 
	\begin{align*}
		\limsup_{n\to\infty} r_{\Z^d\setminus \{0\},n}
		\leq \limsup_{n\to\infty}\Big(-\frac{1}{n}\log X^{(1)}_n-\frac{1}{n}\log X^{(2)}_n\Big) \leq \mathfrak{g}_d,
	\end{align*}
	and hence \eqref{eq: first goal} follows.	Our second task is to show the opposite bound, i.e.,
	\begin{align}\label{eq: second goal}
		\liminf_{M\rightarrow \infty} \liminf_{\delta \rightarrow 0} \liminf_{n \to \infty} \inf_{A \in \kA_M^\delta(n)} r_{A,n} \geq \mathfrak{g}_d.
	\end{align}
	Let $\delta\in (0, (4M)^{-16d})$.
	Define for any $n \in \N$ and $A \in \kA_M^\delta(n)$,
	\begin{align*}
		X_{A,n}:= \prod_{\substack{z\in A\\|z| \geq \log n}} \pp( t(0,z)> n).
	\end{align*}
	The fact that $x\leq -\log(1-x)$ for $x\in [0,1)$ implies that
	\begin{align}\label{eq: second goal}
		\begin{split}
			-\log X_{A,n}
			&\geq \sum_{\substack{z\in A\\|z|\geq \log n}} \pp(t(0,z) \leq n)			\geq \E[|\kR_n \cap A|]-|\Lambda(\log n)|\\
			&\geq \E[|\kR_n|]-\E[|\kR_n\cap  \Lambda(M\sqrt{n})^c|] \\
			& \hspace{3 cm} -\E[|\kR_n \cap A^c \cap \Lambda(M\sqrt{n})|]-(2\log n +1)^d.
		\end{split}
	\end{align}
	We first estimate the second term on the right-hand side of \eqref{eq: second goal}. 
	By the local central limit theorem (see for instance \cite[Theorem 2.1.3]{LawLim10_book}), there exists a positive constant $C$ such that, for $z\in \Z^d\setminus\{0\}$,
	\begin{align*}
		\pp(t(0,z) \leq n)
		&\leq \sum_{k=|z|}^n \pp(S_k=z) \leq  C \sum_{k=|z|}^n k^{-d/2} \exp(-|z|^2/(Ck))\\
		& \leq C\int_{|z|}^n t^{-d/2} \exp(-|z|^2/(Ct))\,dt
		= C|z|^{2-d} \int_{|z|^2/n}^{|z|} u^{(d-4)/2} \exp(-u/C)\,du\\
		&\leq C |z|^{2-d} \exp\Bigl(-\frac{|z|^2}{Cn} \Bigr).
	\end{align*}
	Therefore,
	\begin{align}\label{eq:RLambda}
		\begin{split}
			\E [|\kR_n \cap  \Lambda(M\sqrt{n})^c|]
			&=\sum_{|z| \geq M\sqrt{n} }\pp(t(0,z) \leq n)
			\leq C \sum_{|z| \geq M\sqrt{n} }|z|^{2-d} \exp\Bigl( -\frac{|z|^2}{Cn} \Bigr)\\
			&\leq C\sum_{k \geq M\sqrt{n} } k \exp\Bigl( -\frac{k^2}{Cn} \Bigr)
			\leq C\int_{M\sqrt{n}-1}^\infty t \exp\Bigl( -\frac{t^2}{Cn} \Bigr)\,dt\\
			&\leq C n \exp(-M^2/C).
		\end{split}
	\end{align}
	Next, we estimate the third term on the right-hand side of \eqref{eq: second goal}. Note that 
\ben{
|B| \leq \delta |\Lambda(M\sqrt{n})| \leq \delta (4M\sqrt{n})^d, \quad \textrm{where} \quad B:= A^c \cap \Lambda(M\sqrt{n}).
}
It has been shown in \cite[Appendix A.2]{AC} that
\be{
\E[|\kR_\infty \cap B|] \leq C' |B|^{2/d} \leq 16C'M^2 \delta^{2/d} n,
}
where $C'$ is a universal constant.  
Therefore,
\bea{
\E[|\kR_n \cap A^c \cap \Lambda(M\sqrt{n})|]  \leq \E[|\kR_\infty \cap B|] \leq  16C'M^2 \delta^{2/d} n.
}
	Combining this with \eqref{eq: second goal} and \eqref{eq:RLambda}, one has
	\begin{align*}
		-\log X_{A,n}
		\geq \E[|\kR_n|] - (C+C')n\bigl\{ \exp(-cM^2/4)+16M^2\delta^{2/d} \bigr\}.
	\end{align*}
Consequently, for any $M,n \in \N$ and $\delta$ sufficiently small, for each $A \in \kA_M^\delta(n)$,
	\begin{align*}
r_{A,n}(1)-\mathfrak{g}_d
		&\geq -\frac{1}{n}\log X_{A,n}-\mathfrak{g}_d\\
		&\geq \frac{\E[|\kR_n|]}{n}-(C+C')\bigl\{ \exp(-cM^2/4)+M^2\delta^{1/(8d)} \bigr\}-\mathfrak{g}_d.
	\end{align*}
In addition, 	 $\E[|\kR_n|] = n (\mathfrak{g}_d +o(1))$. Therefore, 
\begin{align*}
\liminf_{M\rightarrow \infty} \liminf_{\delta \rightarrow 0} \liminf_{n \to \infty} \inf_{A \in \kA_M^\delta(n)} r_{A,n}(1)-\mathfrak{g}_d  \geq 0,
	\end{align*}
and \eqref{eq: second goal} follows. 
	
	\medskip
	Next, we consider the case $d=2$. As in the case $d\geq 3$, we decompose
	\begin{align*}
		&\pp( t(z,0)> n \quad \forall z \in \Z^2 \setminus \{0\})\\
		&= \prod_{\substack{z \in \Z^2\\|z| <\log n}} \pp( t(0,z)> n) \times  \prod_{\substack{z \in \Z^2\\ \log n \leq |z|<\tfrac{\sqrt{n}}{\log n}}} \pp( t(0,z)> n) \prod_{\substack{z \in \Z^2\\  |z|>\tfrac{\sqrt{n}}{\log n}}} \pp( t(0,z)> n)\\
		&=: Y^{(1)}_n \times Y^{(2)}_n \times Y^{(3)}_n.
	\end{align*}
	First, note that the estimate \eqref{x13d} holds for all dimensions and it gives
	\ben{ \label{y1}
		0  \geq \log  Y^{(1)}_n \geq -c\sqrt{n}( \log n)^2,
	}
	with $c$ a positive constant. Furthermore, by Lemma \ref{kr} (i), we have
	\[
	\sup_{|z|\geq \tfrac{\sqrt{n}}{\log n}} \pp(t(0,z)\leq n) \leq\frac{2 + \log  \log n }{\log n} =o(1).
	\]
	Therefore, using the same argument as for $X_n^{(2)}$, with the above estimate in place of \eqref{pzlogn}, we obtain
	\begin{align}
		\label{y3}
		-\log Y^{(3)}_n \leq  (1+o_n(1))\E[|\kR_n|].
	\end{align}
	Finally, Finally, Lemma~\ref{kr}~(i) gives, for all sufficiently large $n$, the following bound:
    \bea{
		Y^{(2)}_n &\geq& \prod_{k=\lfloor \log n \rfloor}^{\lceil \sqrt{n}/\log n \rceil} \left(1 -\frac{2+\log(\tfrac{n}{k^2})}{\log n}\right)^{Ck}  \geq  \prod_{k=\lfloor \log n \rfloor}^{\lceil \sqrt{n}/\log n \rceil} \left(\frac{1}{\log n}\right)^{Ck} \\
		&\geq &  \exp \left(- C \log \log n \sum_{k=\lfloor \log n \rfloor}^{\lceil \sqrt{n}/\log n \rceil} k \right) \geq \exp \left(- \frac{Cn \log \log n}{ \log^2 n} \right),
	}
where $C$ is a positive constant. This together with \eqref{y1} and \eqref{y3} and the fact that $\E[|\kR_n|]=( \pi + o(1)) n/ \log n$ implies that
	\be{
\limsup_{n\rightarrow \infty}r_{\Z^2 \setminus \{0\},n}=\limsup_{n\rightarrow \infty} \frac{-\log n}{n} \log\pp( t(z,0)> n \quad \forall z \in \Z^2 \setminus \{0\})  \leq   \pi.
	}
	The remaining part of the proof of the opposite inequality
\begin{align*}
		\liminf_{M\rightarrow \infty} \liminf_{\delta \rightarrow 0} \liminf_{n \to \infty} \inf_{A \in \kA_M^\delta(n)} r_{A,n} \geq \pi
\end{align*}
is  the same as in the case $d\geq 3$, so we omit the details.
\end{proof}

\section{Upper bound on the large deviation probability  for $d\geq 3$} \label{sec:theomain1} We concentrate here on the $\mathbf{e}_1$ direction. Comments on the proof in the general direction are given in Subsection~\ref{subsec: general direction upper d>2}.  Let $d\geq 3$ and $\mu =\mu(\eb_1)$. The aim of this section is to prove that for any $\xi>0$
\be{
\liminf_{n \rightarrow \infty} \frac{-1}{n} \log 	\pp({\rm T}(0,n\eb_1) \geq (\mu + \xi) n) \geq \fgd \xi,
}
or equivalently,  for any small constant $\e>0$, if $n$ is large enough, then
\ben{ \label{uuld}
	\pp({\rm T}(0,n\eb_1) \geq (\mu + \xi) n) \leq \exp(-\mathfrak{g}_d \xi(1- \e) n).
}
\subsection{Elements of the proof}\label{section: elements}
We begin by giving a brief exposition of the steps of the proof of \eqref{uuld}. 
The proof is divided into three main steps  and a preparatory step as discussed in the subsection \ref{subsec:orgproof},  and the main technique is the renormalization via slab arguments. Given $\e>0$, let $K,L,M,\delta$ be constants chosen carefully (see Lemmas \ref{kvth}, \ref{lem:t1}, \ref{lem:pec}  and the proof of \eqref{uuld} below).

\medskip

\noindent \underline{Step 0} (Breeding seeds).  We show that the infection quickly spreads:
\begin{lem} \label{kvth}
	Let
	\begin{align}\label{def: kappa}
		\kappa := -  \sup \biggl\{ |A| \times \limsup_{n \rightarrow \infty} \frac{\log \pp(\kR_n \subset A)}{n} : 0 \in A \subset \Z^d \text{ connected, finite} \biggr\}.
	\end{align}
	Then, for any $K \in \N$ and $\e \in (0,1)$, if $n$ is large enough, then 
	\begin{align}\label{eq:kappa-g}
	\pp( |{\rm B}_{ n}|\leq K)
		\leq \exp(-(\kappa -\e)n). 
	\end{align}
Moreover, $\kappa > 2 \mathfrak{g}_d$. Consequently, for any $K \in \N$  if $n$ is large enough, then
\begin{align}\label{eq:kappa}
	\pp( |{\rm B}_{ n}|\leq K)
	\leq \exp(-2\mathfrak{g}_d n).
\end{align}
\end{lem}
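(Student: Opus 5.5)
The plan is to prove the bound \eqref{eq:kappa-g} by decomposing the time interval $[0,n]$ according to the size of the infected set. I will then prove $\kappa>2\mathfrak{g}_d$ by a spectral comparison; \eqref{eq:kappa} follows from these two facts by choosing $\e<\kappa-2\mathfrak{g}_d$.

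\emph{Reformulation of $\kappa$.} For a finite connected $A\ni 0$, let $P_A$ be the simple random walk kernel killed on leaving $A$, and let $\lambda_A<1$ be its Perron--Frobenius eigenvalue. Since $A$ is connected and finite, $\pp(\kR^x_\ell\subset A)\le C_A\lambda_A^{\ell}$ for every $x\in A$ and every $\ell$, with $C_A<\infty$. The limsup in \eqref{def: kappa} equals $\log\lambda_A$ for every starting point in $A$. Hence
\[
\kappa=\inf_A |A|\,(-\log\lambda_A),
\]
and for each $A$ and each $x\in A$ we have $\pp(\kR^x_\ell\subset A)\le C_A e^{-(\kappa/|A|)\ell}$.

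\emph{Proof of \eqref{eq:kappa-g}.} The sets $t\mapsto \rmB_t$ are nondecreasing. Each $\rmB_t$ is connected and contains $0$, because it is a union of ranges of walks whose starting points are themselves activated sites. The active frogs at time $t$ are exactly those started in $\rmB_t$. On $\{|\rmB_n|\le K\}$, the set $\rmB_t$ changes at most $K$ times. So $[0,n]$ splits into at most $K$ intervals $I_j$ of lengths $\ell_j$, with $\sum_j\ell_j=n$, on each of which $\rmB_t\equiv A_j$ with $|A_j|\le K$.

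On $I_j$, each of the $|A_j|$ active frogs must keep its range inside $A_j$ for time $\ell_j$. I apply the Markov property at the start of each interval, together with the independence of the walks and the uniform bound above. This bounds the probability of a fixed configuration $(A_j,I_j)_j$ by
\[
\prod_j \bigl(C_{A_j} e^{-(\kappa/|A_j|)\ell_j}\bigr)^{|A_j|}\le C_K\, e^{-\kappa n}.
\]
The number of choices of shapes is finite, depending only on $K$. The number of choices of interval endpoints is at most $n^K$. The union bound therefore gives $\pp(|\rmB_n|\le K)\le C_K n^K e^{-\kappa n}\le e^{-(\kappa-\e)n}$ for $n$ large.

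\emph{Proof of $\kappa>2\mathfrak{g}_d$; this is the main obstacle.} I must show that $|A|(-\log\lambda_A)\ge 2\mathfrak{g}_d+\eta$ for some $\eta>0$ uniformly over finite connected $A\ni0$. Since $-\log\lambda_A\ge 1-\lambda_A$, and $1-\lambda_A$ is the Dirichlet form of the normalized eigenfunction, a Faber--Krahn/Nash-type inequality on $\Z^d$ gives $1-\lambda_A\ge c|A|^{-2/d}$. Hence $|A|(1-\lambda_A)\ge c|A|^{1-2/d}\to\infty$ for $d\ge3$, and it suffices to treat $|A|\le N_0$ for a fixed $N_0$.

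For small sets I would bound $\lambda_A$ by the spectral radius of the induced subgraph of $\Z^d$ divided by $2d$. For a subgraph with $k$ vertices and $e$ edges this radius is at most $\sqrt{2e}$ and at most $\max\deg\le 2d$. I would also use $e\le dk-c_dk^{(d-1)/d}$ from edge isoperimetry in $\Z^d$. For example, $|A|=1$ gives rate $\infty$, and $|A|=2$ gives $2\log(2d)$.

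The target is to verify $k\,(-\log\lambda_A)>2\mathfrak{g}_d$ using $\mathfrak{g}_d\le\mathfrak{g}_3<0.66$ and $\mathfrak{g}_d<1$. I expect the delicate part to be making this quantitative for intermediate $k$ in $d=3$. If the crude spectral bound is not sharp enough there, I would first try the bound $\lambda_A\le 1-\min_{x\in A}\pp_x(\text{exit }A\text{ within }m\text{ steps})/m$ for a suitable $m$.
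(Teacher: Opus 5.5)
Your proof of \eqref{eq:kappa-g} is fine and is essentially the paper's. It uses the same decomposition of $[0,n]$ at the at most $K$ jump times of $t\mapsto\mathrm{B}_t$, the Markov property at fixed times, and a union bound over $O_K(n^K)$ configurations. Your Perron--Frobenius bound $\mathbb{P}(\mathcal{R}^x_\ell\subset A)\le C_A\lambda_A^{\ell}$, valid for all $\ell$, is a slightly cleaner substitute for the paper's threshold $m$ in \eqref{kmea}. Each frog makes $s_{j+1}-s_j-1$ steps in the $j$-th interval, which only costs a factor $e^{\kappa K}$.

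The genuine gap is $\kappa>2\mathfrak{g}_d$, which the proposal does not prove. First, your claim $\mathfrak{g}_d\le\mathfrak{g}_3<0.66$ is backwards. Since $\mathfrak{g}_d=1-\pi_d$ increases with $d$ towards $1$, the target $2\mathfrak{g}_d$ is largest in high dimensions, and only $\mathfrak{g}_d<1$ is available uniformly. More importantly, the bound $\rho\le\min\{\sqrt{2e},\max\deg\}$ is vacuous as soon as $A$ contains a vertex with all $2d$ neighbours in $A$ and $2e\ge 4d^2$. For instance, in $d=3$ take the $3\times3$ square in $\{x_3=0\}$, a plus-shape directly above it, and one vertex below its centre: then $|A|=15$, $e=22$ and $\max\deg=6$. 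On such sets everything rests on the Faber--Krahn constant, which you leave unspecified. So $N_0$ and the ``finite check'' are not defined, and that check would have to be done in every $d\ge3$, not only $d=3$. With the constant $\tfrac12$ from Cheeger's inequality and edge isoperimetry, one gets only $|A|(1-\lambda_A)\ge\tfrac12\,15^{1/3}\approx1.23<2\mathfrak{g}_3\approx1.32$ for this set. The exit-probability fallback is not carried out.

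The paper needs no case analysis on $|A|$. On $\{\mathcal{R}_n\subset A\}$, pigeonhole yields $x\neq y$ in $A$ with $l_n(\{x,y\})\ge 2n/|A|$. The tail of $l_\infty(\{0,z\})$ is at most $C(1-1/\Gamma)^m$ with $\Gamma=G_d(0)+G_d(z)\le 2G_d(0)-1$; this uses \cite[Lemma~2.1]{Cetal} and $G_d(z)\le G_d(\mathbf{e}_1)=G_d(0)-1$. Hence $\kappa\ge 2\log\frac{2G_d(0)-1}{2G_d(0)-2}$. This exceeds $2/G_d(0)=2\mathfrak{g}_d$ because $t\mapsto t\log\frac{2t-1}{2t-2}$ is decreasing and $G_d(0)\le G_3(0)<1.52$.

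Within your spectral framework there is also a short repair that makes Faber--Krahn unnecessary. The quantity $1/(1-\lambda_A)$ is the Perron root of the killed Green kernel $(I-P_A)^{-1}$. It is therefore at most the maximal row sum $\max_{y\in A}\sum_{x\in A}G_d(x-y)\le 1+|A|(G_d(0)-1)$. So $|A|(-\log\lambda_A)\ge |A|/(1+|A|(G_d(0)-1))$, which exceeds $2/G_d(0)$ once $|A|(2-G_d(0))>2$. Since $G_d(0)<1.52$, this holds for all $|A|\ge5$. For $2\le|A|\le4$ the induced graph is $P_2$, $P_3$, $P_4$, $K_{1,3}$ or $C_4$, because $\mathbb{Z}^d$ has no triangles. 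Hence $\rho\le2$, $\lambda_A\le 1/d$, and $|A|(-\log\lambda_A)\ge 2\log 3>2$.
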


\noindent \underline{Step 1} (Reaching the slices). Considering slices $(\bL_{M,\ell}^\delta(n))_{\ell \in \Z}$ defined in \eqref{slill}, we show  that these slices are infected shortly after breeding the seeds:
\begin{lem} \label{lem:t1}
	Let $s, \e >0$, $M \in \N$ and $\delta \in (0,\e/3]$.
	There exists $K=K( s,\e,M,\delta)\in \N$  such that for any $L \in \N$, if $n$ is sufficiently large, then
	\begin{align*}
		\pp\biggl( \exists \, A \subset \Z^d: A \textrm{ is connected},\,   0\in A,\, |A| \geq K ,\, \max_{\substack{\ell \in \Z\\1 \leq |\ell| \leq L}}  {\rm T}(A, \bL_{M,\ell}^\delta(n)) \geq \e n \biggr) \leq \exp(-s n),
	\end{align*}
	where we define the slices $\bL_{M,\ell}^\delta(n)$ by
	\begin{align}\label{slill}
		\bL_{M,\ell}^\delta(n):=[-\delta n, (1+\delta)n]\times \{ 2\ell {\lfloor M\sqrt{n}\rfloor} \}\times [-\delta n, \delta n]^{d-2} \cap \Z^d.
	\end{align}
\end{lem}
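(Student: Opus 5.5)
We prove Lemma~\ref{lem:t1} by splitting the time budget $\e n$ into two parts. In the first part, of length $\e n/3$, the infection started from a large connected seed $A$ escapes the diffusive box $\Lambda(M'\sqrt n)$; this is supplied by Proposition~\ref{tln}(ii). In the second part it travels a distance of order $\delta n$ to reach each slice; here we rely on the fact that a single active frog covers a macroscopic distance cheaply once many frogs are available. Since there are at most $2^{CK}$ choices of the relevant piece of $A$ (see below), a union bound costs only a constant factor, so it suffices to bound the probability for a fixed $A$ and lose nothing in the exponent.

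\emph{Step 1: reduction to a fixed seed.} If $A$ is connected, contains $0$ and has $|A|\ge K$, it contains a connected subset $A'\ni 0$ with $|A'|=K$ and $A'\subset\Lambda(K)$. Moreover ${\rm T}(A,\cdot)\le {\rm T}(A',\cdot)$. Hence it suffices to show, for each such $A'$ (of which there are at most $(2d)^{2K}$), that
\[
\pp\Bigl(\max_{1\le|\ell|\le L}{\rm T}(A',\bL^\delta_{M,\ell}(n))\ge \e n\Bigr)\le e^{-(s+1)n}.
\]

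\emph{Step 2: escape from the diffusive box.} Fix a large $M'$, say $M'\ge 4LM$, so that every slice meets $\Lambda(M'\sqrt n)$ in a region at distance $\ll\sqrt n$ from its boundary. Apply Proposition~\ref{tln}(ii) with $M'$, $C=s+2$ and time $\delta' n$, where $\delta'\le\e/3$, and choose $K=K(M',s+2,\delta')$. Then with probability at least $1-e^{-(s+2)n}$ the set $\rmB^{A'}_{\delta' n}$ contains a point outside $\Lambda(M'\sqrt n)$.

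This alone is not enough: we need every slice reached. The first alternative is to note that the $\ell$-th slice intersects $\Lambda(M'\sqrt n)$ (all slices pass through $x_1\in[-\delta n,(1+\delta)n]$ near $0$, at height $2\ell\lfloor M\sqrt n\rfloor\le 2LM\sqrt n$). So it is enough that \emph{every} site of $\Lambda(2LM\sqrt n)$ is reached within time $\e n$. For this I would use Proposition~\ref{tln}(i) with $n$ replaced by $n'=4L^2M^2 n$ and $\e$ replaced by $\e/(3\cdot 4L^2M^2)$. It says that, starting from a single active frog at a point $y$, all of $\Lambda(y,2LM\sqrt n)$ is infected within time $\e n/3$ with probability $1-e^{-c n}$.

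\emph{Step 3: boosting $c$ to $s$.} The rate $c$ from Proposition~\ref{tln}(i) may be smaller than $s$; this is the main obstacle. The remedy is independence among many seeds. Step 2 produces, within time $\e n/3$, a large infected set. I would apply Proposition~\ref{tln}(ii) repeatedly, or Lemma~\ref{rnab}, to show that at time $\e n/3$ there are at least $N$ activated sites $y_1,\dots,y_N$ in $\Lambda(2LM\sqrt n)$ (for $N$ large depending on $s/c$) with pairwise disjoint boxes $\Lambda(y_i, M\sqrt n)$, except on an event of probability $e^{-(s+2)n}$. The restricted passage times $\T_{\Lambda(y_i,M''\sqrt n)}$ on disjoint boxes are independent and are not affected by conditioning on the revealed trajectories only if we reveal them carefully. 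To make that work, I would choose the $y_i$ via a separate, disjoint set of frogs, as in Lemma~\ref{rnab}, and slightly shrink the boxes. Each $y_i$ independently covers $\Lambda(2LM\sqrt n)$ within $\e n/3$ using only frogs in its own box; in dimensions $d\ge 3$ this still holds with high probability after enlarging $M''$ in Proposition~\ref{tln}(i). Failure therefore has probability at most $e^{-cNn}\le e^{-(s+2)n}$.

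Making the boxes genuinely disjoint while still covering the common target region $\Lambda(2LM\sqrt n)$ is delicate. If that fails, I would instead target one slice at a time with seeds placed near that slice, and use the union over $2L$ slices.
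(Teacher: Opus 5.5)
Your reduction to a fixed seed $A'$ in Step~1 is right, and so is your use of Proposition~\ref{tln}~(ii) to push the infection out of a diffusive box. You also correctly identify the real obstacle: the rate $c$ of Proposition~\ref{tln}~(i) has to be boosted to an arbitrary $s$ through independence. But Step~3 carries the whole content of the lemma, and it is left open; the mechanism you sketch does not work as stated. The activated sites $y_1,\dots,y_N$ are random and are determined partly by the frogs in their own boxes. So you cannot condition on them and then treat the box-restricted passage times as fresh. Asking each seed to cover the common region $\Lambda(2LM\sqrt n)$ with frogs from its own box is also incompatible with the boxes being disjoint, as you noticed. The fallback of targeting one slice at a time is not developed either. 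Finally, your choice $M'\ge 4LM$ makes $K=K(M',s+2,\delta')$ depend on $L$, whereas the statement requires $K$ to be fixed before $L$.

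The missing idea is to attach failure to a \emph{local} event defined at every vertex, activated or not, and to replace conditioning by a union bound over positions. Call $x$ bad if $\max_{1\le|\ell|\le L}{\rm T}_{\Lambda(x,\sqrt n)}(x,\bL^\delta_{M,\ell}(n))\ge 2\delta n$. This depends only on walks started in $\Lambda(x,\sqrt n)$: ${\rm T}_{\Lambda(x,\sqrt n)}$ confines only the intermediate frogs and the last hop is free, so the target need not lie in the box.

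\begin{itemize}
\item By Proposition~\ref{tln}~(i) (with $M=1$ and $\e$ replaced by $\delta$), the whole box is infected within time $\delta n$ except with probability $e^{-2c(\delta)n}$.
\item Each of its order $n^{d/2}$ frogs independently touches both faces $F_\pm$ of $\Lambda(Q\sqrt n)$ orthogonal to $\eb_2$, with $Q=2LM+4R$, by time $\delta n/2$ with probability at least $\theta(Q,\delta)>0$ (Lemma~\ref{kr}~(iii)). This forces it to meet every slice. This is why you should aim at the slices rather than at every site of $\Lambda(2LM\sqrt n)$: one frog hits a prescribed site only with probability of order $n^{1-d/2}$.
\item Hence $\pp(x\text{ bad})\le e^{-an^{d/2}}+e^{-2c(\delta)n}\le e^{-c(\delta)n}$ for $x\in\Lambda(3R\sqrt n)$, with $c(\delta)$ independent of $L$, $M$ and $R$.
\end{itemize}

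Now suppose $\mathrm{B}^A_{\delta n}\not\subset\Lambda(3R\sqrt n)$. Connectivity of $\mathrm{B}^A_{\delta n}$ produces $R$ of its points $x_1,\dots,x_R$ in $\Lambda(3R\sqrt n)$ at mutual $\ell^\infty$-distance at least $3\sqrt n$. If moreover $\max_\ell{\rm T}(A,\bL^\delta_{M,\ell}(n))\ge\e n\ge 3\delta n$, then ${\rm T}(x_i,\bL^\delta_{M,\ell}(n))\ge{\rm T}(A,\bL^\delta_{M,\ell}(n))-{\rm T}(A,x_i)$ together with ${\rm T}(A,x_i)\le\delta n$ shows that every $x_i$ is bad. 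Their boxes are disjoint, so a union bound over the at most $|\Lambda(3R\sqrt n)|^R$ tuples gives $n^{O(R)}e^{-c(\delta)Rn}\le e^{-2sn}$ once $R\ge 3s/c(\delta)$. No careful revealing is needed. Choosing $K=K(s,\delta,R)$ from Proposition~\ref{tln}~(ii) for the box $\Lambda(3R\sqrt n)$ gives a $K$ independent of $L$. Your Step~1 union bound over seeds of size $K$ then completes the proof.
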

  
\medskip
\noindent \underline{Step 2} (Invading the target box).
We show that  starting from a slice, the infected set becomes dense in the  box $\Lambda(n\eb_1,M\sqrt{n})$ within $(\mu+o(1))n$ steps. 

 Given $\e,\delta>0$ and $M, L,n \in \N$, we define 
\begin{align}\label{lmne}
	\Lambda^{\e,\delta}_{M,L}(n)
	:= \biggl\{ x\in  \Lambda(n \eb_1, M\sqrt{n}) \setminus \{ n\eb_1 \}: \min_{\substack{\ell \in \Z\\1 \leq |\ell| \leq L}} \, \max_{y\in\bL_{M,\ell}^\delta (n) }{\rm T}_{\overline{\Lambda}_M(n)}(y,x)<(\mu+\e)n \biggr\},
\end{align}
where $\overline{\Lambda}_M(n)=\Lambda(M\sqrt{n})^c \cap \Lambda(n \eb_1, M\sqrt{n})^c \cap \Z^d$.
\begin{lem}  \label{lem:pec}
	Let $s,\e>0$.
There exist $M_0 \in \N$ and $\delta_0>0$ such that the following  holds:
	for all $M \geq M_0$ and $\delta \in (0,\delta_0]$, there exists $L_0=L_0(s,\e,M,\delta)\in \N$ such that if $L\geq L_0$  and $n$ is large enough, then
	\begin{align*}
		\pp\bigl( |\Lambda^{\e,\delta}_{M,L}(n)|<(1-\delta) |\Lambda(n\eb_1,M\sqrt{n})| \bigr) \leq \exp(- s n).
	\end{align*}
\end{lem}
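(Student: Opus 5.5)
We prove Lemma~\ref{lem:pec} in two steps. First, with overwhelming probability at least one slab transports the infection diffusively and at nearly the typical speed toward the target. Second, from that slab the infection fills most of the target box.

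\emph{Step 1: one slice at a time.} Fix $\ell$ with $1\le|\ell|\le L$ and let $\bS_\ell$ be the slab obtained by thickening $\bL^\delta_{M,\ell}(n)$ by $M\sqrt n$ in the $\eb_2$-direction. Since $|\ell|\geq 1$ and the slab half-width is about $M\sqrt n$ around height $2\ell\lfloor M\sqrt n\rfloor$, we can choose the slab inside $\overline{\Lambda}_M(n)$ up to a thin boundary strip. Distinct slabs are disjoint up to boundaries, and we shrink them slightly to make them genuinely disjoint.

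For $y\in\bL^\delta_{M,\ell}(n)$, choose a lattice route from $y$ to a point $w_\ell$ just outside $\Lambda(n\eb_1,M\sqrt n)$ in the $\eb_2$-direction, i.e.\ $w_\ell\approx n\eb_1+2\ell\lfloor M\sqrt n\rfloor\eb_2$. Cover this route by $m\approx \mu$-length$/\sqrt n$ consecutive boxes $\Lambda(z_i,M'\sqrt n)\subset \bS_\ell$, with successive centers at distance $\le\sqrt n$. By the triangle inequality for restricted passage times,
\[
\T_{\bS_\ell}(y,w_\ell)\le \sum_i \T_{\Lambda(z_i,M'\sqrt n)}(z_i,z_{i+1}).
\]
By Proposition~\ref{tuld}(i) (note $\mu(z_{i+1}-z_i)$ sums to $\approx\mu(w_\ell-y)\le(\mu+O(\delta)+O(\ell M/\sqrt n))n$), each summand has exponential tail beyond $(1+\e')\mu(z_{i+1}-z_i)$ on the range $[\,\cdot\,,n]$. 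The summands are finite-range dependent with range $m_0=O(M')$, since disjoint boxes involve independent walks.

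Lemma~\ref{lem:ldlc}(ii) with $h=\e' n$ and $b=n$ then gives $\pp(\T_{\bS_\ell}(y,w_\ell)\ge(\mu+\e/2)n)\le e^{-c\sqrt n\cdot\e'}$-type bounds per route. I would rather aim for $e^{-c n}$: taking $m\asymp\sqrt n$ summands each with tail $e^{-ct}$ beyond a constant times $\sqrt n$, the sum exceeding its mean by $\e' n$ costs $e^{-c\e' n/m_0}$. Over the $\le n^{d}$ choices of $y$, a union bound keeps $\pp(F_\ell)\le e^{-c_1 n}$, where $F_\ell$ is the failure event for slab $\ell$ and $c_1=c_1(\e,M,\delta)>0$ does not depend on $L$.

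\emph{Step 2: independence across slabs.} The events $F_\ell$ depend only on walks started in $\bS_\ell$ and restricted to it. Hence they are independent and $\pp(\bigcap_\ell F_\ell)\le e^{-2Lc_1 n}\le e^{-sn}/2$ once $L\ge L_0$.

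\emph{Step 3: filling the target box.} On $F_{\ell}^c$, every point of $\bL^\delta_{M,\ell}(n)$ infects, within time $(\mu+\e/2)n$, a connected set near $w_\ell$. We may take this set of size $\ge K$ by breeding inside a small box; the cost of failure is controlled by Lemma~\ref{kvth}, with the rate pushed up by $L$-fold repetition if needed. It remains to spread from there to a $(1-\delta)$-fraction of $\Lambda(n\eb_1,M\sqrt n)$ within time $\e n/2$ using only walks from $\overline{\Lambda}_M(n)$.

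For this I would apply Proposition~\ref{tln}(ii) to get many infected sites in the annular region around the box, and then Lemma~\ref{rnab} with $A$ these sites and $B=\Lambda(n\eb_1,M\sqrt n)$. Walks started outside the box do enter it with range density as in Lemma~\ref{kr}(ii). This gives $|\kR^A_{\e n/2}\cap B|\ge(1-\delta)|B|$ with failure probability $e^{-c|A|}$, which is $e^{-sn}$ once $|A|\ge Cn$. The requirement $|A|\gtrsim n$ comes from Proposition~\ref{tln}, which yields linearly many sites at overwhelming probability.

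\emph{Main obstacle.} The main difficulty is the uniformity in $y$: the definition of $\Lambda^{\e,\delta}_{M,L}(n)$ takes $\max_y$ over the slice. I would handle it by a union bound over the polynomially many $y$, which is affordable since each probability is $e^{-cn}$. I also expect the last filling step, restricted to walks outside the box, to need care, because the sites reached must be reached from \emph{every} $y$. I would couple through a common intermediate set near $w_\ell$ reached from all $y$ by time $(\mu+\e/2)n$, so the filling step is done once.
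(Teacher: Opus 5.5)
Your Steps 1 and 2 match the paper's proof of Lemma~\ref{lem:tsl}. You bound the slab passage time from the slice by a chain of $\sqrt n$-boxes using Proposition~\ref{tuld}(i) and Lemma~\ref{lem:ldlc}(ii), which already gives $e^{-cn}$ directly since there are only $O(\sqrt n)$ summands. A union bound handles $\max_y$, and disjointness of the slabs gives the $L$-fold independence. Routing every $y$ through a common point near $n\eb_1+2\ell\lfloor M\sqrt n\rfloor\eb_2$ is also what the paper does: that point is $y_\ell$, which plays the role of your $w_\ell$.

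The gap is in Step 3, where you need a large infected set $A$ near $w_\ell$ before invoking Lemma~\ref{rnab}.
\begin{itemize}
\item Proposition~\ref{tln}(ii) does not give linearly many infected sites. It only says that $\mathrm{B}^A_{\delta n}\not\subset\Lambda(M\sqrt n)$, which guarantees a connected infected set of order $M\sqrt n$ sites and nothing more.
\item It concerns the unrestricted infection, which may use frogs inside $\Lambda(n\eb_1,M\sqrt n)$. The definition of $\Lambda^{\e,\delta}_{M,L}(n)$ only allows intermediate frogs in $\overline{\Lambda}_M(n)$.
\item Lemma~\ref{rnab} only gives $|\kR^A_n\cap B|\ge\min\{cn|A|,(1-\delta)|B|\}$ with failure $e^{-c|A|}$; you dropped the $\min$. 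Since $|B|\asymp M^dn^{d/2}$, you need $|A|\gtrsim\max\{n,n^{d/2-1}\}$. So even ``linearly many'' sites would not cover a $(1-\delta)$-fraction when $d\ge5$. With $|A|\asymp\sqrt n$ you get neither the coverage nor a failure probability better than $e^{-c\sqrt n}$.
\end{itemize}
The fix is the paper's event $\kE^2_\ell$: apply Proposition~\ref{tln}(i) inside $\Lambda(y_\ell,\lfloor M\sqrt n/2\rfloor)\subset\bS^\delta_{M,\ell}(n)\subset\overline{\Lambda}_M(n)$. Starting from the single activated frog at $y_\ell$, every site of $\Lambda(y_\ell,\sqrt n)$ is then reached within $\e n/4$ using only slab frogs, with failure $e^{-cn}$ and $c$ independent of $L$. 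This can therefore be folded into the per-slab event. Then Lemma~\ref{rnab} with $A=\Lambda(y_\ell,\sqrt n)$, $|A|\asymp n^{d/2}$, yields the $(1-\delta)$-coverage within $\e n/2$ with failure $e^{-cn^{d/2}}$. This can be union-bounded over all $\ell$, as in Lemma~\ref{lem:tbyl}. With this, the breeding step via Lemma~\ref{kvth} is unnecessary.
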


\medskip
\noindent \underline{Step 3} (Hitting the endpoint). Once the infection is dense in the box $\Lambda(n\eb_1,M\sqrt{n})$, the target vertex $n\eb_1$ is reached quickly.  
The proof of Step 3 is presented below, while the proofs of Steps 0, 1, and 2 are given in  subsections \ref{subsec:breed}, \ref{subsect:lem4.3} and \ref{subsec:invad} respectively.

\subsection{Hitting the endpoint: Proof of \eqref{uuld} assuming Lemmas \ref{kvth}--\ref{lem:pec}}
\begin{proof}[ Proof of \eqref{uuld}]
	Fix $\xi>0$. Let $\e>0$ be sufficiently small and define
	\begin{align*}
		&{\rm T}_1:=\max_{\substack{\ell \in \Z\\1 \leq |\ell| \leq L}} {\rm T}(0, \bL_{M,\ell}^\delta(n))=\max_{\substack{\ell \in \Z\\1 \leq |\ell| \leq L}}\min_{y\in \bL_{M,\ell}^\delta(n)} {\rm T}(0,y),\\
		&{\rm T}_2:=\min\bigl\{ t(x,n\eb_1):x \in \Lambda^{\e,\delta}_{M,L}(n) \bigr\},
	\end{align*}
	where $\Lambda^{\e,\delta}_{M,L}(n)$ is defined in \eqref{lmne}. 
	Moreover, set
	\begin{align}
		\kE_{\mathrm{dense}}:=\bigl\{ |\Lambda^{\e,\delta}_{M,L}(n)| \geq (1-\delta) |\Lambda(n\eb_1,M\sqrt{n})| \bigr\}.\label{eq: dense set event}
	\end{align}
	By Lemma \ref{lem:pec}, we can find $\delta_0, M_0$ such that for all $M\geq M_0$ and $\delta \leq \delta_0$, there exists $L_0=L_0(\mathfrak{g}_d\xi, \e,M,\delta)$ such that for all $L\geq L_0$ and $n\in \N$ large enough,
	\ben{ \label{upec}
		\pp (\kE_{\mathrm{dense}}^c) \leq \exp(- \mathfrak{g}_d \xi n).
	}
If $\kE_{\mathrm{dense}}$ occurs then  $\Lambda^{\e,\delta}_{M,L}(n) \neq \varnothing$, and thus  ${\rm T}(0,n\eb_1) \leq {\rm T}_1+(\mu+\e)n+{\rm T}_2$ by the triangle inequality. 
	Hence,  
	\begin{align}\label{eq:pf_4.11}
		\begin{split}
			& \pp({\rm T}(0,n\eb_1) \geq (\mu+\xi)n)\\
			&\leq \pp(\{ {\rm T}_1+(\mu+\e)n+{\rm T}_2\geq (\mu+\xi)n\} \cap\kE_{\mathrm{dense}})+\pp(\kE_{\mathrm{dense}}^c)\\
			&\leq \sum_{k \in \Iintv{2\e n,(\xi-2\e)n}} \pp(\{ {\rm T}_1 = k,\,{\rm T}_2\geq (\xi-\e)n-k \} \cap\kE_{\mathrm{dense}})\\
			&\quad +\pp(\{ {\rm T}_1 \geq (\xi-3\e)n \} \cap\kE_{\mathrm{dense}})+\pp(\{ {\rm T}_2 \geq (\xi-3\e)n \} \cap\kE_{\mathrm{dense}})+\pp(\kE_{\mathrm{dense}}^c).
		\end{split}
	\end{align}
	Let us estimate the first term of the right-hand side in \eqref{eq:pf_4.11}. Let $K$ be a large constant chosen later. 	
For all $n$ sufficiently large and $k>2\e n$,
	\begin{align}\label{pt12ke}
		\begin{split}
			&\pp( \{ {\rm T}_1= k,\,{\rm T}_2\geq (\xi-\e)n-k \} \cap \kE_{\mathrm{dense}})\\
			&\leq \pp( \{ |\mathrm{B}_{k-\e n}|<K,\,{\rm T}_2\geq (\xi-\e)n-k \} \cap \kE_{\mathrm{dense}})+\pp(|\mathrm{B}_{k-\e n}|\geq K,\,{\rm T}_1 \geq k)\\
			&= \pp(|\mathrm{B}_{k-\e n}|<K)\,\pp( \{ {\rm T}_2\geq (\xi-\e)n-k \} \cap \kE_{\mathrm{dense}})+\pp(|\mathrm{B}_{k-\e n}| \geq K,\,{\rm T}_1\geq k),
		\end{split}
	\end{align}
where, in the last step, we have used the fact that $\{ |\mathrm{B}_{k-\e n}|<K \}$ depends only on the frogs inside $\Lambda(K)$, and neither $\kE_{\mathrm{dense}}$ nor $\T_2$  uses any frogs in $\Lambda(K)$ when $n$ is large enough.
	Furthermore, Lemma~\ref{Step73d} implies that given $\e>0$, we can take $\delta>0$ and $M \in \N$ such that for all $s \geq \e n$,
	\begin{align} \label{txsgd}
		\sup_{A \in \kA^{\eb_1}_{M,\delta,n}} \pp \left(\min_{x \in A} t(x,n\eb_1) \geq s \right) \leq \exp(-(\mathfrak{g}_d-\varepsilon)s),
	\end{align}
	where
	\begin{align*}
		\kA^{\eb_1}_{M,\delta,n}:=\bigl\{ A \subset \Lambda(n\eb_1,M\sqrt{n}) \setminus\{n \eb_1\}: |A| \geq (1-\delta) |\Lambda(n\eb_1,M\sqrt{n})| \bigr\}.
	\end{align*}
Note also that $\kE_{\mathrm{dense}}$ occurs if and only if   $\Lambda^{\varepsilon,\delta}_{M,L}(n) \in \kA^{\eb_1}_{M,\delta,n}$. Moreover,  $\Lambda^{\varepsilon,\delta}_{M,L}(n)$ depends only on frogs outside $\Lambda(n \eb_1, M\sqrt{n})$. Therefore, for all $n$ large enough and  $s \geq \e n$, \begin{align*}
		\pp( \{ \T_2 \geq s \} \cap\kE_{\mathrm{dense}})
		&= \sum_{A \in \kA^{\eb_1}_{M,\delta,n}} \pp \Bigl( \min_{x \in A} t(x,n\eb_1) \geq s,\, \Lambda^{\varepsilon,\delta}_{M,L}(n)=A \Bigr) \\
		&= \sum_{A \in \kA^{\eb_1}_{M,\delta,n}} \pp \Bigl( \min_{x \in A} t(x,n\eb_1) \geq s \Bigr) \,\pp\bigl( \Lambda^{\varepsilon,\delta}_{M,L}(n)=A \bigr)\\
		& \leq \sup_{A \in \kA^{\eb_1}_{M,\delta,n}} \pp \Bigl( \min_{x \in A} t(x,n\eb_1) \geq s \bigr) \leq \exp(-(\mathfrak{g}_d-\varepsilon)s),
	\end{align*}
using \eqref{txsgd}. 
	In particular,  for all positive integers $k \leq (\xi-2\eps)n$,
	\begin{align}\label{T2E}
		\pp( \{ {\rm T}_2\geq (\xi-\e)n-k \} \cap\kE_{\mathrm{dense}}) \leq \exp(-(\mathfrak{g}_d-\e)((\xi-\e) n- k)).
	\end{align}
	On the other hand, by Lemma~\ref{kvth}, for all $n\in \N$ large enough and $k\geq 2 \e n$,
	\ben{ \label{bkkn}
		\pp(|\mathrm{B}_{k-\e n}|<K) \leq \exp(-\kappa (1-\e) (k-\e n)).
	}
	Therefore, if $n$ is large enough, then
		\begin{align}\label{pt12e}
		\begin{split}
			&\sum_{k \in \Iintv{2\e n, (\xi-2\e)n}} \pp(|\mathrm{B}_{k-\e n}|<K) \,\pp( \{ \T_2 \geq (\xi-\e)n-k \} \cap\kE_{\mathrm{dense}})\\
			&\leq \sum_{k \in \Iintv{2\e n, (\xi-2\e)n}}  \exp(-(\mathfrak{g}_d-\e)((\xi-\e) n- k)-\kappa (1-\e) (k-\e n)) \\
			&\leq  \exp(-\mathfrak{g}_d \xi n +\e (\xi+\kappa+\mathfrak{g}_d)n) \sum_{k \in \Iintv{2\e n, (\xi-2\e)n}}  \exp (k(\mathfrak{g}_d  +\kappa\e-\kappa ))\\
			&\leq \exp(-\mathfrak{g}_d \xi n +\e (\xi+\kappa+\mathfrak{g}_d)n),
\end{split}
\end{align}
using  $\mathfrak{g}_d < \kappa/2$, by Lemma \ref{kvth}.
	Next, given $\delta, M, L$ satisfying  \eqref{upec}, \eqref{txsgd} and $\delta \leq \e/4$, by Lemma \ref{lem:t1} we can find $K$, such that for all $k\geq 2 \e n$ and $n\in \N$ large enough, 
	\begin{align}\label{eq: T1 estimate}
			\pp \left( {\rm T}_1\geq k, \,  |\mathrm{B}_{k-\e n}|\geq K \right)			&\leq
			\pp \left(\exists \, A \in {\rm Conn}_{\geq  K}(0), \,\max_{\ell: 1 \leq |\ell| \leq L}  {\rm T}(A, \bL_{M,\ell}^\delta(n) ) \geq \e n  \right)\notag \\
			&\leq \exp(-\mathfrak{g}_d \xi n),
	\end{align}
where ${\rm Conn}_{\geq  K}(0)$ denotes the set of all connected subsets of $\Z^d$  containing $0$ whose cardinality is larger than $K$. 	Combining this with \eqref{pt12ke} and \eqref{pt12e} yields that
	\ben{ \label{spt1t2e}
		\sum_{k \in \Iintv{2\e n, (\xi-2\e)n}}\pp(\{{\rm T}_1\geq k,\,{\rm T}_2\geq (\xi-\e)n-k \} \cap \kE_{\mathrm{dense}} ) \leq \exp(-\mathfrak{g}_d \xi(1-C \e) n),
	}
	where $C=C(\xi, \kappa)$ is a positive constant. Moreover, similarly to  \eqref{T2E},
	\aln{ 
    	\pp(\{{\rm T}_2\geq (\xi-3\e)n\} \cap\kE_{\mathrm{dense}} ) \leq \exp(-\mathfrak{g}_d (\xi-3\e)(1- \e) n).
	}
Using similar arguments for \eqref{bkkn} and \eqref{eq: T1 estimate}, we have 
\ben{ \label{pt2e}
\pp(\{{\rm T}_1\geq (\xi-3\e)n\} \cap\kE_{\mathrm{dense}} ) \leq \exp(-\mathfrak{g}_d (\xi-C \e) n).
} 
	Now, by decreasing $\e$ if needed, we can deduce \eqref{uuld} from 
	\eqref{upec},  \eqref{eq:pf_4.11}, \eqref{spt1t2e}-- \eqref{pt2e}.
\end{proof}
\subsection{Breeding seeds: Proof of Lemma \ref{kvth}} \label{subsec:breed}
By the definition of $\kappa$ and the shift invariance, given $\e>0$ and $K \in \N$, there exists $m=m(\e, K)\in\N$ such that for all $k\geq m$ and  for all connected sets $A$  with $|A|\leq K$,
\begin{align}\label{kmea}
	\max_{x \in A}\pp(\kR^x_k \subset A ) \leq \exp(-(\kappa -\e)k/|A|).
\end{align}
Suppose that $|{\rm B}_{ n}|\leq K$.
Let $t_0:=0$ and $t_1:= \inf\{t\in [t_0, n]:|{\rm B}_t\setminus {\rm B}_{t_0}|\geq 1\}$.
Successively, we define $t_{j+1}:= \inf\{t\in [t_j, n]:|{\rm B}_t\setminus {\rm B}_{t_{j}}|\geq 1\}$ for $j \in \N$, and set $\ell:=\inf\{j\in \N:t_{j+1}=\infty\}$.
For any $k \in \N$, $\mathfrak{A}_{n,K}^k$ is defined as the set of all sequences $\mathbf{(s,A)}=(s_j,A_j)_{j=0}^k$ on $\N_0 \times \Z^d$ satisfying that
\begin{itemize}
	\item $0=s_0<s_1<\dots<s_{k-1} \leq s_k=n$,
	\item $\{0 \} =A_0 \subsetneq A_1 \subsetneq\dots\subsetneq A_{k-1} \subset A_k$ and $|A_k| \leq K$,
	\item for all $j \in \{1,\dots,k\}$, $A_j$ is connected and $\max_{x \in A_j} \min_{y \in A_{j-1}}|x-y|_1=1$.
\end{itemize}
Note that the set $\mathfrak{A}_{n,K}^k$ includes all the realizations of $t_0,\dots,t_k$ and ${\rm B}_{t_0},\dots, \mathrm{B}_{t_k}$ given that $\ell=k$ and $|\rmB_n| \leq K$.
By successively applying the Markov property, we have for $k\leq K$,
\begin{align*}
	&\pp(|{\rm B}_{ n}|\leq K,\,\ell=k)\\
	&\leq \sum_{\mathbf{(s,A)} \in \mathfrak{A}_{n,K}^k} \pp (\mathrm{B}_t=A_j \, \forall \, t\in[s_j,s_{j+1}), \, j\leq k-1) \\
	&\leq \sum_{\mathbf{(s,A)} \in \mathfrak{A}_{n,K}^k} \pp (\mathrm{B}_t=A_j \, \forall \, t\in[s_j,s_{j+1}), \, j\leq k-2) \left(\max_{x \in A_{k-1}}\pp\left( \kR^{x}_{s_{k}-s_{k-1}-1} \subset A_{k-1}\right)\right)^{|A_{k-1}|}\\
	&\leq \sum_{\mathbf{(s,A)} \in \mathfrak{A}_{n,K}^k} \, \,  \prod_{j=1}^k \left(\max_{x \in A_{j-1}}\pp\left( \kR^{x}_{s_{j}-s_{j-1}-1} \subset A_{j-1}\right)\right)^{|A_{j-1}|}.
\end{align*}
Fix $m \in \N$ such that \eqref{kmea} holds. For $\mathbf{(s,A)} \in \mathfrak{A}_{n,K}^k$, we set  $\Gamma\mathbf{(s,A)} :=\{j \in [k]:s_j -s_{j-1} -1 \geq m \}$.
Then, observe that for all $\mathbf{(s,A)}=(s_j,A_j)_{j=0}^k \in \mathfrak{A}_{n,K}^k$,
\begin{align*}
	\sum_{j \in \Gamma\mathbf{(s,A)}} ({s_j-s_{j-1} -1}) = n-k - \sum_{j \in [k] \setminus \Gamma\mathbf{(s,A)}} ({s_j-s_{j-1}-1}) \geq  n - mk-k.
\end{align*}
Hence, by \eqref{kmea}, one has for all $\mathbf{(s,A)}=(s_i,A_i)_{i=0}^k \in \mathfrak{A}_{n,K}^k$,
\begin{align*}
	&\prod_{j \in \Gamma\mathbf{(s,A)}} \left(\max_{x \in A_{j-1}}\pp\left( \kR^{x}_{s_{j}-s_{j-1}-1} \subset A_{j-1}\right)\right)^{|A_{j-1}|}\\
	&\leq  \exp \left( -(\kappa-\e) \sum_{j \in \Gamma\mathbf{(s,A)}}({s_j-s_{j-1}-1})  \right)
	\leq  \exp \left( -(\kappa-\e) ( n- km-k) \right).
\end{align*}
Therefore, since $|\mathfrak{A}_{n,K}^k| \leq 2^{(2K+1)^d K} n^{K}$, for all $n$ sufficiently large
\begin{align*}
	\pp(| {\rm B}_{ n}|\leq K) = \sum_{k=1}^K \pp(| {\rm B}_{ n}|\leq K, \ell=k) &\leq C n^{K} \exp \left( -(\kappa-\e) ( n- Km-K) \right) \\
	& \leq \exp \left( -(\kappa-2\e) n  \right)
\end{align*}
where $C=C(K)$ is a positive constant. Thus \eqref{eq:kappa} follows. 
Let us next prove that $\kappa >2 \mathfrak{g}_d$. 
Denote the local time of the simple random walk on set $A$ up to time $n$ by 
\begin{align*}
	l_n(A) = \sum_{i=0}^n \mathbf{1}(S_i^0\in A).
\end{align*}
Suppose that $\kR_n \subset A$.
Then, since $l_n(A)= n+1$, we have
\begin{align*}
	\sum_{\substack{x,y\in A\\x \not= y}}l_n(\{x,y\})
	= \sum_{\substack{x,y\in A\\x \not= y}}(l_n(\{x\})+l_n(\{y\}))
	= 2(|A|-1)l_n(A)
	\geq  2(|A|-1)\,n.
\end{align*}
Since $|\{ (x,y):x,y \in A,\,x \not= y \}|=|A|(|A|-1)$, this implies that $l_n(\{x,y\}) \geq 2n/|A|$ must hold for some $x,y \in A$ with $x \not=y$.
Therefore, by the union bound,
\begin{align}\label{loe1}
	\begin{split}
		\pp(\kR_n \subset A)
		&\leq \pp \left( \exists x,y \in A \text{ with } x \neq y \text{ such that } l_n(\{x,y\})  \geq 2n/|A| \right)\\
        & \leq \sum_{x \in A} \sum_{ y \in A \setminus \{x\}} \pp \left(  l_n(\{x,y\})  \geq 2n/|A| \right) \\
		& \leq \sum_{x \in A} \sum_{ z \in (A-x) \setminus \{0\}}\pp (l_{\infty}(\{0,z\}) \geq 2n/|A|),
	\end{split}
\end{align}
where $A-x=\{y-x: y \in A\}$.
To estimate the summand above, for any $x,y \in \Z^d$, let $\tau^y_x = \inf\{t\geq 0: S^y_t =x  \}$ and write $\tau_x:=\tau_x^0$ simply.
Moreover, define $G_d(z):=\E[l_{\infty}(\{z\})]$ for $z \in \Z^d$, which is the  Green function for the simple random walk. 
From \cite[Lemma~{2.1}]{Cetal}, there exists a constant $C>0$ such that for all $m\geq 1$ and $z \in \Z^d$,
\begin{align}\label{loela}
	\pp(l_{\infty}(\{0,z\}) \geq m) \leq C \left(\frac{\lambda -1}{\lambda}\right)^m,
\end{align}
where $\lambda:=G_d(0)+G_d(z)$ is the largest eigenvalue of the matrix
\begin{align*}
	\begin{pmatrix}
		G_d(0) & G_d(z)\\
		G_d(z) & G_d(0)
	\end{pmatrix}.
\end{align*}
Letting $\tau'_z = \inf \{t\geq 0: |S_t-z|_1=1  \}$, we use the strong Markov property to obtain for any $z \in \Z^d \setminus \{0\}$,
\begin{align*}
	\pp(\tau_z<\infty)
	= \E\left[ \mathbf{1}\left\{\tau^{S_{\tau'_z}}_z < \infty\right\} \mathbf{1}\{\tau'_z < \infty\} \right]
	&=\E\left[ \pp\left(\tau^{S_{\tau'_z}}_z < \infty \mid \tau'_z < \infty \right) \mathbf{1}\{\tau'_z < \infty\}\right]\\
	&= \pp(\tau_{\eb_1} < \infty) \pp(\tau'_z < \infty) \leq \pp(\tau_{\eb_1} < \infty).
\end{align*}
This, together with the fact that $G_d(z)=\pp(\tau_z<\infty) G_d(0)$, implies $G_d(z) \leq G_d(\eb_1)=G_d(0)-1$, and hence
\begin{align*}
	\lambda =  G_d(0)+G_d(z)  \leq 2 G_d(0)-1.
\end{align*}
Therefore, combining \eqref{loe1} and \eqref{loela}, we have for any finite connected set $A$,
\begin{align*}
	\pp(\kR_n \subset A) \leq C|A| ^2\exp\left(- \log \left(\frac{2G_d(0)-1}{2G_d(0)-2} \right) \times \frac{2n}{|A|} \right).
\end{align*}
Hence, from the definition of $\kappa$ in \eqref{eq:kappa}, we have that 
\begin{align*}
	\kappa \geq 2\log \left( \frac{2G_d(0)-1}{2G_d(0)-2} \right).
\end{align*}
To conclude that $\kappa >2 \mathfrak{g}_d$,  it remains to prove
\begin{align}\label{lokap}
	\log \left( \frac{2G_d(0)-1}{2G_d(0)-2} \right)>\mathfrak{g}_d,
\end{align}
where we recall that $\mathfrak{g}_d = \pp(S_i \neq 0 \, \forall \, i \geq 1 )$. For this, consider the function $f(t):=t \log (\tfrac{2t-1}{2t-2})-1$ for $t>1$. Using the inequality $\log (1+x) <x$,
\begin{align*}
	f'(t) = \log (1+\tfrac{1}{2t-2}) + \tfrac{2t}{2t-1} - \tfrac{2t}{2t-2} < \tfrac{1}{2t-2} + \tfrac{2t}{2t-1} - \tfrac{2t}{2t-2}=\tfrac{-1}{(2t-1)(2t-2)}<0.
\end{align*}
Hence, $f(t)$ is non-increasing in $t>1$.
From \cite[Lemma~1 and the argument below the lemma]{OS}, the return probability $\pi_d:=\pp(\exists i \geq 1: S_i=0)$ satisfies that $\pi_d \leq \pi_3<0.341$ for all $d\geq 3$.
This means that
\begin{align*}
	G_d(0) =1/(1-\pi_d)<1/(1-0.341)<1.52.
\end{align*}
Hence, since $f$ is non-increasing,
\begin{align*}
	G_d(0)\log\left( \frac{2G_d(0)-1}{2G_d(0)-2} \right)-1=f(G_d(0)) \geq f(1.52)>0.02>0,
\end{align*}
which implies \eqref{lokap} as $\mathfrak{g}_d=1/G_d(0)$.\qed
\subsection{Reaching the slices: Proof of Lemma~\ref{lem:t1}}\label{subsect:lem4.3} 
Throughout this subsection, we fix $s,\e>0$, $L,M \in \N$ and $\delta \in(0,\e/3]$. First, recall
\[ \bL_{M,\ell}^\delta(n)=[-\delta n, (1+\delta)n]\times \{ 2\ell {\lfloor M\sqrt{n}\rfloor} \}\times [-\delta n, \delta n]^{d-2} \cap \Z^d.  \]
Let $A$ be a connected set containing $0$.
Then, for any $R \geq 3$,
\begin{align}\label{taiii}
	\begin{split}
		&\pp\left( \max_{\ell: 1 \leq |\ell| \leq L}{\rm T}(A,\bL_{M,\ell}^\delta(n))\geq \e n \right)\\
		&\leq \pp\left( \max_{\ell: 1 \leq |\ell| \leq L}{\rm T}(A,\bL_{M,\ell}^\delta(n))\geq 3 \delta n; \, \mathrm{B}^A_{\delta n} \not \subset \Lambda(3R \sqrt{n})\right) + \pp(\mathrm{B}^A_{\delta n} \subset \Lambda(3R \sqrt{n})).
	\end{split}
\end{align}
By Proposition \ref{tln} (ii), there exists $K=K(s,\delta,R)\in \N$ such that for any connected set $A$ containing $0$ with $|A| \geq K$,
\begin{align}\label{eq:ball}
	\pp(\mathrm{B}^A_{\delta n} \subset \Lambda(3R \sqrt{n})) \leq \exp(-2sn).
\end{align}
A vertex $x$ is called \textbf{bad} if $\max_{\ell: 1 \leq |\ell| \leq L} {\rm T}_{\Lambda(x, \sqrt{n})} (x, \bL_{M,\ell}^\delta(n)) \geq 2\delta n$. We claim that 
\begin{align}\label{xibad}
	\begin{split}
		&\pp\Bigl( \max_{\ell: 1 \leq |\ell| \leq L}{\rm T}(A,\bL_{M,\ell}^\delta(n))\geq 3 \delta n; \, \mathrm{B}^A_{\delta n} \not \subset \Lambda(3R \sqrt{n}) \Bigr)\\
		& \leq \pp\left(
		\begin{minipage}{18em}
			$\exists \, (x_i)_{i\in[R]} \subset  \Lambda(3R\sqrt{n})$: $(x_i)_{i\in[R]}$ are bad;  $|x_i-x_j|_{\infty} \geq 3\sqrt{n}$, $1 \leq i \not= j \leq R$
		\end{minipage}
		\right)
	\end{split}
\end{align}
Indeed,  if 
$\mathrm{B}^A_{\delta n} \not\subset \Lambda(3R\sqrt{n})$,  there exist  vertices $x_1,\cdots,x_R \in \Lambda(3R \sqrt{n})\cap \mathrm{B}^A_{\delta n}$ such that $|x_i-x_j|_{\infty} \geq 3\sqrt{n}$ for all distinct $i,j \in [R]$ (this is possible since $\mathrm{B}^A_{\delta n}$ is connected). Assume  further that $\max_{1 \leq |\ell| \leq L} {\rm T} (A, \bL_{M,\ell}^\delta(n)) \geq 3\delta n$. Then,  for all $i \in [R]$,
\begin{align*}
	\max_{\ell: 1 \leq |\ell| \leq L}  {\rm T}_{\Lambda(x_i,\sqrt{n})}(x_i, \bL_{M,\ell}^\delta(n)) &\geq \max_{\ell: 1 \leq |\ell| \leq L}{\rm T}(x_i, \bL_{M,\ell}^\delta(n)) \\
	&\geq  \max_{\ell:1 \leq |\ell| \leq L} {\rm T}(A, \bL_{M,\ell}^\delta(n)) - {\rm T}(A,x_i) \geq  2\delta n,
\end{align*}
since   ${\rm T}(A,x_i)\leq \delta n$ (as $x_i\in \mathrm{B}^A_{\delta n}$). Hence,   $x_i$ is bad for all $i \in [R]$ and \eqref{xibad} holds.

 We next estimate the probability that a given vertex is bad. By Proposition~\ref{tln}~(i), there exists a positive constant $c=c(\delta)$ such that for all $x \in \Z^d$
 \begin{align*}
 	\pp(x \textrm{ is bad}) & \leq
 	\pp\left( x \textrm{ is bad}, \, {\rm T}_{\Lambda(x,\sqrt{n})}(x,y) \leq \delta n  \, \, \forall \, y \in \Lambda(x,\sqrt{n}) \right)  \notag \\
 	& \hspace{1 cm}+ \pp\bigl( \exists \, y \in \Lambda(x,\sqrt{n}): {\rm T}_{\Lambda(x,\sqrt{n})}(x,y)>\delta n \bigr) \notag \\
 	& \leq 	\pp\left( x \textrm{ is bad}, \, {\rm T}_{\Lambda(x,\sqrt{n})}(x,y) \leq \delta n  \, \, \forall \, y \in \Lambda(x,\sqrt{n}) \right)  + \exp(-2cn). 
 \end{align*}
  Suppose that  $x$ is bad and ${\rm T}_{\Lambda(x,\sqrt{n})}(x,y) \leq \delta n$ for all $y \in \Lambda(x,\sqrt{n})$. Then for all $ \ell \in \Z: 1 \leq |\ell| \leq L$
\be{
t(y,\bL_{M,\ell}^\delta(n))\geq {\rm T}_{\Lambda(x,\sqrt{n})}(x,\bL_{M,\ell}^\delta(n)) - {\rm T}_{\Lambda(x,\sqrt{n})}(x,y)  \geq \delta n.
}
Therefore, combining the last two display equations, we have 
\begin{align}
	\pp(x \textrm{ is bad}) & 
	\leq \pp \left( \max_{\ell: 1 \leq |\ell| \leq L}t(y,\bL_{M,\ell}^\delta(n))\geq  \delta n \,\, \forall \, y \in \Lambda(x,\sqrt{n}) \right) + \exp(-2cn). \label{pxbad}
\end{align}
Moreover, if $x \in \Lambda(3R\sqrt{n})$, then
\aln{
	&\pp\left( \max_{\ell: 1 \leq |\ell| \leq L}t(y,\bL_{M,\ell}^\delta(n)) \geq  \delta n \,\, \forall \, y \in \Lambda(x,\sqrt{n}) \right)\notag\\
	 & \leq \left(\max_{z \in \Lambda(4R\sqrt{n})} \pp \left( \max_{\ell: 1 \leq |\ell| \leq L}t(z,\bL_{M,\ell}^\delta(n))\geq  \delta n \right) \right)^{|\Lambda(\sqrt{n})|} \notag \\
	&= \left(1-\min_{z \in \Lambda(4R\sqrt{n})} \pp \left( \max_{\ell: 1 \leq |\ell| \leq L}t(z,\bL_{M,\ell}^\delta(n))\leq  \delta n \right) \right)^{|\Lambda(\sqrt{n})|}. \label{pyll}
}
We claim that for any $R,L,M$ and $\delta$ satisfying \eqref{upec}, there exists a positive constant $\theta=\theta(R,L,M,\delta)$ such that
\ben{ \label{pdp}
	\liminf_{n\rightarrow \infty}\min_{z \in \Lambda(4R\sqrt{n})} \pp \left( \max_{\ell: 1 \leq |\ell| \leq L}t(z,\bL_{M,\ell}^\delta(n))\leq  \delta n \right) \geq \theta.
}
Indeed, let $Q=2LM+4R$  and denote the upper and lower faces  along the second coordinate of the box $\Lambda(\lf Q\sqrt{n}\rf)$ by
\aln{\label{def: Fpm}
F_\pm = [-\lfloor Q\sqrt{n}\rfloor,\lf Q\sqrt{n}\rf] \times \{ \pm \lf Q\sqrt{n}\rf\} \times [-\lf Q\sqrt{n}\rf,\lf Q\sqrt{n}\rf]^{d-2}.
}
Observe that for any $z \in \Lambda(\lf Q\sqrt{n}\rf)$ if $\kR^z_{\delta n/2}$ touches both the faces $F_\pm$ then $\kR^z_{\delta n/2}$ intersects all the slices $\bL_{M,\ell}^\delta(n)$ with $|\ell| \leq L$. Hence, for all $n$ sufficiently large\footnote{For the general-direction analogue, see Lemma~\ref{lem:gen-slices}.}
\aln{\label{eq: hit L}
	&\min_{z \in \Lambda(4R\sqrt{n})} \pp \left( \max_{\ell: 1 \leq |\ell| \leq L}t(z,\bL_{M,\ell}^\delta(n))\leq  \delta n \right)\\
	& \geq \min_{z \in \Lambda(\lceil Q\sqrt{n}\rceil)} \pp \left( \kR^z_{\delta n/2} \cap F_ -\neq \varnothing, \kR^z_{\delta n/2} \cap F_+ \neq \varnothing \right) \geq \theta,\notag
}
    where $\theta=\theta(Q,\delta)$ is a  positive constant, by  Lemma  \ref{kr} (iii). Hence, \eqref{pdp} holds, and together with \eqref{pyll}, this implies
\be{
	\pp\left( \max_{\ell: 1 \leq |\ell| \leq L}t(y,\bL_{M,\ell}^\delta(n)) \geq  \delta n \,\, \forall \, y \in \Lambda(x,\sqrt{n}) \right) \leq \exp(-an^{d/2}),
}
where $a=a(R,L,M,\delta)$ is a positive constant. Combining the above inequality and  \eqref{pxbad}, we arrive at
\be{
	\pp(x \textrm{ is bad})  \leq \exp(-a n^{d/2})+ \exp(-2c n)\leq \exp(-cn).
}
In addition, if $|x-y|_\infty \geq 3\sqrt{n}$, then the events $\{x \textrm{ is bad} \}$ and $\{y \textrm{ is bad} \}$  are independent.
Thus, letting $R:=\max\{3,\lceil 3s/c \rceil \}$, we have
\begin{align*}
	\pp\left(
	\begin{minipage}{18em}
		$\exists \, (x_i)_{i\in[R]} \subset  \Lambda(3R\sqrt{n})$: $(x_i)_{i\in[R]}$ are bad;  $|x_i-x_j| \geq 3\sqrt{n}$, $1 \leq i \not= j \leq R$
	\end{minipage}
	\right)
	&\leq |\Lambda(3R\sqrt{n})|^R \exp(-cRn)\\
	& \leq \exp(-2sn).
\end{align*}
Combining this with \eqref{taiii}, \eqref{eq:ball} and \eqref{xibad}, we obtain for any connected set $A$ containing $0$ with $|A|\geq K$,
\begin{align} \label{mtaln}
	\pp\left( \max_{\ell: 1 \leq |\ell| \leq L}{\rm T}(A,\bL_{M,\ell}^\delta(n))\geq \e n \right) \leq 2 \exp(-2sn).
\end{align}
Finally, by using the above estimate and the union bound, it follows that for all $n$ sufficiently large,
\begin{align*}
	&\pp \left(\exists \, A \in {\rm Conn}_{\geq  K}(0), \, \max_{\ell:  1 \leq |\ell| \leq L}  {\rm T}(A, \bL_{M,\ell}^\delta(n)) \geq \e n \right) \\
	& \leq  \pp \left(\exists \, A \in {\rm Conn}_{K}(0), \, \max_{\ell:  1 \leq |\ell| \leq L}  {\rm T}(A, \bL_{M,\ell}^\delta(n)) \geq \e n \right)\\
	&\leq \sum_{ A \in {\rm Conn}_{K}(0)} \pp\left( \max_{\ell: 1 \leq |\ell| \leq L}{\rm T}(A,\bL_{M,\ell}^\delta(n))\geq \e n \right)\leq 2(2d)^{2K} \exp(-2sn)
	\leq \exp(-sn),
\end{align*}
here recall that ${\rm Conn}_{\geq  K}(0)$ (resp. ${\rm Conn}_{ K}(0)$) denotes the set of all connected sets containing $0$ whose cardinality is at least (resp. equal to)  $K$, and note that $|{\rm Conn}_{ K}(0)| \leq (2d)^{2K}$. \qed
\subsection{Invading the target box: Proof of Lemma~\ref{lem:pec}} \label{subsec:invad}
We first prepare  some lemmas.
\begin{lem} \label{lem:tbyl}
	Given $M,L,\e,\delta$, there exists $c=c(M,L,\e,\delta)>0$ such that for all sufficiently  large $n\in \N$,
	\begin{align*}
		\pp \bigl( \mathcal{V}_{M,L}^{\e,\delta}(n) \bigr) \geq 1-\exp(-cn^{d/2}),
	\end{align*}
	where $\mathcal{V}_{M,L}^{\e,\delta}(n)$ is the event that for all $\ell \in \Z$ with $1 \leq |\ell| \leq L$,
	\begin{align*}
		\Bigl| \Bigl\{ x \in \Lambda(n\eb_1,M\sqrt{n}) \setminus \{n\eb_1\}: \min_{ z \in \Lambda(y_{\ell},\sqrt{n})} \, t(z,x) \leq \tfrac{\e n}{2} \Bigr\} \Bigr| \geq (1-\delta) |\Lambda(n\eb_1,M\sqrt{n})|,
	\end{align*}
and for $\ell \in \Z$ \[ y_\ell:=n\mathbf{e}_1+2\ell \lf M\sqrt{n} \rf\mathbf{e}_2 \in \bL_{M,\ell}^\delta(n).
\]
\end{lem}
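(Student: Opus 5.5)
Since $L$ is fixed, a union bound over the $2L$ values of $\ell$ reduces the claim to a single $\ell$. Fix $\ell$ with $1\le|\ell|\le L$, and write $B:=\Lambda(n\eb_1,M\sqrt n)\setminus\{n\eb_1\}$ and $A:=\Lambda(y_\ell,\sqrt n)$. The set in the event is exactly $\kR^A_{\e n/2}\cap B$, where $\kR^A_s$ is the joint range from \eqref{eq:RWrange}. Everything therefore comes down to showing that the ranges of the independent walks started from the $|A|\asymp n^{d/2}$ sites of $A$ cover all but a $\delta$-fraction of $B$ within time $\e n/2$, with failure probability at most $\exp(-cn^{d/2})$.

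The tool is Lemma~\ref{rnab}, applied with time parameter $m:=\lfloor \e n/2\rfloor$ in place of $n$. Every $x\in A$ and $y\in B$ satisfy $|x-y|_\infty\le (2|\ell|M+M+1)\sqrt n+1\le (2LM+M+2)\sqrt{2/\e}\,\sqrt m$. Also $|B|\ge (M\sqrt n)^d\ge \delta' m^{d/2}$ for a small $\delta'$. So we may pick $\delta'=\delta'(M,L,\e,\delta)\le\delta$ small enough that the hypotheses of Lemma~\ref{rnab} hold with $\delta'$. We apply the lemma with tolerance $\delta'$, noting that $(1-\delta')|B|\ge(1-\delta)|B|$; the $-1$ from removing $n\eb_1$ can be absorbed by shrinking $\delta'$ slightly. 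Lemma~\ref{rnab} then gives, for $d\ge3$,
\[
\pp\bigl(|\kR^A_{m}\cap B|\ge \min\{c\,m|A|,(1-\delta')|B|\}\bigr)\ge 1-\exp(-c|A|).
\]
Since $m|A|\asymp n^{1+d/2}$ while $|B|\asymp M^d n^{d/2}$, the minimum equals $(1-\delta')|B|$ for large $n$. Finally $|A|\ge n^{d/2}$, so the failure probability is at most $\exp(-cn^{d/2})$. For $d=2$ the argument is the same with the $d=2$ version of Lemma~\ref{rnab}: the first term $c\,m|A|/\log m\asymp n^2/\log n$ still dominates $|B|\asymp n$.

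A union bound over $\ell$ then gives $\pp(\mathcal V^{\e,\delta}_{M,L}(n)^c)\le 2L\exp(-cn^{d/2})\le\exp(-c'n^{d/2})$.

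The only point needing care is matching parameters in Lemma~\ref{rnab}. Its single $\delta$ plays two roles, the diameter bound $\sqrt m/\delta$ and the density $|B|\ge\delta m^{d/2}$, and it must also be at most the target fraction. Taking $\delta'$ as the minimum of these constraints handles all three, since Lemma~\ref{rnab} holds for every $\delta'\in(0,1)$. The dependence of $c$ on $(M,L,\e,\delta)$ comes only through $\delta'$ and the ratio $m/n$. I expect no genuine obstacle.
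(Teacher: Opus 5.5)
Your proposal is correct and follows the paper's own proof: apply Lemma~\ref{rnab} to $A=\Lambda(y_\ell,\sqrt n)$ and the target box, note that $|A|$ and $|B|$ are both of order $n^{d/2}$ so the minimum is $(1-\delta)|B|$ and the failure probability is $\exp(-cn^{d/2})$, then take a union bound over the $2L$ values of $\ell$. Your additional bookkeeping makes explicit what the paper leaves implicit: running the lemma at time $\lfloor \e n/2\rfloor$, shrinking the lemma's $\delta$ to satisfy the diameter and density hypotheses, and absorbing the removal of $n\eb_1$.
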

\begin{proof}
	Fix $\ell \in \Z$ with $1 \leq |\ell| \leq L$.
	For simplicity, we write $A=\Lambda(y_\ell,\sqrt{n})$ and $B=\Lambda(n\eb_1,M\sqrt{n})$.
	Due to Lemma~\ref{rnab}, there exists a constant $c>0$ such that for all $n$ sufficiently large,
	\begin{align*}
		&\pp \left(\Bigl| \{ x \in B: \min_{z \in A} \, t(z,x) \leq \tfrac{\e n}{2} \} \Bigr| \geq (1-\delta) | B|\right)\\
		&= \pp \left(| \kR_{\e n/2}^A \cap B| \geq \min\{ cn|A|,(1-\delta)|B| \} \right)
		\geq 1-\exp(-cn^{d/2}),
	\end{align*}
where we note that $|A|$ and $|B|$ have order $n^{d/2}$. 
	Since $\ell$ is arbitrary, we get the lemma by using the union bound.
\end{proof}
\begin{lem}\label{lem:tsl}
	Fix $s,\e>0$.
	There exist $M_0 \in \N$ and $\delta_0>0$ such that the following holds: for all $M\geq M_0$ and $\delta \in (0,\delta_0]$:
	there exists $L_0=L_0(s,\e,M,\delta)\in \N$ such that if $L\geq L_0$ and $n$ is large enough, then
	\begin{align*}
		\pp\bigl( \mathcal{W}_{M,L}^{\e,\delta}(n) \bigr) \geq 1- \exp(-sn),
	\end{align*}
	where 
	\begin{align*}
	\mathcal{W}_{M,L}^{\e,\delta}(n) = \Big \{ \min_{\ell: 1 \leq |\ell| \leq L}	\max_{y \in \bL_{M,\ell}^\delta(n)} \, \,\max_{z\in \Lambda(y_{\ell}, \sqrt{n})} \T_{\bS_{M,\ell}^\delta(n)} (y,z) \leq (\mu + \tfrac{\e}{2})n \Big\},
	\end{align*}
with  the slab $\bS_{M,\ell}^\delta(n)$ defined by 
	\begin{align}\label{bsll}
		[-2\delta n, (1+2\delta)n]\times [(2\ell-1) \lfloor M\sqrt{n} \rfloor +1, (2\ell+1) \lfloor M\sqrt{n} \rfloor -1 ]\times [-2\delta n, 2\delta n]^{d-2} \cap \Z^d,
	\end{align}
	and recall that  $y_\ell:=n\mathbf{e}_1+2\ell {\lfloor M\sqrt{n}\rfloor}\mathbf{e}_2 \in \bL_{M,\ell}^\delta(n)$.
\end{lem}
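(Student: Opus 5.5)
The plan is to show that a single slab fails with probability at most $e^{-c_1 n}$, for a constant $c_1=c_1(\e,M,\delta)>0$ not depending on $L$, and then to use independence across the disjoint slabs. For $\ell\ne\ell'$ the slabs $\bS_{M,\ell}^\delta(n)$ and $\bS_{M,\ell'}^\delta(n)$ are disjoint, since their $\eb_2$-ranges $[(2\ell-1)\lfloor M\sqrt n\rfloor+1,(2\ell+1)\lfloor M\sqrt n\rfloor-1]$ do not overlap. The event
\[
G_\ell:=\Bigl\{\max_{y\in\bL_{M,\ell}^\delta(n)}\max_{z\in\Lambda(y_\ell,\sqrt n)}\T_{\bS_{M,\ell}^\delta(n)}(y,z)\le(\mu+\tfrac\e2)n\Bigr\}
\]
depends only on the frogs started in $\bS_{M,\ell}^\delta(n)$, so the events $(G_\ell)_{1\le|\ell|\le L}$ are independent. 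Hence $\pp((\mathcal W_{M,L}^{\e,\delta}(n))^c)=\prod_\ell\pp(G_\ell^c)\le e^{-2Lc_1n}$, and choosing $L_0\ge s/(2c_1)$ gives the claim.

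For a single slab, fix $y\in\bL_{M,\ell}^\delta(n)$ and $z\in\Lambda(y_\ell,\sqrt n)$. At the end I take a union bound over these polynomially many pairs, which costs only a factor $n^{2d}$. Note that $|y-z|_1\le(1+2\delta+o(1))\,n$ roughly, but the ratio $\mu(z-y)/n$ can be as large as $\mu(\eb_1)(1+\delta)+\delta\mu(\eb_3)(d-2)+o(1)$. This is why $\delta_0$ must be small: it ensures $\mu(z-y)\le(\mu+\e/8)n$, using that $\mu$ is a norm and that $z-y$ has $\eb_2$-component $O(\sqrt n)$. Next, join $y$ to $z$ by a chain of points $y=w_0,w_1,\dots,w_k=z$ along the straight segment, with consecutive differences $|w_{i+1}-w_i|_1\le\sqrt n$, so that $k\asymp\sqrt n$. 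Each box $\Lambda(w_i,M\sqrt n)$ lies inside the slab, provided $M\sqrt n$ is compared with the slab half-width $\lfloor M\sqrt n\rfloor$; if needed I would apply Proposition~\ref{tuld} with a constant $M'<M$, which is fine since that proposition only needs some fixed $M$, and I take $M_0$ at least that value, absorbing a factor-$2$ adjustment. By translation invariance, $X_i:=\T_{\Lambda(w_i,M'\sqrt n)}(w_i,w_{i+1})$ has the law in Proposition~\ref{tuld}(i). With $a=(1+\e')\mu(w_{i+1}-w_i)$ and $b=n$, this gives $\pp(X_i\ge t)\le e^{-ct}$ for $t\in[a,n]$.

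The $X_i$ depend only on frogs in their own boxes, so $X_i$ and $X_j$ are independent when $|i-j|\ge m:=4M'+2$. Moreover $\T_{\bS}(y,z)\le\sum_i X_i$. Lemma~\ref{lem:ldlc}(ii) with $h=\e n/8$ then gives
\[
\pp\Bigl(\sum_i X_i\ge\sum_i a_i+\tfrac{\e n}{8}\Bigr)\le m e^{-c\e n/(16m)}2^{\lceil k/m\rceil}+k e^{-cn},
\]
which is $e^{-c_1n}$ since $k=O(\sqrt n)$. The lemma is stated with a uniform $a$, but the same proof works with $a_i$ varying in $i$. Finally, $\sum_i a_i\le(1+\e')\sum_i\mu(w_{i+1}-w_i)$. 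Because the $w_i$ lie along a segment (with lattice rounding errors $O(1)$ per step, hence $O(\sqrt n)$ in total), this is at most $(1+\e')(\mu(z-y)+O(\sqrt n))\le(\mu+\e/4)n$ once $\e'$ is small. This yields $\pp(G_\ell^c)\le n^{2d}e^{-c_1n}$.

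The main obstacle is the bookkeeping in two places. First, the box radius in Proposition~\ref{tuld} must fit inside the slab width while $M$ remains free to be large. Second, the error in the norm $\mu$ coming from the transverse spread $\delta n$ of the slices must be controlled. The second is handled by taking $\delta_0$ small depending on $\e$. The first is handled by fixing the $M$ of Proposition~\ref{tuld} and letting $M_0$ be a large multiple of it, using the boxes $\Lambda(w_i,M_{\mathrm{prop}}\sqrt n)$, which sit well inside the slab.
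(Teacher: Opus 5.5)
Your argument is correct, and it shares the paper's skeleton. Both proofs use independence of the events attached to the disjoint slabs, a chain on scale $\sqrt n$ whose box-restricted passage times are controlled by Proposition~\ref{tuld}(i), and Lemma~\ref{lem:ldlc}(ii) for the finite-range dependent sum.

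The difference is the route. The paper sends every $y\in\bL_{M,0}^\delta(n)$ to the single anchor $y_0=n\eb_1$:
\begin{itemize}
\item first by a transverse leg to $y_*=y^1\eb_1$, of length $O(\delta n)$, which costs at most $\e^2 n$ by a crude bound;
\item then along $\Z\eb_1$, where only $\mu(\eb_1)=\mu$ enters;
\item the spread from $y_0$ to all of $\Lambda(y_0,\sqrt n)$ is handled once, as the separate event $\kE_0^2(n)$, via Proposition~\ref{tln}(i).
\end{itemize}
So the paper's union bound runs only over $y$, and no oblique values of $\mu$ appear.

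You instead chain straight from $y$ to each $z$. This costs you three things: the norm estimate $\mu(z-y)\le(\mu+\e/8)n$ (which is where $\delta_0$ enters), a union bound over pairs $(y,z)$, and a step-dependent version of Lemma~\ref{lem:ldlc}(ii). That version does hold, since $Y_i=X_i\mathbf{1}\{X_i\in[a_i,b]\}$ still satisfies $\pp(Y_i\ge t)\le e^{-ct}$ for all $t>0$. In return you avoid Proposition~\ref{tln}(i) and never use the coordinate axis. Your $\delta_0$ also depends only on $\e$, because the threshold $(1+\e')\mu(x)$ in Proposition~\ref{tuld} does not involve $M$.

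Three bookkeeping points need fixing; none affects the conclusion.
\begin{itemize}
\item Choose the number of chain points as $k=\lceil |z-y|_1/(\sqrt n-d)\rceil$, not $k\asymp\sqrt n$ regardless of $|z-y|_1$. When $z$ is near $y$, the latter gives steps of length $O(1)$ and heavily overlapping boxes, so no fixed $m$ yields independence. With the floor rounding, the former also keeps $|w_{i+1}-w_i|_1\le\sqrt n$, as Proposition~\ref{tuld} requires.
\item The dependence range $m=4M'+2$ is too small in general. Taking $y^1\approx n$ and $y^j=-\delta n$ for $j\ge 3$, the vector $z-y$ can point along $\eb_3+\dots+\eb_d$. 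Then the $\ell^\infty$-displacement per step is only about $\sqrt n/(d-2)$, and disjointness of $\Lambda(w_i,M'\sqrt n)$ and $\Lambda(w_j,M'\sqrt n)$ needs $|i-j|$ of order $dM'$.
\item Absorb the factor $n^{2d}$ from the union bound before multiplying over slabs. For instance, $\pp(G_\ell^c)\le e^{-c_1 n/2}$ for large $n$, which gives $L_0=\lceil s/c_1\rceil$.
\end{itemize}
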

\vspace{1em}
Since the proof of Lemma~\ref{lem:tsl} is technical, we give it after the proof of Lemma~\ref{lem:pec}.
\begin{proof}[\underline{Proof of Lemma~\ref{lem:pec}}]
	Fix $s,\e>0$.
	Note that if $|\ell| \geq 1$, 
	\begin{align*}
		\bS_{M,\ell}^\delta(n) \subset \overline{\Lambda}_M(n)=\Lambda(M\sqrt{n})^c \cap \Lambda(n \eb_1, M\sqrt{n})^c,
	\end{align*}
    and thus for all $x, y, z 
    \in \Z^d$ and $|\ell| \geq 1$
    \ben{ \label{tlsxyz}
    {\rm T} _{\overline{\Lambda}_M(n)} (y,x)  \leq {\rm T} _{\bS_{M,\ell^*}^\delta(n)}(y,x)  \leq {\rm T} _{\bS_{M,\ell^*}^\delta(n)}(y,z)+t(z,x).
    }
 Suppose that  $ \mathcal{W}_{M,L}^{\e,\delta}(n)$ occurs. Then there exists  $\ell^* \in \Z$ with $1\leq |\ell^*| \leq L$ such that
\[
\max_{y \in \bL_{M,\ell^*}^\delta(n)}\max_{z \in \Lambda(y_{\ell^*},\sqrt{n})} {\rm T} _{\bS_{M,\ell^*}^\delta(n)}(y,z)\leq (\mu + \tfrac{\e}{2})n.
\]
Hence, using \eqref{tlsxyz} we have for all $x \in \Z^d$,
\al{
&\min_{1\leq |\ell| \leq L}\max_{y \in \bL_{M,\ell}^\delta(n)}{\rm T} _{\overline{\Lambda}_M(n)} (y,x)\\
&\leq \max_{y \in \bL_{M,\ell^*}^\delta(n)}\min_{z \in \Lambda(y_{\ell^*},\sqrt{n})} \Big[{\rm T} _{\bS_{M,\ell^*}^\delta(n)}(y,z)+t(z,x) \Big] \\
&\leq \max_{y \in \bL_{M,\ell^*}^\delta(n)}\max_{z \in \Lambda(y_{\ell^*},\sqrt{n})} {\rm T} _{\bS_{M,\ell^*}^\delta(n)}(y,z) + \min_{z \in \Lambda(y_{\ell^*},\sqrt{n})} t(z,x)\\
&\leq (\mu + \tfrac{\e}{2})n + \min_{z \in \Lambda(y_{\ell^*},\sqrt{n})} t(z,x).
}
Therefore, on $ \mathcal{W}_{M,L}^{\e,\delta}(n)$,
\al{
 \Lambda^{\e,\delta}_{M,L}(n)&:=\Bigl\{ x \in \Lambda(n\eb_1,M\sqrt{n}) \setminus \{n\eb_1\}: \min_{1\leq |\ell| \leq L}\max_{y \in \bL_{M,\ell}^\delta(n)}{\rm T} _{\overline{\Lambda}_M(n)} (y,x)\leq (\mu+\e)n \Bigr\} \\
 & \supseteq \Bigl\{ x \in \Lambda(n\eb_1,M\sqrt{n}) \setminus \{n\eb_1\}: \min_{ z \in \Lambda(y_{\ell^*},\sqrt{n})} \, t(z,x) \leq \tfrac{\e n}{2} \Bigr\}. 
}
Hence, by the definition of $\mathcal{V}_{M,L}^{\e,\delta}(n)$,
\be{
\{|\Lambda^{\e,\delta}_{M,L}(n)| \geq (1-\delta) |\Lambda(n\eb_1,M\sqrt{n})|\} \supseteq \mathcal{W}_{M,L}^{\e,\delta}(n) \cap  \mathcal{V}_{M,L}^{\e,\delta}(n).
}
Finally, the result follows from the above observation, and  
	 Lemmas~\ref{lem:tbyl} and \ref{lem:tsl}.
	 								\end{proof}
\vspace{1em}
We close this section with the proof of Lemma~\ref{lem:tsl}.
\begin{proof}[\underline{Proof of Lemma~\ref{lem:tsl}}]
	Fix $s,\e>0$ (and the dependence on $s,\e$ is omitted from the notation).
	Define for each $\ell \in \Iintv{-L,L}$,
	\begin{align*}
		&\kE_\ell^1(n):= \Big \{ \max_{y \in \bL_{M,\ell}^\delta(n)} \T_{\bS_{M,\ell}^\delta(n)} (y, y_\ell) \leq (\mu+\tfrac{\e}{4})n\Big \},\\
		&\kE_\ell^2(n):=\Big \{  \max_{z \in \Lambda(y_\ell, \sqrt{n})} \T_{\Lambda (y_{\ell},\lf  M\sqrt{n}/2\rf) }(y_\ell, z) \leq \tfrac{\e n}{4} \Big \}.
	\end{align*}
	Then, since $\Lambda(y_{\ell},\lf \tfrac{M}{2}\sqrt{n}\rf) \subset \bS_{M,\ell}^\delta(n)$,
	\begin{align*}
		\kE_\ell^1(n) \cap \kE_\ell^2(n) \subset \Big \{ \max_{y \in \bL_{M,\ell}^\delta(n)} \max_{z \in \Lambda(y_\ell, \sqrt{n})} \T_{\bS_{M,\ell}^\delta(n)} (y, z) \leq (\mu+\tfrac{\e}{2})n\Big \}.
	\end{align*}
	We notice that $(\kE_\ell^1(n) \cap \kE_\ell^2(n))_{|\ell| \leq L}$ are independent events, since $(\bS_{M,\ell}^\delta(n))_{1 \leq |\ell| \leq L}$ are disjoint sets.
	Moreover, these events have the same probability. Therefore,
	\begin{align*}
		\pp\bigl( \mathcal{W}_{M,L}^{\e,\delta}(n) \bigr)
		\geq \pp\biggl( \, \bigcup_{\ell: 1 \leq |\ell| \leq L} (\kE_\ell^1(n) \cap \kE_\ell^2(n)) \biggr)
		\geq 1- \pp\bigl( (\kE_0^1(n) \cap \kE_0^2(n))^c \bigr)^{2L}.
	\end{align*}
	Hence, to show Lemma~\ref{lem:tsl}, it suffices to prove that 
    there exists $c=c(\e,\delta,M)>0$ such that for $n\in \N$ large enough,
	\begin{align}\label{eell}
		\pp\bigl( \kE_0^1(n)^c)+\pp\bigl( \kE_0^2(n)^c \bigr) \leq \exp(-cn).
	\end{align}
	By Proposition~\ref{tln}~(i), there exists $c=c(M,\e)>0$ such that for all $n$ sufficiently large 
	\begin{align}\label{pe10c}
		\pp\bigl( \kE_0^2(n)^c \bigr)
		= \pp \left( \max_{z \in \Lambda(y_0, \sqrt{n})} \T_{\Lambda(y_{0},\lf  M\sqrt{n}/2\rf) }(y_0, z) > \tfrac{\e n}{4}  \right)
		\leq \exp(- cn).
	\end{align}
	Let us next estimate $\pp(\kE_0^1(n)^c)$.
	By the union bound,
	\begin{align}\label{pe10}
		\pp\bigl( \kE_0^1(n)^c \bigr)
		\leq \sum_{y \in \bL_{M,0}^\delta(n)}  \pp\left( \T_{\bS_{M,0}^\delta(n)} (y, y_0)>(\mu+\tfrac{\e}{4})n \right).
	\end{align}
	By Proposition~\ref{tuld}~(i),  for all sufficiently large $M$, there exists $c=c(M,\e)>0$ such that for all $k$ large enough,  $x \in \Lambda(k)$ and $k^2 \geq t \geq (1+\e^2) \mu(x)$,
	\ben{  \label{key3d1}
		\pp(\T_{\Lambda(Mk/2)} (0,x) \geq t) \leq \exp(-ct).
	}
	Given $y =(y^1, \ldots, y^d) \in \bL_{M,0}^\delta$, we define $y_*=y^1 \eb_1$. Then 
	\be{
		|y-y_*|_\infty \leq \delta n, \qquad y_* \in \Z \eb_1 \cap \bL_{M,0}^\delta(n).
	}
	There exists  a sequence of vertices $(u_i)_{i=1}^l \subset \bL_{M,0}^\delta(n)$ such that $u_1=y, u_l = y_*$ and $u_i \in \Lambda(u_{i-1}, \sqrt{n})$ for $2 \leq i \leq l$, and $\Lambda(u_{i}, M \sqrt{n}/2) \cap \Lambda(u_{j}, M\sqrt{n}/2) = \varnothing $ if $|i-j| \geq 4M$ and $l \leq 4d \delta\sqrt{n}$. We define $X_i:=\T_{\Lambda \left(u_{i},{M\sqrt{n}}/{2}\right)} (u_i,u_{i+1})$. Since $\Lambda(u,M\sqrt{n}/2) \subset \bS_{M,0}^\delta(n)$ for $u \in \bL_{M,0}^\delta$, using the triangle inequality
	\ben{ \label{tyysx}
		\T_{\bS_{M,0}^\delta(n)}(y,y_*)  \leq  \sum_{i=1}^{l-1} X_i.
	}
	By the properties of $(u_i)_{i=1}^l$ we have $X_i$ and $X_j$ are independent when $|i-j|\geq 4M$. Moreover, by \eqref{key3d1}, there exists a universal constant $C$ such that
	\be{
		\pp(X_i \geq t) \leq \exp(-t/C), \qquad \forall \, t \in [CM\sqrt{n}, n], \quad \forall \, i \in [l-1].
	}
	For each  $\e>0$,  taking  $\delta= \delta(\e)$ sufficiently small, we have $C  l M \sqrt{n} \leq \varepsilon^2 n/2$. Hence, using \eqref{tyysx} and applying Lemma \ref{lem:ldlc} (ii) to $(X_i)_{i=1}^{l-1}$, there exists  $c=c(\varepsilon,M)>0$ such that 
	\aln{ \label{tyys}
		\pp \left(\T_{\bS_{M,0}^\delta(n)}(y,y_*) \geq \varepsilon^2 n \right) &\leq \pp \left( \sum_{i=1}^{l-1} X_i \geq C  l M \sqrt{n}+ \varepsilon^2 n/2  \right)\notag\\
        &\leq \exp(-c  n).
	}
	We now consider the passage time from $y_*$ to $y_0$. Note that $y_0=n\eb_1 $ and $ y_* $ are in $\Z \eb_1 \cap \bL_{M,0}^\delta(n)$, so $|y_0-y_*| \leq (1+2 \delta)n$. Hence, there exists  $(v_i)_{i=1}^{l'} \subset \Z \eb_1 \cap \bL_{M,0}^\delta(n)$ such that $v_1=y_*, v_{l'}= y_0$ and $|v_i -v_{i-1}| =\lfloor \sqrt{n}\rfloor$ for $2 \leq i \leq l'$, and  $|v_{l'} -v_{l'-1}| \leq \lfloor \sqrt{n}\rfloor$  and  $l' \leq (1+ 3 \delta)\sqrt{n}$.  Using the same argument as above, we also have
	\ben{ \label{tsy0y}
		\T_{\bS_{M,0}^\delta(n)}(y_*,y_0)  \leq \sum_{i=1}^{l'-1} Y_i, \qquad Y_i:=\T_{\Lambda \left(v_{i},\lf M\sqrt{n}/2\rf \right)} (v_i,v_{i+1}),
	}
	and $Y_i$ and $Y_j$ are independent if $|i-j| \geq 4M$. By  \eqref{key3d1}, there exists a constant $c=c(M,\e) \in (0,1)$ such that for any $i\in [l'],$
	\be{
		\pp(Y_i \geq t) \leq \exp(-ct) \quad \forall \, t \in [(1+\varepsilon^2) \mu \sqrt{n},n].
	}
	If $\delta=\delta(\e)$ is sufficiently small  then $l' (1+\varepsilon^2) \mu \sqrt{n} \leq (1+2 \varepsilon^2)\mu n$. Thus using \eqref{tsy0y} and  Lemma \ref{lem:ldlc} (ii) yields 
    \al{
           		\pp \left(\T_{\bS_{M,0}^\delta(n)}(y_*,y_0) \geq (1+4 \e^2)\mu n \right)& \leq \pp \left(     \sum_{i=1}^{l'-1} Y_i \geq l' (1+\varepsilon^2) \mu \sqrt{n} + 2 \e^2\mu n \right) \\
        		&\leq \exp(-c\e^2 \mu  n/4).
                   	}
	Combining this estimate with \eqref{pe10} and \eqref{tyys}, and using the triangle inequality, we obtain
	\al{
		\pp\bigl( \kE_0^1(n)^c \bigr)
		&\leq \sum_{y \in \bL_{M,0}^\delta(n)}  \pp\left( \T_{\bS_{M,0}^\delta(n)} (y, y_0)>(\mu+\tfrac{\e}{4})n \right) \\
        &\leq n^d[ \exp(-c \e^2 n/16)+\exp(-c\mu \e^2 n/4)].
	}
	This together with \eqref{pe10c} implies the desired result \eqref{eell}.
\end{proof}
\subsection{General direction}\label{subsec: general direction upper d>2}
Let $v\in \R^d$ be an unit vector. We take the orthonormal basis $\mathbf{v} := (v_1,\ldots,v_d)$ such that $v_1=v$. To get the claim for the general direction $v$, we take $M $ and $L$ large integers, $\delta$ a small constant, and   
 replace the definition of slices 
$\bL_{M,\ell}^\delta(n)$  by
	\begin{align*}\label{slill2}
\bL_{M,\ell}^{\delta,{v}}(n):=
\Big\{ \Big \lfloor \sum_{i=1}^d x_i v_i\Big \rfloor\mid (x_i)_{i=1}^d\in [-\delta n, (1+\delta)n]\times \{ 2\ell {\lfloor M\sqrt{n}\rfloor} \}\times [-\delta n, \delta n]^{d-2} \Big \},
\end{align*}
and the slabs $\bS_{M,\ell}^{\delta}(n)$ by 
	\begin{align*}
&\bS_{M,\ell}^{\delta,{v}}(n):= \Big  \{	\Big \lfloor \sum_{i=1}^d x_i v_i \Big \rfloor \textrm{ such that}  \\
&  (x_i)_{i=1}^d\in 	[-2\delta n, (1+2\delta)n]\times [(2\ell-1) \lfloor M\sqrt{n} \rfloor +1 , (2\ell+1) \lfloor M\sqrt{n} \rfloor -1 ]\times [-2\delta n, 2\delta n]^{d-2} \Big \}.
	\end{align*}
In the proof outlined in Section~\ref{section: elements}, the same argument works except for Lemma~\ref{lem:t1}, in which we need to prove a version of \eqref{pdp} that is for all $R>0$
\ben{ \label{pdpv}
		\liminf_{n\rightarrow \infty}\min_{z \in \Lambda^{v}(4R\sqrt{n})} \pp \left( \max_{1 \leq |\ell| \leq L}t(z,\bL_{M,\ell}^{\delta,v}(n))\leq  \delta n \right) \geq \theta,
}
where $\theta =\theta(R,L,M,\delta)$ is a positive constant and for $t \in \R$
\[
\Lambda^v(t)= \Big\{ \Big\lfloor \sum_{i=1}^d x_i v_i\Big \rfloor \mid  (x_i)_{i=1}^d\in \Lambda(t) \Big\}. 
\]
As in the proof of \eqref{pdp}, we take $Q=2LM+4R$  and denote the upper and lower faces  along the second coordinate of the box $\Lambda^v( Q\sqrt{n})$ by
\al{
F_\pm^{v} &= \Big\{ \Big\lfloor \sum_{i=1}^d x_i v_i\Big \rfloor  \mid  (x_i)_{i=1}^d\in  [- Q\sqrt{n},  Q\sqrt{n}] \times \{ \pm  Q\sqrt{n}\} \times [- Q\sqrt{n}, Q\sqrt{n}]^{d-2}\Big\}.
}
The following lemma is the general-direction analogue of the observation used in \eqref{eq: hit L}.
\begin{lem}\label{lem:gen-slices}
For $|\ell| \leq L$, let
\[
\widetilde{\bL}_{M,\ell}^{\delta,v}(n):=
\Big\{ \sum_{i=1}^d x_i v_i \mid (x_i)_{i=1}^d\in [-\delta n, (1+\delta)n]\times \{ 2\ell \lf M\sqrt{n}\rf \}\times [-\delta n, \delta n]^{d-2} \Big\}\subset \R^d.
\]
Then, for all $n$ sufficiently large, if $z \in \Lambda^v(Q\sqrt{n})$ and $\kR^z_{\delta n/2}$ touches both the faces $F^v_\pm$, then $\kR^z_{\delta n/2}$ intersects all the slices $\bL_{M,\ell}^{\delta,v}(n)$ with $|\ell| \leq L$.
\end{lem}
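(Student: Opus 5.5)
The plan is a discrete intermediate value argument carried out in the rotated coordinates $\mathbf{v}$. For $w\in\Z^d$ write $\phi(w):=\langle w, v_2\rangle$ and $\psi_i(w):=\langle w,v_i\rangle$ for $i\ne 2$, so that $w=\sum_i \psi_i(w) v_i$ with $\psi_2:=\phi$. The random walk path is nearest neighbour, so along it $\phi$ changes by at most $1$ per step. The argument has three steps.

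\textbf{Step 1: coordinates along the range.} First I will control all $v$-coordinates of every point in $\kR^z_{\delta n/2}$, whenever the range touches both $F^v_\pm$. Points of $F^v_+$ have the form $\lfloor \sum x_iv_i\rfloor$ with $x_2=Q\sqrt n$. Since $|\lfloor u\rfloor - u|\le \sqrt d$, any $w\in F^v_+$ satisfies $\phi(w)\ge Q\sqrt n-\sqrt d$, and similarly $\phi(w)\le -Q\sqrt n+\sqrt d$ on $F^v_-$.

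The other coordinates need more care. The range cannot travel far, since $|\kR^z_{\delta n/2}|\le \delta n/2+1$ and every point of it is within graph distance $\delta n/2$ of $z$. That alone only gives $|\psi_i(w)|\le Q\sqrt n+\sqrt d+\delta n/2$, which is slightly too large: the slice needs $x_i\in[-\delta n,\delta n]$ for $i\ge 3$ and $x_1\in[-\delta n,(1+\delta)n]$. The fix is to use the factor $1/2$. We have $\psi_1(z)\in[-Q\sqrt n-\sqrt d,\,Q\sqrt n+\sqrt d]$ and $|\psi_i(w)-\psi_i(z)|\le |w-z|\le \delta n/2$. Hence every point $w$ of the range satisfies $|\psi_i(w)|\le \delta n/2+Q\sqrt n+\sqrt d<\delta n-2\sqrt d$ for all $i\ne 2$ and all large $n$. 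This is the reason the time horizon is $\delta n/2$ rather than $\delta n$.

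\textbf{Step 2: locating a point near level $2\ell\lfloor M\sqrt n\rfloor$.} Fix $\ell$ with $|\ell|\le L$ and set $h:=2\ell\lfloor M\sqrt n\rfloor$, so $|h|\le 2LM\sqrt n< Q\sqrt n-\sqrt d$ because $Q=2LM+4R$. Run the walk from the time it is in $F^v_-$ to the time it is in $F^v_+$, taken in whichever order they occur. Along the way $\phi$ moves from below $h$ to above $h$ in unit steps, so some visited point $w$ has $|\phi(w)-h|\le 1/2$; continuity in increments of size at most $1$ gives this. Then $w=\sum_i \psi_i(w)v_i$ with $\psi_2(w)$ within $1/2$ of $h$, and by Step 1 the remaining coordinates lie strictly inside the slice ranges, with margin $2\sqrt d$.

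\textbf{Step 3: matching the floor-defined lattice slice (the expected obstacle).} The point $w$ lies near $\widetilde{\bL}^{\delta,v}_{M,\ell}(n)$ but need not equal $\lfloor u\rfloor$ for a point $u$ of that set with $x_2$ exactly $h$. Set $u:=w+(h-\phi(w))v_2\in\widetilde{\bL}^{\delta,v}_{M,\ell}(n)$, so $|u-w|\le 1/2$, but $\lfloor u\rfloor$ may differ from $w$.

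I would first try to repair this by using lattice points near the crossing. The hyperplane $\{\phi=h\}$ separates the consecutive path points $w_k,w_{k+1}$ with $\phi(w_k)<h\le\phi(w_{k+1})$. I would pick a point $u$ on the segment $[w_k,w_{k+1}]\cap\{\phi=h\}$. For a suitable choice of the representative inside the unit cube, $\lfloor u\rfloor\in\{w_k,w_{k+1}\}$: the segment is axis-parallel, and on it the floor equals $w_k$ for all points except the endpoint when the step is positive, or equals $w_{k+1}$ when the step is negative. Both $w_k$ and $w_{k+1}$ are visited by the walk, and by Step 1 the other coordinates of $u$ stay within the slice ranges. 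This gives $\lfloor u\rfloor\in \bL^{\delta,v}_{M,\ell}(n)\cap \kR^z_{\delta n/2}$, which proves the lemma.

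The delicate points are the floor behaviour at the exact endpoint and the case $\phi(w_k)=h$. Both are handled by noting that $w_k$ itself is then the floor of $u=w_k$.
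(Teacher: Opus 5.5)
Your proposal is correct and follows the paper's argument. Both proofs bound every coordinate $\langle w, v_i\rangle$, $i\neq 2$, of points $w$ of the range by $Q\sqrt{n}+\sqrt{d}+\delta n/2\le \delta n$. Both then find an axis-parallel edge of the walk's path on which $\langle\cdot,v_2\rangle$ crosses $2\ell\lfloor M\sqrt{n}\rfloor$, and observe that the floor of the crossing point is an endpoint of that edge, hence a visited vertex of the slice.

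Two cosmetic points do not affect correctness. Your Step 2 (finding a point within $1/2$ of the level) is superfluous once Step 3 is in place. Your remark in Step 1 that the bound $Q\sqrt{n}+\sqrt{d}+\delta n/2$ is ``too large'' is mistaken, since that is exactly the bound you then use.
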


\begin{proof}
Fix $z \in \Lambda^v(Q\sqrt{n})$ and assume that $\kR^z_{\delta n/2}$ touches both $F^v_-$ and $F^v_+$. Then there exists a nearest-neighbour path $\gamma$ contained in $\kR^z_{\delta n/2}$, joining some $x_- \in F^v_-$ to some $x_+ \in F^v_+$, and passing through $z$. In particular, every point of $\gamma$ is within $\ell^1$-distance at most $\delta n/2$ from $z$.

For $x \in \R^d$, write $x^{(i)}:=\langle x,v_i \rangle$. Since $z \in \Lambda^v(Q\sqrt{n})$, we can choose $(a_i)_{i=1}^d \in [-Q\sqrt{n},Q\sqrt{n}]^d$ so that
\[
z=\Big\lfloor \sum_{i=1}^d a_i v_i \Big\rfloor.
\]
Hence, for each $i \in [d]$,
\[
|z^{(i)}| \leq |a_i| + \Big| \Big\langle \sum_{j=1}^d a_j v_j-z,v_i \Big\rangle \Big| \leq Q\sqrt{n}+\sqrt{d}.
\]
Therefore, for all sufficiently large $n$, we have $|z^{(i)}| \leq \delta n/2$ for every $i \in [d]$. Since $|x-z|_1 \leq \delta n/2$ for every point $x$ of $\gamma$, it follows that
\[
|x^{(i)}| \leq |z^{(i)}| + |x-z|_1 \leq \delta n \qquad \forall\, i \in [d].
\]
In particular, every point $x$ of $\gamma$ satisfies
\[
x^{(1)} \in [-\delta n,(1+\delta)n], \qquad x^{(i)} \in [-\delta n,\delta n] \ \text{for } i=3,\dots,d.
\]

By the definition of $F^v_\pm$, we have
\[
x_-^{(2)} \leq -Q\sqrt{n}+\sqrt{d}, \qquad x_+^{(2)} \geq Q\sqrt{n}-\sqrt{d}.
\]
Since $Q=2LM+4R$, for all sufficiently large $n$ and every $|\ell| \leq L$,
\[
x_-^{(2)} < 2\ell \lf M\sqrt{n}\rf < x_+^{(2)}.
\]
Viewing $\gamma$ as a polygonal path in $\R^d$, the continuous function $x \mapsto x^{(2)}$ must therefore take the value $2\ell \lf M\sqrt{n}\rf$ somewhere on $\gamma$. Hence $\gamma$ intersects $\widetilde{\bL}_{M,\ell}^{\delta,v}(n)$.

If the intersection point is a lattice point, then it belongs to $\bL_{M,\ell}^{\delta,v}(n)$. Otherwise, it lies on an edge $[u,u\pm \eb_j]$ of $\gamma$, and its coordinatewise floor is either $u$ or $u\pm \eb_j$. Thus $\gamma$ visits a vertex of
\[
\{ \lfloor x \rfloor : x \in \widetilde{\bL}_{M,\ell}^{\delta,v}(n) \}=\bL_{M,\ell}^{\delta,v}(n).
\]
Therefore $\gamma$, and hence $\kR^z_{\delta n/2}$, intersects $\bL_{M,\ell}^{\delta,v}(n)$ for every $|\ell| \leq L$.
\end{proof}
By Lemma~\ref{lem:gen-slices},
\al{
	&\min_{z \in \Lambda^v(4R\sqrt{n})} \pp \left( \max_{1 \leq |\ell| \leq L}t(z,\bL_{M,\ell}^{\delta,v}(n))\leq  \delta n \right)\\
	& \geq \min_{z \in \Lambda^v( Q\sqrt{n})} \pp \left( \kR^z_{\delta n/2} \cap F_-^v \neq \varnothing, \kR^z_{\delta n/2} \cap F_+^{{v}} \neq \varnothing \right). 
}
Using the same argument as in Lemma \ref{kr} (iii), we can also prove that the last probability is larger than $\theta$, a positive constant depending only on $Q$ and $\delta$.  Hence, \eqref{pdpv} follows and this completes the proof is completed.  \qed

\section{Upper bound on the large deviation probability  in two dimensions}\label{sec:theomain2}
As in dimensions $d\geq 3$, we only consider the case $v=\eb_1$ since  the proof for general directions is similar. 
 The aim of this section is  to prove that for any $\xi>0$
\ben{ \label{ur2d}
	\pp({\rm T}(0,n\eb_1) \geq (\mu + \xi) n) \leq \exp(-(\pi \xi -o(1)) n/\log n),
}
where $\mu=\mu(\eb_1)$. 
\begin{lem} \label{Step6}
For any $s,\e>0$, there exist positive constants $\delta$ and $M$ such that for all  sufficiently large $n$,
\begin{align*}
    \pp\bigl( |\{x\in \Lambda(n\eb_1, M\sqrt{n}):{\rm T}(0,x)\leq (\mu+5\e)n\}|< (1-\delta) |\Lambda(n\eb_1, M\sqrt{n})| \bigr)
    \leq e^{-s n/\log{n}}.
\end{align*}
\end{lem}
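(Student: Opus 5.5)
Lemma~\ref{Step6} is the two-dimensional analogue of the combination of Lemmas~\ref{kvth}, \ref{lem:t1} and \ref{lem:pec}, now with the starting point fixed at $0$ and with target probability $e^{-sn/\log n}$. This speed is much weaker than the exponential speed of the $d\ge3$ case, so the plan is to reuse the three-step scheme of Section~\ref{sec:theomain1} and check that every failure probability is at most $e^{-sn/\log n}$. For fixed $\delta\le\e$, $M$ and $L$ (to be chosen), the event in the lemma is contained in the union of three events:
\begin{align*}
E_1&:=\bigl\{\mathrm{B}_{\e n}\subset\Lambda(3R\sqrt n)\bigr\},\\
E_2&:=\Bigl\{\mathrm{B}_{\e n}\not\subset\Lambda(3R\sqrt n),\ \max_{1\le|\ell|\le L}{\rm T}(0,\bL^\delta_{M,\ell}(n))\ge 3\e n\Bigr\},\\
E_3&:=\bigl\{|\Lambda^{\e,\delta}_{M,L}(n)|<(1-\delta)|\Lambda(n\eb_1,M\sqrt n)|\bigr\}.
\end{align*}
Indeed, on $E_2^c\cap E_3^c$ (with $E_1^c$), every $x\in\Lambda^{\e,\delta}_{M,L}(n)$ satisfies ${\rm T}(0,x)\le 3\e n+(\mu+\e)n\le(\mu+5\e)n$, by the triangle inequality through the good slice.

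\emph{Step 1 (no breeding step needed).} Proposition~\ref{tln}~(iv) gives $\pp(E_1)\le e^{-c(R,\e)n}$, which is far smaller than $e^{-sn/\log n}$. This is the simplification compared with $d\ge3$: we do not need $|A|\ge K$ or the constant $\kappa$, since in $d=2$ confinement starting from the single frog at $0$ already costs order $n$.

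\emph{Step 2 (reaching the slices).} Bounding $\pp(E_2)$ repeats the proof of Lemma~\ref{lem:t1} verbatim. If the event holds, there are $R$ points in $\Lambda(3R\sqrt n)$, pairwise at distance at least $3\sqrt n$, each of which is bad. Proposition~\ref{tln}~(iii) together with the face-hitting estimate of Lemma~\ref{kr}~(iii) for $d=2$ (which gives the analogue of \eqref{pdp} with a positive $\theta$) yields
\[
\pp(x\text{ bad})\le \exp\bigl(-cn/\log n\bigr)+\exp\bigl(-\theta|\Lambda(\sqrt n)|\bigr).
\]
Using independence of well-separated points and taking $R\ge 2s/c$, we get $\pp(E_2)\le |\Lambda(3R\sqrt n)|^R e^{-cRn/\log n}\le e^{-2sn/\log n}$.

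\emph{Step 3 (invading the target box; main obstacle).} This step redoes Lemmas~\ref{lem:tbyl} and \ref{lem:tsl} in $d=2$. Lemma~\ref{lem:tbyl} carries over directly: the $d=2$ case of Lemma~\ref{rnab} applied with $|A|\asymp n$ gives failure probability at most $e^{-cn}$, since $cn|A|/\log n\gg|B|$. The delicate part is the slab estimate \eqref{eell}. Cover the route from $y$ to $y_*$ and then to $y_0$ by $O(\sqrt n)$ boxes of side $\sqrt n$, and apply Proposition~\ref{tuld}~(ii), which only provides the tail $\exp(-ct/\log t)$. So the variables $X_i,Y_i$ satisfy $\pp(X_i\ge t)\le e^{-ct/\log n}$ for $t\in[C\sqrt n,n]$. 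Applying Lemma~\ref{lem:ldlc}~(ii) with rate $c/\log n$ gives a per-slab failure probability of at most $n^2e^{-c'n/\log n}$. For this, the $\exp(-cb)$ term requires $b=n$, the factor $2^{\lceil\ell/m\rceil}=2^{O(\sqrt n)}$ is negligible, and the $E^2_0$ part comes from Proposition~\ref{tln}~(iii). With $2L$ independent disjoint slabs, the probability that all slabs fail is at most $(n^2e^{-c'n/\log n})^{2L}\le e^{-2sn/\log n}$ once $L\ge s/c'$. Hence $\pp(E_3)\le 2e^{-2sn/\log n}$, and summing the three bounds proves the lemma. The point to check most carefully is that the constant $c'$ from Proposition~\ref{tuld}~(ii) and Lemma~\ref{lem:ldlc} does not depend on $L$, so that increasing $L$ really drives the rate above $s$. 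This holds because $M$ and $\delta$ are fixed before $L$ is chosen.
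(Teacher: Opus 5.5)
Your proposal is correct and follows essentially the same route as the paper. The paper's Lemmas~\ref{Step3} and \ref{Step4} are your Steps 1--3: Proposition~\ref{tln}~(iii),(iv) handle reaching the slices with no breeding step, and Proposition~\ref{tuld}~(ii) together with independence of the $2L$ disjoint slabs gives a good slab. The final spreading into the target box is done as in Lemma~\ref{lem:tbyl}. The only difference is packaging: you phrase the last step through the density of $\Lambda^{\e,\delta}_{M,L}(n)$, as in Lemma~\ref{lem:pec}, whereas the paper first shows that some box $\Lambda(y_\ell,\sqrt{n})$ is fully infected by time $(\mu+4\e)n$. The polynomial prefactors you drop (e.g.\ $n^{4L}$ and $|\Lambda(3R\sqrt{n})|^R$) are absorbed by slightly enlarging $L$ and $R$.
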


\medskip
\noindent {\bf Proof of the upper bound \eqref{ur2d}}. 
Lemma~\ref{Step6} implies that for given $\xi,\e$, we can choose $M,\delta>0$ such that  for all $n\in \N$ large enough,
\begin{align*}
   \pp\bigl( \kE_M^{\e,\delta}(n)^c \bigr) \leq \exp(-2 \pi \xi n/ \log n),
\end{align*}
where we define the event $\kE_M^{\e,\delta}(n)$ by
\begin{align*}
    \bigl\{ |\{x\in \Lambda(n\eb_1, M\sqrt{n}) \setminus \{ n\eb_1 \}:{\rm T}(0,x)\leq (\mu+5\e)n\}| \geq  (1-\delta) |\Lambda(n\eb_1, M\sqrt{n})| \bigr\}.
\end{align*}
Recall the notation  
\begin{align}
	\kA_M^\delta(n):=\bigl\{ A\subset\Z^d \setminus \{0\}: |A\cap  \Lambda(M\sqrt{n})| \geq (1-\delta) |\Lambda(M\sqrt{n})| \bigr\}.
\end{align} 
Hence, due to Lemma  \ref{Step73d}  and the shift invariance, for any $\e,\delta$ sufficiently small and $M$ sufficiently large, if $n$ is large enough, then
\begin{align*}
    \pp( {\rm T}(0,n\eb_1) \geq (\mu+\xi) n )
    &\leq \pp\bigl( \kE_M^{\e,\delta}(n)^c \bigr)+\pp\bigl( \{ {\rm T}(0,n\eb_1) \geq (\mu+\xi) n \} \cap \kE_M^{\e,\delta}(n) \bigr)\\
    &\leq \exp(- 2\pi \xi n/ \log n)+\sup_{A \in \kA_M^\delta(n)} \pp\Bigl( \min_{x \in A} t(x,0) \geq (\xi -5 \e)n \Bigr)\\
    &\leq 2\exp(-(\pi-\e) (\xi-5 \e) n/\log n).
\end{align*}
Therefore, since $\e$ is arbitrary,    we obtain the upper bound \eqref{ur2d}.  \qed

\medskip
\noindent {\bf Proof of Lemma~\ref{Step6}}. 
We follow a similar scheme to that in the proofs of Lemmas \ref{lem:pec} and \ref{lem:t1}. 

Define for any $\delta>0$, $M,L \in\N$, $\ell \in [-L,L]$ and $n \in \N$,
\begin{align*}
    &\bS_{M,\ell}^\delta(n)=[-2\delta n, (1+2\delta)n]\times [(2\ell-1) \lfloor M\sqrt{n} \rfloor+1, (2\ell+1) \lfloor M\sqrt{n} \rfloor -1 ] \cap \Z^2,\\
    &\bL_{M,\ell}^\delta(n)=[-\delta n, (1+\delta)n]\times \{ 2\ell {\lfloor M\sqrt{n}\rfloor}\} \cap \Z^2.
\end{align*}
\begin{lem} \label{Step3}
For any $\e>0$, there exist $\delta\in(0,\e/4)$, $M_0 \in \N,c>0$ such that for any $M\geq M_0$,  $\ell \in \Z$ and $n\in \N$ large enough,
\begin{align*}
    \pp \left( \max_{x \in \bL_{M,\ell}^\delta(n)}{\rm T}_{\bS_{M,\ell}^\delta(n)}(x,y_\ell ) \leq (\mu+\e)n \right) \geq 1-\exp(-cn/\log{n}),
\end{align*}
where $y_\ell:=n\mathbf{e}_1+2\ell \lf M\sqrt{n}\rf\mathbf{e}_2 \in \bL_{M,\ell}^\delta(n)$.
\end{lem}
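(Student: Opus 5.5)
This is the two-dimensional analogue of the estimate $\pp(\kE_0^1(n)^c)\le e^{-cn}$ from the proof of Lemma~\ref{lem:tsl}. I would follow that proof, replacing Proposition~\ref{tuld}~(i) by Proposition~\ref{tuld}~(ii).

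By translation invariance in the $\eb_2$-direction it suffices to take $\ell=0$. In that case $y_0=n\eb_1$, the slice is $\bL_{M,0}^\delta(n)=[-\delta n,(1+\delta)n]\times\{0\}$, and the slab contains $\Lambda(u,\lfloor M\sqrt n/2\rfloor)$ for every $u\in\bL_{M,0}^\delta(n)$. Since the slice has at most $3n$ points, a union bound reduces the claim to a uniform estimate in $x\in\bL_{M,0}^\delta(n)$:
\[
\pp\bigl(\T_{\bS_{M,0}^\delta(n)}(x,y_0)>(\mu+\e)n\bigr)\le \exp(-2cn/\log n).
\]
The polynomial factor $3n$ is then absorbed into the exponent.

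Fix such an $x$. Join $x$ to $y_0$ by lattice points $v_1=x,v_2,\dots,v_{l'}=y_0$ lying on the $\eb_1$-axis, with consecutive spacing at most $\lfloor\sqrt n\rfloor$ and $l'\le(1+3\delta)\sqrt n$. In $d=2$ the slice lies on the axis, so no transverse segment is needed. Set
\[
Y_i:=\T_{\Lambda(v_i,\lfloor M\sqrt n/2\rfloor)}(v_i,v_{i+1}).
\]
Then $\T_{\bS_{M,0}^\delta(n)}(x,y_0)\le\sum_{i=1}^{l'-1}Y_i$. The boxes are disjoint once $|i-j|\ge 4M$, so $Y_i$ and $Y_j$ are independent for such indices. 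Rescaling Proposition~\ref{tuld}~(ii) with $k\approx\sqrt n$ (as was done in \eqref{key3d1}) and using translation invariance gives, for $M\ge M_0$,
\[
\pp(Y_i\ge t)\le \exp(-c\,t/\log t),\qquad t\in\bigl[(1+\e^2)\mu\sqrt n,\;n\bigr],
\]
since $\mu(v_{i+1}-v_i)\le\mu\sqrt n$. Because $\log t\le\log n$ on this range, this implies the linear-rate bound $\pp(Y_i\ge t)\le\exp(-c' t)$ with $c'=c/\log n$.

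Next apply Lemma~\ref{lem:ldlc}~(ii) with $a=(1+\e^2)\mu\sqrt n$, $b=n$, rate $c'$, $m=4M$ and $h=\e n/2$. Choosing $\delta$ small in terms of $\e$ gives $l'a\le(\mu+\e/2)n$. The bound is then
\[
4M\,e^{-c'\e n/(16M)}\,2^{\lceil l'/4M\rceil}+l'e^{-c'n}.
\]
With $c'=c/\log n$, the main term is $\exp(-c\e n/(16M\log n))$, and this dominates the error $2^{O(\sqrt n)}$. The last term is $\sqrt n\,e^{-cn/\log n}$, which is also fine. This gives the uniform estimate, and the union bound finishes the proof.

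The main obstacle is the $d=2$ input, namely confirming that Proposition~\ref{tuld}~(ii) applies on the diffusive scale $k\asymp\sqrt n$ with restriction box $\Lambda(M\sqrt n/2)$, uniformly over the range $t\le n$. The bound there is $e^{-ct/\log t}$, which is weaker than linear. This forces the final decay rate to be $n/\log n$ rather than $n$, and the check is that the factor $2^{\sqrt n/M}$ remains negligible, which it does.
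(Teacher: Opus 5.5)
Your proposal is correct and follows the paper's route: the paper proves this by rerunning the argument of Lemma~\ref{lem:tsl} (the bound on $\pp(\kE_0^1(n)^c)$, chaining along the slice and applying Lemma~\ref{lem:ldlc}~(ii)) with Proposition~\ref{tuld}~(ii) in place of (i), and you are right that in $d=2$ the transverse segment disappears and that Proposition~\ref{tln} is not needed for this statement. One small point: with restriction boxes $\Lambda(v_i,\lfloor M\sqrt n/2\rfloor)$ your rate $c\e/(16M)$ degrades as $M$ grows, while the lemma chooses $c$ before $M$; using fixed-size boxes $\Lambda(v_i,\lfloor M_0\sqrt n/2\rfloor)\subset\bS_{M,\ell}^\delta(n)$ instead keeps the dependence range at $m=4M_0$, hence $c$ uniform in $M\geq M_0$ (the argument of Lemma~\ref{lem:tsl} that the paper invokes has the same $M$-dependence).
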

\begin{proof}
We can apply exactly the same argument as in the proof of Lemma~\ref{lem:tsl} by replacing Proposition~\ref{tuld}~(i) and Proposition~\ref{tln}~(i) with Proposition~\ref{tuld}~(ii) and Proposition~\ref{tln}~(iii), respectively (the factor $n/\log n$ comes from these changes).
\end{proof}
\begin{lem} \label{Step4}
For any $s,\e>0$, there exist $\delta>0$ and $M_0,L_0 \in \N$ such that for all $M\geq M_0$, $L\geq L_0$ and $n$ sufficiently large,
\begin{align*}
    \pp \left( \min_{\ell:  1 \leq |\ell| \leq L}  \,  \max_{x \in \Lambda(y_\ell, \sqrt{n})}{\rm T}(0,x) \leq  (\mu+4\e)n   \right)\geq 1- \exp(-sn/\log{n}).
\end{align*}
\end{lem}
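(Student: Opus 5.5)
We follow the scheme of Lemmas~\ref{lem:t1} and \ref{lem:tsl}, adapted to the speed $n/\log n$. The passage from $0$ to $\Lambda(y_\ell,\sqrt n)$ is split into three pieces:
\begin{align*}
{\rm T}(0,x)\le {\rm T}\bigl(0,\bL_{M,\ell}^\delta(n)\bigr)+\max_{y\in\bL_{M,\ell}^\delta(n)}{\rm T}_{\bS_{M,\ell}^\delta(n)}(y,y_\ell)+{\rm T}_{\Lambda(y_\ell,\lfloor M\sqrt n/2\rfloor)}(y_\ell,x).
\end{align*}
We aim to bound these by $\e n$ for all $1\le|\ell|\le L$ simultaneously, by $(\mu+\e)n$ for at least one $\ell$, and by $\e n$ for that same $\ell$, respectively. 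This gives the total $(\mu+3\e)n\le(\mu+4\e)n$.

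\textbf{Step 1 (reaching all slices).} We show that $\pp\bigl(\max_{1\le|\ell|\le L}{\rm T}(0,\bL_{M,\ell}^\delta(n))\ge \e n\bigr)\le e^{-2sn/\log n}$. In dimension two we do not need the breeding step, because Proposition~\ref{tln}~(iv) is already stated for $\rmB_{\delta n}$ started from the origin, with cost $e^{-cn}$. So, as in \eqref{taiii}, we split on $\{\rmB_{\delta n}\subset\Lambda(3R\sqrt n)\}$, which has probability at most $e^{-cn}$. On the complement we find $R$ vertices $x_i\in\rmB_{\delta n}\cap\Lambda(3R\sqrt n)$ that are pairwise $3\sqrt n$-separated, and each of them must be bad in the sense of Subsection~\ref{subsect:lem4.3}. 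We then bound $\pp(x\text{ bad})$ as in \eqref{pxbad}. Proposition~\ref{tln}~(iii) controls the failure of ${\rm T}_{\Lambda(x,\sqrt n)}(x,\cdot)\le\delta n$ on $\Lambda(x,\sqrt n)$ by $e^{-cn/\log n}$. On the other hand, the probability that all $|\Lambda(\sqrt n)|\asymp n$ independent walks started in $\Lambda(x,\sqrt n)$ fail to cross $F_\pm$ within $\delta n/2$ is at most $(1-\theta)^{cn}$, by Lemma~\ref{kr}~(iii) for $d=2$. Hence $\pp(x\text{ bad})\le 2e^{-cn/\log n}$. Since the $R$ events are independent, the union bound gives $|\Lambda(3R\sqrt n)|^R e^{-cRn/\log n}\le e^{-2sn/\log n}$ once $R\ge 3s/c$. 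The number of choices is only polynomial in $n$, so the $\log n$ loss is harmless.

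\textbf{Step 2 (crossing a slab, and local spread).} Lemma~\ref{Step3} gives, for each $\ell$, failure probability at most $e^{-cn/\log n}$ for the event $\max_{y}{\rm T}_{\bS_{M,\ell}^\delta}(y,y_\ell)\le(\mu+\e)n$. Proposition~\ref{tln}~(iii), applied at $y_\ell$ with $M/2$ and $\e$, gives $\max_{x\in\Lambda(y_\ell,\sqrt n)}{\rm T}_{\Lambda(y_\ell,M\sqrt n/2)}(y_\ell,x)\le\e n$ outside probability $e^{-cn/\log n}$. Since $\Lambda(y_\ell,M\sqrt n/2)\subset\bS_{M,\ell}^\delta(n)$ and the slabs are disjoint, the combined events are independent across $\ell$. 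Therefore the probability that every $\ell$ with $1\le|\ell|\le L$ fails is at most $(2e^{-cn/\log n})^{2L}\le e^{-2sn/\log n}$ once $L\ge L_0(s,c)$.

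\textbf{Step 3 (conclusion).} Combining the two steps by the union bound gives the lemma. The main obstacle is Step~1: the per-vertex bad probability is only $e^{-cn/\log n}$, not $e^{-cn}$, so the amplification has to come from the $R$ separated bad vertices. This requires that the event $\{\rmB_{\delta n}\subset\Lambda(3R\sqrt n)\}$ be genuinely cheaper. That is guaranteed by Proposition~\ref{tln}~(iv), whose choice of $R$ depends on $s$, and this must be arranged consistently with $\delta<\e/4$ and with the constraint $M\ge M_0$ coming from Lemma~\ref{Step3}.
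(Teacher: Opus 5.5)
Your proposal is correct and follows essentially the same route as the paper. It uses the same triangle-inequality split into reaching all slices and finding one good slab. Reaching the slices is handled by the bad-vertex argument of Lemma~\ref{lem:t1} (estimate \eqref{mtaln}), with Proposition~\ref{tln}~(iii),(iv) in place of (i),(ii) and no breeding step. The good slab comes from Lemma~\ref{Step3}, Proposition~\ref{tln}~(iii) and independence over the disjoint slabs.

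One quantifier point needs tidying: the statement requires $L_0$ independent of $M$. For the local spread, use the box $\Lambda(y_\ell,\sqrt n)$ as the paper does, or note that ${\rm T}_{\Lambda(y_\ell,\lfloor M\sqrt n/2\rfloor)}\le {\rm T}_{\Lambda(y_\ell,\sqrt n)}$ for $M\ge 2$. Then the constant from Proposition~\ref{tln}~(iii) does not depend on $M$.
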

\begin{proof}
Fix $s,\e>0$.
 Let $\delta \in (0,\e/4)$, $M_0$, $c$  as in Lemma \ref{Step3} and let $M \geq M_0$. 
By the triangle inequality, for all  $x \in \Lambda(y_\ell,\sqrt{n})$,
\begin{align*}
    {\rm T}(0,x) \leq {\rm T}(0,\bL_{M,\ell}^\delta(n))+ \max_{y \in \bL_{M,\ell}^\delta(n)}{\rm T}(y,x) \leq {\rm T}(0,\bL_{M,\ell}^\delta(n))+ \max_{y \in \bL_{M,\ell}^\delta(n)}\T_{\bS_{M,\ell}^\delta(n)}(y,x).
\end{align*}
Hence,
\al{
 \min_{\ell:  1 \leq |\ell| \leq L}  \,  \max_{x \in \Lambda(y_\ell, \sqrt{n})}{\rm T}(0,x) &\leq \max_{\ell:  1 \leq |\ell| \leq L} {\rm T}(0,\bL_{M,\ell}^\delta(n)) \\
 & \quad  + \min_{\ell:  1 \leq |\ell| \leq L} \max_{y \in \bL_{M,\ell}^\delta(n)} \max_{x \in \Lambda(y_\ell,\sqrt{n})} \T_{\bS_{M,\ell}^\delta(n)}(y,x).
}
Therefore,  it suffices to show that there exists $L_0 \in \N$ such that for all $L \geq L_0$ and $n$ sufficiently large,
\begin{align}
    \label{eq:TL1}
    &\pp\left(\max_{\ell: 1 \leq |\ell| \leq L} {\rm T}(0,\bL_{M,\ell}^\delta(n)) \leq 2\e n \right) \geq 1- \exp(-2sn/\log n),\\
    \label{eq:TL2}
    &\pp\biggl( \,   \bigcup_{\ell: 1 \leq |\ell| \leq L} \{ \bL_{M,\ell}^\delta(n) \textrm{ is good} \} \biggr) \geq 1- \exp(-2sn/\log n),
\end{align}
where
\be{
\{\bL_{M,\ell}^\delta(n) \textrm{ is good}\}= \Big \{\max_{y \in \bL_{M,\ell}^\delta(n)} \max_{x \in \Lambda(y_\ell,\sqrt{n})} \T_{\bS_{M,\ell}^\delta(n)}(y,x) \leq (\mu+2\e) n \Big \}.
}
Let us first treat \eqref{eq:TL2}. Observe that for all $\ell\in \Iintv{-L,L}\setminus\{0\}$,
\be{
\max_{y \in \bL_{M,\ell}^\delta(n)} \max_{x \in \Lambda(y_\ell,\sqrt{n})} \T_{\bS_{M,\ell}^\delta(n)}(y,x) \leq \max_{y \in \bL_{M,\ell}^\delta(n)} \T_{\bS_{M,\ell}^\delta(n)}(y,y_\ell) + \max_{x \in \Lambda(y_\ell,\sqrt{n})} \T_{\Lambda(y_\ell,\sqrt{n})}(y_\ell,x).
}
Therefore,
\bea{
&&\pp(\bL_{M,\ell}^\delta(n) \textrm{ is not good})\\
&\leq& \pp \left(\max_{y \in \bL_{M,\ell}^\delta(n)} \T_{\bS_{M,\ell}^\delta(n)}(y,y_\ell) \geq (\mu+\varepsilon)n \right) + \pp \left( \max_{x \in \Lambda(y_\ell,\sqrt{n})} \T_{\Lambda(y_\ell,\sqrt{n})}(y_\ell,x) \geq \varepsilon n\right)\\
&\leq&  \exp(-cn/\log n),
}
where $c=c(\varepsilon)$ is a positive constant, by using Proposition \ref{tuld} (ii) and Lemma \ref{Step3}. On the other hand, these events $(\{\bL_{M,\ell}^\delta(n) \textrm{ is good}\})_{\ell}$ are independent, since the slabs $(\bS_{M,\ell}^\delta(n))_{\ell}$ are disjoint. Hence, \eqref{eq:TL2} holds for all $L \geq s/c$.
The inequality \eqref{eq:TL1} follows from  the same argument as in the proof of Lemma~\ref{lem:t1}, in  particular,  the estimate \eqref{mtaln} (see Section~\ref{subsect:lem4.3} and replace Proposition~\ref{tln}~(i) and (ii) with Proposition~\ref{tln}~(iii) and (iv)).
\end{proof}
\begin{proof}[\underline{Proof of Lemma~\ref{Step6}}]
First, thanks to Lemma \ref{Step4},  we can assume that there exists $\ell \in \Iintv{-L,L}\setminus\{0\}$, such that ${\rm T}(0,x) \leq (\mu+4\e)n$ for all $x \in \Lambda(y_{\ell},\sqrt{n})$. Then the proof is analogous to that  of Lemma \ref{lem:tbyl}.
    \end{proof}

\section{Initial growth of infection: Proof of Proposition~\ref{tln}}\label{sec:proptln}

The aim of this section is to prove Proposition~\ref{tln}, which
roughly says that with high probability, the box $\Lambda(\sqrt{n})$ is fully occupied within $o(n)$ steps.  The key ingredient in the proof is Proposition~\ref{propk3d}, which describes the  growth of infection during the early stages. The main idea is to construct a coupling between the frog model and an exploration process with greater independence, facilitating analysis. The section is organized as follows. In Section~\ref{subsect:quadrup}, we introduce a tool to estimate the tail behavior of the passage time, which is essential for analyzing the coupled exploration process. At the beginning of Section~\ref{subsect:growth_begin}, we prove Part (i) of Proposition~\ref{tln}, assuming Proposition~\ref{propk3d}, which is then established in Sections~\ref{subsubsect:propk3db} and \ref{subsubsect:propk3da}. The remaining parts, (ii)–(iv), of Proposition~\ref{tln} are proved in Section~\ref{subsect:2-4}, using arguments similar to those in the proof of Part (i).

\subsection{Quadrupling time}\label{subsect:quadrup}
The aim of this subsection is to investigate the \textit{quadrupling time}, which is the duration required for an activated population to grow fourfold.
In particular, for any non-empty finite set $G\subset \Z^d$, we consider the following stopping times $\mathscr{T}_G$ and $\mathscr{T}_G^*$:
\begin{align*}
    &\mathscr{T}_G:=\inf \Big\{ t \geq 0: \Big| \bigcup_{x\in G} \kR^x_t \Big| \geq 4 |G| \Big\},\\
    &\mathscr{T}^*_G:=\inf \Big\{t \geq 0:\Big| \bigcup_{x\in G} \bigl( \kR^x_t \cap \Lambda(x,\sqrt{t}) \bigr) \Big| \geq 4 |G| \Big\}.
\end{align*}
Here, $\sct_G$ is the time it takes for the paths of active frogs starting from $G$ to cover a region of size larger than $4|G|$, while $\hsct_G$ is a constrained version of $\sct_G$ where the spread of active frogs is observed only within $\Lambda(G,\sqrt{t})$ over time $t$. These \textit{quadrupling times} help us describe the spread of active frogs in the early stage.

The following lemma provides precise  tail estimates for the quadrupling times.

\begin{lem} \label{lte}
Assume $d\geq 2$.
There exists a positive constant $c \in (0,1)$ such that for all non-empty finite sets $G \subset \Z^d$ and $t \geq 0$,
\begin{align*}
    &\pp( \hsct_G >t ) \leq
    \begin{cases}
        1 & \text{ if } t \leq c^{-1}|G|,\\
        \exp \left(-ct^{\tfrac{d}{2d-2}} |G|^{\tfrac{d-2}{d-1}} \right) & \text{ if } c^{-1}|G|<t \leq  c^{-1}|G|^2, \\
        \exp \left(-ct|G|^{(d-2)/d} \right) & \text{ if }t >  c^{-1}|G|^2,
    \end{cases}\\[0.5em]
    &\pp( \sct_G >t ) \leq
    \begin{cases}
        1 & \text{ if } t \leq c^{-1}|G|^{2/d},\\
        \exp \left(-ct|G|^{(d-2)/d} \right) &\text{ if }t >   c^{-1}|G|^{2/d}.
    \end{cases}
\end{align*}
\end{lem}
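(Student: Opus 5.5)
The plan is to treat $\sct_G$ first and then adapt the argument to $\hsct_G$. Both bounds come from a sequential revealing argument fed into Lemma~\ref{dobbi}. The walks $(S^x)_{x\in G}$ are independent, and the goal is to show that $\bigcup_{x\in G}\kR^x_t$ reaches size $4|G|$ unless an exponentially large number of nearly independent ``trials'' all fail.

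\textbf{Setup of the trials.} Fix a block length $s\le t$, to be chosen depending on the regime, and cut $[0,t]$ into $\lfloor t/s\rfloor$ blocks. Order the pairs (block $j$, walk $x_i$) lexicographically, block first and walk second. Let $\kF$ be the filtration generated by the walk increments revealed so far, and let $B$ be the set covered up to the current trial. Define the stopping time $\tau$ as the first trial after which $|B|\ge 4|G|$. Call a trial a \emph{success} if, during block $j$, walk $x_i$ visits at least $a_s$ sites outside $B$; here $a_s=cs$ for $d\ge3$ and $a_s=cs/\log s$ for $d=2$. By the Markov property at the start of the block, the walk starts at an arbitrary site and $B$ is an arbitrary set with $|B|<4|G|$ on $\{\tau\text{ not yet reached}\}$.

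\textbf{Key estimate: a success has conditional probability at least $p>0$.} This needs two inputs:
\begin{itemize}
\item[(a)] $|\kR_s|\ge 2a_s$ with probability bounded below. This follows from Lemma~\ref{pzin} together with $\E|\kR_s|\asymp\fsd(s)$ from \eqref{exran} and the trivial bound $|\kR_s|\le s+1$.
\item[(b)] $|\kR_s\cap B|\le a_s$ with probability close to $1$. By Markov's inequality it suffices to bound the mean, and the bound $\E|\kR_\infty\cap B|\le C'|B|^{2/d}$ from \cite[Appendix A.2]{AC} (already used in Lemma~\ref{Step73d}) gives this as soon as $s\ge C|G|^{2/d}$. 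For $d=2$, where that bound is not usable, I would instead use the truncated estimate \eqref{eq: hitting time estimate d=2}, which requires $s\gtrsim|G|$.
\end{itemize}
This choice of $s$ produces the threshold $t\ge c^{-1}|G|^{2/d}$ in the statement.

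\textbf{Conclusion for $\sct_G$.} If $\sct_G>t$, then $\tau$ is not reached within the $N=(t/s)|G|$ trials. In that case the number of successes is at most $4|G|/a_s\asymp |G|^{(d-2)/d}$ when $s\asymp|G|^{2/d}$. This is a vanishing fraction of $N$ once $t\gg s$. Applying \eqref{consx} of Lemma~\ref{dobbi} to the failure indicators gives
\[
\pp(\sct_G>t)\le \exp(-cN)=\exp\bigl(-ct|G|/s\bigr)=\exp\bigl(-ct|G|^{(d-2)/d}\bigr),
\]
which is the second bound.

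\textbf{The restricted time $\hsct_G$.} The same scheme applies, except that walk $x$ may only collect sites inside $\Lambda(x,\sqrt t)$. This causes two problems:
\begin{itemize}
\item[(1)] Each walk can contribute at most about $t^{d/2}$ sites, since that is the size of its box.
\item[(2)] A trial is only useful if the walk stays in its box, which costs a Gaussian factor via \eqref{rsn} unless $s\lesssim t$.
\end{itemize}
For $t\ge c^{-1}|G|^2$, I would take $s\asymp|G|^{2/d}$ as before; the boxes are then large enough that both constraints are harmless, and the third bound follows. In the intermediate regime $c^{-1}|G|<t\le c^{-1}|G|^2$, I would take $s$ larger and require only that a positive fraction of walks have a box not yet saturated by $B$. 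The block length $s$ is then optimized against the saturation constraint; solving the balance between $N=(t/s)|G|$ and the requirement $|G|^{2/d}$-type overlap within boxes of volume $t^{d/2}$ should give the exponent $t^{d/(2d-2)}|G|^{(d-2)/(d-1)}$. I expect this optimization to be the main obstacle, specifically the bookkeeping of how much of $B$ can lie in each box $\Lambda(x,\sqrt t)$. My first attempt would be a pigeonhole argument: at most $|G|$ overlap per box summed over a bounded-overlap cover of $\Lambda(G,\sqrt t)$.
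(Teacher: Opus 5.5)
Your argument for $\sct_G$ works, and it is a reasonable variant of the paper's. The paper indexes trials by walks only: it feeds Lemma~\ref{dobbi} a per-walk failure probability, namely $\pp(|\kR_t\setminus A|<8)\le\exp(-ct|A|^{-2/d})$ from Lemma~\ref{pma}(i) for $\sct_G$, and Lemma~\ref{pma}(ii) for $\hsct_G$. You instead unpack the confinement estimate into $(t/s)|G|$ block trials, each with constant success probability. One small fix: for $d=2$, derive (a) from Lemma~\ref{pzin} using $\sup_x\E|\kR^x_s|\asymp s/\log s$, because the bound $|\kR_s|\le s+1$ is too crude there.

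The gap is in $\hsct_G$. A (block, walk) trial has conditional success probability bounded below only while the walk is well inside $\Lambda(x,\sqrt t)$. Each walk leaves $\Lambda(x,\sqrt t/2)$ before time $t$ with probability bounded away from $0$, and once outside, its later trials can fail with conditional probability close to $1$. The problem is this accumulated displacement over all blocks, not the within-block Gaussian factor you mention. So the hypothesis of Lemma~\ref{dobbi} fails, and you need an argument that leaving the box is itself productive.

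For $t\ge c^{-1}|G|^2$ this is easy, but you do not state it. A walk that leaves $\Lambda(x,\sqrt t/2)$ by time $t$ has already visited at least $\sqrt t/2\ge 4|G|$ sites of $\Lambda(x,\sqrt t)$. Hence on $\{\hsct_G>t\}$ no walk does so, $\bigcup_x\kR^x_t\cap\Lambda(x,\sqrt t)=\bigcup_x\kR^x_t$, and $\pp(\hsct_G>t)\le\pp(\sct_G>t)$.

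The intermediate regime $c^{-1}|G|<t\le c^{-1}|G|^2$ is the real gap, and your sketch lacks the needed mechanism. Here $\sqrt t$ can be much smaller than $|G|$, so a walk can leave its box while collecting almost no new sites, by running through a tube of already covered sites. Bounding the probability of exactly this event is what produces the exponent $t^{d/(2d-2)}|G|^{(d-2)/(d-1)}$.

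Your two proposed tools do not reach it:
\begin{itemize}
\item The criterion ``a box not yet saturated by $B$'' is vacuous, since $|\Lambda(x,\sqrt t)|\asymp t^{d/2}\gg|G|$.
\item Time blocks cannot be fed to a pigeonhole argument, because a walk inside a tube may spend all its blocks in the same saturated region.
\end{itemize}

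The missing ingredient is Lemma~\ref{pma}(ii), Case~2, where the trials are spatial rather than temporal. Let $A$ be the set already covered by earlier walks, so $|A|<4|G|$.
\begin{itemize}
\item On its way to $\partial\Lambda(\sqrt t/2)$, the walk hits successive $\ell^\infty$-shells of radius $k(2K+1)$.
\item The boxes $\Lambda(S_{\ell_k},K)$ centred at these hitting points are disjoint.
\item Take $K^{d-1}\asymp|A|/\sqrt t$. Pigeonhole then leaves at least $L\asymp\sqrt t/K$ of these boxes containing at most $c_1K^d$ points of $A$.
\item In each such box, the walk finds $8$ new sites before exiting with conditional probability at least $c_1$, by the strong Markov property.
\end{itemize}
Together with the confinement bound for walks that never leave $\Lambda(\sqrt t/2)$, this gives a per-walk failure probability $\exp(-c(t^{d/2}/|G|)^{1/(d-1)})$. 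Lemma~\ref{dobbi} applied over the $|G|$ walks then yields the stated bound. Without an estimate of this kind, your treatment of the middle regime remains a heuristic.
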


Before proving Lemma~\ref{lte}, we provide some estimates on the range of random walk in the following lemma. Its proof is postponed to the appendix since it is technical.

\begin{lem} \label{pma}
Assume $d\geq 2$. There exists a positive constant $c$ such that the following results hold for any non-empty finite set $A\subset \Z^d$:
\begin{itemize}
\item [(i)] For all $m\in \N$ with $m\geq c^{-1}|A|^{2/d}$,
\begin{align*}
    q(m,A):=\pp(|\kR_m \setminus A| < 8) \leq \exp(-cm|A|^{-2/d}).
\end{align*}

\item[(ii)] For all $m \in \N$,
\begin{align*}
    p(m,A)
    &:= \pp\bigl( |\kR_m \cap (\Lambda(\sqrt{m}) \setminus A)|<8 \bigr)\\
    &\leq
    \begin{cases}
        \exp\left(-c\frac{m}{|A|^{2/d}}\right) &\textrm{ if } |A| \leq \sqrt{m}/4,\\
        \exp\left(-c\left(\frac{m^{d/2}}{|A|}\right)^{\frac{1}{d-1}}\right) &\textrm{ if }  \sqrt{m}/4 < |A| \leq cm.
    \end{cases}
\end{align*}
\end{itemize}
\end{lem}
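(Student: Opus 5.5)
The plan is to reduce everything to a Faber--Krahn type confinement estimate and iterate it over time blocks. The basic input is the following: there is $C_0$ such that for every finite $B\subset\Z^d$, every $x$ and every $r\ge C_0|B|^{2/d}$, we have $\pp(\kR^x_r\subset B)\le 1/2$. This follows from the lower bound $\lambda_B\ge c|B|^{-2/d}$ on the principal Dirichlet eigenvalue of the walk killed on leaving $B$, via $\pp(\kR^x_r\subset B)\le C e^{-\lambda_B r}$. Uniformity in $x$ is what allows a Markov-property iteration.

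\emph{Part (i).} Set $r:=\lceil C_0(|A|+8)^{2/d}\rceil$ and cut $[0,m]$ into $N:=\lfloor m/r\rfloor$ blocks. Let $E_k$ be the set of points of $\kR\setminus A$ visited by the end of block $k$, and let $X_k$ be the indicator that during block $k$ the walk stays inside $A\cup E_{k-1}$. On $\{|\kR_m\setminus A|<8\}$, the set $E_k$ grows at most $7$ times. Hence at most $7$ of the $X_k$ equal $0$, that is, $\sum_k X_k\ge N-7$. Let $\tau$ be the first block after which $|E_k|\ge 8$. Given $\kF_{k-1}$ and $\tau\ge k$, the set $A\cup E_{k-1}$ has size at most $|A|+7$, so the confinement bound gives $\E[X_k\mathbf 1\{\tau\ge k\}\mid\kF_{k-1}]\le 1/2$. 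Lemma~\ref{dobbi}, applied with $k=N-7$, then yields
\[
q(m,A)\le \binom{N}{7}2^{-(N-7)}\le e^{-cN}\le e^{-cm|A|^{-2/d}},
\]
provided $m\ge c^{-1}|A|^{2/d}$, so that $N$ is large.

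\emph{Part (ii), case $|A|\le\sqrt m/4$.} First observe that on the event in question the walk cannot leave $\Lambda(\sqrt m)$ before time $m$. A nearest-neighbour path from $0$ to $\partial\Lambda(\sqrt m)$ visits at least $\sqrt m$ distinct sites of $\Lambda(\sqrt m)$. Fewer than $|A|+8<\sqrt m$ of these can lie in $A\cup E$, which is a contradiction. So the event coincides with $\{|\kR_m\setminus A|<8\}\cap\{\kR_m\subset\Lambda(\sqrt m)\}$, and part (i) applies directly. If $m<c^{-1}|A|^{2/d}$, the bound is trivial after shrinking $c$.

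\emph{Part (ii), case $\sqrt m/4<|A|\le cm$.} This is the main obstacle, since now $A$ may serve as a corridor along which the walk escapes $\Lambda(\sqrt m)$ cheaply. I would first split according to the exit time $\sigma$ of $\Lambda(\sqrt m/2)$. On $\{\sigma>s\}$, run the block argument of part (i) up to time $s$ with $B=(A\cap\Lambda(\sqrt m))\cup E$. This gives a factor $e^{-cs|A|^{-2/d}}$.

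On $\{\sigma\le s\}$, the walk crosses the annulus $\Lambda(\sqrt m/2)\setminus\Lambda(\sqrt m/4)$ inside a set of at most $|A|+7$ sites. I would slice the annulus into $J$ concentric shells of width $\rho=\sqrt m/(4J)$. By pigeonhole, at least $J/2$ of these shells contain at most $2(|A|+7)/J$ sites of $A\cup E$. Crossing such a shell requires a displacement $\rho$ while staying in a set whose relevant cross-section has width about $w=(|A|/(J\rho^{d-1}))^{1/(d-1)}$. The crossing therefore costs probability at most $e^{-c\rho^2/w^2}$ per shell. By the strong Markov property at successive shell crossings, these costs multiply over the $J/2$ thin shells. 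Optimising $J$, and balancing this against $s$, should produce the exponent $(m^{d/2}/|A|)^{1/(d-1)}$. The condition $|A|\le cm$ guarantees that this exponent is at least a large constant, which absorbs the combinatorial factors.

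The delicate point is to make the tube-crossing bound rigorous through an eigenvalue estimate for thin sets. I would use the isoperimetric bound $\lambda_B\ge c\,|B|^{-2/d}$ applied locally within each shell, and I expect to have to adjust the exponents there.
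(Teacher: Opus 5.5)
Your part (i) and the case $|A|\le\sqrt m/4$ of (ii) are correct. Part (i) follows a different route from the paper. The paper uses $\{|\mathcal{R}_m\setminus A|<8\}\subset\{\mathcal{R}_m\subset\Lambda(A,8)\}$ with $|\Lambda(A,8)|\le 20^d|A|$, so it needs a confinement bound for one fixed set only: cited for $d\ge3$, proved by hand for $d=2$. You instead track the growing set $A\cup E_{k-1}$ block by block through Lemma~\ref{dobbi}. This avoids enlarging $A$ and treats all $d$ at once via Faber--Krahn. One correction: the bare spectral bound carries a factor $\sqrt{|B|}$, which you remove with the standard estimate $P_B^t(x,y)\le Cs^{-d/2}(1-\lambda_B)^{t-2s}$ for $s=|B|^{2/d}$.

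The genuine gap is the case $\sqrt m/4<|A|\le cm$, which is the heart of the lemma. Your per-shell bound $e^{-c\rho^2/w^2}$ is false. A straight tube of width $w$ and length $\rho\gg w$ is crossed with probability at least $e^{-C\rho/w}$: chain $\rho/w$ diffusive moves of length $w$, each of constant cost. This is exactly the construction in the paper's sharpness remark.

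Already for one shell, take $w\asymp(|A|/\sqrt m)^{1/(d-1)}$, which is forced by $w^{d-1}\rho\asymp|A|$ (your formula for $w$ should read $(|A|/(J\rho))^{1/(d-1)}$). Your bound would then give an exponent $(m^{d/2}/|A|)^{2/(d-1)}$, the square of the sharp one.

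The underlying reason is that Faber--Krahn only penalises \emph{staying} in a small set for a time $\gg|B|^{2/d}$. A crossing can be fast, so confinement must be paired with a lower bound on the time spent before crossing, and your argument lacks this. Moreover, crossing a shell inside $A\cup E$ is not one excursion started at a stopping time. The walk may fall back into the inner region, where $A$ can be dense, and re-enter the shell many times. So multiplying per-shell costs by the strong Markov property is not justified as written. Splitting at a time $s<m$ is also unnecessary: on $\{\sigma>m\}$, part (i) already gives $e^{-cm|A|^{-2/d}}$, which is below the target since $|A|\le m^{d/2}$.

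The missing ingredient is the paper's one-box-per-level estimate. For $K$ large and $|B|\le c_1K^d$, a walk started at $x$ visits at least $8$ sites of $\Lambda(x,K)\setminus B$ before leaving $\Lambda(x,K)$ with probability at least $c_1$. The reason is that it stays in the box for time $c_2K^2$ with probability at least $1/2$. On that event, part (i) with $m=c_2K^2$ bounds the confinement probability by $e^{-2}$.

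With this estimate your shells can be repaired as follows.
\begin{itemize}
\item Pigeonhole on $A$ alone, so that the good shells are deterministic.
\item Take $J\asymp(m^{d/2}/|A|)^{1/(d-1)}$, so that each good shell contains at most $2|A|/J\le c_1(\rho/2)^d$ sites of $A$.
\item At the first hitting time of the middle sphere of each good shell, apply the estimate to the box of radius $\rho/2$ around the current position. This box lies inside that shell and inside $\Lambda(\sqrt m/2)$.
\end{itemize}
These time windows are disjoint, ordered, and end before $\sigma\le m$. The strong Markov property then yields a bound $(1-c_1)^{J/2}\le\exp(-c(m^{d/2}/|A|)^{1/(d-1)})$.

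The paper does exactly this with walk-centred boxes $\Lambda(S_{\ell_k},K)$, where $K\asymp(|A|/\sqrt m)^{1/(d-1)}$, taken at the hitting times $\ell_k$ of the spheres of radius $k(2K+1)$. It counts $A$ inside these disjoint boxes rather than in fixed shells.
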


Each exponent in Lemma \ref{pma} is optimal up to constants depending only on $d$ (See Remark~\ref{remark: SRW range remark}).

\begin{proof}[Proof of Lemma \ref{lte}]
We only give the proof of Part~(i), since the same argument works for Part~(ii).
Enumerate $G=\{x_1,\ldots,x_k\}$ arbitrarily and define for any $t\in\N$,
\begin{align*}
    A_t^x:=\kR^x_t \cap \Lambda(x,\sqrt{t}),\qquad
    \tau_t:=\inf\Big\{ j\in \llbracket 1,k \rrbracket: \Big| \bigcup_{i=1}^j A^{x_i}_t \Big| \geq 4k \Big\}.
\end{align*}
Since $\{\hsct_G>t\} = \{\tau_t = \infty\}$, letting $X_j:= \mathbf{1}\{ |A^{x_j}_t \setminus \cup_{i=1}^{j-1} A^{x_i}_t|<8 \}$, one has
\begin{align}\label{tgj}
	\{\hsct_G >t\} \subset \biggl\{ \sum_{j=1}^k X_j \geq k/2,\,\tau_t=\infty \biggr\}.
\end{align}
 On the other hand, if $\tau_t >j-1$, then $|\cup_{i=1}^{j-1} A^{x_i}_t|< 4k$.
 Thus, by Lemma \ref{pma} (ii), there exists a constant $c_1>0$ such that if $t$ is sufficiently large and $k<c_1t/4$, then
\begin{equation} \label{ptk}
    \pp(X_j=1 \mid \tau_t >j-1)=\pp\left( |A^{x_j}_t \setminus \cup_{i=1}^{j-1} A^{x_i}_t|<8 \mid \tau_t >j-1 \right) \leq p_{t,k},
\end{equation}
where
\begin{equation*}
	p_{t,k}= \begin{cases}
	\exp(-c_1t(4k)^{-2/d}) \hspace{2cm} \textrm{if } k \leq c_1\sqrt{t}/4,\\
	\exp(-c_1(t^{d/2}/
	(4k))^{1/(d-1)}) \hspace{0.8cm} \textrm{if } c_1\sqrt{t}/4<k<c_1t/4.
	\end{cases}
\end{equation*}
Observe that when $t\geq 1/c_2$ with $c_2=c_2(c_1)$ a small constant,  $p_{t,k} \leq 2^{-4}$, and thus $2^k \leq p_{t,k}^{-k/4}$.
Therefore, \eqref{tgj}, \eqref{ptk} and Lemma~\ref{dobbi} yield that if $k<c_1t/4$ and $t\geq 1/c_2$, then
\begin{align*}
    \pp(\hsct_G>t)
    \leq \pp \Biggl( \sum_{j=1}^k X_j \geq k/2, \, \tau_t =\infty \Biggr)
    \leq \binom{k}{\lceil k/2\rceil}p_{t,k}^{\lceil k/2 \rceil}
    \leq 2^k p_{t,k}^{\lceil k/2 \rceil}  \leq  p_{t,k}^{k/4}.
\end{align*}
In summary, there exists a small constant $c>0$ depending on $c_1,c_2$ such that
	\begin{equation*}
	  \pp(\hsct_G>t) \leq
          \begin{cases}
          1 & \text{ if } t \leq k/c,\\
          \exp(-c k(t^{d/2}/
	k)^{1/(d-1)}) & \text{ if }  k/c< t \leq k^2/c , \\
		\exp(-ctk^{1-2/d}) & \text{ if } t >k^2/c,
        \end{cases}
        	\end{equation*}
which is the desired conclusion.
\end{proof}

\begin{lem}\label{lem6.4}
There exists a positive constant $c \in (0,1)$ such that if $n$ is sufficiently large, then  for any non-empty finite set $G \subset \Z^d$ and  $\alpha \in (1/\sqrt{n},c)$, 
\begin{align*}
    &\E \left[ \exp \left( \alpha \sqrt{n \hsct_G} \right) \right]\\
    &\leq  n^d\exp \left( \alpha \sqrt{n |G|/c}+{c^{-1} \alpha^2 n |G|^{-\frac{d-2}{d}}}+  \alpha \sqrt{n} \min \left\{ \tfrac{|G|^2}{c}, \left( \tfrac{2\alpha}{c} \right)^{2d-2} \tfrac{n^{d-1}}{|G|^{2d-4}} \right\}^{1/2} \right).
\end{align*}
\end{lem}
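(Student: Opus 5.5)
We bound the exponential moment by integrating the tail bound of Lemma~\ref{lte} for $\hsct_G$. Write $k=|G|$ and $X=\sqrt{n\hsct_G}$. For nonnegative $X$,
\[
\E[e^{\alpha X}]\le e^{\alpha x_0}+\int_{x_0}^\infty \alpha e^{\alpha x}\pp(X>x)\,dx ,
\]
or, discretely, $\E[e^{\alpha X}]\le \sum_{t}e^{\alpha\sqrt{n(t+1)}}\pp(\hsct_G>t)$ over integers $t$. We may cap $t$ at a polynomial in $n$, say $t\le n^{2d}$, after checking that the contribution of larger $t$ is negligible; that check also uses the third regime of Lemma~\ref{lte}. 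The sum then has at most $n^{d}$-type many relevant terms, which explains the prefactor $n^d$: we bound the sum by (number of terms) $\times$ (maximal term), after grouping the $t$ dyadically if needed.

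The maximal term $\sup_t \exp\bigl(\alpha\sqrt{nt}-I(t)\bigr)$ is bounded separately in the three regimes of Lemma~\ref{lte}, where $I$ is the rate there.

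\emph{Regime 1, $t\le k/c$.} The bound is trivial and gives $\exp(\alpha\sqrt{nk/c})$, the first term.

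\emph{Regime 3, $t>k^2/c$.} Here $I(t)=ctk^{(d-2)/d}$. Maximizing $\alpha\sqrt{n}\sqrt t-ctk^{(d-2)/d}$ over $t$ gives at most $\alpha^2n/(4ck^{(d-2)/d})$, the second term, after adjusting $c$.

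\emph{Regime 2, $k/c<t\le k^2/c$.} Here $I(t)=ct^{d/(2d-2)}k^{(d-2)/(d-1)}$. Since $d/(2d-2)\ge 1/2$, the exponent $\alpha\sqrt{nt}-I(t)$ is eventually decreasing. It is bounded by $\alpha\sqrt{nt_*}$, where $t_*=\min\{k^2/c,\,t_0\}$ and $t_0$ is the point where $I(t_0)=\alpha\sqrt{nt_0}$; beyond $t_0$ the exponent is negative. Solving $ct^{d/(2d-2)-1/2}k^{(d-2)/(d-1)}=\alpha\sqrt n$, with $d/(2d-2)-1/2=1/(2d-2)$, gives
\[
t_0=\Bigl(\frac{\alpha\sqrt n}{c}\Bigr)^{2d-2}k^{-(2d-4)}=\Bigl(\frac{\alpha}{c}\Bigr)^{2d-2}\frac{n^{d-1}}{k^{2d-4}}.
\]
This is exactly the third term with $2\alpha/c$ in place of $\alpha/c$. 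The factor 2 absorbs comparing $\alpha\sqrt{nt}$ with $2I(t)$, so that beyond $t_*$ the terms still decay geometrically and the tail sum is controlled.

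Summing the three regime contributions, bounding each exponential by the product of all three, and multiplying by the number of terms gives the claim. The constraint $\alpha\in(1/\sqrt n,c)$ guarantees that at the transition points the exponents are dominated by the displayed expressions and that the tail for $t\ge n^{2d}$ is summable: there $\alpha\sqrt{nt}\le ctk^{(d-2)/d}/2$ because $\alpha\sqrt n\ll \sqrt t$.

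The main obstacle is the bookkeeping in regime 2 and at the regime boundaries. The relevant inequalities are monotonicity of $\alpha\sqrt{nt}-I(t)$ past $t_0$, making sure the geometric tail beyond $t_*$ is summable without losing more than constants, and handling $d=2$, where the exponents $(d-2)/(d-1)$ and $(d-2)/d$ vanish. I would check $d=2$ directly: there regime 2 has rate $ct$, so $t_0=(\alpha/c)^2n$, consistent with the formula.
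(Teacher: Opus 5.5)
Your proposal follows essentially the same route as the paper. You split according to the three regimes of Lemma~\ref{lte}, bound the range $\hsct_G\le |G|/c$ trivially, complete the square for $\hsct_G\ge |G|^2/c$, and in the middle range sum $\exp(\alpha\sqrt{nj})$ only up to $(2\alpha/c)^{2d-2}n^{d-1}/|G|^{2d-4}$, beyond which the exponent is at most $-\alpha\sqrt{nj}\le -\sqrt{j}$.

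The one bookkeeping point to fix is the cutoff, since capping at $n^{2d}$ would give a prefactor $n^{2d}$ rather than the stated $n^d$. Your own tail argument already applies for $t\ge 4n$, because $\alpha<c$ and $\alpha\sqrt n>1$. The middle range has at most $(2\alpha/c)^{2d-2}n^{d-1}\le 2^{2d-2}n^{d-1}$ relevant terms, and this is how the paper gets $n^d$. Also bound the first range directly as an expectation, not term by term, since $|G|/c$ may exceed $n^d$.
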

\begin{proof} Let $c \in (0,1)$ be the constant as in Lemma \ref{lte}. First, we have
\begin{align}\label{ba1}
   \E \left[ \exp \left( \alpha \sqrt{n \hsct_G} \right) \mathbf{1}(\hsct_G \leq |G|/c) \right] \leq  \exp \left(\alpha \sqrt{n |G|/c} \right).
\end{align}
By Lemma \ref{lte},
\ben{ \label{tgl}
\E \left[ \exp \left( \alpha \sqrt{n \hsct_G} \right) \mathbf{1}(\hsct_G \geq |G|^2/c) \right]
\leq \sum_{j \geq |G|^2/c}   \exp \left(\alpha \sqrt{n j} - cj |G|^{(d-2)/d} \right).
}
Since $ax-bx^2 \leq a^2/4b \leq a^2/b$ for all $a,x \in \R$ and $b>0$,
\be{
\alpha\sqrt{n j} - cj |G|^{(d-2)/d} \leq \frac{\alpha ^2 n}{c|G|^{(d-2)/d}}
    \quad \text{for all $j \geq 1$},
}
and since $\tfrac{1}{\sqrt{n}} \leq \alpha \leq c \leq 1$ and $|G| \geq 1$,
\begin{align*}
   \alpha\sqrt{n j} - cj |G|^{(d-2)/d}  \leq \alpha(\sqrt{nj}-j) \leq- \sqrt{j} \quad \text{for all $j \geq 4 n$}.
\end{align*}
These estimates yields
\begin{align*}
\begin{split}
    \sum_{j \geq |G|^2/c}   \exp \left(\alpha \sqrt{n j} - cj |G|^{(d-2)/d} \right)
    &\leq 4n \exp \left(\frac{\alpha ^2 n}{c |G|^{(d-2)/d}} \right) + \sum_{j \geq 4n}  \exp(-\sqrt{j}),
\end{split}
\end{align*}
which together with \eqref{tgl} implies that
\ben{ \label{ba2}
	\E \left[ \exp \left( \alpha \sqrt{n \hsct_G} \right) \mathbf{1}(\hsct_G \geq |G|^2/c) \right]
	\leq 8n \exp \left(\frac{\alpha ^2 n}{c |G|^{(d-2)/d}} \right).
}
By Lemma \ref{lte}, 
\al{
&\E \left[ \exp \left( \alpha \sqrt{n \hsct_G} \right) \mathbf{1}(|G|/c<\hsct_G<|G|^2/c) \right]\\
&\leq \sum_{|G|/c<j<|G|^2/c} \exp\left(\alpha \sqrt{n j} - cj^{\tfrac{d}{2d-2}} |G|^{\tfrac{d-2}{d-1}} \right).
}
It is clear that for all $j \geq (2\alpha/c)^{2d-2} n^{d-1}/|G|^{2d-4}$,
\begin{align*}
    \alpha \sqrt{n j} - cj^{\tfrac{d}{2d-2}} |G|^{\tfrac{d-2}{d-1}} \leq -\alpha \sqrt{nj}\leq -\sqrt{j}.
\end{align*}
Combining the last two estimates, we get
\begin{align}\label{eq: eq A3}
\begin{split}
    &\E \left[ \exp \left( \alpha \sqrt{n \hsct_G} \right) \mathbf{1}(|G|/c<\hsct_G<|G|^2/c) \right]\\
    &\leq \sum_{|G|/c<j\leq \min \left \{ \frac{|G|^2}{c}, \left( \frac{2\alpha}{c} \right)^{2d-2} \frac{n^{d-1}}{|G|^{2d-4}} \right\}}   \exp \left(\alpha \sqrt{n j} \right) + \sum_{j \geq 1} \exp(-\sqrt{j}) \\
    & \leq n^d \exp \left( \alpha \sqrt{n} \min \left\{ \tfrac{|G|^2}{c}, \left( \tfrac{2\alpha}{c} \right)^{2d-2} \tfrac{n^{d-1}}{|G|^{2d-4}} \right\}^{1/2} \right).
\end{split}
\end{align}
Combining \eqref{ba1}, \eqref{ba2} and \eqref{eq: eq A3}, we get the desired result by decreasing $c>0$ if needed.
\end{proof}

\subsection{Proof of Proposition~\ref{tln}~(i)}\label{subsect:growth_begin}
Let $d \geq 3$.
The following lemma tells us that Proposition~\ref{tln}~(i) holds for all $M,\e>0$ once we prove it with $M=\e=1$.
\begin{lem}\label{lem:Me1}
If Proposition \ref{tln} (i) holds for $M =\e =1$, then it holds for all $M,\e >0$.
\end{lem}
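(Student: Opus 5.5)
The plan is to reduce the general case to the case $M=\e=1$ by rescaling $n$, and then to cover the box by sub-boxes. Assume Proposition~\ref{tln}~(i) holds with $M=\e=1$: there is $c_1>0$ such that for all large $m$,
\[
\pp\bigl(\T_{\Lambda(\sqrt m)}(0,x)\le m\ \ \forall x\in\Lambda(\sqrt m)\bigr)\ge 1-\exp(-c_1 m).
\]
The constraint set $\Lambda(\sqrt m)$ is the same box that must be filled.

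First take $M\ge1$ and $\e\le1$; other cases are monotone reductions, since enlarging $M$ or $\e$ only makes the event larger. Set $m:=\lfloor \e n/k\rfloor$ for an integer $k=k(\e)$ chosen below, so that $\sqrt m\approx\sqrt{\e n/k}$. I will cover $\Lambda(\sqrt n)$ by a chain of translated boxes $\Lambda(z_j,\sqrt m)$, with centres on a grid of spacing $\sqrt m/2$. Any $x\in\Lambda(\sqrt n)$ can then be reached from $0$ by a sequence of centres $0=z_0,z_1,\dots,z_r$ with $z_{i+1}\in\Lambda(z_i,\sqrt m)$, $x\in\Lambda(z_r,\sqrt m)$, and $r\le C\sqrt{n/m}\le C'\sqrt{k/\e}$.

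On the event that every grid box is good, i.e.\ $\T_{\Lambda(z,\sqrt m)}(z,y)\le m$ for all $y\in\Lambda(z,\sqrt m)$, the triangle inequality gives
\[
\T_{\Lambda(M\sqrt n)}(0,x)\le (r+1)m\le (C'\sqrt{k/\e}+1)\,\e n/k .
\]
Here we use that all boxes lie inside $\Lambda(2\sqrt n)\subset\Lambda(M\sqrt n)$ once $M\ge2$; for $1\le M<2$, shrink the grid to $\Lambda(\sqrt n-\sqrt m)$ so the boxes stay inside $\Lambda(\sqrt n)$. Choosing $k$ large, say $k\ge (2C')^2/\e+\dots$, makes the right-hand side at most $\e n$. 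The key point is that $(r+1)m$ grows like $\sqrt k\cdot(1/k)$, which tends to $0$.

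By translation invariance, each grid box fails with probability at most $\exp(-c_1 m)=\exp(-c_1\e n/(2k))$. The number of grid boxes is at most $C(n/m)^{d/2}=O(1)$, so a union bound yields failure probability at most $\exp(-cn)$ with $c=c(M,\e)>0$. This proves the lemma.

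The only delicate step is the bookkeeping: the chain length $r$ scales as $\sqrt{n/m}$ while each step costs $m$, and this is what makes the total $o(1)\cdot n$ after choosing $k$. One must also make sure the boxes stay inside the allowed constraint region, which the grid adjustment above handles.
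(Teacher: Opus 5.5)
Your chaining argument is sound for $M\ge 1$, and your monotone reduction for $\e>1$ is fine. However, the claim that the remaining cases are ``monotone reductions'' fails for $M<1$. Since $\T_A$ decreases as $A$ grows, the event $\{\T_{\Lambda(M\sqrt n)}(0,x)\le \e n\ \forall x\in\Lambda(\sqrt n)\}$ is increasing in $M$. A bound for $M\ge 1$ therefore does not imply anything for $M<1$, and small $M$ is exactly the case the lemma must handle.

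Your covering cannot be adapted to that case either. When $M<1$, the target $x\in\Lambda(\sqrt n)$ may lie outside $\Lambda(M\sqrt n)$, at distance of order $\sqrt n$. Any box $\Lambda(z,\sqrt m)$ containing such an $x$ sticks out of the allowed region. The hypothesis only controls reaching points of its own constraint box, so it cannot supply the last step. Recall that $\T_A(0,x)$ requires only the \emph{intermediate} points to lie in $A$, so the final hop to $x$ has to be made by a single frog walking freely.

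That final random-walk step is the missing ingredient, and it is how the paper argues. Apply the hypothesis once, at scale $\e' n$ with $\e'=\min\{M^2,\e/2\}$. Then with probability at least $1-e^{-c'n}$, every $y\in\Lambda(\sqrt{\e' n})\subset\Lambda(M\sqrt n)$ is reached within $\e' n$. Next use $\T_{\Lambda(M\sqrt n)}(0,x)\le \T_{\Lambda(\sqrt{\e' n})}(0,y)+t(y,x)$ for $y\in\Lambda(\sqrt{\e' n})$. By Lemma~\ref{kr}~(i) and independence of the walks, $\pp\bigl(t(y,x)>\e' n\ \forall y\in\Lambda(\sqrt{\e' n})\bigr)\le (1-Cn^{1-d/2})^{(\e' n)^{d/2}}\le e^{-cn}$. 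A union bound over the polynomially many $x\in\Lambda(\sqrt n)$ finishes the proof.

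This single-scale argument also covers $M\ge 1$ directly. Your multi-box chaining there is correct, apart from small boundary bookkeeping when $1\le M<2$: grid centres must be placed at the extreme coordinates so that the corners of $\Lambda(\sqrt n)$ are covered. What chaining buys is avoiding the hitting estimate, but that estimate cannot be avoided once $M<1$. So you should append the free-hop step, either after your chain inside $\Lambda(M\sqrt n)$ or in place of it.
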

\begin{proof}  Assume that the statement of (i) holds for $M=\e =1$, that is, there exists $c>0$ such that for all $n$ sufficiently large,
\begin{align*}
    \pp \left( {\rm T}_{\Lambda(\sqrt{n})}(0,x) \leq n \quad \forall x \in \Lambda(\sqrt{n}) \right) \geq 1-\exp(-cn).
\end{align*}
Let $M,\e >0$ and $x \in \Lambda(\sqrt{n})$.
We define $\e'= \min \{M^2, \e/2\}$.
Applying the above assumption to $\e'n$, one has for all $n$ sufficiently large,
\begin{align}\label{me=1}
    \pp\left( {\rm T}_{\Lambda(\sqrt{\e'n})}(0,y) \leq  \e'n \quad \forall y \in \Lambda(\sqrt{\e'n}) \right) \geq 1- \exp(-c'n),
\end{align}
with $c'=c'(\e')>0$.
Observe that if ${\rm T}_{\Lambda(\sqrt{\e'n})}(0,y) \leq  \e' n$ and $t(y,x) \leq \e'n$ for some $y \in \Lambda(\sqrt{\e' n})$, then ${\rm T}_{\Lambda(M\sqrt{n})}(0,x) \leq {\rm T}_{\Lambda(\sqrt{\e' n})}(0,x) \leq 2 \e' n \leq \e n$.
Thus,  \eqref{me=1} gives
\begin{align}\label{tbmny}
    \pp \left( {\rm T}_{\Lambda(M\sqrt{n})}(0,x) \leq  \e n   \right) \geq 1- \exp(-c'n) - \pp \left( t(y,x) > \e ' n \quad \forall y \in \Lambda(\sqrt{\e' n}) \right).
\end{align}
 By Lemma~\ref{kr}~(i), there exists a positive constant $C=C(\e')>0$ such that if $x \in \Lambda(\sqrt{n})$ and $y \in \Lambda(\sqrt{\e' n})$ then
\be{
\pp \left( t(y,x) > \e ' n  \right) \leq 1 - Cn^{1-d/2}.
}
Hence, there exists a constant $c=c(C, \e')>0$ such that
\begin{align*}
    \pp \left( t(y,x) > \e 'n \quad \forall y \in \Lambda(\sqrt{\e'n}) \right) \leq \left(1 - Cn^{1-d/2}\right)^{(\e 'n)^{d/2}} \leq \exp(-cn).
\end{align*}
Combining this with \eqref{tbmny}, we obtain the desired result.
\end{proof}

Thanks to the above lemma, we may assume $M=\e=1$ in the proof of Proposition~\ref{tln}~(i).
 We first prepare some notation and proposition.
Let us define
\begin{align}\label{eq:Lambdai}
\theta:=1/(8d), \qquad \forall \, i\in \Iintv{0, \lceil d/2 \rceil}: \quad     \Lambda_i:=\Lambda\Bigl( \sqrt{n\left(\tfrac{1}{2}+ i \theta\right)} \Bigr).
    \end{align}
\begin{prop} \label{propk3d} Let $d\geq 3$. Then there exist  positive constants $(c_i)_{ i \in \Iintv{1, d/2} }$ such that the following results hold for all large $n$:
\begin{itemize}
\item [(a)]  $\pp(\kE_1) \geq 1-\exp(-c_1n)$;
\item[(b)] $\pp(\kE_i \mid \kE_{i-1}) \geq 1-\exp(-c_in)$  for all $2 \leq i \leq  \lceil d/2 \rceil$,
\end{itemize}
where for $i \in \Iintv{1, \lceil d/2 \rceil}$
\ben{ \label{dei}
\kE_i=\bigl\{ |\kC_i| \geq (c_in)^{i \wedge (d/2)} \bigr\}
}
with 
\ben{ \label{dci}
	\kC_i=\bigl\{ y \in \Lambda_i \setminus \Lambda_{i-1}: {\rm T}_{\Lambda_{i-1}}(0,y) \leq n\left(\tfrac{1}{2}+ i \theta\right) \bigr\}.
}
\end{prop}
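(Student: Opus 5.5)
We prove (a) and (b) with an exploration of the infection. The quadrupling-time estimates of Lemmas~\ref{lte} and \ref{lem6.4} give fast early growth; Lemma~\ref{rnab} and Lemma~\ref{kr} then push the population from one annulus to the next.

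\emph{Part (a).} We run a generation-based exploration inside $\Lambda_0=\Lambda(\sqrt{n/2})$. Set $G_0=\{0\}$. Given the active set $G_j$, we reveal the constrained ranges $\kR^x_t\cap\Lambda(x,\sqrt t)$, $x\in G_j$, up to the quadrupling time $\hsct_{G_j}$. We let $G_{j+1}$ be the set of newly visited, previously unrevealed sites, so that $|G_{j+1}|\ge 3|G_j|$ by definition of $\hsct$. Only frogs at fresh sites are woken, so the successive quadrupling times are dominated by independent copies with the tails of Lemma~\ref{lte}.

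After $J\approx \log_3(\eta n^{d/2})$ generations the active set has size of order $\eta n^{d/2}$ for a small $\eta$. The total time $\sum_j \hsct_{G_j}$ must be shown to be at most $\theta n/2$ except on an event of probability $e^{-cn}$. For this we apply the exponential Markov inequality to $\sum_j\sqrt{n\hsct_{G_j}}$, using Lemma~\ref{lem6.4} with $\alpha$ a small constant. The terms $\alpha\sqrt{n|G_j|/c}$ and $c^{-1}\alpha^2 n|G_j|^{-(d-2)/d}$ are summable along the geometric sequence $|G_j|\approx 3^j$. The first is dominated by the last generation, giving about $\alpha n^{(d+2)/4}\sqrt{\eta}$, which is too large when $d\ge3$. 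So we switch regimes: we stop the quadrupling scheme once $|G_j|\ge K_0$ (a large constant, reached within time $o(n)$ with probability $1-e^{-cn}$ via the bound $\exp(-ct|G|^{(d-2)/d})$). From a set $A$ of size $K_0$ we then apply Lemma~\ref{rnab} with $B=\Lambda_1\setminus\Lambda_0$ and time of order $\theta n/4$. It gives $|\kR^A_{\theta n/4}\cap B|\ge \min\{cnK_0,(1-\delta)|B|\}$ except with probability $e^{-cK_0}$.

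This failure probability is not $e^{-cn}$. We fix it by bootstrapping in $m\approx\sqrt n$ (or $n^{1/2}$-sized) time blocks: each block independently succeeds with probability $1-e^{-cK_0}$, Lemma~\ref{dobbi} shows that a positive fraction succeed except with probability $e^{-cn/\text{block}}$, and the resulting $\ge c n$ visited sites in the shell give $\kE_1$, since $(c_1n)^{1}$ is the target size.

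\emph{Part (b).} Condition on $\kE_{i-1}$. The set $\kC_{i-1}\subset\Lambda_{i-1}\setminus\Lambda_{i-2}$ has size at least $(c_{i-1}n)^{(i-1)\wedge d/2}$ and is infected by time $n(\tfrac12+(i-1)\theta)$ using only frogs inside $\Lambda_{i-2}$. The frogs at $\kC_{i-1}$ are therefore fresh, independent of the conditioning, and have $\theta n$ time left. Apply Lemma~\ref{rnab} with $A=\kC_{i-1}$ (or a subset of size up to order $n^{d/2-1}$), $B=\Lambda_i\setminus\Lambda_{i-1}$, and time $\theta n$. This yields $|\kC_i|\ge\min\{cn|A|,(1-\delta)|B|\}\ge (c_in)^{i\wedge d/2}$ with failure probability $e^{-c|A|}\le e^{-cn}$, since $|A|\ge c n$ once $i\ge2$.

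One check remains: range points are reached from $A$ while staying inside $\Lambda_{i-1}$ up to the hitting time. Lemma~\ref{rnab} is used with the restricted time ${\rm T}_{\Lambda_{i-1}}$, and here I would intersect with the event that the frog's walk hits $B$ before leaving $\Lambda_i$, which costs only constants through the lower bound \eqref{eq: hitting time estimate d>2}.

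\emph{Main obstacle.} The main obstacle is part (a): upgrading the polynomial/constant-size early-growth estimates to an $e^{-cn}$ bound. I would handle it through the independent-block/Lemma~\ref{dobbi} argument together with the Lemma~\ref{lem6.4} moment bound on the early generations.
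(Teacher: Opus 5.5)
Your part (b) is the paper's argument. $\kC_{i-1}$ is determined by the walks started in $\Lambda_{i-2}$ and avoids $\Lambda_{i-2}$, so you condition on $\{\kC_{i-1}=\Gamma\}$ and apply Lemma~\ref{rnab} over time $\theta n$, with failure probability $e^{-c|\Gamma|}\le e^{-cn}$. Your extra ``check'' is unnecessary and rests on a misreading: ${\rm T}_{\Lambda_{i-1}}$ restricts only the starting sites of the intermediate frogs, not their trajectories.

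\textbf{The gap is in part (a).} Once you stop at a seed of constant size $K_0$, Lemma~\ref{rnab} fails with probability $e^{-cK_0}$, and your bootstrap cannot repair this, for two reasons.
\begin{itemize}
\item With blocks of length $\sqrt n$ there are only about $\sqrt n$ blocks in time $\theta n$. So Lemma~\ref{dobbi} gives at best $e^{-c\sqrt n}$, which is exactly your own formula $e^{-cn/\mathrm{block}}$.
\item Within a block of length $\sqrt n$, walks from a seed well inside $\Lambda_0$ reach the shell $\Lambda_1\setminus\Lambda_0$, at distance of order $\sqrt n$, only with probability about $e^{-c\sqrt n}$. So the per-block success probability $1-e^{-cK_0}$ has no basis, since Lemma~\ref{rnab} needs time of order $n$.
\end{itemize}
To get $e^{-cn}$ out of Lemma~\ref{rnab} you need $|A|$ of order $n$, i.e.\ linearly many fresh frogs inside $\Lambda_0$ before the final push. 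Separately, a bounded set cannot be outgrown in time $o(n)$ with failure probability $e^{-cn}$: the infection stays confined to a bounded set for time $s$ with probability at least $e^{-Cs}$. You need time $\epsilon n$ for that step.

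\textbf{The missing idea.} Stop the constrained exploration at the intermediate size $|G_{\kappa_n}|\ge c_0n$, with $\kappa_n=\lceil\log_2(c_0n)\rceil$. This is neither $n^{d/2}$, where, as you noticed, $\sum_k\alpha\sqrt{n|G_k|}$ blows up, nor a constant. At linear size the three terms of Lemma~\ref{lem6.4} stay of order $n$ along $|G_k|=2^k$:
\begin{itemize}
\item the first sums to $O(\alpha\sqrt{c_0}\,n)$;
\item the second sums to $O(\alpha^2n)$;
\item the third sums to $O(\alpha^2 n)$, after splitting at $2^k\approx\nu\sqrt n$ where the minimum switches.
\end{itemize}
Hence the exponential Markov inequality applied to $\alpha\sqrt n\,D_{\kappa_n}$, where $D_{\kappa_n}=\sum_{k<\kappa_n}\sqrt{\hsct_k}$, gives $\pp(D_{\kappa_n}>\sqrt{n/2})\le e^{-\alpha n/4}$ for small $\alpha$ and $c_0$.

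On $\{D_{\kappa_n}\le\sqrt{n/2}\}$ one has $G_{\kappa_n}\subset\Lambda_0$. Moreover, every frog of $G_{\kappa_n}$ is activated by time $\sum_{k<\kappa_n}\hsct_k\le D_{\kappa_n}^2\le n/2$ using only frogs in $\Lambda_0$. It is this displacement control via $\hsct$ and $D_k$, not a bound on total time alone, that keeps the chain inside $\Lambda_0$ as ${\rm T}_{\Lambda_0}$ requires.

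Finally, the walks from $G_{\kappa_n}$ are unrevealed. Lemma~\ref{rnab} with $|A|\ge c_0n$, $B=\Lambda_1\setminus\Lambda_0$ and time $\theta n$ then gives $|\kC_1|\ge c_1n$ with failure probability $e^{-cc_0n}$, which is exactly what $\kE_1$ requires.
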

We defer the proof to Sections~\ref{subsubsect:propk3db} and \ref{subsubsect:propk3da}.
Let us complete the proof of Proposition~\ref{tln}~(i) for now.
\begin{proof}[Proof of Proposition~\ref{tln}~(i)] By Proposition \ref{propk3d},  there exists a constant $c$ such that 
\begin{align*}
  \pp(\kE_{\lceil d/2\rceil })= \pp (|\kC_{\lceil d/2\rceil }| \geq (cn)^{d/2}) \geq  1-\exp(-cn).
\end{align*}
Assume that $\kE_{\lceil d/2\rceil }$ occurs. Then  $\kC_{\lceil d/2\rceil }$ is a subset of $\Lambda_{\lceil d/2 \rceil} \setminus \Lambda_{\lceil d/2 \rceil-1}$ with cardinality larger than $(cn)^{d/2}$, and for any $y \in \kC_{\lceil d/2\rceil }$, one has $
    {\rm T}_{\Lambda_{\lceil d/2\rceil -1}}(0,y) \leq 3n/4 $. Therefore,
    \be{
 \pp\bigl( \bigl\{ {\rm T}_{\Lambda(\sqrt{n})}(0,x) \geq n \bigr\} \cap \kE_{\lceil d/2\rceil } \bigr)
\leq \pp\bigl( \bigl\{ t(y,x) > n/4 \quad \forall \, y \in \kC_{\lceil d/2\rceil } \bigr\} \cap \kE_{\lceil d/2\rceil } \bigr).    
}
Note that the event $\kE_{\lceil d/2\rceil}$ depends only on the frogs inside $\Lambda_{\lceil d/2\rceil -1}$, so it is independent of the simple random walks starting from $\kC_{\lceil d/2\rceil }$ which is a subset of $\Lambda_{\lceil d/2 \rceil} \setminus \Lambda_{\lceil d/2 \rceil-1}$. 
Hence,  for all $x \in \Lambda(\sqrt{n})$
\begin{align*}
     &\pp\bigl( \bigl\{ t(y,x) > n/4 \quad \forall \, y \in \kC_{\lceil d/2\rceil } \bigr\} \cap \kE_{\lceil d/2\rceil } \bigr)\\
    &\leq \left(\max_{y \in \Lambda_{\lceil d/2 \rceil} \setminus \Lambda_{\lceil d/2 \rceil-1}} \max_{z \in \Lambda(\sqrt{n})} \pp(t(y,z) > n/4) \right)^{(cn)^{d/2}} \\  &\leq \left(\max_{y, z \in \Lambda(\sqrt{n})} \pp(t(y,z) > n/4) \right)^{(cn)^{d/2}}.
\end{align*}
By Lemma~\ref{kr}~(i),  $\max_{y, z \in \Lambda(\sqrt{n})} \pp(t(y,z) > n/4) \leq 1- Cn^{1-d/2}$ with $C$ a positive constant. Hence, 
\be{
\pp\bigl( \bigl\{ t(y,x) > n/4 \quad \forall \, y \in \kC_{\lceil d/2\rceil } \bigr\} \cap \kE_{\lceil d/2\rceil } \bigr) \leq (1- Cn^{1-d/2})^{(cn)^{d/2}} \leq \exp(-c'n),
}
with $c'=c'(C,c)$ a positive constant. Combining these estimates, we arrive at 
\be{
\pp\bigl(  {\rm T}_{\Lambda(\sqrt{n})}(0,x) \geq n   \bigr) \leq \exp(-cn) + \exp(-c'n).
}
Therefore, Proposition~\ref{tln}~(i) holds  with $M=\e=1$, and it also holds for all $M,\e>0$ by Lemma~\ref{lem:Me1}.
\end{proof}

\subsubsection{Blossoms of infection: Proof of Proposition \ref{propk3d} (b)}\label{subsubsect:propk3db}
We prove here the case $i=2$, and the proof for the general case is similar. We recall from  \eqref{eq:Lambdai}--\eqref{dci}  the definition of $(\Lambda_i)_{i\geq 1}$, $(\kE_i)_{i\geq 1}$ and $(\kC_i)_{i\geq 1}$.
In particular, if $\kE_1$ occurs, 
\begin{align}\label{eq:E1prop}
    |\kC_1| \geq c_1 n,\quad \kC_1 \subset \Lambda_1 \setminus \Lambda_0,\quad \max_{y \in \kC_1} {\rm T}_{\Lambda_0}(0,y) \leq n\Bigl( \frac{1}{2}+\theta \Bigr).
\end{align}
Hence, if $t(y,z) \leq \theta n$ for some $y \in \kC_1$, then ${\rm T}_{\Lambda_1}(0,z) \leq n(\tfrac{1}{2}+2\theta)$. Thus 
\begin{align*}
    |\kR^{\kC_1}_{\theta n} \cap (\Lambda_2 \setminus \Lambda_1)|
    &= \bigl| \{ z \in \Lambda_2 \setminus \Lambda_1:\exists y \in \kC_1 \text{ such that } t(y,z) \leq \theta n \} \bigr|\\
    &\leq \left| \left\{ z \in \Lambda_2 \setminus \Lambda_1:\T_{\Lambda_1}(0,z) \leq n\left( \tfrac{1}{2}+2\theta \right) \right\} \right|
    =|\kC_2|,
\end{align*}
where we recall  that $\kR^{\kC_1}_{\theta n}:=\bigcup_{y \in \kC_1} \kR^y_{\theta n}$. Let $\kE_2 = \{|\kC_2| \geq (c_2n)^{2 \wedge (d/2)} \}$ with $c_2=c_2(c_1)$ a positive constant chosen later. The above observation implies
\be{
\pp(\kE_2^c \mid \kE_1)=\pp\Bigl( |\kC_2|<(c_2n)^{2 \wedge (d/2)} \mid \kE_1 \Bigr)
\leq \pp\Bigl( |\kR^{\kC_1}_{\theta n} \cap (\Lambda_2 \setminus \Lambda_1)|<(c_2n)^{2 \wedge (d/2)} \mid \kE_1 \Bigr).
}
Notice that  $\kC_1$ is measurable with respect to the sigma-algebra generated by $(S^x_{\cdot})_{x\in\Lambda_0}$  and $\kC_1 \cap \Lambda_0 = \varnothing$. Hence, for any $\Gamma \subset \Lambda_1 \setminus \Lambda_0$ the event $\{\kC_1=\Gamma\}$ and the random set $\kR^{\Gamma}_{\theta n}$ are independent. Thus, 
\al{
& \pp\Bigl( |\kR^{\kC_1}_{\theta n} \cap (\Lambda_2 \setminus \Lambda_1)|<(c_2n)^{2 \wedge (d/2)} \mid \kE_1 \Bigr)\\
&\leq \sum_{\substack{\Gamma \subset \Lambda_1 \setminus \Lambda_0\\|\Gamma| \geq c_1n}} \pp \Bigl( |\kR^{\Gamma}_{\theta n}  \cap (\Lambda_2 \setminus \Lambda_1)| < (c_2n)^{2 \wedge (d/2)},\,\kC_1=\Gamma \mid \kE_1 \Bigr)\\
&\leq \max_{\substack{\Gamma\subset \Lambda_1 \setminus \Lambda_0\\|\Gamma| \geq c_1n}} \pp \left(|\kR^{\Gamma}_{\theta n}  \cap (\Lambda_2 \setminus \Lambda_1)| < (c_2n)^{2 \wedge (d/2)} \right).
}
By Lemma~\ref{rnab}, there exists a positive constant $c_2=c_2(c_1)$ such that
\begin{align*}
    \max_{\substack{\Gamma\subset \Lambda_1 \setminus \Lambda_0\\|\Gamma| \geq c_1n}} \pp \left(|\kR^{\Gamma}_{\theta n}  \cap (\Lambda_2 \setminus \Lambda_1)| < (c_2n)^{2 \wedge (d/2)} \right)
    \leq \exp(-c_2 n).
    \end{align*}
Combining the last three display equations, we have $\pp(\kE_2 \mid \kE_1) \geq 1-\exp(-c_2 n)$  and hence  Proposition \ref{propk3d} (b) follows.\qed

\subsubsection{The growth of infection around the starting point: Proof of Proposition \ref{propk3d} (a)}\label{subsubsect:propk3da}
For any non-empty finite set $G$ and $t\geq 0$, define
\begin{align*}
    A^G_t=\bigcup_{x\in G} \kR^x_t \cap \Lambda(x,\sqrt{t}).
\end{align*}
Then, we consider a modification of the frog model that can be viewed as an exploration process:
\begin{itemize}
\item [(0)] In generation $0$, let $G_0 =\{0\}$ be the position of the first active frog.
We then define
\begin{align*}
    \hsct_0:=\hsct_{G_0}=\inf\bigl\{ t \in \N:|A^{G_0}_t| \geq 4 \bigr\}.
\end{align*}
Assume that between time $0$ and time $\hsct_0-1$, only the frog in $G_0$ is active and the others are sleeping.
\item[(1)] Since $|A^{G_0}_{\hsct_0}| \geq 4$ and $|G_0|=1$,  we can choose a set $G_1 \subset A^{G_0}_{\hsct_0} \setminus G_0$ such that $|G_1|=2$.
We call $G_1$ the first generation.
Let
\begin{align*}
    \hsct_1:=\hsct_{G_1}=\inf\bigl\{ t \in \N: |A^{G_1}_t| \geq 4|G_1| \bigr\}.
\end{align*}
Assume that between time $\hsct_0$ and time $\hsct_0+\hsct_1-1$, the frogs in $G_1$ are active but the others are sleeping.
\item[(2)] Since $|A_{\hsct_1}^{G_1}|\geq 8$ and $|G_0|+|G_1|=3$, we can take a set $G_2 \subset A^{G_1}_{\hsct_1} \setminus(G_0 \cup G_1)$ such that $|G_2|=2^2$, and call $G_2$ the second generation.
Define also
\begin{align*}
    \hsct_2:=\mathscr{T}^*_{G_2}=\inf\bigl\{t \in \N: |A^{G_2}_t| \geq 4|G_2| \bigr\}.
\end{align*}
Similarly, between time $\hsct_0+\hsct_1$ and time $\hsct_0+\hsct_1+\hsct_2-1$, consider that the frogs in $G_2$ are active but the others are sleeping.
\item[(k)] Suppose that we have already constructed $(G_i,\hsct_i)_{i=1}^{k-1}$, satisfying $\hsct_i=\hsct_{G_{i}}$ and $G_i \subset A^{G_{i-1}}_{\hsct_{i-1}} \setminus (\bigcup_{j=1}^{i-1}G_j)$ and $|G_i|=2^i$ for all $i=1,\ldots,k-1$.
Since $|A^{G_{k-1}}_{\hsct_{k-1}}| \geq 4|G_{k-1}|=2^{k+1}$ and $|G_0|+ \ldots+|G_{k-1}|=2^{k}-1$, there exists a set $G_k \subset A^{G_{k-1}}_{\hsct_{k-1}}\setminus (\bigcup_{i=0}^{k-1}G_i)$ such that $|G_k|=2^k$ and call $G_k$ the $k$-th generation.
We define
\begin{align*}
    \hsct_k:=\mathscr{T}^*_{G_{k}}=\inf\bigl\{t \in \N: |A^{G_k}_t| \geq 4|G_k| \bigr\},
\end{align*}
and assume that between time $\hsct_0+\dots+\hsct_{k-1}$ and time $\hsct_0+\cdots+\hsct_k-1$, the frogs in $G_k$ are active but the others are sleeping.
\end{itemize}
By the construction of the modified frog model above, we make the following observations:
\begin{itemize}
    \item [(i)] At any time, an active frog of the modified frog model is also active in the original frog model.
    \item [(ii)] In the $k$-th generation, we use only the information on simple random walks starting from $G_{k}$.
\end{itemize}
 When the sequence $(G_i,\hsct_i)_{i=1}^k$ is constructed, we define
\begin{align*}
    D_k:=\sqrt{\hsct_0}+\dots+\sqrt{\hsct_{k-1}},\qquad H_k:=\hsct_0+\dots+\hsct_{k-1}.
\end{align*}
Since the $L^\infty$-distance between initial locations of frogs in  consecutive generations $G_i$ and $G_{i+1}$ is at most $\sqrt{\hsct_i}$, it is clear that
\begin{align*}
    G_k \subset \Lambda(D_k),\qquad
    \max_{x\in G_k} {\rm T}_{\Lambda(D_k) }(0,x) \leq H_k.
\end{align*}
Let $c_0$ be a small constant chosen later and define
\begin{align}\label{eq:kapppa}
    \kappa_n=\kappa_n(c_0):=\lceil \log_2 (c_0n) \rceil.
\end{align}
By the construction of $G_i$'s, $|G_{\kappa_n}| \geq c_0n$ holds.
Moreover, if $D_{\kappa_n} \leq \sqrt{n/2}$ then
\begin{align*}
    G_{\kappa_n} \subset \Lambda(D_{\kappa_n}) \subset \Lambda\bigl( \sqrt{n/2} \bigr)=\Lambda_0,\qquad
    \max_{x \in G_{\kappa_n}} {\rm T}_{\Lambda_0}(0,x) \leq H_{\kappa_n} \leq D_{\kappa_n}^2 \leq \frac{n}{2}.
\end{align*}
Assume in addition that $|\kR^{G_{\kappa_n}}_{\theta n} \cap (\Lambda_1 \setminus \Lambda_0)| \geq c_1 n$ with a positive constant $c_1$.
Then
\begin{align*}
    |\kC_1|=|\{y \in \Lambda_1 \setminus \Lambda_0: {\rm T}_{\Lambda_0}(0,y) \leq n(\theta + \tfrac{1}{2})\}| \geq c_1 n,
\end{align*}
and hence $\kE_1$ holds.
In summary,
\begin{align*}
    \pp(\kE_1) \geq \pp \left(D_{\kappa_n} \leq \sqrt{n/2}, \, \bigl| \kR^{G_{\kappa_n}}_{\theta n} \cap (\Lambda_1 \setminus \Lambda_0) \bigr| \geq c_1 n \right).
\end{align*}
Note that, by the construction of the modified frog model, $(\hsct_k,G_k)_{0 \leq k \leq \kappa_n}$ is independent of $(S^x_\cdot)_{x \in G_{\kappa_n}}$. 
Therefore,
\begin{align*}
    \pp \left(D_{\kappa_n} \leq \sqrt{n/2}, \, \bigl| \kR^{G_{\kappa_n}}_{\theta n} \cap (\Lambda_1 \setminus \Lambda_0) \bigr|<c_1 n \right) \leq  \underset{ |\Gamma| \geq c_0 n }{\max_{\Gamma\subset \Lambda_0}} \pp \Bigl( \bigl| \kR^{\Gamma}_{\theta n}  \cap (\Lambda_1 \setminus \Lambda_0) \bigr| < c_1n \Bigr).
\end{align*}
Lemma~\ref{rnab} implies that for some $c_1=c_1(c_0)$ suitably small, the last probability is bounded by $2^{-1}\exp(-c_1n)$, and hence
\begin{align}\label{eq:EEE1}
    \pp(\kE_1) \geq 1- \pp \left(D_{\kappa_n} >\sqrt{\tfrac{n}{2}}\right)-\frac{1}{2}\exp(-c_1n).
\end{align}
Therefore, once we can find a suitable constant $c_0$ such that
\begin{align}\label{dhn}
    \pp \left(D_{\kappa_n(c_0)}>\sqrt{\tfrac{n}{2}}\right) \leq \frac{1}{2}\exp(-c_1n)
\end{align}
(see \eqref{eq:kapppa} for the definition of $\kappa_n(c_0)$), Proposition~\ref{propk3d}~(a) immediately follows.
To prove \eqref{dhn}, we use Markov's inequality:
for any $\alpha>0$,
\begin{align}\label{dkn}
\begin{split}
    \pp \left(D_{\kappa_n} > \sqrt{\tfrac{n}{2}} \right)
    &\leq \exp \left(\tfrac{-\alpha}{\sqrt{2}}n\right) \E \left(\exp[\alpha \sqrt{n}D_{\kappa_n}]\right)\\
    &= \exp \left(\tfrac{-\alpha}{\sqrt{2}}n\right) \E \left(\exp \left[\alpha \sqrt{n} \big(\sqrt{\hsct_0}+\ldots+\sqrt{\hsct_{\kappa_n-1}} \big)\right]\right).
\end{split}
\end{align}
Applying Lemma~\ref{lem6.4} to $\hsct_k$ with $G=G_k$ (note that $|G_k|=2^k$), we have for any $\alpha \in (\tfrac{1}{\sqrt{n}},c)$ with the constant $c$ appearing in Lemma~\ref{lem6.4},
 \ben{ \label{hnk}
\E\Big(\exp \big(\alpha \sqrt{n\hsct_k} \big) \, \big | \, G_k\Big) \leq H(n,k),
}
where
\begin{align*}
    &H(n,k)= n^d\exp \left(\alpha \sqrt{n 2^k/c}+\alpha^2 n/(c2^{k(d-2)/d})+ \alpha \sqrt{n \gamma_{n,k}} \right),\\
    &\gamma_{n,k} = \min \left \{ \frac{2^{2k}}{c}, \left( \frac{2\alpha}{c} \right)^{2d-2} \frac{n^{d-1}}{2^{k(2d-4)}} \right\}.
\end{align*}
There exists a positive constant $\beta=\beta(c,d)$ such that
\be{
\sum_{k=1}^{\kappa_n(c_0)} \sqrt{n 2^k/c} \leq \beta \sqrt{c_0}n, \qquad  \sum_{k=1}^{\kappa_n(c_0)} n/(c 2^{k(d-2)/d}) \leq \beta n.
}
Let us define
\[
\ell_n= \lfloor \log_2(\nu \sqrt{n}) \rfloor, \quad \nu = 2 \alpha c^{-\tfrac{2d-3}{2d-2}}.
\]
Notice that if $k \leq \ell_n$ then $2^{2k}/c$ is the smaller term in the definition of $\gamma_{n,k}$ and is the larger term when $k \geq \ell_n +1$. Hence,
\be{
\forall \, k \leq \ell_n: \quad \gamma_{n,k} = \gamma_{n, \ell_n} 2^{2(k-\ell_n)}, \qquad  \forall k  \geq  \ell_n +1: \quad \gamma_{n,k} =\gamma_{n,\ell_n+1} 2^{(2d-4)(\ell_n+1-k)}. 
}
Note also that $\gamma_{n, \ell_n+1} \leq 4 \gamma_{n, \ell_n}$ and $\sqrt{\gamma_{n, \ell_n}} \leq \nu \sqrt{n/c}$. Thus
\be{
\sum_{k=1}^{\kappa_n(c_0)} \sqrt{\gamma_{n,k}}  \leq \sqrt{\gamma_{n, \ell_n}}   \sum_{j \geq 0}2^{-j}+ \sqrt{\gamma_{n, \ell_n+1}}  \sum_{j\geq 0}2^{-(d-2)j} \leq 6 \sqrt{\gamma_{n, \ell_n}}    \leq \alpha \lambda \sqrt{n},
}
for some constant $\lambda=\lambda(c,d)\in (0,\infty)$. Therefore, by choosing $\alpha=\alpha(\beta, \lambda) \in (0,c)$ and $c_0=c_0(\beta, \lambda) >0$ suitably small, if $n$ is large enough, then
\be{
\prod_{k=1}^{\kappa_n(c_0)} H(n,k) \leq n^{d \kappa_n(c_0)} \exp\left(\alpha \beta \sqrt{c_0} n + \alpha^2 \beta n + \alpha^2 \lambda n  \right) \leq \exp(\alpha n/4).
}
This estimate together with \eqref{dkn} and \eqref{hnk} implies
\be{
 \pp \left(D_{\kappa_n(c_0)} > \sqrt{\tfrac{n}{2}} \right)  \leq \exp(-\alpha n/4),
}
and \eqref{dhn} follows.\qed

\subsection{Proof of Proposition~\ref{tln} (ii)--(iv)}\label{subsect:2-4}
This subsection provides the proof of parts (ii)--(iv) in Proposition \ref{tln}.

\begin{proof}[Proof of Proposition~\ref{tln}~(ii)]
Assume $d \geq 3$.
Given $M, \delta, C$, our task is to find $K=2^k$ with $k\in \N$, such that for any connected set $A$ containing $0$ with $|A| \geq K$,
\ben{
	\pp(\mathrm{B}^A_{\delta n} \subset \Lambda(M\sqrt{n})) \leq \exp(-Cn).
}
For this purpose, first, in the generation $0$, let $G_0:=A$ and set
\[ \sct_0:=\sct_{G_0}= \inf \Big \{ t \in \N: \Big| \bigcup_{x \in G_0} \kR^x_t \Big| \geq 4 |G_0| \Big\}. \]
Then, by the same procedure as in the proof of Proposition~\ref{propk3d}~(a) with $A_t^G=\bigcup_{x \in G} \mathcal{R}_t^x$ (see Section~\ref{subsubsect:propk3da}), we can construct inductively the sequence $(\sct_i,G_i)_{i \geq 1}$, i.e., for each $i \in \N$, $|G_i| =K 2^i$ and
\[ \sct_i:=\sct_{G_i}= \inf \Big \{ t \in \N: \Big| \bigcup_{x \in G_i} \kR^x_t \Big| \geq 4 |G_i| \Big\}. \]
Observe that for any $\ell \geq 1$, one has ${\rm T}(A,x) \leq \sum_{i=0}^ \ell \sct_i$ for all $x \in G_\ell$. Therefore,  if $\mathrm{B}^A_{\delta n} \subset \Lambda(M\sqrt{n}) $ and $ \sum_{i=0}^\ell \sct_i \leq \delta n/2$, then
\be{
 G_\ell \subset \Lambda(M \sqrt{n}), \qquad t(x, \Lambda(M\sqrt{n})^c) \geq \delta n/2 \,\, \forall \, x \in G_\ell.
}
Therefore, for all $\ell \geq 1$
\begin{align*}
\begin{split}
    &\pp \left( \mathrm{B}^A_{\delta n} \subset \Lambda(M\sqrt{n})  \right)\\
    &\leq \pp \left( G_\ell \subset \Lambda(M \sqrt{n}), \, t(x, \Lambda(M\sqrt{n})^c) \geq \delta n/2 \,\, \forall \, x \in G_\ell  \right) + \pp \left(  \sum_{i=0}^\ell \sct_i \geq \delta n/2  \right)\\
    &\leq  \left( \max_{x \in \Lambda(M\sqrt{n})} \pp\left(t(x,\Lambda(M\sqrt{n})^c) \geq \delta n/2\right) \right)^{K 2^\ell} + \pp \left(  \sum_{i=0}^\ell \sct_i \geq \delta n/2  \right).
   \end{split}
\end{align*}
Here, in the second inequality, we have used $|G_\ell| = K 2^\ell$ and the fact that the random walks $(S^x_\cdot)_{x \in G_\ell}$ are independent of $G_\ell$ and of each other by the construction of the sequence $(\sct_i, G_i)_{i\geq 0}$.  There exists a positive constant $a=a(\delta,M)$ such that $  \pp\left(t(x,\Lambda(M\sqrt{n})^c) \geq \delta n/2\right) \leq \exp(-a)$ for all $x \in \Lambda(M\sqrt{n})$. Therefore, 
\ben{
\label{badn}
 \pp \left( \mathrm{B}^A_{\delta n} \subset \Lambda(M\sqrt{n})  \right) \leq  \exp(-a K 2^\ell) + \pp \left(  \sum_{i=0}^\ell \sct_i \geq \delta n/2  \right).
}
 On the other hand, for any $C>0$
\ben{ \label{pbhn}
\pp \left( \sum_{i=0}^{\ell} \sct_i \geq \delta n/2\right) \leq \exp(-2Cn) \E \left[ \exp \left( \sum_{i=0}^{\ell} 4C \sct_i/\delta \right) \right].
}
Let $c$ be the constant as in Lemma~\ref{lte}. Then using this lemma,  for any $i \geq 0$
\begin{align}\label{e2cti}
\begin{split}
    &\E \left[\exp \left( 4C \sct_i/\delta  \right) \mid G_i \right] \\
    & \leq \exp(4C|G_i|^{2/d}/(c\delta)) + \E \left[\exp \left( 4C \sct_i/\delta  \right) \mathbf{1} (\sct_i \geq |G_i|^{2/d}/c  ) \mid G_i \right] \\
    &\leq \exp(4C|G_i|^{2/d}/(c\delta)) + \sum_{j \geq c^{-1}|G_i|^{2/d}} \exp(4Cj/\delta-cj|G_i|^{\tfrac{d-2}{d}})\\
	&\leq \exp(4C|G_i|^{2/d}/(c\delta)) + 1 \leq 2 \exp(4C|G_i|^{2/d}/(c\delta)),
\end{split}
\end{align}
provided that
\be{
	8C/(c \delta) \leq  |G_i|^{\tfrac{d-2}{d}}.
}
Since $|G_i|=2^i |G_0|=K2^i$, we can take $K=2^k$ large enough such that the above condition holds for all $i\geq 0$.
Thus, due to \eqref{badn}, \eqref{pbhn} and \eqref{e2cti}, there exists a constant $C'=C'(C,\delta,c)>0$ such that
\begin{align*}
    &\pp \left( \mathrm{B}^A_{\delta n} \subset \Lambda(M\sqrt{n})  \right)\\
    &\leq \exp(-aK2^\ell)+\exp(-2Cn) 2^{\ell +1} \exp \left(\dfrac{4C}{c \delta} \sum_{i=0}^{\ell  }  (K 2^{i})^{2/d} \right) \\
	&\leq \exp(-aK2^\ell)+\exp(-2C n) 2^{\ell +1}\exp(C' 2^{2\ell/d}).
\end{align*}
Note that since $d\geq 3$ (and thus $2/d <1$), the second term in the right-hand  side above is bounded by $\exp(-3Cn/2)$ as long as $2^\ell$ is of order $n$. 

Consequently, taking $\ell=\lceil \log_2(2 C n/(aK)) \rceil$ proves that
\begin{align*}
    \pp \left( \mathrm{B}^A_{\delta n} \subset \Lambda(M\sqrt{n})  \right) \leq \exp(-Cn),
\end{align*}
and the proof is complete.
\end{proof}

\begin{proof}[Proof of Proposition~\ref{tln}~(iii)]
We assume $d=2$, and the proof is essentially the same as that of Proposition~\ref{tln} (i) (which treats the case where $d \geq 3$).
Using the arguments for the proof of Proposition~\ref{tln}~(i), the proof of (iii) is reduced to study the spread of active frogs at early stages, that is, to prove that there exists a positive constant $c_1$ such that
\begin{equation} \label{bn22}
\pp(\kE_1)=
\pp \bigl( \bigl| \{y \in A_1: {\rm T}_{\Lambda_0}(0,y) \leq n(\tfrac{1}{2}+\theta)\} \bigr| \geq c_1n \bigr)\geq 1-e^{-c_1n/\log n}.
\end{equation}
Furthermore, since Lemma~\ref{rnab} is used when we obtain \eqref{eq:EEE1}, in $d=2$, the following inequality holds as the counterpart of \eqref{eq:EEE1}:
let $c_0>0$ be a constant chosen later, and for some constant $c_1=c_1(c_0)$,
\ben{ \label{bn2kn}
\pp (\kE_1)\geq 1-\pp \left(D_{\kappa_n(c_0)} > \sqrt{\tfrac{n}{2}} \right)-\frac{1}{2}\exp(-c_1n/\log n)
}
(see \eqref{eq:kapppa} for the definition of $\kappa_n(c_0)$).
Moreover, by Markov's inequality  for any $\beta >0$,
\begin{align*}
    \pp \left(D_{\kappa_n(c_0)} > \sqrt{\tfrac{n}{2}} \right)
\leq \exp \left(\tfrac{-\beta n}{\sqrt{2} \log n}\right) \E \left(\exp \left[\beta \sqrt{n} \Big(\sqrt{\hsct_0}+\ldots+\sqrt{\hsct_{\kappa_n-1}} \Big)/\log n\right]\right).
\end{align*}
Applying Lemma~\ref{lem6.4} to $\hsct_k$ with $d=2$ and $|G_k|=2^k$, we have for any $\alpha\in (\tfrac{1}{\sqrt{n}},c)$ with the constant $c$ appearing in Lemma~\ref{lem6.4},
\begin{align*}
    \E\Big(\exp \big(\alpha \sqrt{n\hsct_k} \big) \, \big | \, G_k\Big) &\leq n^2 \exp \left(\alpha \sqrt{n |G_k|/c} +  \alpha^2 n/c + \alpha \sqrt{n} \min \big \{ \tfrac{|G_k|^2}{c}, \tfrac{(2\alpha)^2n}{c^2}  \big \}^{1/2}  \right)
    \\
  &\leq  n^2 \exp \left(\alpha \sqrt{n |G_k|/c} + 3 \alpha^2 n/c \right)=:     M(n,k).
\end{align*}
Letting $\alpha =\beta/\log n$ and recalling that $\kappa_n(c_0)=\log_2 \lfloor c_0 n \rfloor$ and $|G_k|=2^k$,  we have
\al{
\prod_{k=0}^{\kappa_n(c_0)} M(n,k) &\leq  n^{4 \kappa_n(c_0)}\exp \left( \frac{4\beta n\sqrt{c_0/c}}{\log n}  + \frac{ 6\beta^2 n \log_2(c_0)}{c\log n} \right)\\
& \leq \exp \left( \beta n/ (4 \log n)\right),
}
by taking  $\beta=\beta (c)$ and $c_0=c_0(c)$ suitably small. 
Combining the last three displays, one has
\be{
\pp \left(D_{\kappa_n} > \sqrt{\tfrac{n}{2}} \right) \leq \exp \left( -\beta n/ (4 \log n)\right),
}
which together with \eqref{bn2kn} implies the desired result \eqref{bn22}.
\end{proof}

\begin{proof}[Proof of Proposition~\ref{tln}~(iv)]
We now take $G_0:=\{0\}$ and repeat the same argument as in the proof of Proposition~\ref{tln}~(ii).
Since $|G_0|=1$, \eqref{badn} implies that for any $\ell \geq 1$, 
\al{
\pp \left( \mathrm{B}_{\delta n} \subset \Lambda(M\sqrt{n}) \right)  \leq  \exp(-a 2^\ell) + \pp \left(  \sum_{i=0}^\ell \sct_i \geq \delta n/2  \right),
}
where $a=a(\delta,M)$ is a positive constant.
Markov's inequality shows that for any $\alpha>0$,
\begin{align*}
\pp \left( \sum_{i=0}^{\ell} \sct_i \geq \delta n/2\right) \leq \exp(-\alpha n) \E \left[ \exp \left( \sum_{i=0}^{\ell} (2\alpha/\delta) \sct_i \right) \right].
\end{align*}
Moreover, we note that $d=2$ and use an argument similar to \eqref{e2cti}: if $(2\alpha/\delta) \leq c|G_0|/2 =c/2$ with $c$ a positive constant appearing in Lemma \ref{lte}, then for all $i\geq 0$,
\begin{align*}
    \E \left[\exp \left( {(2\alpha/\delta) \sct_i}  \right) \mid G_i \right] \leq 2 \exp(2\alpha|G_i|/( c \delta)) = 2 \exp(\alpha2^{i+1}/(c\delta)).
\end{align*}
Combining the last three estimates and taking $\ell = \lfloor \log_2(c\delta n/8) \rfloor$ and $\alpha=c\delta/4$, we arrive at
 \al{
\pp \left( \mathrm{B}_{\delta n} \subset \Lambda(M\sqrt{n}) \right)  \leq  \exp(-a 2^\ell) + 2\exp(-\alpha n/2) \leq \exp(-c'n),
}
where $c'=c'(\delta, c,M)$ is a positive constant.
\end{proof}

\section{Renormalization and covering process: Proof of Proposition~\ref{tuld} }\label{sect:LB_uld3}
We first recall the statement of  Proposition~\ref{tuld}: there exists $M \in \N$ such that for any $\e>0$, there exists $c=c(\e,M)>0$ such that for all $x \in \Z^d$, $t, n \in \N$ satisfying $|x|_1 \leq \sqrt{n}$ and $(1+\e)\mu(x) \leq t \leq n$, one has
\begin{align}\label{theo3.1}
	\pp(\T_{\Lambda(M\sqrt{n})}(0,x) \geq  t) \leq \exp(-c\mathfrak{s}_d(t)),
\end{align}
where 
\begin{align*}
\mathfrak{s}_d(t)= t \1{d\geq 3} + \tfrac{t}{\log t} \1{d=2}.
\end{align*}
The proof of \eqref{theo3.1} reduces to estimating the key event  $\{{\rm T} _{\Lambda(KN)}(0, N \eb_1) \geq t\}$ with $K$ a constant and $t \in [KN,N^2]$. Lemma~\ref{lem:seoset} plays a central role in this analysis. Roughly speaking, Lemma~\ref{lem:seoset} implies that if the above event occurs, then with overwhelming probability, we can identify a sequence of disjoint bad regions, say $(\Lambda(x_i, L_i))_{i=1}^\ell$, within which the transmission is extremely slow. More precisely, conditioned on an overwhelming event
\[
\{{\rm T} _{\Lambda(KN)}(0, N \eb_1) \geq t\}  \subset \bigcap_{i=1}^\ell \{ \exists \, y, z \in \Lambda(x_i, L_i):  {\rm T} _{\Lambda(x_i,L_i)}(y,z) \geq L_i^2 \},
\]      
with the additional constraint that
  $\sum_{i=1}^\ell L_i^2 \geq t/100$.  By Proposition~\ref{tln}, the probability of each event in the intersection is smaller than $\exp(-c \mathfrak{s}_d(L_i^2))$ with $c$ a positive constant. Then, using the independence of restricted passage times over disjoint regions, we deduce the desired estimate $\pp({\rm T} _{\Lambda(KN)}(0, N \eb_1) \geq t) \leq \exp(-c'\mathfrak{s}_d(t))$ with $c'$ a suitable constant.  

The bad regions $(\Lambda(x_i, L_i))_{i=1}^\ell$ are identified through a carefully tuned renormalization and covering process. First, by a renormalization procedure, we construct a sequence of consecutive boxes connecting $0$ and $N\eb_1$, which consists mostly of good boxes but includes some bad ones; see Lemma~\ref{lem:mdist} and the event $\kE_1$ in \eqref{poe1}.
 Then, the bad regions $(\Lambda(x_i, L_i))_{i=1}^\ell$
are revealed by applying a covering process to these bad boxes; further details can be found in Subsection~\ref{sect:LB_uld2}.

\subsection{Key estimates and proof of Proposition~\ref{tuld}}

First, we mention a generalized version of \cite[Theorem~{1.1}]{Kub19b}.

\begin{lem}\label{prop:kubota19b} 
Let $d\geq 2$. Assume that  there exists $C>0$ such that for all $x \in \Z^d$ and $t \geq C|x|_1$,
\begin{align} \label{assldp}
	\pp\left({\rm T} (0,x) \geq t \right) \leq C\exp(-g(t)/C),
\end{align}
where  $g$ is a function satisfying
\begin{align} \label{assog}
	\limsup_{t\to\infty} \frac{\log g(t)}{\log t} \in (0,1],
\end{align}
Then, for any $\varepsilon >0$, there exists $C'=C'(C,\varepsilon) >1$ such that for all $x \in \Z^d$,
\begin{align*}
	\pp\left({\rm T} (0,x) \geq (1+\varepsilon) \mu(x) \right) \leq C'\exp(-g(|x|_1)/C').
\end{align*}
\end{lem}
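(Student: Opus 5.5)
The plan is to follow the block-decomposition scheme of \cite{Kub19b}. We write $T(0,x)$ as a sum of passage times over short blocks along the segment from $0$ to $x$. We then restrict each block time to a box so that the summands have finite-range dependence, and we control the sum using the tail hypothesis \eqref{assldp} on each block together with a truncated Chernoff argument adapted to stretched-exponential tails.

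\emph{Step 1 (choice of block size).} Fix $\varepsilon>0$. By the shape theorem of \cite{AlvMacPop02} ($L^1$ convergence, uniform over directions since $\mu$ is a norm), there is $m$ such that $\E[{\rm T}(0,y)]\le (1+\varepsilon/8)\mu(y)$ for all $y$ with $|y|_1\in[m,2m]$. We then introduce restricted times ${\rm T}_{\Lambda(y,Km)}(y,z)$ with $K$ a large constant. Any frog used by an optimal path for ${\rm T}(y,z)$ that leaves $\Lambda(y,Km)$ must either take time $\ge {\rm T}(y,z)$ large, or travel distance $\gtrsim Km$ within time $O(m)$. Using \eqref{assldp} and \eqref{rsn}, this gives $\E[{\rm T}_{\Lambda(y,Km)}(y,z)\wedge Cm^{2}]\le(1+\varepsilon/4)\mu(z-y)$ for $K$ large. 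The truncation at $Cm^{2}$ only matters on an event whose probability is controlled by \eqref{assldp}. This restriction step is standard but needs care: the restricted time is not bounded by the unrestricted one, so its upper tail must itself be derived. I would obtain this tail from \eqref{assldp} applied at scale $t$, using that on $\{{\rm T}(y,z)\le t\}$ the optimal path stays in $\Lambda(y,C t)$ except with probability $e^{-ct}$.

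\emph{Step 2 (decomposition).} Choose lattice points $0=y_0,y_1,\dots,y_k=x$ along the segment with $|y_i-y_{i-1}|_1\in[m,2m]$ and $k\asymp|x|_1/m$. Set $X_i:={\rm T}_{\Lambda(y_{i-1},Km)}(y_{i-1},y_i)$. By subadditivity, ${\rm T}(0,x)\le\sum_i X_i$ and $\sum_i\mu(y_i-y_{i-1})\le\mu(x)+O(k)$, where the $O(k)$ error is absorbed by taking $m$ large. The $X_i$ are independent when $|i-j|\ge 2K+2$. It therefore suffices to bound $\pp\big(\sum_i (X_i-\E X_i)\ge \tfrac{\varepsilon}{2}\mu(x)\big)$. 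Each centred variable $X_i-\E X_i$ is bounded below by a constant multiple of $m$ and satisfies $\pp(X_i\ge t)\le C\exp(-g(t)/C)$ for $t\ge Cm$.

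\emph{Step 3 (the main obstacle: a heavy-tailed sum).} Because $g$ may be sublinear, exponential moments need not exist, so Lemma~\ref{lem:ldlc}(ii) cannot be applied directly. I would truncate at level $s:=\eta|x|_1$ for a small $\eta=\eta(\varepsilon)$ and write $X_i=X_i\wedge s+(X_i-s)_+$. The event that some $X_i\ge s$ has probability at most $k\,C e^{-g(\eta|x|_1)/C}$. By \eqref{assog}, $g(\eta t)\ge \eta^{1+o(1)}g(t)$, which is of the required form $C' e^{-g(|x|_1)/C'}$ after enlarging $C'$. For the truncated variables $Y_i=X_i\wedge s$, I apply Lemma~\ref{lem:ldlc}(i) on each residue class modulo $2K+2$ with $\lambda:=c\,g(s)/s$. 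Since $\log g/\log t\le1+o(1)$, the function $t\mapsto g(t)/t$ is essentially non-increasing up to sub-polynomial factors. Hence $\lambda t\le g(t)/(2C)$ for all $t\in[Cm,s]$, which gives $\E[e^{\lambda(Y_i-\E Y_i)}]\le 1+O(\lambda^2 m^{2})+O(e^{-g(Cm)/(2C)})$. When $g$ is genuinely sublinear, the resulting bound $\exp\!\big(-\lambda\varepsilon\mu(x)/(2(2K+2))+O(k\lambda^2m^2)\big)$ is at most $\exp(-c\,g(|x|_1))$ because $\lambda |x|_1\asymp g(|x|_1)$ and $k\lambda^2m^2\asymp |x|_1 m\,(g(s)/s)^2\ll g(|x|_1)$. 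In the case where $g$ is near-linear, I take $\lambda$ a small constant instead; this is exactly the situation of Lemma~\ref{lem:ldlc}(ii). The only delicate point is the regularity of $g$ required by this comparison, which is where \eqref{assog} is used, possibly after replacing $g$ by a monotone regularisation. Combining the two pieces gives the claim for $|x|_1$ large, and small $|x|_1$ is absorbed into $C'$.
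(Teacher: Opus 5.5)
There is a genuine gap. It sits in Steps~1--2, before the heavy-tailed summation of Step~3 is ever reached, because block variables with the properties you need do not exist here. For fixed $K$ and $m$, $X_i={\rm T}_{\Lambda(y_{i-1},Km)}(y_{i-1},y_i)$ uses only the finitely many frogs started in $\Lambda(y_{i-1},Km)$.
\begin{itemize}
\item In $d\ge 3$ these walks are transient. With a positive probability that depends on $K,m$ but not on $x$, none of them ever hits $y_i$, and then $X_i=\infty$.
\item In $d=2$ one still has $\pp(X_i>t)\ge (c/\log t)^{|\Lambda(Km)|}$, which is only polylogarithmic decay.
\end{itemize}
Consequently $\E X_i=\infty$, and the tail bound $\pp(X_i\ge t)\le Ce^{-g(t)/C}$ on $[Cm,\eta|x|_1]$ asserted in Step~2 is false. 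Your claim that the truncation at $Cm^2$ ``only matters on an event controlled by \eqref{assldp}'' also fails, since that event has a fixed positive probability. There are $\asymp|x|_1/(Km)$ independent blocks, so with probability tending to one some $X_i$ is infinite ($d\ge3$) or of order $|x|_1$ ($d=2$). Hence $\sum_i X_i$ is useless as an upper bound for ${\rm T}(0,x)$, and the step ``some $X_i\ge s$ has probability at most $kCe^{-g(\eta|x|_1)/C}$'' is wrong.

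Your suggested repair, restricting at the scale of the level $t$ (boxes $\Lambda(y,Ct)$), does give the right tails. But at level $t\approx\eta|x|_1$ those boxes have size comparable to $|x|_1$, so the dependence range becomes $\asymp\eta|x|_1/m$ blocks. Each residue class in Lemma~\ref{lem:ldlc} then has only $O(1/\eta)$ terms, and no decay in $|x|_1$ survives. Along fixed checkpoints, you cannot get finite-range dependence and good tails at the same time in the frog model.

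The missing idea is that slow regions in the bulk must be bypassed, not summed. The paper (following \cite{Kub19b}) uses a renormalization in which good blocks dominate supercritical site percolation, together with the exponential upper tail of the chemical distance. This shows that some pair $y\in\Lambda(\varepsilon|x|_1/A)$, $z\in\Lambda(x,\varepsilon|x|_1/A)$ satisfies ${\rm T}(y,z)<(1+\varepsilon/3)\mu(x)$, except with probability $Ae^{-|x|_1/A}$. The hypothesis \eqref{assldp} is then used only for the two endpoint pieces, via
${\rm T}(0,x)\le\max_{y}{\rm T}(0,y)+\min_{y,z}{\rm T}(y,z)+\max_{z}{\rm T}(z,x)$
and a union bound over the two mesoscopic boxes, at cost $e^{-g(\varepsilon|x|_1)/C}$. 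So the stretched-exponential cost comes entirely from the neighbourhoods of $0$ and $x$, and no sum of heavy-tailed variables ever has to be controlled.
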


\begin{rem}
In the original version ~\cite[Theorem~{1.1}]{Kub19b}, the author considers the stretched large deviation corresponding to the case $g(t)=t^\alpha$ with $\alpha \in (0,1)$.  The proof for general functions is essentially the same.  For the convenience of the reader, let us briefly explain the proof (see \cite[Section~3]{Kub19b} for more details). 

Fix $\e>0$. Then, by using a percolation argument (in particular, an upper tail estimate for the chemical distance in a supercritical site percolation on $\Z^d$), there exists a constant $A>0$ such that for any $x \in \Z^d$,
\begin{align*}
	\mathbb{P}\biggl( \min_{\substack{y \in \Lambda(\e |x|_1/A)\\z \in \Lambda(x,\e |x|_1/A)}}{\rm T}(y,z) \geq (1+\e)\mu(x) \biggr) \leq A\exp(-|x|_1/A).
\end{align*}
This, combined with the triangle inequality and \eqref{assldp}, implies that  for all $x \in \Z^d$,
\begin{align*}
	&\mathbb{P}({\rm T}(0,x) \geq (1+\e)\mu(x))\\
	&\leq 2\mathbb{P}\biggl( \max_{y \in \Lambda(\e |x|_1/A)}{\rm T}(0,y) \geq \e|x|_1 \biggr)+\mathbb{P}\biggl( \min_{\substack{y \in \Lambda(\e |x|_1)\\z \in \Lambda(x,\e |x|_1)}}{\rm T}(y,z) \geq \Bigl( 1+\frac{\e}{3} \Bigr)\mu(x) \biggr)\\
	&\leq 2C\exp\bigl( -g(\e |x|_1)/C \bigr)+A\exp(-|x|_1/A) \leq C'\exp\bigl( -g(|x|_1)/C' \bigr),
\end{align*}
where $C' =C'(\varepsilon,C,A)$ is a positive constant.  Note that in the last inequality, we have used the assumption \eqref{assog} of the function $g$. For more details, we refer to \cite[Section~3]{Kub19b}.
\end{rem}

\begin{lem}\label{theo3.1 rough estimate}
 There exists a constant $C>0$, such that for all $z \in \Z^d$ with $|z|_1 \geq 2$ and for all $t\geq C|z|_1$ 
   	  \be{
\pp\left(\T_{\Lambda(C(\sqrt{t}+|z|_1))}(0,z) > t \right) \leq C \exp(-\mathfrak{s}_d(t)/C).
          }
\end{lem}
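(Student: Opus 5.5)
We aim to bound $\pp(\T_{\Lambda(C(\sqrt t+|z|_1))}(0,z)>t)$ by combining the fast local filling of Proposition~\ref{tln}(i),(iii) with a crude chain of boxes from $0$ to $z$. We split into two regimes: $|z|_1\le \sqrt t$ and $\sqrt t<|z|_1\le t/C$. Throughout, set $m:=\lfloor t/K\rfloor$ for a large constant $K$ to be fixed. The basic building block is the following consequence of Proposition~\ref{tln}(i) (resp.\ (iii) for $d=2$), applied with $n$ replaced by $m$ and translated:
\[
\pp\Bigl(\T_{\Lambda(u,M\sqrt m)}(u,w)\le \e m\ \ \forall w\in\Lambda(u,\sqrt m)\Bigr)\ge 1-\exp(-c\,\mathfrak s_d(m)).
\]

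\emph{Case $|z|_1\le\sqrt t$.} Choose a nearest-neighbour-in-boxes chain $u_0=0,u_1,\dots,u_k=z$ with $u_{i+1}\in\Lambda(u_i,\sqrt m)$ and $k\le \sqrt K+1$. On the intersection of the $k$ good events above (with $\e=1$), the triangle inequality gives
\[
\T_{\Lambda(C(\sqrt t+|z|_1))}(0,z)\le k m\le (\sqrt K+1)\,t/K\le t
\]
for $K\ge 4$. All boxes $\Lambda(u_i,M\sqrt m)$ lie in $\Lambda(C\sqrt t)$ once $C\ge M+1$. A union bound over $k=O(1)$ events then yields the failure probability $k\exp(-c\,\mathfrak s_d(t/K))\le C\exp(-\mathfrak s_d(t)/C)$.

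\emph{Case $|z|_1>\sqrt t$.} Here we take the box scale from the distance instead. Set $r:=\lfloor t/|z|_1\rfloor\ge C$ and use blocks of time $m':=r^2/\!K'$, i.e.\ of spatial scale $\sqrt{m'}\asymp r$. Along a lattice path from $0$ to $z$, choose points $u_0,\dots,u_k$ at $\ell^\infty$ spacing $\le\sqrt{m'}$, so that $k\asymp |z|_1/\sqrt{m'}$. Let
\[
X_i:=\T_{\Lambda(u_i,M\sqrt{m'})}(u_i,u_{i+1}).
\]
Then $X_i$ and $X_j$ are independent when $|i-j|\ge 4M$, and Proposition~\ref{tuld} (or just Proposition~\ref{tln}(i),(iii) applied on its good event) gives $\pp(X_i\ge s)\le \exp(-c\,\mathfrak s_d(s))$ for $s\in[m',\,m'']$ with a suitable upper limit $m''$. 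Lemma~\ref{lem:ldlc}(ii) then bounds $\pp(\sum X_i\ge k m'+t/2)$. Note that
\[
k m'\asymp |z|_1\sqrt{m'}\asymp |z|_1\cdot r/\sqrt{K'}\le t/2
\]
for $K'$ large. The resulting bound is $\exp(-c\,t/m)\cdot 2^{k/m}$ plus the individual tail terms, and all boxes lie in $\Lambda(|z|_1+M\sqrt{m'})$.

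\textbf{Main obstacle.} Lemma~\ref{lem:ldlc}(ii) only gives exponential decay in $h=t/2$ when the single-box tail is exponential in $s$ itself; otherwise it produces the wrong rate.
\begin{itemize}
\item For $d\ge3$ this is fine: Proposition~\ref{tln}(i) yields $\exp(-cs)$ on box scale $s$, provided the truncation level $b$ can be taken of order $t$. If it cannot, I would instead apply Proposition~\ref{tln} with $n=t$ inside each box, which is allowed since the constraint $|x|_1\le\sqrt n$ holds.
\item For $d=2$ the $\log$ loss must be tracked: the tail $\exp(-cs/\log s)$ with $s\le t$ gives exponent $\ge c\,t/\log t$. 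The point to check is that the number of independent subfamilies, $m=4M$, is a constant, so the factor $2^{\lceil k/m\rceil}$ is negligible as $k\le |z|_1\le t/C$.
\end{itemize}
Where these checks fail, I would fall back on coarser chains of boxes of size $\sqrt t$ and accept $k\le\sqrt t$ factors.
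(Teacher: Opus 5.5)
\textbf{Verdict: genuine gap.} Your case $|z|_1\le\sqrt t$ is fine and matches the paper, which notes that for $t\ge N^2$ the lemma follows directly from Proposition~\ref{tln} using a bounded number of boxes. The gap is the regime $\sqrt t<|z|_1\le t/C$, which is the whole content of the lemma.

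\textbf{Why Case 2 fails.} First, Proposition~\ref{tuld} is unavailable. In the paper it is deduced \emph{from} this lemma, so invoking it is circular. Even if you could use it, it only controls a box of radius $M\sqrt{m'}$ up to level $m'$.

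Second, Proposition~\ref{tln} at level $s$ needs the restriction box $\Lambda(u_i,M\sqrt s)$. Your $X_i=\T_{\Lambda(u_i,M\sqrt{m'})}(u_i,u_{i+1})$ has a fixed box, so it is controlled only up to level of order $m'$. Beyond that level there is no exponential tail at all:
\begin{itemize}
\item For $d\ge3$, $X_i=\infty$ with probability at least $\mathfrak g_d^{|\Lambda(u_i,M\sqrt{m'})|}$. Indeed, every frog of the box may miss $u_{i+1}$ forever, and $\pp(t(x,y)=\infty)\ge\mathfrak g_d$ for $x\ne y$.
\item For $d=2$, the tail beyond level $m'$ is far heavier than $\exp(-cs/\log s)$.
\end{itemize}
So in Lemma~\ref{lem:ldlc}(ii) you are forced to take $b\asymp m'\asymp(t/|z|_1)^2$. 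The error term $k\exp(-c\,m')$ is then much larger than $\exp(-c\,\mathfrak s_d(t))$ as soon as $|z|_1\gg\sqrt t$. It is of order one when $|z|_1\asymp t$; there $r$ is bounded, and Proposition~\ref{tln}, a statement for large $n$, does not even apply.

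This is not a bookkeeping issue. Along any fixed deterministic chain of boxes of radius $R$, the bound $\sum_i X_i$ is infinite with probability at least $\mathfrak g_d^{CR^d}$.

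Your fallbacks fail too:
\begin{itemize}
\item Restricting to $\Lambda(u_i,M\sqrt t)$ makes the dependence range of order $M\sqrt{t/m'}$. Lemma~\ref{lem:ldlc} then gives only about $\exp(-c\sqrt{t\,m'})=\exp(-c\,t^{3/2}/|z|_1)$.
\item A chain of $\sqrt t$-boxes costs $\e t$ per box under Proposition~\ref{tln}, i.e.\ a total of order $\e\sqrt t\,|z|_1\gg t$.
\end{itemize}

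\textbf{What is missing.} You need two ideas: flexibility of the route, so that bad boxes can be bypassed rather than crossed, and a multiscale charging of the delay. The paper does this in Lemma~\ref{lem:seoset}.
\begin{itemize}
\item It uses fixed-size $M_*$-boxes, open when $\tT(\Lambda(M_*u,2M_*))\le C_*M_*$. This percolation dominates high-density Bernoulli percolation.
\item Lemma~\ref{lem:mdist} then gives, with probability $1-e^{-cr}$ and $r\asymp t$, a box path from $0$ to $N\eb_1$ of length $O(N)$ with at most $\sqrt r(\log r)^2$ closed boxes.
\item Each closed box gets its radius of bad neighbours $\kL$ from \eqref{dlb}. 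Its tail $\exp(-\alpha\,\mathfrak s_d(\ell^2))$ comes from Proposition~\ref{tln} applied at every scale.
\item The covering process merges these into disjoint regions $\Lambda(x_i,L_i)$ such that ${\rm T}_{\Lambda(KN)}(0,N\eb_1)\le t/2+50\sum_i L_i^2$, with a crossing of duration $\ge L_i^2$ inside each region.
\item On the bad event $\sum_i L_i^2\ge t/100$. Independence over the disjoint regions gives $\exp(-c\sum_i\mathfrak s_d(L_i^2))\le\exp(-c\,\mathfrak s_d(t))$, against an entropy of only $\exp(\sqrt t(\log t)^4)$.
\end{itemize}
The essential point is that a delay of size $L^2$ is charged to a region of radius $L$, whose cost $\mathfrak s_d(L^2)$ matches the delay. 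Your single-scale chain can only charge a delay of order $t$ at the box scale $m'$.
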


\noindent {\bf Proof of Proposition~\ref{tuld}  assuming Lemma \ref{theo3.1 rough estimate}}.
Let $\e>0$ and $x \in \Z^d$ with $2 \leq |x|_1 \leq \sqrt{n}$ and $(1+\varepsilon)\mu(x) \leq t\leq n$.
In addition, let $C$ be as in Lemma \ref{theo3.1 rough estimate} and take $M\geq 2C$.
Due to $M\sqrt{n}\geq C(\sqrt{t}+|x|_1)$ and Lemma~\ref{theo3.1 rough estimate}, we have for any $t\geq C|x|_1$,
 \ben{\label{eq: large t}
\pp\left(\T_{\Lambda(M\sqrt{n})}(0,x) > t \right) \leq \pp\left(\T_{\Lambda(C(\sqrt{t}+|x|_1))}(0,x) > t \right) \leq C \exp(-\mathfrak{s}_d(t)/C).
          }
Moreover, if $(1+\varepsilon) \mu(x) \leq t \leq C|x|_1$, then using  \eqref{eq: large t} and Lemma \ref{prop:kubota19b}
 \bea{
&&\pp(\T_{\Lambda(M\sqrt{n})}(0,x) \geq t ) \\
 & \leq& \pp(\T_{\Lambda(M\sqrt{n})}(0,x) \geq (1+\varepsilon)\mu(x) )\\ 
 &\leq & \pp( (1+\e) \mu (x) \leq  {\rm T}(0,x) \leq C|x|_1 ) + \pp(\T_{\Lambda(M\sqrt{n})}(0,x) > C|x|_1) \\
  &\leq&  C'\exp(-\mathfrak{s}_d(|x|_1)/C') + C\exp(-\mathfrak{s}_d(C |x|_1)/C)\\
  &\leq& C''\exp(-\mathfrak{s}_d(t)/C''),
}
where $C'=C'(C,\varepsilon)$ is a positive constant as in Lemma \ref{prop:kubota19b},  $C''=(C+C')C$, and  we used the fact that $\T_{\Lambda(M\sqrt{n})}(0,x)={\rm T}(0,x)$ if ${\rm T}(0,x) \leq C|x|_1$ and $C|x|_1 \leq M\sqrt{n}$ in the second inequality.
In conclusion, \eqref{theo3.1} holds. \qed

\begin{lem} \label{lem:seoset} There exist  positive constants $K$ and $c$  such that for $N^2 \geq t \geq KN$, 
	\ben{
		\pp(\kE) \geq 1- \exp(-c\mathfrak{s}_d(t)),
	}
	where $\kE$ is the event that we can find a sequence  $(x_i,L_i)_{i=1}^{\ell}\subset   \Z^d\times \N$ satisfying
	\begin{itemize}
		\item [(a)] $\ell \leq \sqrt{t} (\log t)^2$;
		\item[(b)]   $L_i\leq N$ for all $i=1, \ldots \ell$, and 
		$$	{\rm T}_{\Lambda(KN)}(0,N\eb_1) \leq \frac{t}{2} + 50 \sum^{\ell}_{i=1} L_i^2;$$
		\item[(c)] $ x_i\in \Lambda(2N)$ for all $i=1, \ldots, \ell$,  and   $ \Lambda(x_i,L_i)\cap \Lambda(  x_j,L_j)=\emptyset$ if $i\neq j$;
		\item [(d)] There exist $y_i,z_i\in \Lambda( x_i,L_i)$ such that ${\rm T}_{\Lambda( x_i,L_i)}(y_i,z_i)\geq L_i^2$ for all $i=1,\ldots, \ell$.
	\end{itemize}	
\end{lem}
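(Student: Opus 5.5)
The plan is to build a coarse-grained path of boxes from $0$ to $N\eb_1$ inside $\Lambda(KN)$, attach to every bad box a radius of slow propagation, and then merge the resulting regions into a disjoint family satisfying (a)--(d). I take this to be the content of Lemma~\ref{lem:mdist} and the covering procedure of Subsection~\ref{sect:LB_uld2}.

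\emph{Step 1: scale and good boxes.} Fix the mesoscopic scale $s:=\lfloor \sqrt t/(\log t)^2\rfloor$. Since $N\le t/K$, a straight chain of $s$-boxes from $0$ to $N\eb_1$ has at most $N/s\le \sqrt t(\log t)^2/K$ boxes, which is where the count in (a) comes from. Call a box $\Lambda(x,s)$ with $x\in s\Z^d\cap\Lambda(2N)$ \emph{good} if $\T_{\Lambda(x,3s)}(y,z)\le C_0 s$ for all $y,z\in\Lambda(x,2s)$. Good boxes are then crossed at linear cost.
\begin{itemize}
\item The bad probability $p(s)$ tends to $0$. I would get this from Kubota's stretched-exponential bound \cite{Kub19}, combined with Proposition~\ref{tln}(i)/(iii) applied at scale $s$ to pass from point-to-point times to all pairs.
\item Goodness depends only on walks started in $\Lambda(x,3s)$. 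The box field is therefore finite-range dependent, so it dominates supercritical site percolation once $p(s)$ is small.
\end{itemize}
Then I would use the chemical-distance/Peierls argument in the spirit of Lemma~\ref{lem:mdist}. With probability at least $1-\exp(-cN/s)\ge 1-\exp(-c\mathfrak s_d(t))$, there is a lattice path of boxes from the box of $0$ to that of $N\eb_1$, staying in $\Lambda(2N)$, of length at most $C_1N/s$, with at most $\eta N/s$ bad boxes, where $\eta$ is small. Along good boxes the passage time is at most $C_0C_1 N\le t/2$ once $K\ge 2C_0C_1$. This gives the $t/2$ term in (b).

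\emph{Step 2: a radius for each bad box.} For a bad box at $x$, let $L_x$ be the smallest dyadic $L\ge s$ such that
\[
\max_{y,z\in\Lambda(x,L)}\T_{\Lambda(x,L)}(y,z)<L^2 .
\]
Then the path can detour through $\Lambda(x,L_x)$ at cost at most $L_x^2$, plus a few neighbouring boxes. At the scale $L_x/2$ just below, property (d) holds by minimality of $L_x$.

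Radii are rare. Proposition~\ref{tln}(i),(iii), applied with $n=L^2$ and the box covered by $O(1)$ balls of radius $\sqrt n$, gives
\[
\pp(L_x\ge L)\le \exp(-c\,\mathfrak s_d(L^2)).
\]
A union bound over the $O(N^d)$ positions then shows that $L_x\le N$ for every $x$ except on an event of probability $\exp(-c\,\mathfrak s_d(N^2))\le\exp(-c\,\mathfrak s_d(t))$, since $t\le N^2$. This gives $L_i\le N$ in (b). The requirement $x_i\in\Lambda(2N)$ in (c) holds because the box path stays in $\Lambda(2N)$.

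\emph{Step 3: covering.} The regions $\Lambda(x,L_x)$ may overlap, so I merge them. Repeatedly, whenever two current regions $\Lambda(x_i,L_i)$ and $\Lambda(x_j,L_j)$ intersect, replace them by a single box of radius $\le L_i+L_j$ containing both. After merging, pass to the next dyadic scale at which (d) still holds, re-using the minimality from Step 2 applied to the merged box.

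The quantity to track is $\sum L_i^2$. Each merge at most doubles the contribution of the merged pair, so $\sum_i L_i^2$ stays comparable to the original sum, and the factor $50$ absorbs these constants together with the detour overheads. The result is a disjoint family satisfying (b), (c) and (d). The number of regions is at most the number of bad boxes, which gives (a).

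\emph{Main obstacle.} The hard part is showing that property (d) survives merging. A union of slow regions need not be slow on the merged box, because infection may enter it from outside. My first approach would be to define the merged radius again by minimality, as the smallest enclosing scale at which the box is good, rather than by the sum of the radii. I would then argue that the detour cost through the merged box is still at most its squared radius. If that fails, the fallback is a Vitali-type selection: enlarge each region by a factor $3$, pick a disjoint subfamily whose tripled boxes cover all the others, and absorb the factor $9$ into the constant $50$. Avoiding any independence claim at this stage is deliberate. Independence across disjoint boxes is used only afterwards, when Proposition~\ref{tuld} is derived from this lemma.
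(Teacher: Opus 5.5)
There is a genuine gap in Step~1, caused by your growing base scale $s=\lfloor\sqrt t/(\log t)^2\rfloor$.

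\emph{The probability bound fails.} You computed yourself that the box path has at most $N/s\le\sqrt t(\log t)^2/K$ boxes. The asserted bound $1-\exp(-cN/s)\ge 1-\exp(-c\,\mathfrak{s}_d(t))$ is therefore false, since $N/s\ll\mathfrak{s}_d(t)$ (which is $t$ for $d\ge3$ and $t/\log t$ for $d=2$).

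\emph{This is not a matter of constants.} A bad box with $L_x=s$ has no smaller dyadic scale, so ``(d) at scale $L_x/2$ by minimality'' gives no witness for it. Yet its detour may cost up to $s^2=t/(\log t)^4$, so you can afford only about $(\log t)^4$ such boxes. But an $s$-box is linearly slow with probability at least $e^{-Cs}$ (already by the avoidance estimate of Lemma~\ref{Step73d} at a single vertex). Heuristically, forcing every box path through more of them then has probability about $\exp(-\sqrt t\,(\log t)^{O(1)})$, far larger than $\exp(-c\,\mathfrak{s}_d(t))$. Separately, your restricted linear-time goodness on $\Lambda(x,3s)$ is not supplied by Kubota's unrestricted bound, nor by Proposition~\ref{tln}, which only gives quadratic time.

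\emph{How the paper avoids this.} It works at a \emph{fixed} scale $M_*$, with openness defined by the unrestricted condition $\tilde{\rm T}(\Lambda(M_*u,2M_*))\le C_*M_*$. This is automatically local, because the infection moves at speed at most one. The bound $\sqrt t(\log t)^2$ in (a) counts the \emph{closed} $M_*$-boxes allowed on the path, not all boxes. The failure probability $e^{-cr}$, with $r=\lfloor t/M_*^2\rfloor$, comes from Lemma~\ref{lem:mdist}. Its proof uses about $\sqrt r/(\log m)^2$ disjoint parallel channels, each failing with probability $\exp(-(\log m)^2\sqrt r/C)$. This parallel-channel mechanism is what gives a bound exponential in $t$ when $t\gg N$, and it is absent from your plan. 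Closed boxes whose radius is below $M_*$ then cost $O(M_*^2)$ each, and their total is absorbed into $t/2$ because $t\ge KN$.

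The covering step has the gap you anticipated, and your radius cannot close it. The smallest dyadic scale at which a box is quadratically fine says nothing about larger scales. A merged box, or a box tripled in a Vitali selection, can therefore be slow again, and its crossing cost is then uncontrolled.

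The missing idea is the radius of bad neighbours \eqref{dlb}: the threshold beyond which \emph{every} larger scale $k$ satisfies $\tilde{\rm T}_{\Lambda(x,3k)}(\Lambda(x,k))<k^2$. With this definition:
\begin{itemize}
\item failure occurs exactly at $k=\kL$, which gives (d) because ${\rm T}_{\Lambda(x,L)}\ge{\rm T}_{\Lambda(x,3L)}$;
\item the enlarged box $\Lambda(x,6L+1)$ has all internal passage times below $(6L+1)^2\le 50L^2$, which is where the constant $50$ comes from.
\end{itemize}
The paper then selects greedily by decreasing $\kL$, discarding closed boxes whose centres lie within $\ell^\infty$-distance $6\kL$ of a selected one. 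This is essentially your Vitali fallback. It yields disjoint boxes $\Lambda(M_*u_i,3L_i)$ with no merging and no re-verification of (d).

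Condition (b) then follows from Lemma~\ref{lem: connect-time}, applied to a connected union of path boxes of cost $O(M_*^2)$ and the boxes $\Lambda(M_*u_i,6L_i)$. The bound $L_i\le N$ comes from a union bound as in your Step~2.
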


\noindent \textbf{Proof of Lemma \ref{theo3.1 rough estimate} assuming Lemma \ref{lem:seoset}}. For simplicity, we assume that  $z=N \mathbf{e}_1$. If $t \geq N^2$ then Lemma \ref{theo3.1 rough estimate} directly follows from Proposition \ref{tln}. Hence, it suffices to prove that there exist  positive constants $K$ and $\kappa$, such that for all $N$ sufficiently large and $t \in [KN,N^2]$,
\ben{ \label{goal2d}
	\pp\left({\rm T} _{\Lambda(KN)}(0, N \mathbf{e}_1) \geq t \right) \leq  \exp\left(-\kappa \mathfrak{s}_d(t) \right).
}
 Let $K$, $c$ and $\kE$  be the constants and event  as in Lemma \ref{lem:seoset}.  
On the event $\kE \cap \{ {\rm T}_{\Lambda(KN)}(0,N\eb_1) \geq t\}$, we can find a sequence $(x_i,L_i)_{i=1}^{\ell}\subset \Z^d\times \N$ satisfying (a), (c), (d) and 
\begin{itemize}
	\item [(b')]  $L_i\leq N$ for all $i=1, \ldots \ell$, and 
	$$	\sum^{\ell}_{i=1} L_i^2\geq \frac{t}{100}. $$
\end{itemize}
Given $\ell\in\N$, let us denote 
\[\mathscr{L}_\ell :=\{ (x_i,L_i)_{i=1}^{\ell} \in \Z^d\times \N \textrm{ satisfying (b'), (c)}\}. \]
Then, with some $c'$ and $c''$ positive constants, we have
\bean{ \label{ldoe12}
	&&\pp\left( \kE \cap \{ {\rm T}_{\Lambda(KN)}(0,N\eb_1) \geq t\}\right) \notag \\
	& \leq &  \pp \Big(\exists \, \ell \leq \sqrt{t} (\log t)^2, \, \exists \, (x_i,L_i)_{i=1}^{\ell} \in \scl_\ell: \notag \\
	&&  \hspace{3 cm} \forall \, i\in  [\ell],\, \exists \, y_i,z_i\in  \Lambda(x_i,L_i),  \, {\rm T}_{ \Lambda(x_i,L_i)}(y_i,z_i)\geq L_i^2   \Big) \notag \\
	& \leq & \sum_{\ell=1}^{\lf\sqrt{t} (\log t)^2\rf} \, \sum_{(x_i,L_i)_{i=1}^{\ell} \in \scl_\ell} \, \sum_{y_i, z_i \in \Lambda( x_i,L_i) } \pp \left(\forall i\in [\ell],~{\rm T}_{ \Lambda(x_i,L_i)}(y_i,z_i)\geq L_i^2 \right) \notag\\
	&\leq&  \sum_{\ell=1}^{\lf\sqrt{t} (\log t)^2\rf} \,  \sum_{(x_i,L_i)_{i=1}^{\ell} \in \scl_\ell} \, \sum_{y_i, z_i \in \Lambda( x_i,L_i) } \exp{ \left(-c'\sum_{i=1}^{\ell}\mathfrak{s}_d(L_i^2) \right)} \notag \\
	&\leq& \sum_{\ell=1}^{\lf\sqrt{t} (\log t)^2\rf}\,  \sum_{(x_i,L_i)_{i=1}^{\ell} \in \scl_\ell} \, \sum_{y_i, z_i \in \Lambda( x_i,L_i) } \exp(-c''\mathfrak{s}_d(t)), \notag 
}
where, for the third inequality we have used Proposition \ref{tln}  with the help of the condition (c) and the  property that ${\rm T}_A$ and ${\rm T}_B$ are independent if $A \cap B =\emptyset$; and for the last inequality, we used the condition (b') and that $L_i \leq N \leq t$ for all $i=1, \ldots, \ell$.   Moreover,   since $L_i\leq N \leq t$ by (b') and $x_i \in \Lambda(2N)$,
\be{ 
	\sum_{\ell=1}^{\lf\sqrt{t} (\log t)^2\rf}\,  \sum_{(x_i,L_i)_{i=1}^{\ell} \in \scl_\ell} \, \sum_{y_i, z_i \in \Lambda( x_i,L_i) }   1  \leq \exp \left( \sqrt{t} (\log t)^4 \right).
	}
It follows from the last two displays that for $N$ sufficiently large 
\be{
	\pp\left(\kE \cap \{ {\rm T}_{\Lambda(KN)}(0,N\eb_1) \geq t\}\right) \leq \exp(-c''\mathfrak{s}_d(t)/2).
}
Combining this with Lemma~\ref{lem:seoset},  we get the desired result \eqref{goal2d}.  \hfill $\square$

\subsection{Renormalization and covering process: proof of Lemma \ref{lem:seoset} }  \label{sec:reno}
\subsubsection{Renormalization scheme}
Let $M_* ,C_* \geq 4$ be large integers chosen later.
We recall that $\Lambda(M_*  u,M_* ) : = M_*  u + [-M_*  ,M_*  ]^d$  for $u \in \Z^d $. 
We consider the renormalization based on the tessellation of $\Z^d$ into boxes of size $M_* $:
  \be{
  \Z^d = \bigcup_{ u\in \Z^d} \Lambda(M_*  u,M_*  ).
   }
An $M_*$-box $\Lambda(M_*  u,M_* )$ is called {\bf open} if and only if
\begin{align*}
   \tT(\Lambda(M_*  u,2M_* )) \leq  C_*   M_* , \quad \textrm{where } \tT(A) := \max_{u,v \in A} {\rm T}(u,v). 
\end{align*}
These open boxes induce the following site-percolation. A vertex $u \in \Z^d$ is called {\it open} if the $M_*$-box $\Lambda(M_*  u,M_* )$ is open. We denote the associated percolation by $\kP$. Note that here $\kP$ depends on $C_* $ and $M_* $, but for simplicity of notation, we omit the dependence of $\kP$ on parameters. 

By \cite[Proposition~{2.4}]{Kub19b}, there exist $A \geq 1$ and $\alpha \in (0,1]$ such that for all $x \in \Z^d$ and $t \geq A|x|_\infty$,
\begin{align*}\label{eq:upper_tail}
	\mathbb{P}({\rm T}(0,x) \geq t) \leq A\exp(-t^\alpha/A).
\end{align*}
Consequently, for $C_*$ sufficiently large,
  \begin{equation} \label{pminf}
   \lim_{M_*\rightarrow \infty}\pp(u \textrm{ is open}) = 1.
  \end{equation}
Furthermore, note that the event $\{u \textrm{ is open} \}$ depends only on SRWs $S^y_{\cdot}$, $y \in \Z^d$ with $|y-M_*  u|_1\leq C_*  M_* $.
In particular, 
  \ben{ \label{locd}
  \textrm{ if  $|u-v|_1\geq 4C_* $, then the two events $\{u \textrm{ is open}\}$ and $\{v\textrm{ is open}\}$ are independent. }
}

\begin{lem} \label{lem:reno} Let $d\geq 2$.
The following claims  hold:
	\begin{itemize}
		\item[(i)] For any $k \geq 2$ and  $u, v \in \Lambda(k) \subset \Z^d$,
		    \[ \T_{\Lambda(M_* (k+C_* ))}(M_*  u, M_*  v) \leq  C_*   M_*   d_{\kP_k} (u,v), \]
        where $d_{\kP_k}$ is the graph distance (chemical distance) in  $\kP$ restricted in $\Lambda(k)$.    
		\item [(ii)] For any $\e>0$, there exist positive constants $C_* ,M_* $ such that  the induced percolation $\kP$ stochastically dominates the independent Bernoulli site-percolation with parameter $1-\e$.
        	\end{itemize}
\end{lem}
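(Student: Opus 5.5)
For part (i) I will argue deterministically by chaining the local bounds along a shortest open path. Take $u,v\in\Lambda(k)$ with $d_{\kP_k}(u,v)=m<\infty$; otherwise there is nothing to prove. Let $u=w_0,w_1,\dots,w_m=v$ be an open nearest-neighbour path in $\Lambda(k)$. Consecutive sites satisfy $|w_{j}-w_{j+1}|_1=1$, so $M_*w_j$ and $M_*w_{j+1}$ both lie in $\Lambda(M_*w_j,2M_*)$. Since $w_j$ is open, this gives
\[
{\rm T}(M_*w_j,M_*w_{j+1})\le \tT(\Lambda(M_*w_j,2M_*))\le C_*M_*.
\]
The statement concerns the restricted time, so I also need to control which frogs are used. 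An optimal path for ${\rm T}(M_*w_j,M_*w_{j+1})$ has total duration at most $C_*M_*$. Every frog it uses is therefore activated at a site within $\ell^\infty$-distance $C_*M_*$ of $M_*w_j$, because each walk moves at most one step per unit time. All such sites lie in $\Lambda(M_*(k+C_*))$, so the restricted passage time over this box equals the unrestricted one for each link. Summing over $j$ with the triangle inequality for ${\rm T}_A$ gives the bound $C_*M_*\,m$. The only care needed is to check that ${\rm T}_A$ is subadditive when every intermediate site stays in $A$, which holds by concatenating sequences.

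For part (ii) I will use the Liggett--Schonmann--Stacey domination theorem. A $k$-dependent site percolation on $\Z^d$ whose marginals are at least $p$ stochastically dominates Bernoulli site percolation with parameter $\pi(p)$, and $\pi(p)\to1$ as $p\to1$ for fixed $k$ and $d$. By \eqref{locd}, the field $\{u\text{ open}\}_u$ is $4C_*$-dependent, with a range depending only on $C_*$ and not on $M_*$. So I first fix $C_*$ large enough that \eqref{pminf} holds. Then, for the given $\e$, I choose $p<1$ with $\pi(p)\ge1-\e$ for dependence range $4C_*$. Finally, \eqref{pminf} lets me pick $M_*$ so large that $\pp(u\text{ open})\ge p$, which by translation invariance holds uniformly in $u$.

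The main obstacle is the order of quantifiers in (ii). The dependence range must not grow with $M_*$. This is exactly why the openness event is defined through frogs within distance $C_*M_*$ of $M_*u$, which makes the dependence range a function of $C_*$ alone. The deduction of \eqref{locd} from that locality is routine and can be taken from the text.
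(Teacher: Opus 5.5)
Your proposal is correct and follows the paper's proof. For (i) you chain the bound ${\rm T}(M_*w_j,M_*w_{j+1})\le C_*M_*$ along an open path, localizing each link in $\Lambda(M_*w_j,C_*M_*)\subset\Lambda(M_*(k+C_*))$; for (ii) you combine \eqref{pminf} and \eqref{locd} with the Liggett--Schonmann--Stacey domination theorem, and your explicit localization argument and the order of choices ($C_*$, then $p$, then $M_*$) spell out what the paper leaves implicit.
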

\begin{proof} We start with (i). If $d_{\kP_k}(u,v) = \infty$ then the inequality is trivial. Suppose that $l:=d_{\kP_k}(u,v) <\infty$ and $(u_i)_{i=0}^l$ is an open path in $\Lambda(k)$ connecting $u$ and $v$, i.e., ${u_0=u}$, $u_l =v$ and $|u_i-u_{i-1}|_1=1$ for $i=1, \ldots, l$, and all $u_i$'s are open. In particular,   ${\rm T}(M_*  u_{i-1},M_*  u_{i}) \leq C_*   M_* $ for all $i=1, \ldots, l$ and thus $\T_{\Lambda(M_*  u_{i-1},C_*  M_* )}(M_*  u_{i-1},M_*  u_{i}) \leq C_*   M_* $.  Hence, by the triangle inequality and the fact that $\Lambda(M_*  u_{i-1},C_*  M_* ) \subset \Lambda(M_* (k+C_* ))$ for all $i=1, \ldots, l$, we have
$$\T_{\Lambda(M_* (k+C_* ))}(M_*  u,M_*  v) \leq \sum_{i=1}^{{l}} \T_{\Lambda(M_*  u_{i-1},C_*  M_* )}(M_*  u_{i-1},M_*  u_{i}) \leq  C_*  M_*  d_{\kP_k}(u,v).$$
Part (ii) follows from \eqref{pminf}, \eqref{locd} and the stochastic domination theorem for locally dependent percolation (see for instance \cite[Theorem~{7.65}]{Gri99_book} or \cite[Theorem~{B26}]{Lig99_book}).
\end{proof}

\begin{lem} \label{lem:mdist} Let $\pp_p$ denote the probability measure of the standard site percolation on $\Z^d$ with density $p$.
There exist positive constants $\e$ and $c$, such that for any $p>1-\e$ and $m$ sufficiently large and $m^2 \geq r \geq m$,
\begin{align*}
&	\pp_p \Bigl( \exists \text{a $\Z^d$-path $\gamma \subset \Lambda(2m)$ from $0$ to $m \eb_1$, } |{\rm Cl}(\gamma)| \leq \sqrt{r} (\log r)^2, \,\, |\gamma| \leq m/c \Bigr) \\
	& \geq  1-\exp(-cr),
\end{align*}
where
\begin{align*}
	{\rm Cl}(\gamma) :=\{u\in \gamma: u \textrm{ is closed} \}.
\end{align*}
\end{lem}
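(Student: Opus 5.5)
The plan is a Peierls-type detour argument along straight lines, combined with a choice among many parallel lines to boost the failure probability from $e^{-cm}$ to $e^{-cr}$. In this proposal, ``cluster'' means a $*$-connected component of closed sites, and $\varepsilon$ is taken small. Fix $J:=\lceil r/m\rceil\le m$. For $j\in\Iintv{0,J-1}$ let $\gamma_j$ be the three-piece path $0\to j\eb_2\to m\eb_1+j\eb_2\to m\eb_1$. All the $\gamma_j$ lie in $\Lambda(2m)$ and have length at most $3m$. For each $j$ I modify $\gamma_j$ into $\gamma_j'$ as follows. Every cluster $C$ meeting $\gamma_j$ with $|C|\le s:=\lfloor (\log r)^2\rfloor$ is bypassed along its outer boundary, which is open and has length at most $C_d|C|$. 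Every cluster with $|C|>s$ is simply crossed, which contributes at most $|C|$ closed sites.

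Thus $|{\rm Cl}(\gamma_j')|\le \Sigma^{>}_j:=\sum_{C\cap\gamma_j\neq\emptyset,\,|C|>s}|C|$ and $|\gamma_j'|\le 3m+C_d\Sigma_j$, where $\Sigma_j$ is the total size of all clusters meeting $\gamma_j$. The constraint $\gamma_j'\subset\Lambda(2m)$ holds as soon as every cluster meeting $\gamma_j$ has diameter at most $m/2$, which is implied by $\Sigma_j\le m/2$. So $\gamma_j'$ is a valid path unless line $j$ is \emph{bad}, meaning $\Sigma_j>m/c$ or $\Sigma^>_j>\sqrt r(\log r)^2$. It suffices to show that $\pp_p(\text{all } j \text{ bad})\le e^{-cr}$.

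The basic input is the standard Peierls count. For a fixed finite family of sites $u_1,\dots,u_k$ and sizes $s_1,\dots,s_k$, the probability that there exist disjoint clusters $C_i\ni u_i$ with $|C_i|\ge s_i$ is at most $\prod_i (C_d\varepsilon)^{s_i}$. From this a single line is easy to handle. The event $\Sigma_j>m/c$ requires disjoint clusters of total size at least $m/c$ rooted on $\gamma_j$. Choosing the roots costs at most $2^{3m}$, so $\pp_p(\Sigma_j>m/c)\le 2^{3m}(C_d\varepsilon)^{m/c}\le e^{-m}$ for $\varepsilon$ small. Likewise, $\Sigma^>_j\ge N$ requires at most $N/s$ roots, so the root choice costs $\binom{3m}{\le N/s}\le e^{(N/s)\log(3m)}$. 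Since $s=(\log r)^2\ge \log(3m)$, this gives $\pp_p(\Sigma^>_j\ge N)\le e^{-cN}$ with $N=\sqrt r(\log r)^2$.

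The main obstacle is the second step: when $r\gg m$, a single line only fails with probability $e^{-c\sqrt r(\log r)^2}$ or $e^{-cm}$, and the lines are not independent, because one cluster can meet many $\gamma_j$. I would handle this by a disjoint-occurrence counting rather than independence. If all $J$ lines are bad, then the union $\mathcal U$ of the clusters meeting $\bigcup_j\gamma_j$ satisfies $\sum_{C\in\mathcal U}|C|\,\#\{j:C\cap\gamma_j\ne\emptyset\}\ge J\min(m/c,\sqrt r(\log r)^2)$. A cluster meets at most $|C|+1$ of the lines, since it must span the $\eb_2$-offsets. I expect this to force either one cluster of size at least $\sqrt{r}$, or a family of disjoint clusters with total size at least $c\,r/\sqrt r\cdot(\log r)^2$. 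By the Peierls bound these have probability at most $m^{2d}(C\varepsilon)^{\sqrt r}$, respectively $e^{(\#\text{roots})\log m}(C\varepsilon)^{\text{total}}$.

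This does not yet reach $e^{-cr}$, so I would refine the choice of lines. Take the $J$ lines at mutual $\eb_2$-distance $m/J$, and for $\gamma_j$ count only clusters of diameter less than $m/(2J)$, crossing larger clusters instead. Then the smaller clusters are private to each line, so the corresponding bad events are independent and each has probability at most $e^{-cm}$, which gives a product $e^{-cJm}=e^{-cr}$. The shared large clusters have diameter at least $m/(2J)\ge m^2/(2r)$, and I would bound their total contribution to the closed count through the diameter–size Peierls estimate. Making these two regimes match at the threshold $\sqrt r(\log r)^2$ is the step I expect to need the most care. If it fails, the fallback is to apply a multiscale (Antal–Pisztora) chemical-distance bound at scale $m^2/r$.
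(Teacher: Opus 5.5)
\textbf{There is a genuine gap.} The decisive step, turning single-line estimates into a bound $e^{-cr}$, is never carried out. Moreover, the scheme you set up for it is miscalibrated, so it cannot be completed as stated.

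With $J=\lceil r/m\rceil$ trials you can gain at most a factor $J$ in the exponent. But a single line does not fail with probability $e^{-cm}$: it fails through the closed-site budget with probability at least $(1-p)^{N}$, where $N=\lfloor\sqrt r(\log r)^2\rfloor+1$. Indeed, a run of $N$ consecutive closed sites on the horizontal part of $\gamma_j$ is a cluster of size $>s$, and your procedure crosses it in both the original and the refined version. Placing such runs on disjoint pieces of all $J$ lines shows that, for any fixed $p<1$, all lines are bad with probability at least $(1-p)^{NJ}=\exp\bigl(-O(r\cdot\sqrt r(\log r)^2/m)\bigr)$. This is $\gg e^{-cr}$ whenever $\sqrt r(\log r)^2\ll m$; for example, at $r=m$ you have $J=1$ and a failure probability of at least $e^{-C\sqrt m(\log m)^2}$.

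The regime $\sqrt r(\log r)^2\ll m$ is precisely the nontrivial one. Once $\sqrt r(\log r)^2\ge m+1$, the straight segment from $0$ to $m\eb_1$ already satisfies the event. So the claim in your refined scheme that each line's bad event has probability at most $e^{-cm}$ is false. Likewise $J\asymp r/m$ is the wrong number of trials: you need about $\sqrt r/(\log r)^2$ of them.

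There is a second problem. All your $\gamma_j$ share the vertical pieces $0\to j\eb_2$ and $m\eb_1+j\eb_2\to m\eb_1$, which have length up to $m$ in the refined version. A single cluster there meets every line. This contradicts your claim that a cluster meets at most $|C|+1$ lines, and it destroys the independence you derive from ``private'' clusters. The Antal--Pisztora fallback is only named, not executed.

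\textbf{How the paper gets the right balance.} It first restricts to a coordinate plane, which also spares you the connectivity of outer boundaries of $*$-clusters in $d\ge 3$. Put $\ell=\lfloor(\log m)^2\rfloor$ and use $k=\lfloor\sqrt r/(4\ell)\rfloor$ \emph{disjoint} strips $[0,h\ell]\times[(3i-1)\ell,(3i+1)\ell]$. All strips lie within height $O(\sqrt r)$ of the segment. The endpoints can therefore be joined to a strip crossing by straight segments that contribute, deterministically, at most $O(\sqrt r)\le\sqrt r\ell/4$ closed sites.

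Inside a strip the paper coarse-grains at scale $\ell$. A block is good if it has short open crossings, which fails with probability at most $e^{-c\ell}$ by Lemma~\ref{lem:C_new}. Good runs are crossed by open paths of linear length, and each bad block is passed along its boundary at cost $O(\ell)$ in both length and closed sites. Hence the length is $O(m)$ deterministically. A strip fails only if at least $\sqrt r/16$ of the $3$-dependent blocks are bad, which by Lemma~\ref{lem:ldlc}(i) has probability at most $e^{-c\ell\sqrt r}$. The choice $\ell=(\log m)^2$ gives $e^{-c\ell/2}\le 1/m$, so the $h\approx m/\ell$ factors in the moment bound are harmless.

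Exact independence of the strips then yields $(e^{-c\ell\sqrt r})^{\sqrt r/\ell}=e^{-cr}$. The key is the matching: a per-trial budget $\asymp\ell\sqrt r$, a per-trial failure probability $e^{-c\ell\sqrt r}$, and $\asymp\sqrt r/\ell$ genuinely disjoint trials confined to height $O(\sqrt r)$. A Peierls version of your argument would need this same structure.
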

The proof of this lemma is presented in Appendix.

\subsubsection{Covering process and proof of Lemma \ref{lem:seoset}}\label{sect:LB_uld2}
Let $\varepsilon$ be the constant as in Lemma \ref{lem:mdist}. Then take $C_*$, $M_*$ the large constants satisfying Lemma \ref{lem:reno} with this $\varepsilon$. Recall the renormalization process in the previous subsection and the associated site percolation  $\kP$. We set
\[
w_N=\lfloor N/M_*  \rfloor \eb_1.
\]
Note that $N \eb_1 \in \Lambda(M_*  w_N,M_* )$ and by Lemma \ref{lem:reno},  $\kP$  stochastically dominates the independent site percolation with density $1-\e$.
Therefore,  Lemmas \ref{lem:reno} and  \ref{lem:mdist}  guarantee that there exists a positive constant $c$ such that
for $N/M_* \leq r \leq (N/M_*)^2$, 
\bean{ \label{poe1}
	\pp(\kE_1) \geq 1 - \exp(-cr)
} 
where
\begin{align*}
	\kE_1:=\left\{
	\begin{minipage}{28em}
		$\exists$ a $\Z^d$-path $\gamma =(v_i)_{i=1}^{l} \subset \Lambda(2N/M_* )$ such that $v_1=0$, $v_l=w_N$,\\
		$l \leq N/(cM_*)$ and $|\{i \in [l]:\Lambda(M_* v_i,M_* ) \textrm{ is closed} \}|\leq \sqrt{r}(\log r)^2$
	\end{minipage}
	\right\}.
\end{align*}
Below, we fix the constants $C_*,M_*>1$ and $c\in (0,1)$, and set 
\[
K= 24 + \lceil 2C_*M_*/c \rceil.
\]
Given $t$ as in Lemma \ref{lem:seoset}, set $r:=\lfloor t/M_*^2 \rfloor$. Then, for all $N$ sufficiently large, $N/M_* \leq r \leq (N/M_*)^2$.
We aim to estimate the passage time ${\rm T} (0,N\eb_1)$ using the passage times of boxes along the path $\gamma$, as defined by the event $\kE_1$. By construction, the passage time through open boxes is well controlled. The main challenge lies in understanding the passage time through closed boxes. To address this, we introduce a \textbf{covering process} that aggregates such boxes. We define the \textit{radius of bad neighbors} of an $M_*$-box $\Lambda(M_*u,M_* )$ as  
\ben{ \label{dlb}
	\kL_{M_*}(M_*u)=\inf\{\ell \in\N:~\forall k>\ell,~\tilde{\rm T}_{\Lambda(M_*u,3k)}(\Lambda(M_*u,k))<k^2\}. 
}	
Intuitively, this radius captures the extent of the region around a box where the infection spread is rarely slow, namely, where the maximal passage time grows quadratically with distance.
We then progressively merge the regions of closed boxes, ordered by decreasing values of the radius $\kL_{M_*}$. This process results in a disjoint cover of these bad regions. The detailed construction will be presented in Lemma~\ref{lem:seoset'}.

Note that the distribution of $\kL_{M_* }$ is independent of $u$. By Proposition \ref{tln} and the union bound,
\begin{align}\label{lbl}
\begin{split}
	\pp( \kL_{M_* }(0)>\ell)
	&\leq \sum_{k \geq \ell} \pp\bigl( \tilde{\rm T}_{\Lambda(2k)}(\Lambda(k)) \geq k^2 \bigr)\\
	&\leq \sum_{k\geq \ell}\sum_{x, y \in \Lambda(k)}  \pp\bigl( {\rm T} _{\Lambda(2k)}(x,y) \geq k^2 \bigr)
	\leq \exp(-\alpha \mathfrak{s}_d(\ell^2)),
\end{split}
\end{align}
for some $\alpha= \alpha(M_*,d)>0$.
In particular, $\kL_{M_* }(M_*  u) < \infty$ almost surely for all $u\in\Z^d$. We define 
\bea{
	\kE_2 = \big \{ \kL_{M_* }(M_*  u)\leq N \, \, \textrm{ for all } u\in  \Lambda(N^2) \big \}.
}
By \eqref{lbl} and the union bound, for $N/M_* \leq r \leq (N/M_*)^2$,
\bea{
	\pp(\kE_2^c) \leq \sum_{u\in \Lambda(N^2)} \pp (\kL_{M_* }(M_*  u)>N) \leq  \exp(- \alpha \mathfrak{s}_d(r)/2).
}
 Combining this with \eqref{poe1}, we get
\ben{ \label{e1a2c}
	\pp((\kE_1\cap\kE_2)^c)\leq \exp(-cr)+\exp(-\alpha \mathfrak{s}_d(r)/2). 
}

\begin{lem} \label{lem:seoset'}
On the event $\kE_1 \cap \kE_2$, we can find a sequence  $(u_i,L_i)_{i=1}^{\ell}\subset   \Z^d\times \N$ such that $(M_*u_i,L_i)_{i=1}^{\ell}$ satisfies conditions (a)--(d) of Lemma \ref{lem:seoset}. As a consequence, Lemma \ref{lem:seoset} holds true.
\end{lem}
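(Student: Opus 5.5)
The plan is to work on $\kE_1\cap\kE_2$ with the renormalized path $\gamma=(v_j)_{j=1}^{l}$ given by $\kE_1$. The bad regions are the intersecting boxes among those centred at closed vertices of $\gamma$, merged by a Vitali-type greedy selection ordered by the radius $\kL_{M_*}$. The output is the family $(u_i,L_i)_{i=1}^{\ell}$ with $L_i=\kL_{M_*}(M_*u_i)$, and we check (a)--(d) of Lemma~\ref{lem:seoset} directly for $(M_*u_i,L_i)$.

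\textbf{Step 1: property (d) from the definition of $\kL_{M_*}$.} Let $u$ be a closed vertex of $\gamma$ and set $k_u:=\kL_{M_*}(M_*u)$. If $k_u\ge1$, then by \eqref{dlb} the defining inequality fails at $k=k_u$. Hence there are $y,z\in\Lambda(M_*u,k_u)$ with ${\rm T}_{\Lambda(M_*u,3k_u)}(y,z)\ge k_u^2$. Restricting to a smaller set can only increase the passage time, so ${\rm T}_{\Lambda(M_*u,k_u)}(y,z)\ge k_u^2$. This is exactly (d) with $x=M_*u$ and $L=k_u$. On $\kE_2$ we also have $k_u\le N$, since $u\in\Lambda(2N/M_*)\subset\Lambda(N^2)$. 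Closed vertices with $k_u=0$ are not selected; they are treated as cheap in Step~3.

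\textbf{Step 2: covering.} Order the closed vertices of $\gamma$ with $k_u\ge1$ by decreasing $k_u$.
\begin{itemize}
\item Select the first vertex, and delete every remaining closed vertex $v$ whose box $\Lambda(M_*v,k_v)$ intersects $\Lambda(M_*u,k_u)$.
\item Repeat until no closed vertex remains.
\end{itemize}
The selected boxes are pairwise disjoint. Their centres lie in $\Lambda(2N)$ because $\gamma\subset\Lambda(2N/M_*)$, which gives (c). Their number is at most the number of closed vertices of $\gamma$, that is at most $\sqrt r(\log r)^2\le\sqrt t(\log t)^2$, since $r\le t$; this gives (a). Every deleted $v$ satisfies $k_v\le k_u$, so $\Lambda(M_*v,k_v)\subset\Lambda(M_*u,3k_u)$. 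Hence every closed vertex with $k\ge1$ lies in one of the enlarged regions $\Lambda(M_*u_i,3L_i)$.

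\textbf{Step 3: the passage time bound (b).} Walk along $\gamma$ from $0$ to $w_N$. Each time $\gamma$ enters an enlarged region $\Lambda(M_*u_i,3L_i)$, jump from the first box of $\gamma$ meeting it to the last one. Put $k'=3L_i+1>L_i$. By the definition of $\kL_{M_*}$,
\[
\tilde{\rm T}_{\Lambda(M_*u_i,3k')}\bigl(\Lambda(M_*u_i,k')\bigr)<k'^2\le16L_i^2\le50L_i^2 .
\]
The restriction set stays inside $\Lambda(KN)$: indeed $|M_*u_i|_\infty\le2N+M_*$ and $3k'\le12N$, so $\Lambda(M_*u_i,3k')\subset\Lambda(24N)\subset\Lambda(KN)$.

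Outside the enlarged regions, $\gamma$ uses only two kinds of vertices:
\begin{itemize}
\item Open vertices. Each costs at most $C_*M_*$ by Lemma~\ref{lem:reno}(i), restricted to $\Lambda(M_*(2N/M_*+C_*))\subset\Lambda(KN)$. Since $l\le N/(cM_*)$, their total cost is at most $C_*N/c\le t/4$, using $t\ge KN$ and $K\ge2C_*M_*/c$.
\item Closed vertices with $k_u=0$. For these, goodness at scale $k=2M_*$ bounds the crossing cost by $4M_*^2$. Their total is at most $4M_*^2\sqrt t(\log t)^2\le t/8$ for large $N$.
\end{itemize}
The final step from $M_*w_N$ to $N\eb_1$ lies in one box and costs $O(M_*^2)$, by the same goodness argument.

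Summing by the triangle inequality gives ${\rm T}_{\Lambda(KN)}(0,N\eb_1)\le t/2+50\sum_iL_i^2$, which is (b). Finally, \eqref{e1a2c} with $r=\lfloor t/M_*^2\rfloor$ gives $\pp(\kE)\ge1-\exp(-c\,\mathfrak s_d(t))$.

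\textbf{Main obstacle.} The delicate step is the chaining in Step~3. One must make sure that a jump across an enlarged region really connects two consecutive points of $\gamma$ lying in $\Lambda(M_*u_i,k')$. This holds because the boxes adjacent to the enlarged region are within distance $M_*\le k'-3L_i$ of it, after enlarging $k'$ by a constant multiple of $M_*$ if needed; the constant $50$ absorbs that change. One must also check that overlapping enlarged regions do not force double counting. To handle this, we jump to the last exit of the first region entered; later regions are charged only if $\gamma$ re-enters them, and each region is charged at most once.
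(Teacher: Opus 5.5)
Your overall route is the paper's. You do a greedy covering of the closed vertices of $\gamma$ by decreasing $\kL_{M_*}$, read off (d) from the definition of $\kL_{M_*}$, get (a) by counting closed vertices, and get (b) by chaining along $\gamma$. Your last-exit chaining is a fine substitute for the paper's connected set $\kC$ and Lemma~\ref{lem: connect-time}.

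\textbf{The gap.} It sits in (b), exactly where you place the main obstacle, and it comes from selecting every closed vertex with $k_u\ge1$. Consecutive points $M_*v_j,M_*v_{j+1}$ of $\gamma$ are at $\ell^\infty$-distance $M_*$. So crossing a selected region, including the step in and the step out, needs scale $3L_i+M_*+1$, at cost up to $(3L_i+M_*+1)^2$. This is \emph{not} $\le 50L_i^2$ when $L_i\ll M_*$: the constant $50$ cannot absorb an additive $M_*$.

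For instance, let $v_j,v_{j+1}$ be adjacent closed vertices of $\gamma$ with $\kL_{M_*}=1$ and $2$, and no other closed vertices nearby. Then:
\begin{itemize}
\item your rule selects both, since $\Lambda(M_*v_j,1)$ and $\Lambda(M_*v_{j+1},2)$ are disjoint;
\item for $M_*>6$, neither $\Lambda(M_*v_j,3)$ nor $\Lambda(M_*v_{j+1},6)$ contains both endpoints of the step $M_*v_j\to M_*v_{j+1}$;
\item neither endpoint is open or has radius $0$;
\item the only bound available in your framework, the defining inequality of $\kL_{M_*}$ at a scale $\ge M_*$, is of order $M_*^2$, far above $50(1^2+2^2)$.
\end{itemize}

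\textbf{A smaller issue with (d).} The paper's convention is $\N=\{1,2,\dots\}$; it writes $\N_0$ when $0$ is allowed. Under it, $\kL_{M_*}\ge1$ always, so your class of radius-$0$ vertices is empty. Moreover, $\kL_{M_*}(M_*u)=1$ does not force $\tilde{\rm T}_{\Lambda(M_*u,3)}(\Lambda(M_*u,1))\ge1$, so (d) can fail for selected vertices with $L_i=1$.

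\textbf{The fix.} Both problems disappear with the paper's threshold: select only closed vertices with $\kL_{M_*}\ge M_*$.
\begin{itemize}
\item Every other closed vertex satisfies the defining inequality at scale $2M_*$, so the step leaving it costs less than $4M_*^2$. There are at most $\sqrt r(\log r)^2$ of them, so their total is $o(t)$. This is exactly your treatment of radius-$0$ vertices.
\item For a selected region, jump from the first point of $\gamma$ in $\Lambda(M_*u_i,3L_i)$ directly to the point right after its last exit, or to $N\eb_1$ if the last exit is at $w_N$.
\item Both endpoints of the jump lie in $\Lambda(M_*u_i,k)$ with $k=3L_i+M_*+1\le 4L_i+1$. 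So the jump costs less than $(4L_i+1)^2\le 25L_i^2$, and $\Lambda(M_*u_i,3k)\subset\Lambda(24N)$.
\end{itemize}
Alternatively, keep a small threshold (at least $2$, for (d)). Then bound each jump by $(3L_i+M_*+1)^2\le 32L_i^2+2M_*^2$, and charge the $2M_*^2$ parts, at most $2M_*^2\sqrt t(\log t)^2$ in total by (a), to $t/2$. With either change your argument goes through.
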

\begin{proof}
Suppose that $\kE_1$ occurs, i.e., there exists a $\Z^d$ path $\gamma =(v_i)_{i=1}^{l} \subset \Lambda(2N/M_*)$ starting at $v_1=0$ and ending at $v_l=w_N$, containing at most $\sqrt{r}(\log r)^2$ closed sites with $l \leq   N/(cM_*)$.  We will find a subsequence $(u_i)_{i=1}^{\ell} \subset (v_i)_{i=1}^l$ such that  $(M_*u_i)_{i=1}^{\ell}$ satisfies (a)--(d) on  $\kE_2$. Let
\begin{align*}
	\kL_i=\kL_{M_* }(M_*v_i), \qquad  J=\{i\in\{1,\cdots,l\}:~\Lambda(M_*v_i,M_* )\text{ is closed}\},
\end{align*}
where  $\kL_{M_* }(M_*  v_i)$ is defined as in \eqref{dlb}.  By the definition of $\kE_1$,
\be{
|J| \leq \sqrt{r}(\log r)^2.
}
Let 
\begin{align*}
\zeta_1=\max \, {\rm argmax}\{\kL_i:~i\in J\}, \qquad J_1= \{i\in J:~M_* v_i\notin  \Lambda_{6\kL_{\zeta_{1}}}(M_* v_{\zeta_{1}}) \}.
\end{align*}
Inductively, we define
\begin{align*}
\zeta_{m+1}=\max \, {\rm argmax}\{\kL_i:~i\in J_m\}, \qquad J_{m+1}= \{i\in J_m:~M_*v_i\notin  \Lambda_{6\kL_{\zeta_{m+1}}}(M_*v_{\zeta_{m+1}}) \}.
\end{align*}
By the definition, if $J_{m}\neq \emptyset$, then $J_{m+1} \subset J_m$ and $|J_m \setminus J_{m+1}| \geq 1$, since  $\zeta_{m+1} \in J_m \setminus J_{m+1}$. Hence, there exists $s$  such that
\ben{ \label{voa}
s \leq |J| \leq \sqrt{r}(\log r)^2, \qquad 	J_{s-1}\neq \emptyset, \qquad J_{s}= \emptyset.
}
By the definition of $(\zeta_i)_{i=1}^{s}$, the sequence $(\kL_{\zeta_i})_{i=1}^{s}$ is non-increasing, so we can define
 $$\ell:=\max\{i\leq s:~\kL_{\zeta_i}\geq M_* \}.$$ 
Observe that  if $1\leq i < j\leq \ell$ then $|M_*v_{\zeta_i}-M_*v_{\zeta_j}|_\infty > 6\kL_{\zeta_i}$. Thus, since $\kL_{\zeta_i} \geq \kL_{\zeta_j}$,
\ben{ \label{voc}
	\Lambda(M_*v_{\zeta_i},3\kL_{\zeta_{i}}) \cap  \Lambda(M_*v_{\zeta_j},3\kL_{\zeta_{j}}) = \varnothing.
} 
By the definition  of  $\kL_{\zeta_i}$ (see \eqref{dlb}), we have for all $i=1,\ldots, \ell$,
\be{
{\tilde{{\rm T}}_{\Lambda(M_*v_{\zeta_i},3\kL_{\zeta_{i}})}(\Lambda(M_*v_{\zeta_i},\kL_{\zeta_{i}}))} \geq \kL_{\zeta_i}^2,
}
and thus there exist {$y_i, z_i$} in $\Lambda(M_*v_{\zeta_i},\kL_{\zeta_{i}})$ such that
\ben{ \label{vod}
	{\rm T}_{\Lambda(M_*v_{\zeta_i},\kL_{\zeta_{i}})} (y_i,z_i) \geq {\rm T}_{\Lambda(M_*v_{\zeta_i},3\kL_{\zeta_{i}})} (y_i,z_i) \geq \kL_{\zeta_i}^2.
}
We now define for $i=1, \ldots, \ell$,
\[u_i= v_{\zeta_i}, \quad L_i=\kL_{\zeta_i},\]
and show that $(M_*u_i,y_i,z_i,L_i)_{i=1}^{\ell}$ with $y_i, z_i$ as in \eqref{vod} satisfies the conditions in Lemma \ref{lem:seoset}. Observe that the condition (a) is verified by noting that  $\ell \leq s \leq \sqrt{r}(\log r)^2 \leq \sqrt{t}(\log t)^2$. Since $u_i \in \Lambda(N/M_* )$ for all $i=1,\ldots, \ell$, the condition~{(c)} follows from \eqref{voc}. Moreover,  (d) is checked by \eqref{vod}. Now it remains to verify condition (b). \\

Note that on the event $\kE_2$,  we have $L_i \leq N$ for all $i=1,\ldots, \ell$. Hence, we only need to check the condition on the sum  $\sum_{i=1}^\ell L_i^2$. 
\begin{lem}\label{lem: connect-time}
Let $k \in \N$ and let $x_1,\ldots,x_k\in \Z^d$ and $d_1, \ldots, d_k$ be non-negative integers.
Assume that $X=\cup_{i=1}^k \Lambda(x_i,d_i)$ is connected. Then for any set $\Lambda \subset \Z^d$  
	\ben{ \label{btkx}
		\tilde{\rm T}_{\Lambda} (X) \leq \sum_{i=1}^k \tilde{\rm T}_{\Lambda}(\Lambda(x_i,d_i+1)).
	}
\end{lem}
We postpone the proof of Lemma \ref{lem: connect-time} for a moment and complete  the verification of condition~{(b)}.
Let us define 
$$\kC=\bigcup_{\substack{1 \leq j \leq l \\ j \not \in J_* } } \Lambda(M_*v_j,M_* ) \cup \bigcup_{i=1}^{\ell}\Lambda(M_*  u_i,6L_i),$$
where
$$J_*=\{ \zeta_{1}, \ldots, \zeta_{\ell} \}.$$
To apply Lemma \ref{lem: connect-time}, we first show that $\kC$ is connected. Indeed, note that $L_i=\kL_{\zeta_i} \geq M_*$ for all $i=1, \ldots,\ell$, and thus 
\be{
\bigcup_{j  \in J_* }  \Lambda(M_*v_j,M_* ) = \bigcup_{i=1}^{\ell}  \Lambda( M_*  u_i,M_* ) \subset \bigcup_{i=1}^{\ell}  \Lambda( M_*  u_i,6L_i).
}
Therefore, $\kC$ can be rewritten as 
\be{
	\kC =  \kC_1 \cup \bigcup_{i=1}^{\ell}\Lambda(M_*  u_i,6L_i), \qquad \kC_1:=\bigcup_{j=1}^{l} \Lambda(M_*v_j,M_* ).
}
Since $\gamma$ is a nearest-neighbor path, the set $\kC_1$ is connected. Moreover, $\Lambda(M_*  u_i,6L_i)$ is connected and has non-empty intersection with $\kC_1$ for all $i=1,\ldots, \ell$. Hence, $\kC$ is connected. \\

Note that $0 \in \Lambda(M_*)= \Lambda(M_*  v_1,M_* ) \subset \kC$ (recall that $v_1=0$) and $ N\eb_1 \in \Lambda(M_* w_N,M_* )= \Lambda(M_*  v_l,M_* ) \subset  \kC$.  Hence,  using Lemma \ref{lem: connect-time},
\aln{ \label{ttlc}
	{\rm T}_{\Lambda(24N)}(0,N \eb_1) &\leq \tilde{\rm T}_{\Lambda(24N)} (\kC)\notag\\
    &\leq \sum_{\substack{1 \leq j \leq l \\ j \not \in J_*}} \tilde{\rm T}_{\Lambda(24N)}(\Lambda(M_*  v_j,M_*+1)) + \sum_{i=1}^{\ell} \tilde{\rm T}_{\Lambda(24N)}(\Lambda(M_*  u_i,6L_i+1)).
}
We remark that if $j\notin {J_*}$, then either $\Lambda(M_*  v_j,M_* )$ is an open box or $\kL_j<M_*$. In the first case,  $ \tilde{\rm T}(\Lambda(M_*  v_j,M_*+1)) \leq \tilde{\rm T}(\Lambda(M_*  v_j,2M_* )) \leq C_*M_*$. In the second case, by the definition of $\kL_j$, we have $\tilde{\rm T}(\Lambda(M_*  v_j,M_*+1)) < (M_*+1)^2$. Hence, in all cases, $\tilde{\rm T}(\Lambda(M_*  v_j,M_*+1)) \leq C_*M_*^2$. This, together with the fact that $M_*v_j \in \Lambda(2N)$ implies 
\ben{ \label{lmvj}
\tilde{\rm T}_{\Lambda(24N)}(\Lambda(M_*  v_j,M_*+1)) \leq C_*M_*^2.
}
Moreover, for $i=1, \ldots,\ell$, since $6L_i+1 > L_i=\kL_{M_*}( M_*  u_i)$ 
\be{
	\tilde{\rm T}_{\Lambda( M_*  u_i,18L_i+3)}(\Lambda( M_*  u_i,6L_i+1)) < (6L_i+1)^2 \leq 50 L_i^2.
}
Moreover,  for all $i=1, \ldots, \ell$, $M_*u_i \in \Lambda(2N)$ and   $L_i \leq N$  on $\kE_2$. Therefore, 
\ben{ \label{lmui}
\tilde{{\rm T} }_{\Lambda(24N)}(\Lambda( M_*  u_i,6L_i+1)) \leq    50 L_i^2.
}
Combining \eqref{ttlc}, \eqref{lmvj} and \eqref{lmui}, we have 
\ben{ \label{tlar}
	{\rm T}_{\Lambda(24N)}(0,N\eb_1)\leq C_*M_*^2l +  50 \sum_{i=1}^{\ell} L_i^2 \leq \frac{t}{2} + 50 \sum_{i=1}^{\ell} L_i^2,
}
since $l \leq N/(cM_*) \leq t/(2C_*M_*^2)$, as $t\geq KN \geq 2C_*M_*N/c$. Thus (b) is verified. 
\end{proof}

\begin{proof}[Proof of Lemma~\ref{lem: connect-time}]
	 We start by a simple observation that if $Y_i\subset\Z^d$ is connected and  $Y=\cup_{i=1}^k Y_i$ is connected as well, then we can find an index $1 \leq j \leq k$ such that $\cup_{i \neq j} Y_i$ is still connected.  To prove this, let $\pi(1) =1$. Since $Y=Y_{\pi(1)} \cup \bigcup_{i \neq \pi(1)} Y_i$ is connected, there exists $\pi(2) \neq \pi(1)$ such that  $Y_{\pi(1)} \cup Y_{\pi(2)}$ is connected. Continuing this procedure, we can find a permutation $\pi$ of $\{1, \ldots, k\}$ such that $\cup_{j\leq i} Y_{\pi(j)}$ is connected for all $i \leq k$. In particular, $\cup_{i \neq \pi(k)} Y_i$ is connected. \\
	
	Now we prove Lemma \ref{lem: connect-time} by induction in $k$. If $k=1$ the lemma is trivial. Suppose that we have already proved the result  for $k=m$; we need to  show it holds for $m+1$. 	Thanks to the above observation, there exists $j$ such that $\bar{X}:=\cup_{i \neq j} \Lambda(x_i,d_i)$ is connected. For any pair of vertices $x, y \in X$, there are two possibilities. First,  if both $x$ and $y$ are in $\bar{X}$ or both $x$ and $y$ are in $\Lambda(x_j,d_j)$, then the induction hypothesis  implies that
	\be{
		{\rm T}_{\Lambda}(x,y) \leq \max \{\tilde{\rm T}_{\Lambda}(\bar{X}), \tilde{\rm T}_{\Lambda}(\Lambda(x_j,d_j+1))\}\leq \sum_{i=1}^{m+1} \tilde{\rm T}_{\Lambda}(\Lambda(x_i,d_i+1)). 
	}
		Second, suppose that  one vertex, say $x$, is in $\bar{X}$ and the other, say $y$, is in $\Lambda(x_j,d_j)$. Since $X = \bar{X} \cup \Lambda(x_j,d_j)$ is connected, there exists $z \in \bar{X}$ such that $z$ is a nearest neighbor of $\Lambda(x_j,d_j)$ and thus $z \in \Lambda(x_j,d_j+1)$. Using the 
	induction hypothesis  for $\bar{X}$, which is the union of $m$ connected sets, we get 
	\be{
		\tilde{\rm T}_{\Lambda}(\bar{X}) \leq   \sum_{i \neq j } \tilde{\rm T}_{\Lambda}(\Lambda(x_i,d_i+1)).
	}
	Combining the above inequality, the triangle inequality and the fact that both $y$ and $z$ are in $\Lambda(x_j,d_j+1)$, we have 
	\be{
		{\rm T}_{\Lambda}(x,y) \leq {\rm T}_{\Lambda}(x,z) + {\rm T}_{\Lambda}(y,z) \leq \tilde{\rm T}_{\Lambda}(\bar{X}) +  \tilde{\rm T}_{\Lambda}(\Lambda(x_j,d_j+1)) \leq   \sum_{i =1 }^{m+1} \tilde{\rm T}_{\Lambda}(\Lambda(x_i,d_i+1)),
	}
which proves   the induction step and completes the proof of Lemma \ref{lem: connect-time}. 
\end{proof}

\appendix
\section{Proof of Lemma \ref{pma}}

We recall the claim of Lemma~\ref{pma}: 
Assume $d\geq 2$. There exists a positive constant $c$ such that the following results hold for any non-empty finite set $A\subset \Z^d$ with $0\in A$:
   \begin{itemize}
      \item [(i)] For all $m\in \N$ with $m\geq c^{-1}|A|^{2/d}$, 
             \be{
q(m,A):= \pp(|\kR_m \setminus A| < 8) \leq \exp(-cm|A|^{-2/d}). 
            }
         \item[(ii)] For all $m \in \N$ with $m\geq c^{-1} |A|$, 
           \begin{align*}
        p(m,A)  :=  & \pp(|\kR_m \cap (\Lambda(\sqrt{m}) \setminus A)|<8) \\
        \leq 
           &\begin{cases}
           \exp\left(-c\frac{m}{|A|^{2/d}}\right) &\textrm{ if } |A| \leq \sqrt{m}/4,\\
           \exp\left(-c\left(\frac{m^{d/2}}{|A|}\right)^{\frac{1}{d-1}}\right) &\textrm{ if }  \sqrt{m}/4 < |A|.
           \end{cases}
           \end{align*}
   \end{itemize}

\begin{proof}[\bf Proof of (i)]
Fix a non-empty finite set $A\subset \mathbb Z^d$ with $0\in A$ and recall that $\Lambda(A,8)$ denotes the set of all sites whose $\ell^\infty$-distance from $A$ is less than or equal to $8$ (see Subsection~\ref{sect:notation}). Observe that
\[
|\Lambda(A,8)|\le 20^d\,|A|, \qquad 
\{|\kR_m\setminus A|<8\}\subset \{\kR_m\subset \Lambda(A,8)\}.
\]
Hence, it suffices to show that there exists a constant \(c>0\) such that   for any connected set \(B\) and
$ 
m\ge c^{-1} |B|^{2/d}, 
$ 
\ben{ \label{rma8}
\mathbb{P}(\kR_m\subset B)\le \exp\Bigl(-c\, m\,|B|^{-2/d}\Bigr).
}
Since the proof of \eqref{rma8} in $d\geq 3$ is given in \cite[Lemma 1.2]{AC}, we only treat the case where $d=2$.
Our task is to prove that there exists a constant \(c>0\) such that for any connected finite set \(B\) containing \(0\),
\begin{equation} \label{rbsb}
\mathbb{P}(\kR_{|B|}\subset B)\le 1-c.
\end{equation}
Indeed, once \eqref{rbsb} is established, we can obtain \eqref{rma8} by dividing the time interval $\{0,1,\dots,m\}$ into \(\lfloor m/|B|\rfloor\) consecutive blocks of length \(|B|\) and using the Markov property at the endpoints of these blocks which gives the desired bound $(1-c)^{\lfloor m/|B|\rfloor}$ since $d=2$. 
We now prove \eqref{rbsb}. Let $k=|B|$. Applying Lemma~\ref{kr}~(ii) with $d=2$, $n=k$ and $a=1/2$, we obtain  there exists $c>0$ such that 
for every \(D\subset \Lambda(2\sqrt{k})\) with \(|D|\ge k\),
\[
\mathbb{P}\Bigl(|\kR_k\cap D|\ge \frac{ck}{\log k}\Bigr)\ge c.
\]
By choosing
$ 
D=\Lambda(2\sqrt{k})\cap B^c, 
$ (recall that $k=|B|$), 
we deduce that with probability at least \(c\) the walk visits some point in \(B^c\). Hence,
\[
\mathbb{P}(\kR_{|B|}\subset B)=1-\mathbb{P}\Bigl(|\kR_{|B|}\cap B^c|>0\Bigr)\le 1-c,
\]
which implies \eqref{rbsb}.
\end{proof}

\begin{proof}[\bf Proof of (ii)]
We consider two regimes: $|A| \leq \sqrt{m}/4$ and $\sqrt{m}/4 \leq |A| \leq cm$.

\noindent\textbf{Case 1: \(|A|\le \sqrt{m}/4\).}   Note that  if
$ 
\kR_m\cap \Lambda(\sqrt{m}/2)^c\neq\varnothing, 
$ 
then 
\[
|\kR_m\cap \Lambda(\sqrt{m})|\ge \frac{\sqrt{m}}{2}.
\]
Since \(|A|\le \sqrt{m}/4\), it follows that
\[
|\kR_m\cap (\Lambda(\sqrt{m})\setminus A)|\ge |\kR_m\cap (\Lambda(\sqrt{m})|-|A| \ge \frac{\sqrt{m}}{2}-\frac{\sqrt{m}}{4}=\frac{\sqrt{m}}{4}\ge 8,
\]
provided that \(m\) is sufficiently large. Consequently,
\[
\mathbb{P}\Bigl(|\kR_m\cap (\Lambda(\sqrt{m})\setminus A)|<8\Bigr)
=\mathbb{P}\Bigl(|\kR_m\cap (\Lambda(\sqrt{m})\setminus A)|<8,\;\kR_m\subset \Lambda(\sqrt{m}/2)\Bigr).
\]
Hence, noting that \(\kR_m\setminus A=\kR_m\cap (\Lambda(\sqrt{m})\setminus A)\) on the event \(\{\kR_m\subset \Lambda(\sqrt{m}/2)\}\), one can apply part~(i) to obtain
\begin{align}\label{rma2}
\begin{split}
	&\mathbb{P}\bigl(|\kR_m\cap (\Lambda(\sqrt{m})\setminus A)|<8\bigr)\\
	&=\mathbb{P}\bigl(|\kR_m\cap (\Lambda(\sqrt{m})\setminus A)|<8, \;\kR_m\subset \Lambda(\sqrt{m}/2)\bigr)\\
	&\leq \mathbb{P}\bigl(|\kR_m\setminus A|<8\bigr)\le \exp\bigl(-c\, m\,|A|^{-2/d}\bigr).
\end{split}
\end{align}
Therefore, part~(ii) holds in the case where $|A| \leq \sqrt{m}/4$.

\vspace{0.5em}

\noindent\textbf{Case 2: \(\sqrt{m}/4\le |A|\le c m\).}  
In this regime a multiscale (or chaining) argument is employed.
We first show that there exists a constant \(c_1\in (0,1)\) such that for any \(K\in \N\) and any set \(B \subset \Z^d\) satisfying
$|B|\le c_1\, K^d $,
one has
\begin{equation} \label{tauka}
\mathbb{P}\bigl(|\kR_{\sigma_{\Lambda(K)}}\cap (\Lambda(K)\setminus B)|<8\bigr)\le 1-c_1,
\end{equation}
where \(\sigma_{\Lambda(K)}\) denotes the exit time from the box \(\Lambda(K)\).
By \cite[Proposition~2.1.2~(b)]{LawLim10_book}, there exists a constant \(c_2\in (0,1)\) such that
\[
\mathbb{P}\bigl(\sigma_{\Lambda(K)}> c_2K^2\bigr)\ge \frac12.
\]
Thus,
\begin{align}\label{rtauk}
\begin{split}
	&\mathbb{P}\bigl(|\kR_{\sigma_{\Lambda(K)}}\cap (\Lambda(K)\setminus B)|\ge 8\bigr)\\
	&\geq \mathbb{P}\bigl(\sigma_{\Lambda(K)}>  c_2K^2\bigr)-  \mathbb{P}\bigl(\sigma_{\Lambda(K)}> c_2K^2,\;|\kR_{\sigma_{\Lambda(K)}}\cap (\Lambda(K)\setminus B)|< 8\bigr)\\
	&\geq \frac12 - \mathbb{P}\bigl(\sigma_{\Lambda(K)}> c_2K^2,\;|\kR_{\sigma_{\Lambda(K)}}\cap (\Lambda(K)\setminus B)|< 8\bigr).
\end{split}
\end{align}
Since $\kR_{c_2K^2}\cap (\Lambda(K)\setminus B)=\kR_{c_2K^2}\setminus B$ holds on the event \(\{\sigma_{\Lambda(K)}> c_2K^2\}\), an application of (i) (with \(m=c_2K^2\)) shows that if \(|B|\le (c_2c/2)^{d/2}K^d\), where $c$ is the constant from  part~(i), then
\begin{align*}
	&\mathbb{P}\Bigl(\sigma_{\Lambda(K)}\ge c_2K^2,\;|\kR_{\sigma_{\Lambda(K)}}\cap (\Lambda(K)\setminus B)|< 8\Bigr)\\
	&\leq \mathbb{P}\bigl(|\kR_{c_2K^2}\setminus B|<8\bigr)
	\le \exp\bigl(-c_2cK^2\,|B|^{-2/d}\bigr) \leq e^{-2}.
\end{align*}
Therefore, by \eqref{rtauk},  
\[
\mathbb{P}\bigl(|\kR_{\sigma_{\Lambda(K)}}\cap (\Lambda(K)\setminus B)|\ge 8\bigr)\ge  \frac12 -e^{-2} > \frac14,
\]
which implies \eqref{tauka} with $c_1= \min \{(cc_2/2)^{d/2}, 1/4\}$.

Given $A \subset \Z^d$ and $m \in \N$, define
\[
K:=\Bigl\lceil\Bigl(\frac{12|A|}{c_1\sqrt{m}}\Bigr)^{\frac{1}{d-1}}\Bigr\rceil,\qquad L:=\Bigl\lfloor\frac{\sqrt{m}+2K}{8K+4}\Bigr\rfloor,
\]
with $c_1$ as in \eqref{tauka}. For \(k\in\{0,1,\dots,2L-1\}\), set
\begin{align*}
	&\ell_k:=\inf\bigl\{t\in\N:\; \|S_t\|_\infty=k(2K+1)\bigr\},\\
	&\sigma_k:=\inf\bigl\{t\in\N:\;S_{\ell_k+t}\not\in \Lambda(S_{\ell_k},K)\bigr\}.
\end{align*}
By definition, the boxes $\Lambda(S_{\ell_k},K)$, $k\in\{0,1,\dots,2L-1\}$, are disjoint, and $\ell_k+ \sigma_k < \ell_{k+1}< \ell_{k+1} + \sigma_{k+1}$ holds for each $k\in\{0,1,\dots,2L-2\}$.
We also introduce the index set
\[
J=\bigl\{k \in \{0,1,\dots,2L-1\}:|\Lambda(S_{\ell_k},K)\cap A|\le c_1\, K^d\bigr\}.
\]
We will verify that
\begin{equation} \label{eltl}
\ell_{2L-1}+\sigma_{2L-1}\le \sigma_{\Lambda(\sqrt{m}/2)}\quad \text{and} \quad |J|\ge L.
\end{equation}
The first inequality follows from the choice of \(L\). Indeed,
\[
2L(2K+1)\le \frac{\sqrt{m}+2K}{2},
\]
which implies that the $\ell^\infty$-distance to the boundary of $\Lambda(S_{\ell_{2L-1}},K)$ is at most
\[
(2L-1)(2K+1)+K=2L(2K+1)-(K+1)<\frac{\sqrt{m}}{2}.
\]
Hence, the walk $S_\cdot$ must stay in $\Lambda(\sqrt{m}/2)$ until time $\ell_{2L-1}+\sigma_{2L-1}$, and the first inequality follows.
For the second inequality, note that the boxes $\Lambda(S_{\ell_k},K)$, $k \in \{0,1,\dots,2L-1\}$, are disjoint and $|\Lambda(S_{\ell_k},K)\cap A|> c_1\, K^d$ if $k \in J^c$.
Thus,
\[
|A| \geq \sum_{k \in J^c} |\Lambda(S_{\ell_k},K)\cap A| \geq c_1|J^c| K^d.
\]
This, combined with the choice of $K$ and $L$, yields that
\[
|J^c| \leq \frac{|A|}{c_1K^d} \leq \frac{\sqrt{m}}{12K}\le \frac{\sqrt{m}+2K}{8K+4}<L+1,
\]
which proves $|J^c|\le L$.
Therefore, $|J| = 2L-|J^c| \geq L$ holds, and we have \eqref{eltl}.

We now decompose
\begin{align*}
	\mathbb{P}\bigl(|\kR_m\cap (\Lambda(\sqrt{m})\setminus A)|<8\bigr)
	&=\mathbb{P}\Bigl(|\kR_m\cap (\Lambda(\sqrt{m})\setminus A)|<8,\;\sigma_{\Lambda(\sqrt{m}/2)}\le m\Bigr)\\
	&\qquad +\mathbb{P}\Bigl(|\kR_m\cap (\Lambda(\sqrt{m})\setminus A)|<8,\;\sigma_{\Lambda(\sqrt{m}/2)}> m\Bigr)\\
	&=:X_1+X_2.
\end{align*}
The second term \(X_2\) is controlled as in \eqref{rma2}:
\[
X_2\le \mathbb{P}\bigl(|\kR_m\setminus A|<8\bigr) \le \exp\bigl(-c_3\, m\,|A|^{-2/d}\bigr),
\]
with $c_3$ a positive constant.
Regarding the first term \(X_1\), let \(j_1<\cdots<j_L\) be the first \(L\) elements of \(J\), which exist because \(|J|\ge L\). By \eqref{eltl},  each time interval \([\ell_{j_r},\ell_{j_r}+\sigma_{j_r}]\), \(r\in [L]\) is contained in  \([0,\sigma_{\Lambda(\sqrt{m}/2)}]\). Therefore, 
\ben{ \label{x1sj8}
X_1 \leq \bigl(|\kR_{[\ell_{j_r},\ell_{j_r}+\sigma_{j_r}]}\cap (\Lambda(S_{\ell_{j_r}},K)\setminus A)|<8 \, \, \forall \, r \in [L]\bigr).
}
These intervals are disjoint since \(\ell_k+\sigma_k<\ell_{k+1}\) for all $k$. 
Moreover, \(j_r\in J\) implies \(|\Lambda(S_{\ell_{j_r}},K)\cap A|\le c_1K^d\). Hence,  \eqref{tauka} provides $$\mathbb{P}\bigl(|\kR_{[\ell_{j_r},\ell_{j_r}+\sigma_{j_r}]}\cap (\Lambda(S_{\ell_{j_r}},K)\setminus A)|<8|~\kF_{\ell_{j_r}}\bigr)\le 1-c_1,$$
where $\kR_I:=\{S_i\mid~i\in I\}$ and $\kF_\ell:=\sigma(S_i\mid~i\leq \ell)$. By \eqref{x1sj8}, repeated use of the strong Markov property at the times \(\ell_{j_1},\dots,\ell_{j_L}\)   yields
\[
X_1\le \mathbb{P}\bigl(|\kR_{[\ell_{j_r},\ell_{j_r}+\sigma_{j_r}]}\cap (\Lambda(S_{\ell_{j_r}},K)\setminus A)|<8 \, \, \forall \, r \in [L]\bigr) \le (1-c_1)^L.
\]
Since \(|A|\ge \sqrt{m}/4\), 
\[
K\le 2\Bigl(\frac{12|A|}{c_1\sqrt{m}}\Bigr)^{\frac{1}{d-1}}.
\]
By decreasing the constant \(c\) in the statement if necessary, we may assume that
\[
\frac{\sqrt{m}+2K}{8K+4}\ge 2.
\]
Hence,
\[
L\ge \frac{\sqrt{m}+2K}{16K+8}\ge \frac{\sqrt{m}}{24K}\ge c_4 m^{\frac{d}{2(d-1)}}\,|A|^{-1/(d-1)}
\]
with $c_4$ a positive constant. Therefore, the above inequality implies
\[
X_1\le \exp\Bigl(-c_5\,\Bigl(\frac{m^{d/2}}{|A|}\Bigr)^{1/(d-1)}\Bigr),
\]
with $c_5:=-c_4\log(1-c_1)$. 
Combining the estimates for \(X_1\) and \(X_2\) completes the proof of part~(ii).
\end{proof}

\begin{rem}[Sharpness of Lemma \ref{pma}]\label{remark: SRW range remark}
For each regime, one can choose \(A\subset\mathbb Z^d\) so that the corresponding upper bound for \(p(m,A)\) or \(q(m,A)\) is sharp up to the constant in the exponent.

 $\bullet$  Part (i) and Part (ii) with $|A|\le \sqrt{m}/4$. Let  $A:=\Lambda(L)$ with $L \leq m^{1/(2d)}$. Then, since $A \subset \Lambda(\sqrt{m})$,
\ben{ \label{rmas8}
\{ |\kR_m \cap (\Lambda(\sqrt{m}) \setminus A)|<8\}  \cap \{|\kR_m\setminus A|<8\}\supset\{\kR_m \subset A\}.
}
By Donsker's invariance principle (Skorokhod coupling), for any $x \in \Z^d$ there is a coupling of a simple random walk $S^x_n$ starting at $x$ and a Brownian motion $B$ starting at $0$ such that, uniformly,  $\sup_{t\in[0,1]}\big|(S^x_{\lfloor tL^2\rfloor}-x)/L-  B_t\big|\xrightarrow{\mathbb P}0$, and hence there exists $c_0>0$ such that
$$
\inf_{x\in\Lambda(L/2)}\mathbb{P}(\kR^x_{L^2} \subset \Lambda(L), \ S^x_{L^2} \in\Lambda(L/2))\ge c_0.
$$
Using this  inequality and the strong Markov property at times $L^2,2L^2,\dots$,  
$$
\pp(\kR_m \subset \Lambda(L))  \ge\ c_0^{\lceil m/L^2 \rceil}.
$$
Together with \eqref{rmas8}, this shows that the bounds of Part(i) and Part(ii) for $|A|\le \sqrt{m}/4$ are sharp. 

 $\bullet$ Part (ii) with $\sqrt{m}/4 < |A| \leq cm$. Let $L\in 2\N$ and consider  
\[
A=\Big\{x=(x_1,\ldots,x_d)\in\mathbb{Z}^d:\ |x_1|\le \sqrt{m},\ |x_i|\le L \, \, \forall  \, \, 2 \leq  i \leq d\Big\}.
\]
Note that  $L\asymp \Big(\frac{|A|}{\sqrt{m}}\Big)^{1/(d-1)} \leq (c\sqrt{m})^{1/(d-1)}$. Define 
 $$x_j=\frac{jL}{2} \eb_1; \quad j=0, \ldots,K, \qquad K= \lceil 6\sqrt{m}/L \rceil.$$
Then we have
\be{
\bigcup_{j=0}^{K-1} \Lambda(x_j, L) \cap (\Lambda(\sqrt{m}) \setminus A) = \varnothing; \quad \Lambda(x_K, \sqrt{m}) \cap \Lambda(\sqrt{m}) = \varnothing.
}
Therefore, 
\aln{ \label{rmej}
&\pp(|\kR_m \cap (\Lambda(\sqrt{m}) \setminus A)|<8) \notag   \geq \pp(\kR_m \cap   (\Lambda(\sqrt{m}) \setminus A) = \varnothing) \notag  \\
& \geq \pp \left( \kR_m \subset  \bigcup_{j=0}^{K-1} \Lambda(x_j, L) \cup \Lambda(x_K, \sqrt{m}) \right)   \geq \pp \left( \bigcap_{j=0}^{K-1} E_j \cap \kE \right),
}
where for $j=0, \ldots,K-1$
\[
E_j= \{\kR_{[jL^2,(j+1)L^2]} \subset \Lambda(x_j,L), \, S_{(j+1)L^2} \in \Lambda(x_j,L/2) \}
\]
and 
\[
\kE= \{\kR_{[KL^2,m]} \subset \Lambda(x_K,\sqrt{m}) \}.
\]
By Donsker's invariance principle again and the strong Markov property, there exists a universal constant $c_1\in(0,1)$ such that for $j=1, \ldots,K-1$
\al{
\min\{ \pp(E_0),\, \pp(E_j \mid \cap_{i=0}^{j-1} E_{i})\} \geq \inf_{x \in \Lambda(L/2)} \pp(\kR^x_{[0,L^2]} \subset \Lambda(L), S_{L^2} \in \Lambda(L/2)) \geq c_1,
}
and 
\al{
 \pp(\kE \mid \cap_{i=0}^{K-1} E_{i}) \geq \inf_{x \in \Lambda(x_K,\sqrt{m}/2)} \pp(\kR^x_{[0,m]} \subset \Lambda(x_K,\sqrt{m})) \geq c_1,
}
here we used that  on $E_{K-1}$, $S_{KL^2} \in \Lambda(x_{K-1},L/2) \subset \Lambda(x_K,\sqrt{m}/2)$.  Combining the last two display equations with \eqref{rmej} we obtain
\be{
\pp(|\kR_m \cap (\Lambda(\sqrt{m}) \setminus A)|<8) \geq c_1^{K+1}.
}
 Notice also that 
\[
K\asymp \sqrt{m}/L\asymp  \left(\frac{m^{d/2}}{|A|}\right)^{1/(d-1)} .
\]
The last two estimates show that the bound of Part (ii) is sharp. 
\end{rem}

\section{Proof of Lemma~\ref{lem:mdist}}
Since $\Z^2 \subset \Z^d$ for $d \geq 2$, it suffices to prove Lemma~\ref{lem:mdist} in the case where $d=2$.
Hence, we assume $d=2$ throughout this section.
Let us first prepare some notation and a lemma. 
For any $x \in \Z^2$ and $\ell \in \N$, consider the event
\begin{align*}
	S(x,\ell):=H(x,\ell) \cap V(x,\ell),
\end{align*}
where $c>0$ is a small constant to be chosen in Lemma~\ref{lem:C_new} and
\begin{align*}
	&H(x,\ell):=\Bigl\{ \text{$\exists$ an open left-right crossing path $\gamma$ of $x+[-\ell,\ell] \times [0,\ell]$ with $|\gamma| \leq \ell/c$} \Bigr\}\\
	&V(x,\ell):=\Bigl\{ \text{$\exists$ an open top-bottom crossing path $\gamma$ of $x+[0,\ell] \times [-\ell,\ell]$ with $|\gamma| \leq \ell/c$} \Bigr\}.
\end{align*}
Furthermore, recall that $\pp_p$ is the probability measure of the standard site percolation on $\Z^2$ with density $p$ and $p_c(2)$ is its critical percolation probability.

\begin{lem}\label{lem:C_new}
There exists $p' \in (p_c(2),1)$ such that if $p \in (p',1]$, then for some constant $c=c(p)>0$ and for all $\ell$ sufficiently large,
\begin{align*}
	\pp_p(S(0,\ell)^c) \leq \exp(-c\ell).
\end{align*}
\end{lem}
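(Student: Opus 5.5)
The plan is to prove Lemma~\ref{lem:C_new} by the standard finite-size/renormalization argument for supercritical planar site percolation close to $p=1$, using a Peierls-type count. It suffices to treat $H(0,\ell)$; the bound for $V(0,\ell)$ then follows by rotating the lattice, and a union bound gives the claim for $S(0,\ell)$.

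\textbf{Step 1 (duality).} By planar duality for site percolation on $\Z^2$, if the rectangle $R=[-\ell,\ell]\times[0,\ell]$ has no open left-right crossing, then it contains a $*$-connected closed path from the bottom side to the top side. Such a path has at least $\ell$ sites. The number of $*$-paths of length $k$ starting from a given site is at most $8^k$, and there are at most $2\ell+1$ possible starting points. Hence
\[
\pp_p(\text{no open crossing}) \le (2\ell+1)\sum_{k\ge \ell} 8^k (1-p)^k \le e^{-c\ell}
\]
as soon as $8(1-p)<1/2$.

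\textbf{Step 2 (length control).} This step is the main obstacle: we must also produce a crossing of length at most $\ell/c$. My approach is to bound the number of ``detours'' forced by closed sites.

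Look at the bottom-most open crossing, or more concretely, follow a straight horizontal line $\{y=j\}$ inside $R$. Whenever the line meets a closed site $u$, reroute around the closed $*$-cluster $\mathcal C(u)$ of $u$ inside $R$: an open circuit hugging the outer boundary of $\mathcal C(u)$ has length at most $8|\mathcal C(u)|$. The straight-line-with-detours path then has length at most
\[
2\ell+1+8\sum_{u\in \{y=j\}\cap R,\ u \text{ closed}} |\mathcal C(u)|.
\]
Detours near the boundary of $R$ may leave the rectangle. To avoid this, take $j\approx \ell/2$ and require every closed cluster meeting the line to have diameter less than $\ell/4$; this fails only with probability at most $\ell e^{-c\ell}$ by the same Peierls count as in Step 1.

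The sum $\sum_u |\mathcal C(u)|$ is a sum of $2\ell+1$ variables with exponential tails, $\pp_p(|\mathcal C(u)|\ge k)\le (8(1-p))^k$. Distinct sites may share a cluster, but that only overcounts, so it does not hurt the upper bound. To handle the dependence, use a BK-type argument: the event $\{\sum_u |\mathcal C(u)| \ge \ell/c'\}$ implies the existence of disjoint closed $*$-clusters, attached to a subset of the line sites, whose total size is at least $\ell/(2c')$. Enumerating these collections gives at most $2^{2\ell+1}\cdot 8^{m}$ choices of total size $m$, each of probability at most $(1-p)^m$. Summing over $m\ge \ell/(2c')$ yields the bound $e^{-c\ell}$ once $p$ is close enough to $1$ and $c'$ is chosen small.

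The resulting path is open, lies in $R$, crosses $R$ from left to right, and has length at most $\ell/c$. This defines $p'$, and the estimate then holds for every $p\in(p',1]$.

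\textbf{Step 3 (conclusion).} Combine the union bound over $H$ and $V$ with the bounds from Steps 1 and 2.
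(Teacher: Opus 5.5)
Your route differs from the paper's. The paper picks points of the infinite open cluster near a row of equally spaced centres and joins them by short open paths, using the Garet--Marchand large-deviation bound for the chemical distance and the exponential decay of $\pp_p(\mathcal{C}_\infty\cap\Lambda(r)=\emptyset)$. A self-contained straight-line-plus-detours argument with a Peierls count is a legitimate alternative for $p$ close to $1$. However, Step~2 as written contains a false step.

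You bound the length by $2\ell+1+8\sum_u|\mathcal{C}(u)|$ over closed line sites $u$. You then claim this sum exceeds $\ell/c'$ only with probability $e^{-c\ell}$, because it would force disjoint closed clusters of total size $\ell/(2c')$. Here the overcounting does hurt. A cluster is counted once per line site it contains, so the sum can be of order $\sum_{\mathcal{C}}|\mathcal{C}|^2$. Suppose the $k=\lceil\sqrt{\ell/c'}\,\rceil$ consecutive sites $(0,j),\dots,(k-1,j)$ are closed; this has probability $(1-p)^k=e^{-O(\sqrt{\ell})}$. Then the sum is at least $k^2\ge \ell/c'$, while only a single cluster, possibly of size $k$, is involved. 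So both your implication and your exponential tail bound fail for the quantity you wrote down.

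The repair is to detour each cluster exactly once and count distinct clusters.
\begin{itemize}
\item When the line first meets a closed $u=(x_0,j)$, leave it at $(x_0-1,j)$. Rejoin at $(x_1,j)$ with $x_1=1+\max\{x:(x,j)\in\mathcal{C}(u)\}$, not at $(x_0+1,j)$, which may sit in a hole of $\mathcal{C}(u)$.
\item Both points lie in the outer $*$-boundary of $\mathcal{C}(u)$: the sites $*$-adjacent to it and joined to infinity by a nearest-neighbour path avoiding it. The first gets there via your open path from the left, the second via the ray to its right.
\item That set is open, nearest-neighbour connected and has at most $8|\mathcal{C}(u)|$ sites. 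Connectivity fails for the nearest-neighbour boundary, e.g.\ for two diagonal closed sites, so the $*$-boundary is the right object.
\item Subsequent closed line sites lie in other clusters. Hence the length is at most $2\ell+1+8\sum|\mathcal{C}|$ over distinct clusters meeting the line.
\item Your count of disjoint closed $*$-animals rooted on the line now gives $e^{-c\ell}$ for $p$ near $1$. Use a lattice-animal constant, and a factor $2^m$ for the choice of sizes, in place of $8^m$.
\end{itemize}

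The ends also need care. The first line site may be closed or lie in a hole of a closed cluster, and detours near $x=\pm\ell$ can leave $R$. Run the construction on a longer segment, starting from an open site not enclosed by any closed cluster; such a site exists off an event of probability $e^{-c\ell}$, by the same count. Then trim to the sub-path between the last visit to $\{x=-\ell\}$ and the first visit to $\{x=\ell\}$, as the paper does with $\tilde{\gamma}$.
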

\begin{proof}
By translation invariance and symmetry, it suffices to prove the lemma with $S(0,\ell)$ replaced by the event
\begin{align*}
	H'(\ell):=\Bigl\{ \text{$\exists$ an open left-right crossing path $\gamma$ of $[0,2\ell] \times [-\ell/2,\ell/2]$ with $|\gamma| \leq \ell/c$} \Bigr\}.
\end{align*}
To this end, fix $\delta \in (0,1/48)$ and set $x_i:=3(i-1)\lceil \delta\ell \rceil \eb_1$ for $i \in \{ 0,1,\dots,L \}$, where $L=\lceil 1/\delta \rceil+1$.
Since $p>p_c(2)$, there exists almost surely a unique infinite open cluster, say $\mathcal{C}_\infty$ (see for instance \cite[Theorems~{1.10} and {8.1}]{Gri99_book}).
Moreover, for any $x,y \in \Z^2$, define the chemical distance $D(x,y)$ between $x$ and $y$ as the length of the shortest open path between these two sites. It is known from \cite[Theorem~{1.4} and inequality~(4)]{GarMar07} (see also \cite[Theorem~3]{DurSch88}) that there exists $p' \in (p_c(2),1)$ such that the following results hold whenever $p \in (p',1]$:
there exists a constant $c'=c'(p,\delta)>0$ such that
\begin{itemize}
\item if $\ell$ is large enough, then for all $|y|_\infty \geq \delta\ell$,
\begin{align*}
	\pp_p((1+\delta)|y|_\infty \leq D(0,y)<\infty) \leq \exp(-c'|y|_\infty),
\end{align*}

\item for all $r>0$ sufficiently large,
\begin{align*}
	\pp_p(\mathcal{C}_\infty \cap \Lambda(r)=\emptyset) \leq \exp(-c'r).
\end{align*}
\end{itemize}
These results tell us that there exists a constant $c''=c''(p,\delta)>0$ such that for all $\ell$ sufficiently large,
\begin{align}\label{eq:SS}
	\pp_p((H'_1(\ell) \cap H'_2(\ell))^c) \leq \exp(-c''\ell),
\end{align}
where
\begin{align*}
	&H'_1(\ell):=\left\{
	\begin{minipage}{27em}
		$D(x,y)<(1+\delta)|x-y|_\infty$ for all $x,y \in [-5\ell,5\ell] \times [-\ell/2,\ell/2]$\\
		with $D(x,y)<\infty$ and $|x-y|_\infty \geq \delta\ell$ 
	\end{minipage}
	\right\},\\
	&H'_2(\ell):=\Bigl\{ \mathcal{C}_\infty \cap \Lambda(x_i,\delta\ell) \not= \emptyset \text{ for all } i \in \{ 0,1,\dots,L \} \Bigr\}.
\end{align*}
Note that on the event $H'_1(\ell) \cap H'_2(\ell)$, we can find a site $z_i \in \mathcal{C}_\infty \cap \Lambda(x_i,\delta\ell)$ for every $i \in \{ 0,1,\dots, L \}$.
In addition, since $\delta\ell \leq |z_i-z_{i+1}|_\infty \leq 6\delta\ell$ holds for each $i \in \{ 0,1,\dots,L-1 \}$, we have for all $i \in \{ 0,1,\dots,L-1 \}$,
\begin{align*}
	D(z_i,z_{i+1})<(1+\delta)|z_i-z_{i+1}|_\infty \leq 6(1+\delta)\delta\ell<\frac{\ell}{4}.
\end{align*}
This implies that the shortest open path $\tilde{\gamma}$ connecting all $z_i$'s is included in the slab $\R \times [-\ell/2,\ell/2]$.
Note that all the boxes $\Lambda(x_i,\delta\ell)$ belong to the slab $\R \times [-\ell/2,\ell/2]$ but some of them are located outside of $[0,2\ell] \times [-\ell/2,\ell/2]$ (recall here that $x_0=-3\lceil \delta \ell \rceil \eb_1$ and $x_L=3\lceil 1/\delta \rceil \lceil \delta \ell \rceil \eb_1$). Hence, removing segments outside of $[0,2\ell] \times [-\ell/2,\ell/2]$ from $\tilde{\gamma}$, we can construct an open left-right crossing path $\gamma$ of $[0,2\ell] \times [-\ell/2,\ell/2]$.
In particular, the construction of $\gamma$ and $\delta \in (0,1/48)$ give
\begin{align*}
	|\gamma| \leq |\tilde{\gamma}| \leq \frac{\ell}{4} \times L \leq \frac{\ell}{\delta},
\end{align*}
which means that $H'_1(\ell) \cap H'_2(\ell)$ implies $H'(\ell)$. Therefore, by choosing $c=\min\{ \delta,c'' \}$, \eqref{eq:SS} implies that
\begin{align*}
	\pp_p(H'(\ell)^c) \leq \pp_p((H'_1(\ell) \cap H'_2(\ell))^c) \leq \exp(-c\ell),
\end{align*}
and this is our desired conclusion.
\end{proof}

We are now in a position to prove Lemma~\ref{lem:mdist}.

\begin{proof}[\underline{Proof of Lemma~\ref{lem:mdist}}]
Choose $\e:=1-p'$ with $p'$ as in Lemma \ref{lem:C_new}  and fix $p>1-\varepsilon$.
Moreover, for a given $m>0$ sufficiently large and  $m^2 \geq r \geq m$, we  set
\begin{align*}
	\ell:=\lfloor (\log m)^2 \rfloor,\qquad
	h:=\Bigl\lceil \frac{m}{\ell} \Bigr\rceil,\qquad
	k:= \Bigl\lfloor \frac{\sqrt{r}}{4\ell} \Bigr\rfloor.
\end{align*}
Then, for each $i=0,1,\dots,k$, we consider the event
\begin{align*}
	\kE_i:=\left\{
	\begin{minipage}{26em}
		$\exists$ a left-right crossing path $\eta$ of $[0, h\ell] \times [(3i-1)\ell,(3i+1)\ell]$\\
		such that $|\eta| \leq Cm$ and $|{\rm Cl}(\eta)| \leq \sqrt{r}\ell/2$
	\end{minipage}
	\right\},
\end{align*}
where $C \geq 3$ is a large constant to be chosen later.
It is clear that if the event $\bigcup_{i=1}^k \kE_i$ occurs, then there exists a $\Z^2$-path $\gamma \subset \Lambda(2m)$ from $0$ to $m \eb_1$ such that $|\gamma| \leq 2Cm$ and $|{\rm Cl}(\gamma)| \leq \sqrt{r} \ell$.
Indeed, on the event $\bigcup_{i=1}^k \kE_i$, there exists a left-right crossing path $\eta$ of $[0,h\ell] \times [0, \lfloor \sqrt{r} \rfloor]$ such that $|{\rm Cl}(\eta)| \leq \sqrt{r}\ell/2$ and $|\eta| \leq Cm$.
Letting $x$ and $y$ be the starting and ending points of $\eta$, we have $|x|_1 \leq \sqrt{r} \leq \min\{ \sqrt{r}\ell/4,m \}$ and $|y-m\eb_1|_1 \leq \sqrt{r} + \ell \leq \min\{ \sqrt{r}\ell/4,2m \}$, since $m$ is large enough.
Hence, the desired $\Z^2$-path $\gamma$ can be constructed by connecting $0$ (resp.~$y$) to $x$ (resp.~$m\eb_1$) with a shortest $\Z^2$-path.
It therefore remains to prove that there exists a positive constant $C$ such that for all $i=0,\ldots,k$,
\begin{align}\label{peir}
	\pp_p(\kE_i) \geq 1-\exp(-\ell \sqrt{r}/C).
\end{align}
Indeed,  since $\kE_1,\dots,\kE_k$ are independent,
\begin{align*}
	&\pp_p\Bigl( \text{$\exists$a $\Z^2$-path $\gamma \subset \Lambda(2m)$ from $0$ to $m \eb_1$ with $|{\rm Cl}(\gamma)| \leq \sqrt{r} \ell$ and  $|\gamma| \leq 2Cm$} \Bigr)\\
	&\geq \pp_p\left( \bigcup_{i=1}^k \kE_i \right) =  1- \prod_{i=1}^k\pp_p(\kE_i^c) \geq 1-\exp(-r/8C),
\end{align*}
and the lemma follows. Our task is now to prove \eqref{peir}.
By translation invariance, it suffices to check \eqref{peir} for $i=0$.
 Note that for any $x \in \Z^2$ and $j>k \geq 0$,
\begin{align}\label{sxil}
\begin{split}
	&\bigcap_{i=k}^j S(x+(i\ell,0),\ell) 
    \subset \left\{
	\begin{minipage}{23em}
		$\exists$ an open left-right crossing path $\tilde{\eta}$ of \\
		  $x+[(k-1)\ell,(j+1)\ell] \times [-\ell,\ell]$ s.t. $|\tilde{\eta}| \leq 2(j-k)\ell/c$
	\end{minipage}
	\right\},
\end{split}
\end{align}
where $c$ is the constant in the definition of $S(\cdot,\ell)$. For $i=0,1,\ldots,h-1$, let $x_i=(i\ell,0)$ and call
$\Lambda(x_i,\ell)$ \textbf{good} if $S(x_i,\ell)$ occurs, and
\textbf{bad} otherwise. We decompose the sequence of boxes into
maximal consecutive runs of good boxes, separated by bad boxes.
By \eqref{sxil}, each maximal run of consecutive good boxes can be crossed from left to right by an open path whose length is linear in the length of the run.
We then connect the crossing paths of successive good runs along the
boundaries of the intervening bad boxes. In this way, we obtain a
left--right crossing path
\(
\eta\subset[0,h\ell]\times[-\ell,\ell]
\)
such that
\begin{align}\label{gamhlc}
\begin{split}
	&|\eta| \leq \frac{2h \ell}{c} \leq \frac{4m}{c},\\
	&|{\rm Cl}(\eta)| \leq 8 \ell \times |\{i \in [0,h-1]:\text{$\Lambda(x_i,\ell)$ is bad}\}|.
\end{split}
\end{align}
It follows that
\begin{align} \label{ppe0}
	\pp_p(\kE^c_0)
	\leq \pp_p\left( \sum_{i=0}^{h-1} X_i \geq \frac{\sqrt{r}}{16} \right), \quad \textrm{where} \quad X_i = \textbf{1}\{ \text{$\Lambda(x_i,\ell)$ is bad}\}.
\end{align}
We remark that the random variables $X_i$ and $X_j$ are independent if $|i-j| \geq 3$.
By Lemma~\ref{lem:C_new}, if $m \in \N$ is large enough, then there exists a constant $c \in (0, 1)$ such that  for all $i \geq 0$,
\begin{align*}
	\E[e^{c\ell X_i/2}]  \leq 1-e^{-c\ell}+e^{-c\ell/2} \leq \exp(e^{-c \ell/2}) \leq \exp(m^{-1}).
\end{align*}
Here we used the fact that $(1+1/t)^t \leq e$ for all $t>0$ and $\ell= \lfloor (\log m)^2 \rfloor$. 
Therefore, Lemma~\ref{lem:ldlc}~(i) proves that for all $m$ sufficiently large,
\begin{align*}
	\pp_p\left( \sum_{i=0}^{h-1} X_i \geq \frac{\sqrt{r}}{16} \right)
	&\leq 3 \exp \left( -\left( \frac{c \ell}{2} \right) \frac{\sqrt{r}}{48}\right) \left( \max_{ 0\leq i \leq h-1} \E[e^{c\ell X_i/2}] \right)^{\lceil h/3 \rceil}\\
	&\leq 3 \exp \left( -\frac{c\ell\sqrt{r}}{96} \right) \times \exp\left( m^{-1} \left\lceil \frac{h}{3} \right\rceil \right)
	\leq  \exp \left( -\frac{c \ell \sqrt{r}}{100} \right),
\end{align*}
and \eqref{peir} immediately follows by using \eqref{ppe0}.
\end{proof}

\specialsection*{Declaration on the use of AI}
The results in this paper were already announced in 2023
(see \cite{CanKubNak26}). All mathematical arguments and
proofs were developed and verified by the authors.
AI tools were used only for checking the proofs.

\specialsection*{Acknowledgments}
The second author was supported by JSPS KAKENHI Grant Number JP25K07054.
The third author was supported by JSPS KAKENHI
Grant Numbers JP24K16937 and JP25K00911.

\bibliographystyle{abbrv}
\bibliography{main}
\end{document}